\newtheorem{theorem}{Theorem}[section]
\newtheorem{prop}[theorem]{Proposition}
\newtheorem{lem}[theorem]{Lemma}
\newtheorem{cor}[theorem]{Corollary}
\newtheorem{statement}[theorem]{Statement}
\theoremstyle{definition}
\newtheorem{defn}[theorem]{Definition}
\newtheorem{ass}[theorem]{Assumption}
\theoremstyle{remark}
\newtheorem{ex}[theorem]{Example}
\newtheorem{remark}[theorem]{Remark}
\numberwithin{equation}{section}
\newcommand{\be}{\begin{equation}}
\newcommand{\ee}{\end{equation}}
\newcommand{\lb}{\left(}
\newcommand{\rb}{\right)}
\newcommand{\lsb}{\left[}
\newcommand{\rsb}{\right]}
\newcommand{\lcb}{\left\{}
\newcommand{\rcb}{\right\}}
\newcommand{\ip}[1]{\langle#1\rangle}
\newcommand{\norm}[1]{\lVert#1\rVert}
\newcommand{\ve}{\varepsilon}
\newcommand{\x}{X}
\newcommand{\y}{Y}
\newcommand{\z}{Z}
\newcommand{\F}{\mathcal{F}}
\newcommand{\N}{\mathbb{N}}
\renewcommand{\P}{\mathbb{P}}
\newcommand{\Q}{\mathbb{Q}}
\newcommand{\R}{\mathbb{R}}
\renewcommand{\S}{\mathcal{S}}
\newcommand{\U}{\mathcal{N}}
\newcommand{\V}{\mathcal{V}}
\newcommand{\W}{\mathcal{W}}
\newcommand{\X}{X}
\newcommand{\Y}{Y}
\newcommand{\Z}{Z}
\newcommand{\allN}{\mathcal{I}}
\newcommand{\lip}{\kappa}
\newcommand{\spaan}{\text{span}}
\newcommand{\conv}{{\text{cone}}}
\newcommand{\cts}{\mathcal{C}}
\newcommand{\dlr}{\mathcal{D}_{\ell,r}}
\newcommand{\dr}{\mathcal{D}_r}
\newcommand{\sm}{\Gamma}
\newcommand{\esm}{\bar{\Gamma}}
\newcommand{\dm}{\Lambda}
\newcommand{\proj}{\mathcal{L}}
\begin{document}

\title[Directional derivatives of Skorokhod maps]{On directional derivatives of Skorokhod maps in convex polyhedral domains}

\date{\today}

\author[David Lipshutz]{David Lipshutz\textsuperscript{$\ast$}}
\address{Division of Applied Mathematics \\ 
                Brown University \\ 
                182 George Street, Providence \\ 
                RI 02912}
\email{david{\_}lipshutz@brown.edu}

\author[Kavita Ramanan]{Kavita Ramanan\textsuperscript{$\dagger$}}
\email{kavita{\_}ramanan@brown.edu}

\keywords{extended Skorokhod problem, directional derivative of the Skorokhod map, time-inhomogeneous Skorokhod problem, sensitivity analysis, stochastic flow, oblique reflection, reflected Brownian motion, derivative problem, boundary jitter property}

\subjclass[2010]{Primary: 60G17, 90C31, 93B35. Secondary: 90B15}

\thanks{\textsuperscript{$\ast$}The research of the first author was supported in part by NSF grant DMS-1148284 and AFOSR grant FA9550-12-1-0399.}
\thanks{\textsuperscript{$\dagger$}The research of the second author was supported in part by NSF grant CMMI-1234100 and NSF grant DMS-1407504.}

\dedicatory{Brown University}

\begin{abstract}
The study of both sensitivity analysis and differentiability of the stochastic flow of a reflected process in a convex polyhedral domain is challenging because the dynamics are discontinuous at the boundary of the domain and the boundary of the domain is not smooth. These difficulties can be addressed by studying directional derivatives of an associated extended Skorokhod map, which is a deterministic mapping that takes an unconstrained path to a suitably reflected version. In this work we develop an axiomatic framework for the analysis of directional derivatives of a large class of Lipschitz continuous extended Skorokhod maps in convex polyhedral domains with oblique directions of reflection.  We establish existence of directional derivatives at a path whose reflected version satisfies a certain boundary jitter property, and also show that the right-continuous regularization of such a directional derivative can be characterized as the unique solution to a Skorokhod-type problem, where both the domain and directions of reflection vary (discontinuously) with time. A key ingredient in the proof is establishing certain contraction properties for a family of (oblique) derivative projection operators. As an application, we establish pathwise differentiability of reflected Brownian motion in the nonnegative quadrant with respect to the initial condition, drift vector, dispersion matrix and directions of reflection. The results of this paper are also used in subsequent work to establish pathwise differentiability of a much larger class of reflected diffusions in convex polyhedral domains.
\end{abstract}

\maketitle

\tableofcontents

\section{Introduction}\label{sec:intro}

\subsection{Overview} 

Reflected stochastic processes that are constrained to lie in the closure of a convex polyhedral domain in $J$-dimensional Euclidean space $\R^J$ arise in many contexts, including as diffusion approximations of stochastic networks \cite{Chen1991a,Ramanan2008a,Reiman1984}, in the study of interacting diffusions and limits of interacting particle systems \cite{Burdzy2002,Warren2007}, in chemical and biochemical reaction networks \cite{Kinnally2010} and in mathematical finance \cite{Banner2005}. The analysis of processes with state constraints is challenging due to the fact that the dynamics are often discontinuous on the boundary of the domain, and is further complicated when the boundary is not smooth. In many cases, the extended Skorokhod problem (ESP), which is a generalization of the Skorokhod problem (SP), provides a convenient tool for the pathwise analysis of such processes \cite{Costantini1992,Dupuis1991,Harrison1981,Lions1984,Ramanan2006,Saisho1987,Tanaka1979}. Specifically, the ESP provides an axiomatic framework to constrain a path taking values in $\R^J$ to the closure of a domain in $\R^J$ with a constraining or regulating function that ``pushes'' in prescribed directions on the boundary of the domain, referred to as the directions of reflection, to ensure that the path remains within the closure of the domain. The constrained path can often be represented as the image under the associated extended Skorokhod map (ESM), a generalization of the Skorokhod map (SM), of the unconstrained path, which is typically analytically more tractable. Thus, the analysis of the constrained path is then largely reduced to the study of properties of the ESM.  

The study of the differentiability of the stochastic flow associated with a stochastic process or, more broadly speaking, sensitivity of the stochastic process to perturbations in the initial
condition and other parameters that define the process, is a classical topic in stochastic analysis. For example, there is a substantial body of work that studies these questions for (unconstrained) diffusions in $\R^J$, with contributions from Elworthy \cite{Elworthy1978}, Bismut \cite{Bismut1981}, Ikeda and Watanabe \cite{Ikeda1981}, Kunita \cite{Kunita1981}, Metivier \cite{Metivier1982} and others. The book by Kunita \cite{Kunita1997} contains a summary of many of these results. In contrast, there are relatively few results for reflected processes or even reflected Brownian motions (RBMs), especially in the context of oblique reflection and nonsmooth domains which is relevant in applications (see, e.g., \cite{Chen2014,Dieker2014}). Two exceptions include the work of Andres \cite{Andres2009}, which establishes differentiability of the stochastic flow of a reflected diffusion with identity covariance in a convex polyhedral domain, but only until the first time the reflected diffusion hits a nonsmooth part of the boundary, thus avoiding having to deal with the effect of the nonsmooth part of the boundary; and the work of Dieker and Gao \cite{Dieker2014}, which looks at sensitivities of reflected diffusions in the nonnegative orthant (with reflection matrices that satisfy a so-called Harrison-Reiman condition) with respect to perturbations of the drift in the direction $-{\bf 1}$, the vector in $\R^J$ with negative one in each component. In addition to these works, Deuschel and Zambotti \cite{Deuschel2005} considered differentiability of stochastic flows for a system of one-dimensional reflected diffusions with coupled drifts; Pilipenko (see \cite{Pilipenko2013} and references therein) studied differentiability properties of stochastic flows of reflected diffusions with state-dependent covariance in the half space with normal reflection; and Burdzy \cite{Burdzy2009a} and Andres \cite{Andres2011} characterized derivatives of stochastic flows for RBMs and reflected diffusions with identity covariance, respectively, in domains with smooth boundaries and normal reflection, where geometric challenges arise due to the curvature of the boundary.

In this work we show that the study of both the differentiability of the stochastic flow or sensitivity analysis of a reflected stochastic process can largely be reduced to the study of an associated unconstrained process and so-called directional derivatives of the SM or ESM (see Definition \ref{def:derivative} below). Directional derivatives of the SM were first introduced in the one-dimensional setting by Mandelbaum and Massey \cite{Mandelbaum1995} to analyze diffusion approximations of time-inhomogeneous queues (see also \cite[Chapter 9]{Whitt2002} for a generalization of their results). Directional derivatives of a class of so-called Harrison-Reiman multidimensional SMs on the nonnegative orthant were shown to exist and characterized by Mandelbaum and Ramanan \cite{Mandelbaum2010}. While their work was primarily motivated by the study of time-inhomogeneous queues, the results in \cite{Mandelbaum2010} have subsequently been used by Cudina and Ramanan \cite{Cudina2011} to study asymptotically optimal controls for fluid limits of time-inhomogeneous queueing networks; by Chen \cite{Chen2014} to develop an algorithm for unbiased estimators of sensitivities of a stochastic fluid network; by Dieker and Gao \cite{Dieker2014} to study certain sensitivities of reflected diffusions; by Lipshutz and Williams \cite{Lipshutz2015} to study differentiability of flows of a one-dimensional delay differential equation with reflection; and by Honnappa, Jain and Ward \cite{Honnappa2015a,Honnappa2015} to study the behavior of transient queueing networks. The proof in \cite{Mandelbaum2010}, which considered directional derivatives of multidimensional SMs, relies on the Harrison-Reiman SMs having a fairly explicit representation and satisfying certain monotonicity properties, which do not hold for more general classes of multidimensional SMs or ESMs. In particular, such a multidimensional SM can be viewed as a system of coupled one-dimensional SMs, and when combined with the explicit expression for the one-dimensional SM, the analysis of directional derivatives of the multidimensional SM largely reduces to a careful study of directional derivatives of the supremum functional. Even for SMs that lie in the slightly larger class of so-called generalized Harrison-Reiman maps (see Example \ref{ex:gHR} below), which do not have the same monotonicity property, the approach in \cite{Mandelbaum2010} does not yield existence of the directional derivative.

The main goal of this work is to establish existence and provide a useful characterization of directional derivatives of ESMs associated with a broad class of convex polyhedral domains and oblique directions of reflection that need not satisfy the above mentioned monotonicity properties nor admit an explicit representation, and to demonstrate the use of directional derivatives of an ESM in the study of differentiability properties of reflected stochastic processes. In contrast to \cite{Mandelbaum2010}, we develop a completely different approach to studying directional derivatives of an ESM. Much in the spirit of the ESP, we adopt an axiomatic approach and formulate a time-inhomogeneous Skorokhod-type problem, which we refer to as the derivative problem (DP, see Definition \ref{def:dp} below), in which the domain and directions of reflection vary (discontinuously) with time. We show that under general conditions on the domain and directions of reflection that ensure the ESM is well defined and Lipschitz continuous, directional derivatives of the ESM exist and are uniquely characterized via the DP when the ESM is evaluated at a continuous path whose image under the ESM satisfies a certain boundary jitter property (see Definition \ref{def:jitter} below as well as the discussion following Theorem \ref{thm:main} for a description of its role in establishing existence of directional derivatives). While the boundary jitter property is not necessary to prove existence of directional derivatives (see, e.g., \cite{Mandelbaum2010}), by imposing this property we are able to provide a nice axiomatic characterization of directional derivatives via the DP and consideration of such paths is sufficient for many stochastic applications. As an illustration, we show that the boundary jitter property is satisfied by a large class of RBMs in the nonnegative quadrant, and use this to establish existence of and characterize so-called pathwise derivatives of such RBMs with respect to their initial conditions, drift and dispersion coefficients, and directions of reflection. To the best of our knowledge, sensitivities of a reflected diffusion with respect to its dispersion coefficient or directions of reflection have not been considered in any prior work. The boundary jitter property can be shown to hold for a larger class of reflected diffusions in more general convex polyhedral domains; however, the verification of this property is more involved and is thus deferred to a forthcoming paper, which also establishes pathwise differentiability of reflected diffusions in these domains. The pathwise nature of our analysis allows for our results to be applied in principle to a larger class of continuous reflected processes beyond reflected diffusions; specifically, those processes that satisfy the boundary jitter property. 

In summary, the main contributions of this paper are as follows:
\begin{itemize}
	\item Definition of the boundary jitter property and verification of the boundary jitter property for a large class of RBMs in the nonnegative quadrant (Section \ref{sec:jittermain} and Section \ref{sec:rbmjitter}).
	\item Formulation and analysis of the DP (Section \ref{sec:dpmain} and Section \ref{sec:dp}).
	\item Existence of directional derivatives of the ESM evaluated at paths satisfying the boundary jitter property, and their characterization via the DP (Section \ref{sec:ESMderivative} and Sections \ref{sec:nablasm1}--\ref{sec:tau}).
	\item Differentiability of the ESM with respect to the directions of reflection (Section \ref{sec:directions}).
	\item Existence and characterization of pathwise derivatives of a large class of RBMs in the nonnegative quadrant with respect to their initial conditions, drift and dispersion coefficients, and directions of reflection (Section \ref{sec:pathwise}).
\end{itemize}
Additionally, the results of this paper are used in forthcoming work to establish pathwise differentiability of a large class of reflected diffusions in convex polyhedral domains.
	
\subsection{Outline of the paper}

The paper is organized as follows. In Section \ref{sec:esppoly}, a precise statement of the ESP with a convex polyhedral domain is given and its associated ESM is introduced. Assumptions guaranteeing that the ESM is well defined are stated and the notion of a directional derivative of the ESM is introduced. Our main results on directional derivatives of an ESM and pathwise differentiability of an RBM in the nonnegative orthant are presented in Section \ref{sec:main}. In Section \ref{sec:jitter} some important consequences of the boundary jitter property are shown and a verification that a large class of RBMs in the nonnegative quadrant satisfy the boundary jitter property is given. A discussion of the DP and its properties is given in Section \ref{sec:dp}. The proof of our main result on existence of directional derivatives and their characterization via the DP is given in Sections \ref{sec:nablasm1}--\ref{sec:tau}. Proofs of some useful lemmas are relegated to Appendices \ref{apdx:local}--\ref{apdx:projlip}, and some relevant examples are provided in Appendix \ref{apdx:examples}.

\subsection{Notation}\label{sec:notation}

We now collect some notation that will be used throughout this work. We use $\N=\{1,2,\dots\}$ to denote the set of positive integers, and let $\N_0\doteq\N\cup\{0\}$ and $\N_\infty\doteq\N\cup\{\infty\}$. Let $\Q$ denote the set of rational numbers. For $J\in\N$, let $\R^J$ denote $J$-dimensional Euclidean space and $\R_+^J$ the closed nonnegative orthant in $\R^J$. When $J=1$, we suppress $J$ and simply write $\R$ for $(-\infty,\infty)$ and $\R_+$ for $[0,\infty)$. Given $r,s\in\R$, we let $r\vee s\doteq\max(r,s)$ and $r\wedge s\doteq\min(r,s)$. For a column vector $x\in\R^J$, let $x^j$ denote the $j$th component of $x$. We let $\{e_1,\dots,e_J\}$ denote the standard orthonormal basis for $\R^J$; that is, $e_i^j$ is 1 if $i=j$ and 0 otherwise. For $x,y\in\R^J$, we write $\ip{x,y}\doteq x^1y^1+\cdots+x^Jy^J$ for the usual inner product on $\R^J$. Given $x\in\R^J$, $|x|\doteq\ip{x,x}^{\frac{1}{2}}$ denotes the Euclidean norm of $x$. We use $\mathbb{S}^{J-1}\doteq\{x\in\R^J:|x|=1\}$ to denote the unit sphere in $\R^J$. 

For $J,K\in\N$, let $\R^{J\times K}$ denote the set of real-valued matrices with $J$ rows and $K$ columns. For a matrix $M\in\R^{J\times K}$, let $M_k^j$ denote the entry of the $j$th row and $k$th column, $M^j$ denote the $j$th row and $M_k$ denote the $k$th column. We write $M'$ to denote the transpose of $M$. Given a nondegenerate square matrix $M\in\R^{J\times J}$, we write $M^{-1}$ to denote matrix inverse of $M$.

Given sets $A$ and $B$, we write $A\subseteq B$ when $A$ is a subset of $B$ and we write $A\subsetneq B$ when we want to emphasize that $A$ is a strict subset of $B$. For a subset $A\subseteq\R$, we let $\inf A$ and $\sup A$ denote the infimum and supremum, respectively, of $A$. We use the convention that the infimum and supremum of the empty set are respectively defined to be $\infty$ and $-\infty$. Given a subset $A\subseteq\R^J$, let $A^\circ$, $\overline{A}$ and $\partial A$ denote the interior, closure and boundary, respectively, of $A$. We let $\conv(A)$ denote the convex cone generated by $A$; that is,
	$$\conv(A)\doteq\lcb\sum_{k=1}^Kr_kx_k:K\in\N,x_k\in A,r_k\geq0\rcb.$$
with the convention that $\conv(\emptyset)\doteq\{0\}$. We let $\spaan(A)$ denote the set of all possible finite linear combinations of vectors in $A$; that is,
	$$\spaan(A)\doteq\lcb\sum_{k=1}^Kr_kx_k:K\in\N,x_k\in A,r_k\in\R\rcb,$$
with the convention that $\spaan(\emptyset)\doteq\{0\}$. Given a subset $A\subseteq\R^J$, we let $A^\perp$ denote the orthogonal complement of $\spaan(A)$ in $\R^J$; that is,
	$$A^\perp\doteq\{x\in\R^J:\ip{x,y}=0\;\forall\;y\in A\}.$$

Given $T\in(0,\infty]$ and a closed, convex subset $E\subseteq\R^J$, we let $\dlr([0,T):E)$ denote the set of functions on $[0,T)$ taking values in $E$ that have finite left limits at all $t\in(0,T)$, finite right limits at all $t\in[0,T)$, and are left continuous and/or right continuous at each $t\in(0,\infty)$. We let $\dr([0,T):E)$ denote the set of right continuous functions with finite left limits in $\dlr([0,T):E)$ and let $\cts([0,T):E)$ denote the further subset of continuous functions in $\dr([0,T):E)$. Given a subset $A\subseteq E$, we use $\cts_A([0,T):E)$ to denote the subset of continuous functions $f\in\cts([0,T):E)$ with $f(0)\in A$. When $T=\infty$, $E=\R^J$ and $A\subseteq\R^J$, we simply write $\dlr$, $\dr$, $\cts$ and $\cts_A$ for $\dlr([0,\infty):\R^J)$, $\dr([0,\infty):\R^J)$, $\cts([0,\infty):\R^J)$ and $\cts_A([0,\infty):\R^J)$, respectively. We endow $\dr([0,T):E)$ and its subsets with the topology of uniform convergence on compact intervals in $[0,T)$. For $f\in\dlr([0,T):E)$ and $t\in[0,T)$, define the supremum norm of $f$ over $[0,t]$ by
	$$\norm{f}_t\doteq\sup_{s\in[0,t]}|f(s)|<\infty.$$
Let $|f|(t)\in[0,\infty]$ denote the total variation of $f$ over the interval $[0,t]$; that is,
	$$|f|(t)\doteq\sup_{\{t_k\}}\sum_{k=1}^m|f(t_k)-f(t_{k-1})|,$$
where the supremum is over all finite partitions $\{0=t_0<t_1<\cdots<t_m=t\}$ of the interval $[0,t]$. We let $f(t-)\doteq\lim_{s\uparrow t}f(s)$ for all $t\in(0,T)$ and $f(t+)\doteq\lim_{s\downarrow t}f(s)$ for all $t\in[0,T)$. For $f\in\dlr([0,T):E)$, we say $f$ is increasing (resp. nondecreasing, decreasing, nonincreasing) if $f(s)<f(t)$ (resp. $f(s)\leq f(t)$, $f(s)>f(t)$, $f(s)\geq f(t)$) for all $0\leq s<t<T$. We call the function $g\in\dr([0,T):E)$ defined by $g(t)\doteq f(t+)$ for all $t\in[0,T)$ the \emph{right continuous regularization of $f$}.
	
We abbreviate ``such that'' as ``s.t." and ``almost surely'' as ``a.s.''

\section{The extended Skorokhod reflection problem}\label{sec:esppoly}

In this section we introduce the ESP and directional derivatives of the associated ESM. In Section \ref{sec:espdef} we describe the class of convex polyhedral domains that we consider and give a precise definition of a solution to the ESP. In Section \ref{sec:esmlip} we provide sufficient conditions for the associated ESM to satisfy a Lipschitz continuity condition. In Section \ref{sec:esmwelldefined} we present further conditions under which the ESM is well defined on all of $\cts$. In Section \ref{sec:derivativesdef} we define a directional derivative of the ESM.

\subsection{Statement of the extended Skorokhod reflection problem}\label{sec:espdef}

Let $G$ be the closure of a nonempty convex polyhedral domain in $\R^J$, which can be expressed as the intersection of a finite number of closed half spaces; that is,
	\be\label{eq:G}G\doteq\bigcap_{i=1,\dots,N}\lcb x\in\R^J:\ip{x,n_i}\geq c_i\rcb,\ee
for some positive integer $N\in\N$, unit vectors $n_i\in \mathbb{S}^{J-1}$ and constants $c_i\in\R$, for $i=1,\dots,N$. To each face $F_i\doteq\{ x\in\partial G:\ip{x,n_i}=c_i\}$ of the polyhedron is associated a reflection vector $d_i\in\R^J$ that points into the interior $G^\circ$ of $G$; that is, $\ip{d_i,n_i}>0$. Without loss of generality, the reflection vectors are assumed to be normalized so that $\ip{d_i,n_i}=1$ for $i=1,\dots,N$. For notational convenience, we let $\allN\doteq\{1,\dots,N\}$ and for $x\in G$, we write 
	\be\label{eq:allNx}\allN(x)\doteq\{i\in\allN:x\in F_i\}\ee
to denote the (possibly empty) set of indices associated with the faces that intersect at $x$. For $x\in G$, we let $|\allN(x)|$ denote the cardinality of the set $\allN(x)$. In the following lemma we state an upper semicontinuity property of the set-valued function $\allN(\cdot)$ on $G$.

\begin{lem}\label{lem:allNusc}
For each $x\in G$, there is an open neighborhood $U_x$ of $x$ in $\R^J$ such that
	\be\label{eq:allNusc}\allN(y)\subseteq\allN(x)\qquad\text{for all }y\in U_x\cap G.\ee
\end{lem}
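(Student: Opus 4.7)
The plan is to exploit the fact that for indices $i \notin \allN(x)$, the inequality $\ip{x,n_i} \geq c_i$ defining $G$ holds strictly at $x$, and then to use continuity of the linear functionals $y \mapsto \ip{y,n_i}$ together with the finiteness of $\allN$ to obtain a single open neighborhood on which all such strict inequalities persist.

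More concretely, I would proceed as follows. Fix $x \in G$. For each $i \in \allN \setminus \allN(x)$, the definition of $\allN(x)$ in \eqref{eq:allNx} together with $x \in G$ and \eqref{eq:G} implies $\ip{x,n_i} > c_i$; set $\delta_i \doteq \ip{x,n_i} - c_i > 0$. Since $n_i$ is a unit vector, the Cauchy--Schwarz inequality gives $|\ip{y,n_i} - \ip{x,n_i}| \leq |y-x|$, so the open ball $B_i \doteq \{y \in \R^J : |y-x| < \delta_i\}$ satisfies $\ip{y,n_i} > c_i$ for every $y \in B_i$.

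Now define
\[
U_x \doteq \bigcap_{i \in \allN \setminus \allN(x)} B_i,
\]
with the convention $U_x \doteq \R^J$ when $\allN(x) = \allN$. Because $\allN$ is finite, $U_x$ is a finite intersection of open balls centered at $x$ and is therefore an open neighborhood of $x$. For any $y \in U_x \cap G$ and any $i \in \allN(y)$, one has $\ip{y,n_i} = c_i$; by construction of $U_x$ this rules out $i \notin \allN(x)$, so $i \in \allN(x)$. This establishes \eqref{eq:allNusc}.

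There is no real obstacle here: the statement is essentially the elementary upper semicontinuity of the face-index map for a polyhedron, and the only ingredients needed are the strict inequality $\ip{x,n_i} > c_i$ for $i \notin \allN(x)$, continuity of a finite collection of linear functionals, and closure of $\allN$ under finite intersections of open sets. The convention $U_x = \R^J$ in the degenerate case $\allN(x) = \allN$ is the only minor bookkeeping point worth flagging.
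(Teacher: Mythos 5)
Your proof is correct and complete: the strict inequality $\ip{x,n_i}>c_i$ for $i\notin\allN(x)$, the Cauchy--Schwarz bound using $|n_i|=1$, and the finite intersection over $\allN\setminus\allN(x)$ (with the degenerate convention $U_x=\R^J$) give exactly what is claimed. The paper itself offers no argument but simply cites Lemma 2.1 of \cite{Kang2007}, and your elementary argument is the standard one underlying that reference, so there is nothing to add.
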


\begin{proof}
See, for example, Lemma 2.1 of \cite{Kang2007}.
\end{proof}

For $x\in\partial G$, we let $d(x)$ denote the cone generated by the permissible directions of reflection at $x$. In other words, for $x\in\partial G$,
	\be\label{eq:dx}d(x)\doteq\conv(\{d_i,i\in\allN(x)\}).\ee
For convenience, we extend the definition of $d(x)$ to all of $G$ by setting $d(x)\doteq\{0\}$ for all $x\in G^\circ$.

We now give a precise formulation of the ESP for continuous paths.

\begin{defn}\label{def:esp}
Suppose that $\{(d_i,n_i,c_i),i\in\allN\}$ and $\x\in\cts$ are given. Then $(\z,\y)\in\cts\times\cts$ solves the ESP $\{(d_i,n_i,c_i),i\in\allN\}$ for $\x$ if $\y(0)\in d(\z(0))$ and if for all $t\in[0,\infty)$, the following conditions hold:
\begin{itemize}
	\item[1.] $\z(t)=\x(t)+\y(t)$;
	\item[2.] $\z(t)\in G$;
	\item[3.] for all $s\in[0,t)$,
		\be\label{eq:ytys} \y(t)-\y(s)\in\conv\lsb\cup_{u\in(s,t]}d(\z(u))\rsb.\ee
\end{itemize}
If there exists a unique solution $(\z,\y)$ to the ESP for $\x$, then we write $\z=\esm(\x)$ and we refer to $\esm$ as the extended Skorokhod map (ESM).
\end{defn}

\begin{remark}
Given $T\in(0,\infty)$ and $\x\in\cts([0,T):\R^J)$, we say that $(\z,\y)\in\cts([0,T):\R^J)\times\cts([0,T):\R^J)$ solves the ESP for $\x$ on $[0,T)$ if $\y(0)\in d(\z(0))$ and conditions 1--3 of the ESP hold for all $t\in[0,T)$.
\end{remark}

\begin{remark}
The inclusion \eqref{eq:ytys} holds if and only if the following inclusion holds:
	\be\label{eq:ytminusys}\y(t)-\y(s)\in\conv\lsb\cup_{u\in(s,t)}d(\z(u))\rsb.\ee
Here the ``only if'' direction is immediate and the ``if'' direction follows from the continuity of $\y$ and because \eqref{eq:ytminusys} implies that
	$$\y(t-)-\y(s)\in\cup_{r<t}\conv\lsb\cup_{u\in(s,r)}d(\z(u))\rsb\subseteq\conv\lsb\cup_{u\in(s,t)}d(\z(u))\rsb.$$
\end{remark}

\begin{remark}
The formulation of the ESP in Definition \ref{def:esp} appears slightly different from the one originally given in \cite[Definition 1.2]{Ramanan2006} since the ESP in \cite{Ramanan2006} was formulated for paths $\x\in\dr$ that satisfy $\x(0)\in G$. In particular, \cite[Definition 1.2]{Ramanan2006} requires that $\y(0)=0$ and $\y(t)-\y(t-)\in\conv\lsb d(\z(t))\rsb$ for all $t\in(0,\infty)$. Here, we only consider continuous paths, so the jump condition holds automatically. In addition, we allow input paths $\x$ that start outside $G$; that is, $\x(0)\not\in G$, but instead allow $\y(0)\neq0$ as long as $\y(0)\in d(\z(0))$. This mild generalization is useful when considering directional derivatives of the ESM, where if $\x(0)\in\partial G$, an $\ve$-perturbation of $\x$ in the direction $\psi\in\cts$ may result in the perturbed initial condition $\x(0)+\ve\psi(0)$ lying outside of $G$. When $\x\in\cts_G$, the conditions of Definition \ref{def:esp} ensure that any solution $(\z,\y)$ of the ESP for $\x$ must satisfy $\y(0)=0$, so Definition \ref{def:esp} coincides with \cite[Definition 1.2]{Ramanan2006}.
\end{remark}

\begin{remark}\label{rmk:sp}
The ESP and the associated ESM are generalizations of the SP and its associated SM. In contrast to the SP, the ESP does not require that the constraining term $\y$ have finite variation on compact intervals. In \cite{Ramanan2006} it was shown that the set $\V$, defined by
	\be\label{eq:setV}\V\doteq\{x\in\partial G:\text{ there exists $d\in \mathbb{S}^{J-1}$ such that $\{d,-d\}\subseteq d(x)$}\}\ee
is important for characterizing whether the constraining term $\y$ can have unbounded variation. In particular, if $\V=\emptyset$, then $(\z,\y)$ solves the ESP for $\x$ if and only if $(\z,\y)$ solves the SP for $\x$. When referring to specific examples of ESPs in which $\V=\emptyset$, we will use SP and SM in place of ESP and ESM, respectively, to emphasize that the constraining term must be of finite variation on compact intervals.
\end{remark}

We close this section with a useful time-shift property of the ESP. Given a solution $(\z,\y)$ of the ESP for $\x\in\cts$ and $S\in[0,\infty)$, define $\x^S,\y^S,\z^S\in\cts$ by
\begin{align}\label{eq:xS}
	\x^S(\cdot)&\doteq\z(S)+\x(S+\cdot)-\x(S),\\\label{eq:zS}
	\z^S(\cdot)&\doteq \z(S+\cdot),\\\label{eq:yS}
	\y^S(\cdot)&\doteq \y(S+\cdot)-\y(S).
\end{align}

\begin{lem}\label{lem:esmshift}
Suppose $(\z,\y)$ solves the ESP for $\x\in\cts$. Let $S\in[0,\infty)$ and define $\x^S,\z^S,\y^S$ as in \eqref{eq:xS}--\eqref{eq:yS}. Then $(\z^S,\y^S)$ solves the ESP for $\x^S$. Moreover, if $(\z,\y)$ is the unique solution to the ESP for $\x$, then for any $0\leq S<T<\infty$, $\z(T)$ depends only on $\z(S)$ and $\{\x(S+t)-\x(S),t\in[0,T-S]\}$.
\end{lem}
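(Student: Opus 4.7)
The proof decomposes naturally into two parts. For the first assertion I would verify the three conditions of the ESP (Definition~\ref{def:esp}) for $(\z^S, \y^S)$ with input $\x^S$ directly from the definitions. The initial condition $\y^S(0) = 0 \in d(\z^S(0))$ is automatic since the cone $d(\cdot)$ always contains the origin. Condition~1 is a line of algebra: substituting the definitions and invoking $\z = \x + \y$ at times $S$ and $S+t$ gives
\[
\x^S(t) + \y^S(t) = \z(S) + [\x(S+t) - \x(S)] + [\y(S+t) - \y(S)] = \z(S+t) = \z^S(t).
\]
Condition~2 is immediate from $\z(S+t) \in G$, and condition~3 follows by a shift of the time index: for $0 \le s < t < \infty$,
\[
\y^S(t) - \y^S(s) = \y(S+t) - \y(S+s) \in \conv\!\Bigl[\bigcup_{u \in (S+s,\, S+t]} d(\z(u))\Bigr] = \conv\!\Bigl[\bigcup_{u \in (s,\, t]} d(\z^S(u))\Bigr],
\]
the inclusion being condition~3 applied to $(\z, \y)$ at times $S+s < S+t$ and the final equality a change of variable.

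For the second assertion the key observation is the causality of the ESP conditions: condition~3 on an interval $(s, t]$ references only $\z|_{[s, t]}$, while conditions~1 and~2 are pointwise. Hence the restriction $(\z^S, \y^S)|_{[0, T-S]}$ solves the finite-interval ESP for $\x^S|_{[0, T-S]}$ in the sense of the remark following Definition~\ref{def:esp}, and by inspection $\x^S|_{[0, T-S]}$ depends only on $\z(S)$ and the segment $\{\x(S+t) - \x(S) : t \in [0, T-S]\}$. Combined with the shift property of the first part, uniqueness of $(\z, \y)$ for $\x$ then forces $\z^S(T-S) = \z(T)$ to be a function only of these two pieces of data.

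The subtlety I would flag as the main obstacle is the transfer of uniqueness from $\x$ to the shifted input $\x^S$ on the finite horizon $[0, T-S]$. I would handle this by a concatenation argument: any hypothetical alternative finite-interval solution $(\tilde{\z}, \tilde{\y})$ for $\x^S|_{[0, T-S]}$ with $\tilde{\z}(0) = \z(S)$ can be spliced with $(\z, \y)|_{[0, S]}$ on the left and with $(\z^S, \y^S)|_{[T-S, \infty)}$ on the right (after the appropriate time translations and constant shifts added to $\y$) to produce a second full ESP solution for $\x$ on $[0, \infty)$, contradicting the hypothesized uniqueness. Continuity at the two splice points, together with the conical (additive) structure of $d(\cdot)$, ensures that conditions~1--3 are preserved across each join.
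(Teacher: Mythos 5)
Your verification of the first assertion is correct and is exactly the direct check that underlies the paper's citation of \cite[Lemma 2.3]{Ramanan2006}: condition 1 is the displayed algebraic identity, condition 2 is immediate, condition 3 is the change of variable $u\mapsto S+u$, and $\y^S(0)=0$ lies in $d(\z^S(0))$ because every cone $d(x)$ contains the origin.

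The second assertion is where there is a genuine gap, and it sits precisely at the right-hand splice point that you yourself flag as the main obstacle. To concatenate a hypothetical alternative $(\tilde{\z},\tilde{\y})$ on $[S,T]$ with $(\z,\y)$ on $[T,\infty)$ into a second solution of the ESP for $\x$, the resulting path must be continuous at $T$; that is, you need $\tilde{\z}(T-S)=\z(T)$. But this equality is exactly the uniqueness you are trying to establish. If it fails, the concatenation is discontinuous, hence not an element of $\cts\times\cts$, and no contradiction with the hypothesized uniqueness for $\x$ is obtained. The ``constant shifts added to $\y$'' cannot repair this: adding $c=\tilde{\z}(T-S)-\z(T)\neq0$ to $\y$ on $[T,\infty)$ shifts $\z$ by $c$ there, which in general violates condition 2 ($\z(t)\in G$) and alters the sets $\allN(\z(u))$ entering condition 3. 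So the argument is circular at the one point where it is needed. (A smaller issue: on the left you simply posit $\tilde{\z}(0)=\z(S)$; this is true, but only because $\x^S(0)=\z(S)\in G$ forces $\tilde{\y}(0)=0$ for any solution, as in the remark following Definition \ref{def:esp} --- it should be argued, not assumed.)

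What the ``moreover'' clause actually requires is non-anticipation: that the value of a solution at time $T-S$ is determined by the input on $[0,T-S]$ alone, since $\x^S$ beyond $T-S$ involves $\x$ beyond $T$. Splicing against the given solution cannot deliver this. The route consistent with how the lemma is used in this paper is to combine the shift property from your first part with the Lipschitz bound of Theorem \ref{thm:esmlip}, which is available under Assumption \ref{ass:setB} whenever the ``moreover'' clause is invoked: if two inputs agree on $[0,r]$, then $\norm{\z_1-\z_2}_r\leq\lip_{\esm}\norm{\x_1-\x_2}_r=0$, so the solution on $[0,r]$ depends only on the input there; applying this to $\x^S$, whose restriction to $[0,T-S]$ is determined by $\z(S)$ and the stated increments, yields the claim. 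If you wish to avoid Lipschitz continuity, you must prove uniqueness and causality for the shifted ESP on the finite horizon directly, which your splicing construction does not do.
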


\begin{proof}
In \cite[Lemma 2.3]{Ramanan2006} this result was shown in the case $\x(0)\in G$. The same argument can be applied when $\x(0)\not\in G$.
\end{proof}

\subsection{Lipschitz continuity}\label{sec:esmlip}

In this section we provide sufficient conditions on the ESP for the associated ESM to be Lipschitz continuous on its domain of definition. The conditions, stated in Assumption \ref{ass:setB} below, are expressed in terms of the existence of a convex set $B$ whose inward normals satisfy certain geometric properties expressed in terms of the data $\{(d_i,n_i,c_i),i\in\allN\}$. Given a convex set $B$ and $z\in\partial B$, we let $\nu_B(z)$ denote the set of unit inward normals to the set at the point $z$. In other words,
	$$\nu_B(z)\doteq\lcb\nu\in \mathbb{S}^{J-1}:\ip{\nu,y-z}\geq0\text{ for all }y\in B\rcb.$$

\begin{ass}\label{ass:setB}
There exists $\delta>0$ and a compact, convex, symmetric set $B$ with $0\in B^\circ$ such that for $i\in\allN$,
	\be\label{eq:setB}\lcb\begin{array}{l}z\in\partial B\\|\ip{z,n_i}|<\delta\end{array}\rcb\qquad\Rightarrow\qquad\ip{\nu,d_i}=0\qquad\text{for all }\;\nu\in\nu_B(z).\ee
\end{ass}

\begin{remark}\label{rmk:setB}
Suppose $\delta>0$ and $B$ are such that \eqref{eq:setB} holds for $i\in\allN$. Then given any $c>0$, \eqref{eq:setB} holds with $c\delta$ and $cB\doteq\{cz:z\in B\}$ in place of $\delta$ and $B$, respectively. In particular, if $z\in\partial(cB)$ for some $c>0$ and $|\ip{z,n_i}|=0$, then $\ip{\nu,d_i}=0$ for all $\nu\in\nu_{cB}(z)$.
\end{remark}

This assumption was first introduced as \cite[Assumption 2.1]{Dupuis1991} and was shown in \cite[Theorem 2.2]{Dupuis1991} to imply Lipschitz continuity of the associated SM on its domain of definition. In \cite[Theorem 3.3]{Ramanan2006}, it was shown that Assumption \ref{ass:setB} is a sufficient condition for Lipschitz continuity of the ESM as well. An analogue of Assumption \ref{ass:setB} also serves as a sufficient condition for Lipschitz continuity of the map associated with the so-called constrained discontinuous media problem  (see \cite[Theorem 2.9]{Atar2008}). A dual condition on the data $\{(d_i,n_i,c_i),i\in\allN\}$ that implies the existence of a set $B$ that satisfies Assumption \ref{ass:setB} was introduced in \cite{Dupuis1999a,Dupuis1999}. As demonstrated in \cite{Dupuis1998,Dupuis2000}, the dual condition is often more convenient to use in practice. 

We now give a precise statement of the Lipschitz continuity property that follows from Assumption \ref{ass:setB}.

\begin{theorem}\label{thm:esmlip}
Given an ESP $\{(d_i,n_i,c_i),i\in\allN\}$, suppose Assumption \ref{ass:setB} holds. Then there exists $\lip_{\esm}<\infty$ such that if $(\z_1,\y_1)$ solves the ESP for $\x_1\in\cts$ and $(\z_2,\y_2)$ solves the ESP for $\x_2\in\cts$, then for all $T\in[0,\infty)$,
\begin{align}\label{eq:Zlip}
	\norm{\z_1-\z_2}_T&\leq\lip_{\esm}\norm{\x_1-\x_2}_T.
\end{align}
\end{theorem}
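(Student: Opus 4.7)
The plan is to adapt the geometric argument of Dupuis–Ishii \cite{Dupuis1991} for the SM, as generalized to the ESM in \cite[Theorem 3.3]{Ramanan2006}. The central object is the Minkowski gauge $p_B(x)\doteq\inf\{\lambda>0:x\in\lambda B\}$ of the set $B$ furnished by Assumption \ref{ass:setB}. Because $B$ is compact, convex, symmetric with $0\in B^\circ$, $p_B$ is a norm on $\R^J$ equivalent to $|\cdot|$; there exist $c_1,c_2>0$ with $c_1|x|\leq p_B(x)\leq c_2|x|$. It therefore suffices to establish a Lipschitz bound of the form $p_B(\z_1(t)-\z_2(t))\leq K\sup_{s\in[0,t]}p_B(\x_1(s)-\x_2(s))$ for some constant $K$ depending only on the data, from which \eqref{eq:Zlip} with $\lip_{\esm}=Kc_2/c_1$ follows immediately.

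Writing $W\doteq\z_1-\z_2$, $U\doteq\x_1-\x_2$, and $V\doteq\y_1-\y_2$, so that $W=U+V$, the key geometric claim is that whenever $p_B(W(t))$ exceeds $K\sup_{s\leq t}p_B(U(s))$, the constraining difference $V$ cannot be responsible for it. The content of Assumption \ref{ass:setB} is a \emph{tangency} statement: if $z\in\partial B$ and $|\ip{z,n_i}|<\delta$, then every unit inward normal $\nu$ to $B$ at $z$ satisfies $\ip{\nu,d_i}=0$, so $d_i$ is tangent to $\partial B$ at $z$; by Remark \ref{rmk:setB}, the same tangency holds on any level set $\lambda\partial B$. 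I would combine this with the cone inclusion \eqref{eq:ytys} and the upper semicontinuity of $\allN(\cdot)$ (Lemma \ref{lem:allNusc}) in a contradiction argument: fix $T>0$, set $M\doteq\sup_{s\leq T}p_B(U(s))$, define $\tau\doteq\inf\{s\in[0,T]:p_B(W(s))>KM\}$, and suppose that $\tau<T$. For $r\in(\tau,T]$ close to $\tau$, decompose $W(r)-W(\tau)=[U(r)-U(\tau)]+[V(r)-V(\tau)]$; the tangency property above shows that the $V$ term lies (in a limiting sense) in the tangent hyperplane of $KM\,\partial B$ at $W(\tau)$ and so cannot push $W$ outward through this level set, while the $U$ term is bounded in $p_B$ by at most $2M$, contradicting the supposed growth of $p_B(W)$ past $KM$ once $K$ has been chosen large enough in terms of $c_1$, $c_2$ and $\delta$.

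The main obstacle distinguishing the ESM case from the SM case of \cite{Dupuis1991} is that $\y_1$ and $\y_2$ may have unbounded variation on compact intervals when $\V\neq\emptyset$ (cf.\ Remark \ref{rmk:sp}), so $V(r)-V(\tau)$ cannot be decomposed as an integral against pointwise push directions. I would circumvent this exactly as in \cite{Ramanan2006}, by working directly with the closed convex cone characterization \eqref{eq:ytys}: the increment $V(r)-V(\tau)$ lies in the algebraic sum $\conv(\cup_{u\in(\tau,r]}d(\z_1(u)))-\conv(\cup_{u\in(\tau,r]}d(\z_2(u)))$, and Lemma \ref{lem:allNusc} guarantees that for $r$ close enough to $\tau$ only indices $i\in\allN(\z_1(\tau))\cup\allN(\z_2(\tau))$ contribute. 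The geometric relations $\ip{\z_1(\tau)-\z_2(\tau),n_i}\leq 0$ when $i\in\allN(\z_1(\tau))$ (because $\z_2(\tau)\in G$ forces $\ip{\z_2(\tau),n_i}\geq c_i=\ip{\z_1(\tau),n_i}$), and its reverse when $i\in\allN(\z_2(\tau))$, together with Assumption \ref{ass:setB}, then confine the effective cone of admissible $V$-increments to the required tangent hyperplane and complete the contradiction.
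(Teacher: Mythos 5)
Your overall strategy---re-deriving the Lipschitz bound from Assumption \ref{ass:setB} via the gauge of $B$ and a first-exit-time contradiction---is a genuinely different route from the paper's proof, which simply invokes \cite[Theorem 3.3]{Ramanan2006} for inputs with $\x(0)\in G$ and reduces the general case by a time shift plus a Lipschitz bound on $\y(0)=\pi(\x(0))-\x(0)$. However, your sketch has a real gap in the choice of the tracked quantity: you run the exit-time argument on $W=\z_1-\z_2$, defining $\tau$ as the first time $p_B(W)$ exceeds $KM$. This cannot be closed. The tangency/inward-pointing property supplied by Assumption \ref{ass:setB} controls only the $V$-increments; the $U$-increments near $\tau$ are merely small by continuity, not inward-pointing, so there is no pointwise obstruction to $W$ crossing the level set $KM\,\partial B$. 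Nor does the bound $\sup_s p_B(U(s))\leq M$ let you accumulate control over time, since it says nothing about the total variation of $U$, and hence nothing about the cumulative outward push of its increments. Worse, $W$ may first exit $KMB$ at a time when both constrained paths are in $G^\circ$ and $V$ is locally constant; then no face is active near $\tau$, the geometric hypothesis is vacuous there, and the exit of $W$ is only a delayed symptom of $V$ having grown large earlier---the first-exit argument localizes at the wrong time.

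The repair is to run the exit-time argument on $V=\y_1-\y_2$ itself, with threshold $a$ slightly larger than a fixed multiple of $\norm{\x_1-\x_2}_T$; this is exactly the structure of the paper's proof of Theorem \ref{thm:dmlip} (which tracks $\Delta\eta$, not $\Delta\phi$) and of \cite[Theorem 2.2]{Dupuis1991} and \cite[Theorem 3.3]{Ramanan2006}. At the first exit of $V$ from $(aB)^\circ$, condition 3 of the ESP produces a face $i$ active for $\z_1$ (or $\z_2$) at times $u_k\uparrow\tau$ with $\ip{d_i,\nu}\neq0$ for some $\nu\in\nu_B(V(\tau-)/a)$; the implication \eqref{eq:zni1} then forces $|\ip{V(\tau-),n_i}|\geq a$ (with a definite sign), while the relation you correctly record in your last paragraph, $\ip{\z_1(u_k)-\z_2(u_k),n_i}\leq0$ for $i\in\allN(\z_1(u_k))$, gives $\ip{V(\tau-),n_i}\leq-\ip{U(\tau-),n_i}\leq\norm{U}_T$, a contradiction once $a>\norm{U}_T$. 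So the ingredients of your final paragraph are the right ones; they must simply be applied to $V$ rather than to $W$, after which $\norm{W}_T\leq\norm{U}_T+\norm{V}_T$ yields \eqref{eq:Zlip}. You would also still need the $t=0$ estimate $|\y_1(0)-\y_2(0)|\leq\hat{\lip}|\x_1(0)-\x_2(0)|$ for inputs starting outside $G$, which neither your sketch nor the paper's proof spells out.
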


\begin{proof}
By \cite[Theorem 3.3]{Ramanan2006}, there exists $\tilde{\lip}<\infty$ such that whenever $\x_1(0),\x_2(0)\in G$, \eqref{eq:Zlip} holds with $\tilde{\lip}$ in place of $\lip_{\esm}$ for all $T\in[0,\infty)$. Now suppose $\x_1,\x_2\in\cts$ are arbitrary. For $i=1,2$, define $\x_i^0,\z_i^0$ as in \eqref{eq:xS}--\eqref{eq:zS}, with $S=0$ and $\x_i,\z_i$ in place of $\x,\z$, respectively, so that $\z_i^0(0)=\x_i^0(0)\in G$. By \eqref{eq:zS}, the time-shift property of the ESP (Lemma \ref{lem:esmshift}), \cite[Theorem 3.3]{Ramanan2006}, \eqref{eq:xS} and condition 1 of the ESP, for all $T\in[0,\infty)$,
\begin{align*}
	\norm{\z_1-\z_2}_T=\norm{\z_1^0-\z_2^0}_T&\leq\tilde{\lip}\norm{\x_1^0-\x_2^0}_T\leq\tilde{\lip}\norm{\x_1-\x_2}_T+\tilde{\lip}|\y_1(0)-\y_2(0)|.
\end{align*}
Therefore, it suffices to show there exists $\hat{\lip}<\infty$ (depending only on the data $\{(d_i,n_i,c_i),i\in\allN\}$) such that $|\y_1(0)-\y_2(0)|\leq\hat{\lip}|\x_1(0)-\x_2(0)|$. Then \eqref{eq:Zlip} will hold with $\lip_{\esm}=\tilde{\lip}(1+\hat{\lip})$. The existence of $\hat{\lip}$ can be shown using an argument that is related to the one used in the proof of \cite[Theorem 2.2]{Dupuis1991}. To avoid redundancy, we omit the argument here.
\end{proof}

\subsection{Existence and uniqueness of solutions}\label{sec:esmwelldefined}

In this section we summarize results on existence and uniqueness of solutions to the ESP. We start by assuming the existence of a certain map that projects points in $\R^J$ onto $G$ in a way that is compatible with the directions of reflection $d(\cdot)$.

\begin{ass}\label{ass:projection}
There is a map $\pi:\R^J\mapsto G$ satisfying $\pi(x)=x$ for all $x\in G$ and $\pi(x)-x\in d(\pi(x))$ for all $x\not\in G$.
\end{ass}

For general results on the existence of a map $\pi$, see \cite[Section 4]{Dupuis1999a}. We now give a precise statement that the ESM is well defined under our stated assumptions.

\begin{theorem}\label{thm:speu}
Given an ESP $\{(d_i,n_i,c_i),i\in\allN\}$, suppose Assumption \ref{ass:setB} and Assumption \ref{ass:projection} hold. Then there exists a unique solution $(\z,\y)$ of the ESP for each $\x\in\cts$ and $\z$ satisfies $\z(0)=\pi(\x(0))$.
\end{theorem}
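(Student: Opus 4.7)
The plan is to reduce the theorem to the case already handled in \cite{Ramanan2006} --- namely input paths starting inside the polyhedron --- by a translation trick, and then invoke the Lipschitz estimate of Theorem \ref{thm:esmlip} for uniqueness. Uniqueness is immediate: if $(\z_1,\y_1)$ and $(\z_2,\y_2)$ both solve the ESP for $\x\in\cts$, then Theorem \ref{thm:esmlip} applied with $\x_1=\x_2=\x$ gives $\norm{\z_1-\z_2}_T \leq \lip_{\esm}\norm{\x-\x}_T = 0$ for every $T\in[0,\infty)$, so $\z_1=\z_2$ and hence $\y_1=\z_1-\x=\z_2-\x=\y_2$.

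For existence when $\x(0)$ may lie outside $G$, I would use Assumption \ref{ass:projection} to translate the input into $\cts_G$. Set $x_0 \doteq \pi(\x(0)) \in G$ and define $\tilde{\x}(t) \doteq x_0 + \x(t) - \x(0)$, so that $\tilde{\x}\in\cts_G$ with $\tilde{\x}(0)=x_0$. The existence result of \cite[Theorem 3.3]{Ramanan2006} --- valid for continuous inputs starting in $G$ under Assumptions \ref{ass:setB} and \ref{ass:projection} --- then produces a solution $(\tilde{\z},\tilde{\y})$ of the ESP for $\tilde{\x}$ with $\tilde{\y}(0)=0$ and $\tilde{\z}(0)=x_0$. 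I would then set $\z\doteq\tilde{\z}$ and $\y(t)\doteq\tilde{\y}(t) + x_0 - \x(0)$ and verify the four requirements of Definition \ref{def:esp}: condition 1 holds because $\x+\y = \tilde{\x}+\tilde{\y} = \tilde{\z} = \z$; condition 2 holds because $\z\equiv\tilde{\z}$ is $G$-valued; condition 3 follows from $\y(t)-\y(s)=\tilde{\y}(t)-\tilde{\y}(s)$ and $d(\z(u))=d(\tilde{\z}(u))$; and the initial constraint $\y(0)=\pi(\x(0))-\x(0)\in d(\z(0))$ is trivial when $\x(0)\in G$ (as $\y(0)=0$) and is precisely Assumption \ref{ass:projection} otherwise.

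There is no real obstacle here --- the substantive content already resides in Theorem \ref{thm:esmlip} and in the existence result of \cite{Ramanan2006}. The only new ingredient is the observation that Assumption \ref{ass:projection} packages the discrepancy between $\x(0)$ and $\pi(\x(0))$ as an admissible initial value $\y(0)=\pi(\x(0))-\x(0)\in d(\z(0))$, which is exactly the mild generalization of the ESP that Definition \ref{def:esp} was designed to accommodate (and which is needed for the analysis of directional derivatives). Combined with the uniqueness step, the construction immediately yields $\z(0)=x_0=\pi(\x(0))$.
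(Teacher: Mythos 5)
Your proposal is correct and follows essentially the same route as the paper: translate the input to $\x^0(\cdot)=\pi(\x(0))+\x(\cdot)-\x(0)\in\cts_G$, invoke the existence result of \cite{Ramanan2006} for paths starting in $G$, shift the constraining term by $\pi(\x(0))-\x(0)$ (admissible by Assumption \ref{ass:projection}), and deduce uniqueness from the Lipschitz bound of Theorem \ref{thm:esmlip}. The only difference is the precise citation within \cite{Ramanan2006} (the paper uses its Lemma 2.6 together with Theorem \ref{thm:esmlip} and Assumption \ref{ass:projection}), which is immaterial.
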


\begin{proof}
Define $\z(0)\doteq\pi(\x(0))$ and $\x^0\in\cts_G$ as in \eqref{eq:xS}, with $S=0$. By Theorem \ref{thm:esmlip} and Assumption \ref{ass:projection}, together with \cite[Lemma 2.6]{Ramanan2006}, there exists a unique solution $(\z^0,\y^0)$ of the ESP for $\x^0$ and $\z^0(0)=\x^0(0)=\pi(\x(0))\in G$. Define $\z(\cdot)\doteq\z^0(\cdot)$ and $\y(\cdot)\doteq\pi(\x(0))-\x(0)+\y^0(\cdot)$. According to Assumption \ref{ass:projection}, $\y(0)=\pi(\x(0))-\x(0)\in d(\z(0))$. It is readily verified that $(\z,\y)$ satisfies conditions 1--3 of the ESP for $\x$, so $(\z,\y)$ is a solution to the ESP for $\x$. Uniqueness of the solution then follows from the Lipschitz continuity property established in Theorem \ref{thm:esmlip}.
\end{proof}

We close this section with some examples of SPs that satisfy Assumption \ref{ass:setB} and Assumption \ref{ass:projection}. See Appendix \ref{apdx:esp} for an example of an ESP that is not an SP. Additional examples of SPs and ESPs can be found in \cite{Dupuis1991,Dupuis1998,Dupuis1999a,Dupuis1999,Dupuis2000,Ramanan2006}.

\begin{ex}\label{ex:1dsp}
Let $J=1$ and consider the one-dimensional SP $\{(e_1,e_1,0)\}$. In this case $G\doteq\R_+$ and it is readily verified that the SP satisfies Assumption \ref{ass:setB} with $B\doteq[-1,1]$, and Assumption \ref{ass:projection} with $\pi_1(x)\doteq x\vee0$ for all $x\in\R$. The one-dimensional SP was first formulated by Skorokhod \cite{Skorokhod1961} to construct pathwise reflected diffusions on $\R_+$. As is well known (see, e.g., \cite[Chapter 8]{Chung1990}), given $\x\in\cts$ such that $\x(0)\geq0$, the one-dimensional SM, which we denote by $\sm_1$, admits the following explicit representation:
	\be\label{eq:sm1} \sm_1(\x)(t)=\x(t)+\sup_{s \in [0,t]}(-\x(s))\vee0,\qquad t\in[0,\infty).\ee
It is readily verified that the above expression is also valid when $\x(0)<0$. Consequently, if $(\z,\y)$ is the solution of the one-dimensional SP for $\x\in\cts$, then due to the property $\z(t)=\x(t)+\y(t)$ for $t\in[0,\infty)$, it follows that
	\be\label{eq:y}\y(t)=\sup_{s\in[0,t]}(-\x(s))\vee0,\qquad t\in[0,\infty).\ee
\end{ex}

\begin{ex}\label{ex:gHR}
Consider an SP $\{(d_i,n_i,c_i),i=1,\dots,J\}$ with linearly independent directions of reflection $\{d_i,i=1,\dots,J\}$ (normalized so that $\ip{d_i,n_i}=1$ for $i=1,\dots,J$) and define the matrix $Q\in\R^{J\times J}$ by
	\be\label{eq:Q}
	Q_i^j=
	\begin{cases}
		|\ip{d_i,n_j}|&\text{if }i\neq j,\\
		0&\text{if }i=j.
	\end{cases}
	\ee
Suppose $\varrho(Q)$, the spectral radius of $Q$, satisfies $\varrho(Q)<1$. Then, according to the results in \cite[Section 2]{Dupuis1999}, the SP satisfies Assumption \ref{ass:setB} and Assumption \ref{ass:projection}. Furthermore, in the case $J=2$, the condition $\varrho(Q)<1$ is both necessary and sufficient for the SP to satisfy Assumption \ref{ass:setB} and Assumption \ref{ass:projection}. These SPs are sometimes referred to as generalized Harrison-Reiman SPs (see, e.g., \cite[Section 2]{Dupuis1999}) since they are a natural generalization of the so-called Harrison-Reiman SPs considered by Harrison and Reiman \cite{Harrison1981} (this class of SPs has the additional restrictions $n_i=e_i$ for $i=1,\dots,J$ and $\ip{d_j,e_i}\leq0$ for all $i\neq j$), which arise in single class open queueing networks \cite{Reiman1984} as well as in mathematical finance \cite{Karatzas2015}. Figure \ref{fig:gHR} depicts an example of a generalized Harrison-Reiman SP along with its associated set $B$. Since $\ip{d_2,e_1}>0$, this SP does not fall into the class of Harrison-Reiman SPs whose directional derivatives were characterized in \cite{Ramanan2006}.
\end{ex}

\begin{figure}[h!]
	\centering
	\begin{subfigure}{.5\textwidth}
		\centering
		\includegraphics[width=.6\textwidth]{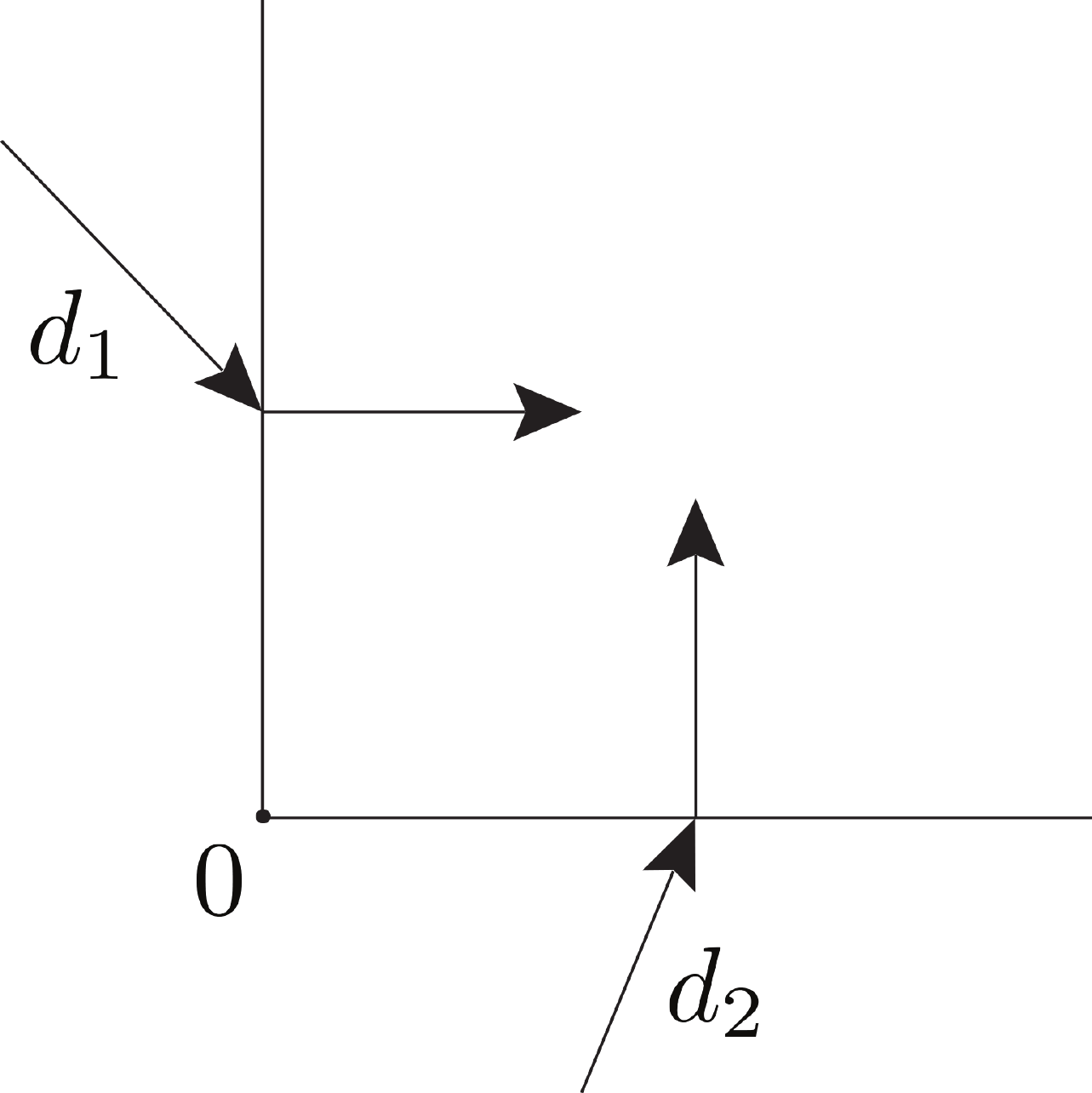}
		\caption{SP $\{(d_i,e_i,0),i=1,2\}$.}
		\label{fig:G}
	\end{subfigure}%
	\begin{subfigure}{.5\textwidth}
		\centering
		\includegraphics[width=.6\textwidth]{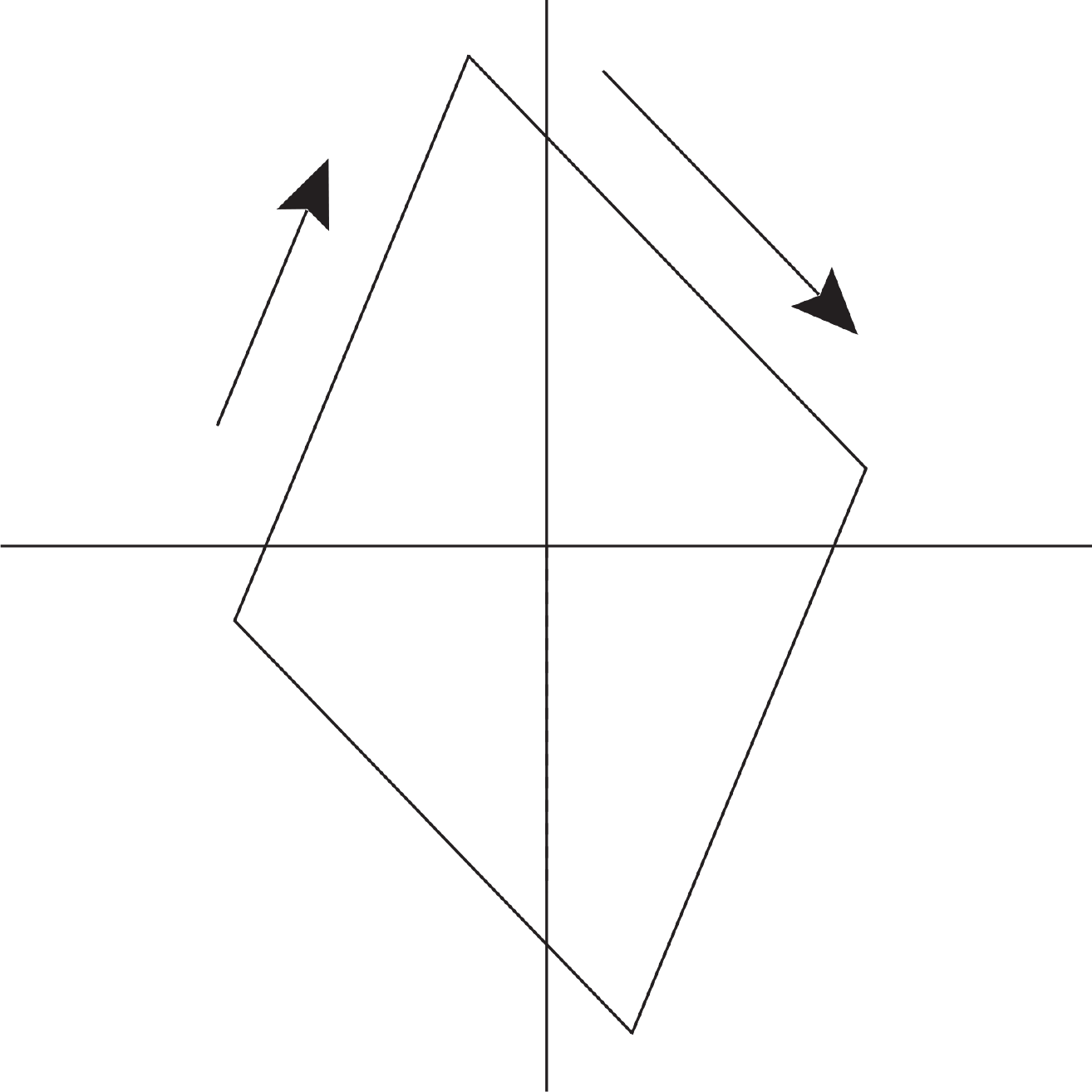}
		\caption{Set $B$.}
		\label{fig:setB}
	\end{subfigure}
	\caption{(A). An example of an SP on the nonnegative orthant with $d_1=(1,-1)'$ and $d_2=(1/2,1)'$. (B). An associated set $B$ satisfying the conditions in Assumption \ref{ass:setB}.}
	\label{fig:gHR}
\end{figure}

\subsection{Directional derivatives of the extended Skorokhod map}\label{sec:derivativesdef}

In this section we give the precise definition of a directional derivative of the ESM. Fix an ESP $\{(d_i,n_i,c_i),i\in\allN\}$ satisfying Assumption \ref{ass:setB} and Assumption \ref{ass:projection}, so by Theorem \ref{thm:esmlip} and Theorem \ref{thm:speu}, the associated ESM $\esm$ is well defined and Lipschitz continuous on $\cts$. Given $\x,\psi\in\cts$ and $\ve>0$, we define
 \begin{align}\label{eq:nablapsive}
	\nabla_\psi^\ve\esm(\x)&\doteq\frac{\esm(\x+\ve\psi)-\esm(\x)}{\ve}.
\end{align}

\begin{defn}\label{def:derivative}
Given $\x,\psi\in\cts$, the directional derivative of the ESM $\esm$ along the direction $\psi$ evaluated at $\x$, denoted $\nabla_\psi\esm(\x)$, is the function from $[0,\infty)$ into $\R^J$ that is equal to the pointwise limit of $\nabla_\psi^\ve\esm(\x)$ as $\ve\downarrow0$; that is, for $t\in[0,\infty)$,
\begin{equation}\label{eq:SMderivative}
	\nabla_\psi\esm(\x)(t)\doteq\lim_{\ve\downarrow0}\nabla_\psi^\ve\esm(\x)(t).
\end{equation}
\end{defn}

\begin{remark}
Let $\x,\psi\in\cts$. If the limit \eqref{eq:SMderivative} exists at $t\in[0,\infty)$, we say that $\nabla_\psi\esm(\x)(t)$ exists. If the limit \eqref{eq:SMderivative} exists for all $t\in[0,T)$ for some $T\in(0,\infty]$, we say that $\nabla_\psi\esm(\x)$ exists on $[0,T)$. We say that $\nabla_\psi\esm(\x)$ exists to mean the limit \eqref{eq:SMderivative} exists for all $t\in[0,\infty)$.
\end{remark}

The following proposition provides sufficient conditions for pointwise convergence of $\nabla_{\psi_\ve}^\ve\esm(\x)$ as $\ve\downarrow0$, where $\x\in\cts$ and $\{\psi_\ve\}_{\ve>0}$ is a family in $\cts$.
	
\begin{prop}\label{prop:psive}
Given an ESP $\{(d_i,n_i,c_i),i\in\allN\}$, suppose Assumption \ref{ass:setB} and Assumption \ref{ass:projection} hold. Let $\x,\psi\in\cts$ and $T\in(0,\infty]$ be such that $\nabla_\psi\esm(\x)(t)$ exists on $[0,T)$, and let $\{\psi_\ve\}_{\ve>0}$ be a family in $\cts$ such that $\psi_\ve\to\psi$ in $\cts$ as $\ve\downarrow0$. Then for all $t\in[0,T)$,
	\be\label{eq:nablapsiveve}\lim_{\ve\downarrow0}\nabla_{\psi_\ve}^\ve\esm(\x)(t)=\nabla_\psi\esm(\x)(t).\ee
\end{prop}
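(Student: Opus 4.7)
The plan is to reduce the convergence of $\nabla_{\psi_\ve}^\ve\esm(\x)(t)$ to the assumed convergence of $\nabla_\psi^\ve\esm(\x)(t)$ via the triangle inequality, using the Lipschitz continuity of $\esm$ established in Theorem \ref{thm:esmlip} to control the discrepancy between using $\psi_\ve$ and using $\psi$ as the perturbation direction.

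First I would fix $t \in [0,T)$ and write
\begin{equation*}
  \nabla_{\psi_\ve}^\ve\esm(\x)(t) - \nabla_\psi\esm(\x)(t)
    = \bigl[\nabla_{\psi_\ve}^\ve\esm(\x)(t) - \nabla_\psi^\ve\esm(\x)(t)\bigr]
    + \bigl[\nabla_\psi^\ve\esm(\x)(t) - \nabla_\psi\esm(\x)(t)\bigr].
\end{equation*}
By the hypothesis that $\nabla_\psi\esm(\x)$ exists on $[0,T)$, the second bracket tends to $0$ as $\ve\downarrow 0$, so it remains to control the first bracket.

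For the first bracket, I would unfold the definition \eqref{eq:nablapsive} to obtain
\begin{equation*}
  \nabla_{\psi_\ve}^\ve\esm(\x)(t) - \nabla_\psi^\ve\esm(\x)(t)
    = \frac{\esm(\x+\ve\psi_\ve)(t) - \esm(\x+\ve\psi)(t)}{\ve}.
\end{equation*}
Since Assumption \ref{ass:setB} and Assumption \ref{ass:projection} hold, Theorem \ref{thm:esmlip} and Theorem \ref{thm:speu} apply to the paths $\x + \ve\psi_\ve$ and $\x + \ve\psi$ (both in $\cts$), giving
\begin{equation*}
  \bigl|\esm(\x+\ve\psi_\ve)(t) - \esm(\x+\ve\psi)(t)\bigr|
    \leq \norm{\esm(\x+\ve\psi_\ve) - \esm(\x+\ve\psi)}_t
    \leq \lip_{\esm}\,\norm{\ve\psi_\ve - \ve\psi}_t
    = \lip_{\esm}\,\ve\,\norm{\psi_\ve - \psi}_t.
\end{equation*}
Dividing by $\ve$ yields
\begin{equation*}
  \bigl|\nabla_{\psi_\ve}^\ve\esm(\x)(t) - \nabla_\psi^\ve\esm(\x)(t)\bigr|
    \leq \lip_{\esm}\,\norm{\psi_\ve - \psi}_t,
\end{equation*}
which tends to $0$ as $\ve\downarrow 0$ because $\psi_\ve \to \psi$ in $\cts$, i.e., uniformly on $[0,t]$.

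Combining the two estimates via the triangle inequality gives \eqref{eq:nablapsiveve}. I do not anticipate any real obstacle here: the argument is a direct linearization that exploits the fact that the $1/\ve$ factor in the definition of the difference quotient is exactly compensated by the $\ve$ that the Lipschitz bound extracts from $\norm{\ve\psi_\ve - \ve\psi}_t$. The only mild subtlety is ensuring that $\x + \ve\psi_\ve$ lies in the domain of definition of $\esm$, which is automatic since Theorem \ref{thm:speu} makes $\esm$ well-defined on all of $\cts$ under the standing assumptions.
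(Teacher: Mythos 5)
Your proof is correct and is essentially identical to the paper's: both apply the triangle inequality to split off $\nabla_\psi^\ve\esm(\x)(t)-\nabla_\psi\esm(\x)(t)$ (which vanishes by hypothesis) and bound $|\nabla_{\psi_\ve}^\ve\esm(\x)(t)-\nabla_\psi^\ve\esm(\x)(t)|\leq\lip_{\esm}\norm{\psi_\ve-\psi}_t$ via Theorem \ref{thm:esmlip}. No gaps.
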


\begin{proof}
Let $t\in[0,T)$. By \eqref{eq:nablapsive} and Theorem \ref{thm:esmlip},
\begin{align*}
	|\nabla_{\psi_\ve}^\ve\esm(\x)(t)-\nabla_\psi^\ve\esm(\x)(t)|&=\frac{|\esm(\x+\ve\psi_\ve)(t)-\esm(\x+\ve\psi)(t)|}{\ve}\leq\lip_{\esm}\norm{\psi_\ve-\psi}_t,
\end{align*}
which converges to zero as $\ve\downarrow0$. This, along with the triangle inequality and \eqref{eq:SMderivative}, implies \eqref{eq:nablapsiveve}.
\end{proof}

\section{Main results}\label{sec:main}

In this section we present our main results on the existence and characterization of directional derivatives of the ESM and pathwise differentiability of a class of reflected diffusions in the nonnegative quadrant. We consider directional derivatives of the ESM evaluated at paths whose images under the ESM satisfy a certain boundary jitter property, which we define in Section \ref{sec:jittermain}. Later, in Section \ref{sec:jitter}, we verify that the boundary jitter property is satisfied by a large class of RBMs in the nonnegative quadrant. In Section \ref{sec:dpmain} we introduce the DP. In Section \ref{sec:ESMderivative} we present our main result on the existence of directional derivatives of the ESM and the characterization of their right continuous regularizations as solutions of the DP. In Section \ref{sec:directions} we use our main result to study directional derivatives of solutions of the ESM with respect to the directions of reflection. In Section \ref{sec:pathwise} we apply our results to study pathwise differentiability of a large class of two-dimensional RBMs in the nonnegative quadrant.

\subsection{Definition of the boundary jitter property}\label{sec:jittermain}

In order to prove existence of directional derivatives of the ESM evaluated at $\x\in\cts_G$, we require that the solution $(\z,\y)$ of the ESP for $\x$ satisfies certain conditions at the boundary $\partial G$. We collectively refer to these conditions as the \emph{boundary jitter property}. Recall that $|\allN(x)|$ denotes the cardinality of the set $\allN(x)$ defined in \eqref{eq:allNx}. Let $\S$ and $\U$ respectively denote the smooth and nonsmooth parts of the boundary $\partial G$; that is,
	\be\label{eq:Sset}\S\doteq\lcb x\in\partial G:|\allN(x)|=1\rcb,\ee
and
	\be\label{eq:Uset}\U\doteq\partial G\setminus\S=\lcb x\in\partial G:|\allN(x)|\geq2\rcb.\ee

\begin{defn}\label{def:jitter}
Given an ESP $\{(d_i,n_i,c_i),i\in\allN\}$, $T\in(0,\infty]$ and $(\z,\y)\in\cts([0,T):G)\times\cts([0,T):\R^J)$. We say that $(\z,\y)$ satisfies the boundary jitter property on $[0,T)$ if the following hold:
\begin{itemize}
	\item[1.] If $t\in[0,T)$ is such that $\z(t)\in\S$, then for all $s<t<u<T$, $\y$ is nonconstant on $(s\vee0,u)$.
	\item[2.] On the interval $[0,T)$, $\z$ does not spend positive Lebesgue time in $\U$; that is,
		$$\int_0^T 1_{\U}(\z(t))dt=0.$$
	\item[3.] If $\z(0)\in\U$, then for each $i\in\allN(\z(0))$ and every $\delta\in(0,T)$, there exists $u\in(0,\delta)$ such that $\allN(\z(u))=\{i\}$.
	\item[4.] If $t\in(0,T)$ is such that $\z(t)\in\U$, then for each $i\in\allN(\z(t))$ and every $\delta\in(0,t)$, there exists $s\in(t-\delta,t)$ such that $\allN(\z(s))=\{i\}$.
\end{itemize}
\end{defn}

\begin{remark}
When $T=\infty$, we omit the interval $[0,\infty)$ and write $(\Z,\Y)$ satisfies the boundary jitter property to mean conditions 1--4 hold on $[0,\infty)$. When the pair $(\z,\y)$ is defined on an interval that contains $[0,T)$, we write $(\z,\y)$ satisfies the boundary jitter property on $[0,T)$ to mean the restriction of $(\z,\y)$ to $[0,T)$ satisfies the boundary jitter property on $[0,T)$. The boundary jitter property depends on the ESP $\{(d_i,n_i,c_i),i\in\allN\}$; however, we omit this dependence since the ESP will be clear from the context. Since conditions 2, 3 and 4 of the boundary jitter property only depend on $\z$, we often write $\z$ satisfies condition 2, 3 or 4 of the boundary jitter property to mean $(\z,\y)$ satisfies condition 2, 3 or 4, respectively, of the boundary jitter property.
\end{remark}

\begin{remark}
Given an ESP on the half space (i.e., when $N=1$), the set $\U$ is empty and so conditions 2--4 of the boundary jitter property hold automatically.
\end{remark}

Condition 1 can be interpreted to mean that whenever $\z$ lies on the smooth part of the boundary, it must be actively constrained to remain in $G$. Condition 2 is self-explanatory. Condition 3 states that if the path $\z$ starts on the nonsmooth part of the boundary, then it must hit the smooth part of each face intersecting the point $\z(0)$ infinitely often immediately after time zero. Condition 4 states that whenever the path $\z$ is on a nonsmooth part of the boundary at time $t\in(0,\infty)$, it must hit the smooth part of each face intersecting the point $\z(t)$ infinitely often immediately before time $t$. See Figure \ref{fig:jitter} (in Section \ref{sec:projdp}) for an illustration of a path $\z$ that satisfies condition 4 of the boundary jitter property.

By imposing the boundary jitter property on $(\z,\y)$, we will be able characterize the right continuous regularizations of directional derivatives of the ESM as solutions of the DP associated with $\z$, which we introduce in the next section. In Section \ref{sec:jitter} we show that a large class of RBMs in the nonnegative quadrant a.s.\ satisfy the boundary jitter property, which allows us to establish pathwise differentiability for these RBMs and obtain a nice characterization of their derivatives. In a forthcoming work, we show the boundary jitter property is a.s.\ satisfied by a large class of reflected diffusions in convex polyhedral domains, thus allowing us to prove pathwise differentiability for these reflected diffusions.

\subsection{Statement of the derivative problem}\label{sec:dpmain}

Let $\x\in\cts$ and $\z\doteq\esm(\x)$. The DP associated with $\z$ is a certain time-inhomogeneous Skorokhod-type problem where both the domain and directions of reflection vary (discontinuously) in time. Other works that have considered SPs or ESPs in domains that vary in time include \cite{Burdzy2004,Burdzy2009} in the one-dimensional setting, and \cite{Nystrom2010a} in the multidimensional setting with time-varying domains and oblique directions reflection.

In order to state the DP, recall the definitions of $\allN(x)$ and $d(x)$ given in \eqref{eq:allNx} and \eqref{eq:dx}, and that $\dr$ denotes the space of $\R^J$-valued right-continuous functions with finite left limits on $[0,\infty)$. For $x\in\partial G$, define the linear subspace
	\be\label{eq:Hx}H_x\doteq\bigcap_{i\in\allN(x)}\lcb y\in\R^J:\ip{y,n_i}=0\rcb,\ee
and for $x\in G^\circ$, set $H_x\doteq\R^J$. We now give a precise formulation of the DP.

\begin{defn}\label{def:dp}
Given an ESP $\{(d_i,n_i,c_i),i\in\allN\}$ and $\x\in\cts$, suppose $(\z,\y)$ is a solution of the ESP for $\x$. Let $\psi\in\dr$. Then $(\phi,\eta)\in\dr\times\dr$ solves the DP associated with $\z$ for $\psi$ if $\eta(0)\in\spaan[d(\z(0))]$ and for all $t\in[0,\infty)$, the following conditions hold:
\begin{itemize}
	\item[1.] $\phi(t)=\psi(t)+\eta(t)$;
	\item[2.] $\phi(t)\in H_{\z(t)}$;
	\item[3.] for all $s\in[0,t)$,
		\be\label{eq:etatetas}\eta(t)-\eta(s)\in\spaan\lsb\cup_{u\in(s,t]}d(Z(u))\rsb.\ee
\end{itemize}
If there exists a unique solution $(\phi,\eta)$ to the DP for $\psi$, we write $\phi=\dm_\z(\psi)$ and refer to $\dm_\z$ as the derivative map (DM) associated with $\z$.
\end{defn}

\begin{remark}
When there is no confusion regarding $\z$, we omit the phrase ``associated with $\z$'' and simply say $(\phi,\eta)$ solves the DP for $\psi$.
\end{remark}

\begin{remark}
Given $T\in(0,\infty)$, we say $(\phi,\eta)\in\dr([0,T):\R^J)\times\dr([0,T):\R^J)$ solves the DP for $\psi$ on $[0,T)$ if $\eta(0)\in\spaan[d(\z(0))]$ conditions 1--3 hold for $t\in[0,T)$. If the pair $(\phi,\eta)$ is defined on an interval that strictly contains $[0,T)$, we say $(\phi,\eta)$ solves the DP for $\psi$ on $[0,T)$ if the restriction of $(\phi,\eta)$ to $[0,T)$ solves the DP for $\psi$ on $[0,T)$.
\end{remark}

\begin{remark}
If $\eta$ is discontinuous at $t\in(0,\infty)$, then by condition 3 of the DP, the definition of $d(\cdot)$ given in \eqref{eq:dx}, the continuity of $\z$ and the upper semicontinuity of $\allN(\cdot)$ (Lemma \ref{lem:allNusc}),
	\be\label{eq:etatetatminus}\eta(t)-\eta(t-)\in\bigcap_{s<t}\spaan\lsb\cup_{u\in(s,t]} d(\z(u))\rsb=\spaan\lsb d(\z(t))\rsb.\ee
\end{remark}

\begin{remark}
The definition of the DP has many similarities to the definition of the ESP. In particular, conditions 1--3 of the DP correspond to conditions 1--3 of the ESP, but with $\phi,\psi,\eta$, $H_{\z(t)}$ and ``span'' in place of $\z,\x,\y$, $G$ and ``cone''. Here, $H_{\z(t)}$ is time-dependent and for each $t\in[0,\infty)$ is equal to the intersection of finitely many hyperplanes, whereas $G$ is fixed (in time) and equal to the intersection of finitely many half spaces.
\end{remark}

In Section \ref{sec:dp} we study relevant properties of the DP and the associated DM.

\subsection{Existence and characterization of directional derivatives}\label{sec:ESMderivative}

Throughout this section we fix an ESP $\{(d_i,n_i,c_i),i\in\allN\}$ satisfying Assumption \ref{ass:setB} and Assumption \ref{ass:projection}.

We prove existence of directional derivatives of the ESM evaluated at $\x\in\cts_G$ provided the solution $(\z,\y)$ of the ESP for $\x$ satisfies the boundary jitter property, and only up until the first time $\z(t)$ reaches a certain, possibly empty, subset of the nonsmooth part of the boundary $\U$. To be precise, let
	\be\label{eq:Wset}\W\doteq\lcb x\in\U:\spaan(H_{x}\cup d(x))\neq\R^J\rcb,\ee
and given a solution $(\z,\y)$ to the ESP, define $\tau$ to be the first time that $\z$ reaches $\W$; that is,
	\be\label{eq:tau}\tau\doteq\inf\{t\in[0,\infty):\z(t)\in\W\}.\ee
In Appendix \ref{apdx:nonemptyW} we demonstrate that the directional derivative of the ESM does not necessarily exist at $t=\tau$. In Lemma \ref{lem:Wempty} we show that if the following mild linear independence assumption on the directions of reflection also holds, then the set $\W$ is empty, so $\tau=\infty$ holds trivially.

\begin{ass}\label{ass:linearlyind}
For each $x\in\partial G$, $\{d_i,i\in\allN(x)\}$ is a set of linearly independent vectors.
\end{ass}

\begin{remark}\label{rmk:linearlyind}
Under Assumption \ref{ass:linearlyind}, the set $\V$, defined in \eqref{eq:setV}, is clearly empty. Therefore, according to Remark \ref{rmk:sp}, $(\z,\y)$ is a solution of the ESP $\{(d_i,n_i,c_i),i\in\allN\}$ if and only if $(\z,\y)$ is a solution of the SP $\{(d_i,n_i,c_i),i\in\allN\}$.
\end{remark}

The following lemma will be used to establish the existence of $\nabla_\psi\esm(\x)$ at $t=0$.

\begin{lem}\label{lem:projxv}
The following limit exists for all $(x,v)\in G\times\R^J$:
	\be\label{eq:pixv}\nabla_v\pi(x)\doteq\lim_{\ve\downarrow0}\frac{\pi(x+\ve v)-\pi(x)}{\ve}.\ee
Furthermore, $\nabla_v\pi(x)-v\in\conv[d(\pi(x))]=\conv[d(x)]$.
\end{lem}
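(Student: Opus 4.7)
The plan is to dispatch the easy case $x \in G^\circ$ immediately---there $x + \ve v \in G$ for small $\ve > 0$, so $\pi(x + \ve v) = x + \ve v$, $\nabla_v \pi(x) = v$, and $\nabla_v \pi(x) - v = 0 \in \conv(\{0\}) = \conv(d(x))$---and then concentrate on $x \in \partial G$, where the lemma carries its content. Set $z_\ve \doteq \pi(x + \ve v)$ and $y_\ve \doteq z_\ve - (x + \ve v) \in d(z_\ve)$ (by Assumption \ref{ass:projection}). First I would apply Theorem \ref{thm:esmlip} to the constant paths $\x_1 \equiv x + \ve v$ and $\x_2 \equiv x$ to conclude that $\pi$ is Lipschitz on $\R^J$, which both bounds the family $g_\ve \doteq (z_\ve - x)/\ve = v + y_\ve/\ve$ and yields $z_\ve \to x$ as $\ve \downarrow 0$; Lemma \ref{lem:allNusc} then furnishes $\ve_0 > 0$ with $\allN(z_\ve) \subseteq \allN(x)$ for $\ve \in (0, \ve_0)$, so that $y_\ve \in \conv\{d_i : i \in \allN(x)\} = d(x)$.

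Next, by boundedness I would extract an arbitrary convergent subsequence $g_{\ve_k} \to w$ and verify three properties: (i) $w - v \in d(x)$ by closedness of the cone; (ii) $\ip{w, n_i} \geq 0$ for every $i \in \allN(x)$, since $z_\ve \in G$ together with $\ip{x, n_i} = c_i$ on $\allN(x)$ gives $\ip{g_\ve, n_i} \geq 0$; and (iii) complementary slackness, i.e.\ $w - v = \sum_i \lambda_i d_i$ with $\lambda_i \geq 0$ and $\lambda_i > 0 \Rightarrow \ip{w, n_i} = 0$. To obtain (iii) I would pick a Caratheodory-type representation $y_\ve = \sum_{i \in \allN(z_\ve)} \alpha_i^\ve d_i$ with linearly independent summands (bounding $\alpha_i^\ve/\ve$ uniformly), pass to a further subsequence on which the active index set stabilizes and $\alpha_i^\ve/\ve \to \lambda_i$, and use that $\alpha_i^\ve > 0$ forces $i \in \allN(z_\ve)$ and hence $\ip{g_\ve, n_i} = 0$.

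The hard part is showing the limit $w$ is independent of the subsequence. My plan is to interpret $w$ as the constant solution of the \emph{localized} ESP $\{(d_i, n_i, 0) : i \in \allN(x)\}$ on the polyhedron $\bigcap_{i \in \allN(x)} \{y \in \R^J : \ip{y, n_i} \geq 0\}$ driven by the constant input $\x \equiv v$: property (iii) is exactly what makes $(\z, \y) \equiv (w, w - v)$ a bona fide ESP solution for this input. This localized ESP inherits Assumption \ref{ass:setB} with the same $B$ and $\delta$, because the defining condition \eqref{eq:setB} is quantified over $i \in \allN$ and restricting to the subset $\allN(x)$ only weakens the hypothesis. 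Theorem \ref{thm:esmlip} applied to the localized ESP then forces any two constant solutions $(w_1, w_1 - v)$ and $(w_2, w_2 - v)$ driven by the common input $v$ to coincide, so $w_1 = w_2$. Hence $\nabla_v \pi(x) = \lim_{\ve \downarrow 0} g_\ve$ exists and $\nabla_v \pi(x) - v \in \conv(d(x)) = d(x)$, as claimed.
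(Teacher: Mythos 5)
Your proposal is correct, and for the existence of the limit it takes a genuinely different (more self-contained) route than the paper. The paper simply cites the discussion in Section 5.3 of Dupuis--Ishii for existence of the limit \eqref{eq:pixv}, and then proves only the cone-membership assertion, exactly as you do in your second paragraph's item (i) combined with Lemma \ref{lem:allNusc}: for small $\ve$ one has $\allN(\pi(x+\ve v))\subseteq\allN(x)$, so the difference quotient minus $v$ lies in the closed cone $\conv[d(x)]$. What you add is a full proof of existence: boundedness of the difference quotients (via Lipschitz continuity of $\pi$, which indeed follows from Theorem \ref{thm:speu} together with Theorem \ref{thm:esmlip} applied to constant paths), extraction of subsequential limits, the conical Carath\'eodory decomposition with a uniform bound on the coefficients (valid because there are only finitely many linearly independent subsets of $\{d_i\}$), and the key identification of every limit point $w$ as the value of the constant solution of the localized ESP $\{(d_i,n_i,0),i\in\allN(x)\}$ on the cone $\bigcap_{i\in\allN(x)}\{y:\ip{y,n_i}\geq 0\}$ for the constant input $v$. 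Your observation that Assumption \ref{ass:setB} is inherited by the sub-collection indexed by $\allN(x)$ (the same $B$ and $\delta$ work, since \eqref{eq:setB} is quantified over indices) is exactly what makes Theorem \ref{thm:esmlip} applicable to the localized problem and forces all subsequential limits to agree. Two minor points worth making explicit if this were written out: the localized domain has nonempty interior (it contains $G^\circ-x$), so it is a legitimate ESP domain in the sense of \eqref{eq:G}; and the complementary-slackness property (iii) is precisely the statement $w-v\in\conv(\{d_i:i\in\allN(x),\ip{w,n_i}=0\})$, which is what verifies the initial condition $\y(0)\in d(\z(0))$ for the localized ESP. With those remarks, your argument is a complete replacement for the external citation, at the cost of about a page versus one line.
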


\begin{proof}
According to the discussion in \cite[Section 5.3]{Dupuis1991}, the limit \eqref{eq:pixv} exists. Due to the convergence $\pi(x+\ve v)\to\pi(x)=x$ as $\ve\downarrow0$ and the upper semicontinuity of $\allN(\cdot)$ (Lemma \ref{lem:allNusc}), $\allN(\pi(x+\ve v))\subseteq\allN(\pi(x))=\allN(x)$ for $\ve>0$ sufficiently small. This, along with Assumption \ref{ass:projection}, implies that for all $\ve>0$ sufficiently small,
\begin{align*}
	\frac{\pi(x+\ve v)-\pi(x)}{\ve}-v=\frac{\pi(x+\ve v)-(x+\ve v)}{\ve}\in\conv[d(\pi(x))]=\conv[d(x)].
\end{align*}
The final assertion of the lemma then follows from taking limits as $\ve\downarrow0$ and because $\conv[d(x)]$ is a closed set.
\end{proof}

The second part of our main result is to relate the directional derivative $\nabla_\psi\esm(\x)$ to the unique solution $(\phi,\eta)$ of the DP associated with $\z$ for $\psi$. In order to state this result, we define a functional 
	$$\Theta_{\z}:\dr([0,\tau):\R^J)\mapsto\dlr([0,\tau):\R^J)$$ 
so that $\Theta_\z(\phi)$ and $\nabla_\psi\esm(\x)$ are equal on $(0,\tau)$. To this end, for each $x\in\S$, the smooth part of the boundary, let $i_x\in\allN$ denote the unique index such that $\allN(x)=\{i_x\}$ and define
	\be\label{eq:Gx}G_x\doteq\lcb y\in\R^J:\ip{y,n_{i_x}}\geq0\rcb.\ee
Given $f\in\dr([0,\tau):\R^J)$, define $\Theta_{\z}(f)$ as follows: for each $t\in[0,\tau)$,
\be\label{eq:ThetaZ}
	\Theta_{\z}(f)(t)\doteq
	\begin{cases}
		f(t),&\text{if }\z(t)\in G\setminus\S,\\
		f(t),&\text{if }\z(t)\in\S,\;f(t-)\not\in G_{\z(t)}\\
		f(t-),&\text{if }\z(t)\in\S,\;f(t-)\in G_{\z(t)}.
	\end{cases}
\ee

We can now state our main result on directional derivatives of the ESM.

\begin{theorem}\label{thm:main}
Fix an ESP $\{(d_i,n_i,c_i),i\in\allN\}$ satisfying Assumption \ref{ass:setB} and Assumption \ref{ass:projection}. Given $\x\in\cts_G$, let $(\z,\y)$ denote the solution to the ESP for $\x$ and define $\tau$ as in \eqref{eq:tau}. Suppose $(\z,\y)$ satisfies the boundary jitter property (Definition \ref{def:jitter}) on $[0,\tau)$. Then for all $\psi\in\cts$,
\begin{itemize}
	\item[1.] $\nabla_\psi\esm(\x)$ exists on $[0,\tau)$ and lies in $\dlr([0,\tau):\R^J)$;
	\item[2.] there exists a unique solution $(\phi,\eta)$ to the DP associated with $\z$ for $\psi$ on $[0,\tau)$;
	\item[3.] $\phi(t)=\nabla_\psi\esm(\x)(t+)$ for all $t\in[0,\tau)$;
	\item[4.] $\nabla_\psi\esm(\x)(0)=\nabla_{\psi(0)}\pi(\x(0))$ and $\nabla_\psi\esm(\x)=\Theta_{\z}(\phi)$ on $(0,\tau)$.
\end{itemize}
Moreover, under Assumption \ref{ass:linearlyind}, $\tau=\infty$.
\end{theorem}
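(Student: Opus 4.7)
The plan is to follow the order of the stated conclusions, treating existence of the DP solution and convergence of the differential quotients together, since uniqueness of the DP is what upgrades subsequential convergence to full pointwise convergence. For the initial time, Theorem \ref{thm:speu} gives $\esm(\x)(0) = \pi(\x(0))$ and $\esm(\x+\ve\psi)(0) = \pi(\x(0)+\ve\psi(0))$, so Lemma \ref{lem:projxv} directly delivers $\nabla_\psi\esm(\x)(0) = \nabla_{\psi(0)}\pi(\x(0))$ and places the initial constraining value $\eta(0) = \nabla_{\psi(0)}\pi(\x(0)) - \psi(0)$ in $\spaan[d(\z(0))]$.

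For existence and uniqueness of the DP solution on $[0,\tau)$, I would construct $(\phi,\eta)$ inductively between consecutive times at which $\z$ changes its active set $\allN(\z(\cdot))$. On each subinterval where $\z$ remains in a fixed stratum (either $G^\circ$, a single face, or a fixed nonsmooth intersection of faces), condition 2 of the DP pins $\phi(t) \in H_{\z(t)}$ while condition 3 restricts the increment of $\eta$ to $\spaan[d(\z(t))]$; together with $\phi = \psi + \eta$ these uniquely propagate $(\phi,\eta)$ from its value at the left endpoint. At a discontinuity time $t$, the jump of $\eta$ is restricted by \eqref{eq:etatetatminus} to $\spaan[d(\z(t))]$, and the hypothesis $\z(t) \notin \W$ means $\spaan(H_{\z(t)} \cup d(\z(t))) = \R^J$, so $\phi(t)$ is uniquely recovered from $\phi(t-) + \psi(t) - \psi(t-)$ as its oblique projection onto $H_{\z(t)}$ along $\spaan[d(\z(t))]$. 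Conditions 2--4 of the boundary jitter property ensure that $\z$ remains in each stratum long enough for the recursion to proceed and that the nonsmooth contacts are preceded by single-face visits, while condition 1 rules out passive tangencies with $\S$. Uniqueness (and a Lipschitz estimate for the DM $\dm_\z$) then follows from the contractivity of the oblique derivative projection operators, which is the principal technical device developed in Section \ref{sec:dp}.

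For the convergence of the differential quotients, set $\z^\ve \doteq \esm(\x+\ve\psi)$, $\y^\ve \doteq \z^\ve - \x - \ve\psi$, $\phi^\ve \doteq (\z^\ve - \z)/\ve$, and $\eta^\ve \doteq (\y^\ve - \y)/\ve$, so $\phi^\ve = \psi + \eta^\ve$. By Theorem \ref{thm:esmlip} the family $\{\phi^\ve\}$ is uniformly bounded on compact subintervals of $[0,\tau)$. A diagonal extraction over rational times, combined with the continuity of $\z$ and upper semicontinuity of $\allN(\cdot)$ (Lemma \ref{lem:allNusc}), produces subsequential pointwise limits inheriting the three defining conditions of the DP from the ESP conditions on $(\z^\ve,\y^\ve)$; uniqueness then forces the full family to converge. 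A refined analysis is needed at times $t$ with $\z(t) \in \S$: the oblique projection onto $H_{\z(t)}$ along $d_{i_{\z(t)}}$ acts as the identity on $G_{\z(t)}$, so when $\phi(t-) \in G_{\z(t)}$ no instantaneous pushing is needed and $\phi^\ve(t)$ clusters near $\phi(t-)$ rather than $\phi(t)$, whereas in the complementary case an instantaneous projection produces $\phi^\ve(t) \to \phi(t)$. This dichotomy is exactly the definition of $\Theta_\z$ in \eqref{eq:ThetaZ} and simultaneously yields $\nabla_\psi\esm(\x)(t+) = \phi(t)$ for every $t \in [0,\tau)$.

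Finally, under Assumption \ref{ass:linearlyind} the set $\W$ defined in \eqref{eq:Wset} is empty by a dimension count: linear independence of $\{d_i : i \in \allN(x)\}$ ensures $\dim\spaan[d(x)] = |\allN(x)|$, which together with $\dim H_x = J - |\allN(x)|$ and transversality of the $d_i$ to $H_x$ (via $\ip{d_i,n_i} = 1$) gives $\spaan(H_x \cup d(x)) = \R^J$ for every $x \in \partial G$, so $\tau = \infty$ trivially. The main obstacle throughout is the uniqueness argument for the DP, where the subspaces $H_{\z(t)}$ and the cones $\spaan[d(\z(t))]$ both jump at coupled instants; the contractivity estimate for the oblique derivative projection operators is the essential tool for controlling this interaction, and deploying it correctly requires the full force of the boundary jitter property.
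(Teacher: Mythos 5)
Your treatment of the initial time and of the final assertion ($\W=\emptyset$ under Assumption \ref{ass:linearlyind}, hence $\tau=\infty$) matches the paper's (Theorem \ref{thm:speu}, Lemma \ref{lem:projxv}, Lemma \ref{lem:Wempty}), though note that transversality of $\spaan[d(x)]$ to $H_x$ does not follow from $\ip{d_i,n_i}=1$ alone; the paper derives $H_x\cap\spaan[d(x)]=\{0\}$ from the set $B$ of Assumption \ref{ass:setB} (Lemma \ref{lem:Wdecomp}). The core of your argument, however, has a structural gap. You propose to construct and uniquely propagate $(\phi,\eta)$ ``inductively between consecutive times at which $\z$ changes its active set.'' Under the boundary jitter property there are no such consecutive times: condition 4 forces $\allN(\z(\cdot))$ to change infinitely often in every left-neighborhood of any $t$ with $\z(t)\in\U$, and condition 3 forces the same in every right-neighborhood of $0$ when $\z(0)\in\U$. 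The value $\phi(t-)$ at a corner time is therefore not reached after finitely many steps of your recursion; it must be identified as the limit of an infinite product of oblique projection operators $\proj_{\z(\xi_j)}\cdots\proj_{\z(\xi_1)}$ along the nested sequence of Lemma \ref{lem:xij}, and proving that this limit exists and equals $\proj_{\z(t)}\phi(S)$ is exactly the content of Lemmas \ref{lem:projxKastzero}, \ref{lem:accumulate} and \ref{lem:projphirhok} (with a separate time-reversal argument, Lemma \ref{lem:chij}, for $\z(0)\in\U$). Your proposal names the contraction property but does not supply the accumulation mechanism that makes the recursion close.

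The compactness argument for the difference quotients has a second gap. Subsequential limits of $\phi^\ve=(\z^\ve-\z)/\ve$ do not ``inherit'' condition 2 of the DP from the ESP: the ESP gives only $\ip{\z^\ve(t),n_i}\ge c_i$, hence $\ip{\phi^\ve(t),n_i}\ge 0$ when $\z(t)\in F_i$, and the reverse inequality needed for the equality $\phi(t)\in H_{\z(t)}$ is precisely where condition 1 of the jitter property and the one-dimensional analysis of Section \ref{sec:nablasm1} do real work; at corner times one additionally needs the orthogonal-projection Lipschitz bound of Lemma \ref{lem:projlip} to kill the component of the limit in $H_{\z(t)}^\perp$. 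Moreover, because the pointwise limit is $\Theta_\z(\phi)$ rather than $\phi$, uniqueness of the DP solution does not by itself force the full family to converge at corner times; the paper circumvents this by first restricting to the dense class $\cts^\z$ of perturbations constant near times when $\z\in\U$ (Lemma \ref{lem:CZdense}, which uses condition 2 of the jitter property), running the double induction of Section \ref{sec:induction} for such $\psi$, and only then passing to general $\psi$ via the Lipschitz continuity of the ESM and DM together with the closure property of the DM (Lemma \ref{lem:dmclosure}). This density-plus-closure step is absent from your outline and is not optional: without freezing $\psi$ near the corner times, the identification of the subsequential limits there does not go through.
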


Here, we provide a brief outline of the proof of Theorem \ref{thm:main}, which is deferred to Section \ref{sec:psiconst}. Given $\x\in\cts_G$, let $\z\doteq\esm(\x)$. In Section \ref{sec:theta2} we prove existence of and characterize directional derivatives $\nabla_\psi\esm(\x)$ of an ESM up until the first time $\z$ reaches the nonsmooth part of the boundary $\U$. We denote this time by $\theta_2$. Roughly speaking, given an interval such that $\z$ hits at most a single face $F_i$, we can exploit prior results on directional derivatives of the one-dimensional SM, which are reviewed in Section \ref{sec:nablasm1}, to prove existence of and characterize $\nabla_\psi\esm(\x)$ on the interval. We then patch together these results to prove existence of and characterize $\nabla_\psi\esm(\x)$ on $[0,\theta_2)$. The proof of existence of $\nabla_\psi\esm(\x)$ on $[0,\theta_2)$ does not require that the boundary jitter property hold; however condition 1 of the boundary jitter property is needed to characterize the right continuous regularization of $\nabla_\psi\esm(\x)$ as the solution of the DP on $[0,\theta_2)$.

In Section \ref{sec:tau} we prove existence of and characterize $\nabla_\psi\esm(\x)$ on $[0,\tau)$. The key challenge is to characterize $\nabla_\psi\esm(\x)(t)$ at times $t\in[0,\tau)$ that $\z(t)\in\U$. We first show that it suffices to consider $\psi$ that lie in a dense subset of $\cts$ consisting of paths that are constant about times that the path $\z$ lies in $\U$. We then use the boundary jitter property, along with properties of certain (oblique) derivative projection operators, which are introduced in Section \ref{sec:derivativeprojectionoperator}, to characterize $\nabla_\psi\esm(\x)(t)$ at such times. In particular, given $t\in(0,\infty)$ such that $\z(t)\in\U$, then the boundary jitter property implies that $\z$ hits the relative interior of each face that intersects $\z(t)$ infinitely often in any left neighborhood of $t$. Roughly speaking, each time $\z$ reaches the relative interior of a face, $\nabla_\psi\esm(\x)$ is projected onto a hyperplane associated with that face along a direction that lies in the span of the direction of reflection associated with that face. As a consequence, understanding $\nabla_\psi\esm(\x)(t)$ when $\z(t)\in\U$ is largely reduced to the analysis of countable sequences of derivative projection operators, which is carried out in Section \ref{sec:contraction}.

\subsection{Differentiability with respect to directions of reflection}\label{sec:directions}

In this section we prove differentiability of certain ESMs with respect to the directions of reflection. In order to consider perturbations to the directions of reflection, we consider a sub-class of SPs which we first introduce in Section \ref{sec:reflectionmatrix}.

\subsubsection{Reflection matrix and the decomposition of the constraining term}\label{sec:reflectionmatrix}

We consider a class of SPs in which the constraining term $\y$ can be uniquely decomposed to describe its action along each face. This class contains many SPs of interest that arise in applications, including the class of generalization Harrison-Reiman SPs described in Example \ref{ex:gHR}.

Given an SP $\{(d_i,n_i,c_i),i\in\allN\}$ satisfying Assumption \ref{ass:linearlyind}, define the \emph{reflection matrix} $R\in\R^{J\times N}$ by
	\be\label{eq:reflectionmatrix}R\doteq\begin{pmatrix}d_1&\cdots&d_N\end{pmatrix}.\ee
The following lemma describes a decomposition of the $J$-dimensional constraining function $\y$ into an $N$-dimensional nondecreasing (componentwise) path $L$ such that for each $i\in\allN$, $L^id_i$ denotes the pushing along face $F_i$.

\begin{lem}\label{lem:local}
Suppose the SP $\{(d_i,n_i,c_i),i\in\allN\}$ satisfies Assumption \ref{ass:linearlyind}. Then given a solution $(\z,\y)$ of the SP for $\x\in\cts_G$, there exists a unique function $L\in\cts([0,\infty):\R_+^N)$ such that $\y=R L$ and for each $i\in\allN$, $L^i$ is nondecreasing and
	\be\label{eq:dLi}\int_0^\infty1_{\{\z(s)\not\in F_i\}}dL^i(s)=0.\ee
Moreover, there is a positive constant $\lip_L<\infty$ such that if, for $k=1,2$, $(\z_k,\y_k)$ is the solution of the SP $\{(d_i,n_i,c_i),i\in\allN\}$ for $\x_k\in\cts_G$ and $L_k$ is as above, but with $\z_k,\y_k,L_k$ in place of $\z,\y,L$, respectively, then for all $T\in(0,\infty)$,
	\be\label{eq:locallip}\norm{L_1-L_2}_T\leq\lip_L\norm{\x_1-\x_2}_T.\ee
\end{lem}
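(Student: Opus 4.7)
The plan is to first construct $L$ pathwise by decomposing $d\y$ along the reflection directions, then verify continuity and uniqueness from local linear independence, and finally establish the Lipschitz estimate \eqref{eq:locallip}, which I expect to be the main obstacle.

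For existence and uniqueness, I exploit that under Assumption \ref{ass:linearlyind} the set $\V$ in \eqref{eq:setV} is empty, so by Remark \ref{rmk:linearlyind} the pair $(\z,\y)$ also solves the SP; in particular $\y$ has bounded variation on every compact subinterval of $[0,\infty)$. Let $\mu=|d\y|$ be the total variation measure and write $d\y=f\,d\mu$ with $|f|=1$ $\mu$-a.e. Condition 3 of the SP gives $\y(b)-\y(a)\in\conv(\cup_{u\in(a,b]}d(\z(u)))$ for all $a<b$; combining this with continuity of $\z$ and the upper semicontinuity of $\allN(\cdot)$ from Lemma \ref{lem:allNusc}, a Lebesgue differentiation argument shows that for $\mu$-a.e.\ $t$, $f(t)\in\conv(\{d_i:i\in\allN(\z(t))\})$. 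Since these generators are linearly independent, there is a unique $r(t)\in\R_+^N$ supported on $\allN(\z(t))$ with $f(t)=\sum_{i}r_i(t)d_i$, and a pseudoinverse bound makes $r$ uniformly bounded. Setting
$$L^i(t)\doteq\int_0^tr_i(s)\,1_{\{\z(s)\in F_i\}}\,d\mu(s),$$
each $L^i$ is nondecreasing and nonnegative, $\y=RL$ (using $\y(0)=0$), and \eqref{eq:dLi} holds by construction. Continuity of $L$ is immediate: a jump in $L$ at $t_0$ would force $\y(t_0)-\y(t_0-)=\sum_{i\in\allN(\z(t_0))}\Delta L^i(t_0)\,d_i$, and continuity of $\y$ together with linear independence of $\{d_i:i\in\allN(\z(t_0))\}$ forces each $\Delta L^i(t_0)=0$; uniqueness reduces to the same pointwise linear-independence argument applied to $R\,d(L-\tilde L)=0$.

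The Lipschitz estimate \eqref{eq:locallip} is the central challenge. Theorem \ref{thm:esmlip} yields $\norm{\y_1-\y_2}_T\leq(1+\lip_{\esm})\norm{\x_1-\x_2}_T$, so it suffices to bound $\norm{L_1-L_2}_T$ by a constant multiple of $\norm{\y_1-\y_2}_T$ (together with $\norm{\z_1-\z_2}_T$). The key algebraic input is that $\allN(\cdot)$ takes only finitely many values $I\subseteq\allN$, for each of which $\{d_i:i\in I\}$ is linearly independent and hence admits dual vectors $\{w_i^I:i\in I\}$ satisfying $\ip{w_i^I,d_j}=\delta_{ij}$ for $j\in I$, with a uniform bound $c\doteq\max_{I,i}|w_i^I|<\infty$. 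My plan is to partition $[0,T]$ via stopping times that track the joint face configuration $(\allN(\z_1(\cdot)),\allN(\z_2(\cdot)))$ and, on each subinterval, use the local identity $L_k^i(b)-L_k^i(a)=\ip{w_i^{I_k},\y_k(b)-\y_k(a)}$ that is valid while $\z_k$ remains in configuration $I_k$; telescoping and controlling the boundary corrections at configuration changes should yield the global bound. The main technical difficulty—and what I expect to require the most care—is that $\z_1$ and $\z_2$ may lie on different faces at the same time, so the corrections at configuration changes must be absorbed using continuity of $\z_1-\z_2$ and the discreteness of the configuration space.
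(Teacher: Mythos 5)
Your construction of $L$ is essentially the paper's: the paper obtains the a.e.\ inclusion $\gamma(t)\in d(\z(t))$ directly from the integral representation $\y(t)=\int_{(0,t]}\gamma(s)\,d|\y|(s)$ built into the definition of the SP in \cite{Ramanan2006} (rather than rederiving it by Lebesgue differentiation from condition 3, as you do), decomposes $\gamma(t)=\sum_{i\in\allN(\z(t))}\xi^i(t)d_i$ using Assumption \ref{ass:linearlyind}, and sets $L^i(t)=\int_{(0,t]}1_{\{\z(s)\in F_i\}}\xi^i(s)\,d|\y|(s)$, exactly as you propose. Your uniqueness and continuity arguments are also fine (continuity is in fact automatic, since $|\y|$ is atomless).

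The gap is in the Lipschitz step, and it is precisely at the point you flag but do not resolve. Two concrete problems with the partition-and-telescope scheme. First, the face configuration $t\mapsto\allN(\z_k(t))$ of a constrained path is not piecewise constant: $\{t:\z_k(t)\in F_i\}$ is in general a closed set with empty interior (this is exactly the ``chattering'' that the boundary jitter property formalizes, and it is generic for the RBMs to which the lemma is applied), so there is no finite sequence of stopping times at which the configuration changes, and the telescoping sum over ``configuration changes'' with per-change corrections is not well defined. Second, the local identity $L_k^i(b)-L_k^i(a)=\ip{w_i^{I},\y_k(b)-\y_k(a)}$ requires $\cup_{u\in(a,b]}\allN(\z_k(u))\subseteq I$ with $\{d_j:j\in I\}$ linearly independent; Assumption \ref{ass:linearlyind} only guarantees independence of $\{d_i:i\in\allN(x)\}$ for a \emph{single} $x$, so when $\z_1$ and $\z_2$ occupy different (possibly non-intersecting) faces on the same interval you must use different dual vectors $w_i^{I_1}\neq w_i^{I_2}$, and the resulting correction $\ip{w_i^{I_1}-w_i^{I_2},\y_2(b)-\y_2(a)}$ is controlled by the total variation $|\y_2|(b)-|\y_2|(a)$, not by $\norm{\y_1-\y_2}_T$; such terms do not telescope into anything bounded by $\norm{\x_1-\x_2}_T$. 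To be fair, the paper itself disposes of this step in one sentence (asserting $\norm{L_1-L_2}_T\leq\tilde{\lip}\norm{\y_1-\y_2}_T$ as a consequence of linear independence and the integral representation, and then invoking Theorem \ref{thm:esmlip}), so you have correctly located the crux; but the mechanism you propose does not close it, and ``continuity of $\z_1-\z_2$ and discreteness of the configuration space'' is not a substitute for an argument here.
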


If $N=J$, then $L$ is given by $L=R^{-1}\y$ and it is readily checked that the properties stated in the lemma follow from the definition of the ESP (Definition \ref{def:esp}).  For completeness, the proof of Lemma \ref{lem:local} for arbitrary $N\in\N$ is given in Appendix \ref{apdx:local}.

\subsubsection{Perturbations of the reflection matrix}

Fix a set of vectors $\{v_i,i\in\allN\}$ in $\R^J$ such that $\ip{v_i,n_i}=0$ for each $i\in\allN$ and define the matrix $V\in\R^{J\times N}$ by 
	\be\label{eq:Smatrix}V\doteq\begin{pmatrix}v_1&\cdots&v_N\end{pmatrix}.\ee
We consider SPs with perturbed directions of reflection given by $\{d_i+\ve v_i,i\in\allN\}$. Since the directions of reflection can always be renormalized so that $\ip{d_i+\ve v_i,n_i}=\ip{d_i,n_i}=1$ for each $i\in\allN$, the condition $\ip{v_i,n_i}=0$ for each $i\in\allN$ is without loss of generality.

\begin{ass}\label{ass:SPperturb}
Given vectors $\{v_i,i\in\allN\}$ satisfying $\ip{v_i,n_i}=0$ for each $i\in\allN$, there is an $\ve_0\in(0,\infty)$ such that for each $\ve\in[0,\ve_0)$, the SP $\{(d_i+\ve v_i,n_i,c_i),i\in\allN\}$ satisfies Assumption \ref{ass:setB}, Assumption \ref{ass:projection}, and Assumption \ref{ass:linearlyind}.
\end{ass}

In the following lemma we show that the generalized Harrison-Reiman SPs introduced in Example \ref{ex:gHR} satisfy Assumption \ref{ass:SPperturb}.

\begin{lem}\label{lem:SPperturb}
Given an SP $\{(d_i,n_i,c_i),i=1,\dots,J\}$ satisfying Assumption \ref{ass:linearlyind}, define the matrix $Q\in\R^{J\times J}$ as in \eqref{eq:Q} and let $\varrho(Q)$ denote its spectral radius. If $\varrho(Q)<1$, then the SP satisfies Assumption \ref{ass:SPperturb}. Furthermore, in the case $J=2$, the SP satisfies Assumption \ref{ass:SPperturb} only if $\varrho(Q)<1$.
\end{lem}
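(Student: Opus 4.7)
The plan is to reduce the lemma to the sufficiency and (for $J=2$) necessity assertions already recorded in Example \ref{ex:gHR}, bridging the $\ve=0$ setting to the family $\{(d_i+\ve v_i,n_i,c_i),i=1,\dots,J\}$ by continuity. For each $\ve\geq0$, define the perturbed matrix $Q(\ve)\in\R^{J\times J}$ by
$$Q(\ve)_i^j\doteq\begin{cases}|\ip{d_i+\ve v_i,n_j}|&\text{if }i\neq j,\\ 0&\text{if }i=j,\end{cases}$$
where the diagonal is indeed $0$ because $\ip{d_i+\ve v_i,n_i}=\ip{d_i,n_i}+\ve\ip{v_i,n_i}=1$ by the normalization in Section \ref{sec:espdef} together with the hypothesis $\ip{v_i,n_i}=0$. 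The map $\ve\mapsto Q(\ve)$ is continuous (and $Q(0)=Q$), and the spectral radius is a continuous function of the matrix entries (as the roots of the characteristic polynomial depend continuously on its coefficients). Hence $\varrho(Q(\ve))\to\varrho(Q)<1$ as $\ve\downarrow0$, so there exists $\ve_1>0$ with $\varrho(Q(\ve))<1$ for all $\ve\in[0,\ve_1)$.

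Next I would verify that the perturbed directions of reflection remain linearly independent for small $\ve$. Forming the $J\times J$ matrix $R(\ve)\doteq(d_1+\ve v_1\,|\,\cdots\,|\,d_J+\ve v_J)$, the function $\ve\mapsto\det R(\ve)$ is a polynomial, and $\det R(0)\neq0$ by Assumption \ref{ass:linearlyind} applied to the unperturbed SP (since $\{d_i\}_{i=1}^J$ are linearly independent by the standing hypothesis of Example \ref{ex:gHR}). Consequently, after shrinking to some $\ve_0\in(0,\ve_1]$, $\det R(\ve)\neq0$ for all $\ve\in[0,\ve_0)$, so the vectors $\{d_i+\ve v_i\}_{i=1}^J$ are linearly independent. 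Since $\allN(x)\subseteq\{1,\dots,J\}$ for every $x\in\partial G$, any subset of a linearly independent set is linearly independent, which gives Assumption \ref{ass:linearlyind} for the perturbed SP. Invoking the sufficiency assertion of Example \ref{ex:gHR} for each $\ve\in[0,\ve_0)$, we also obtain Assumption \ref{ass:setB} and Assumption \ref{ass:projection} for the perturbed SP, completing the ``if'' direction.

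For the converse when $J=2$, suppose Assumption \ref{ass:SPperturb} holds. Specializing to $\ve=0$, the SP $\{(d_i,n_i,c_i),i=1,2\}$ satisfies Assumption \ref{ass:setB} and Assumption \ref{ass:projection}, so the necessity assertion of Example \ref{ex:gHR} (valid specifically in the case $J=2$) yields $\varrho(Q)<1$. The only technical point worth emphasizing is the need to invoke continuity of both $\varrho(Q(\ve))$ and $\det R(\ve)$ in $\ve$ simultaneously; once this is in hand, the lemma follows from Example \ref{ex:gHR}, so no independent hard estimate is required.
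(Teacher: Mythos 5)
Your proposal is correct and follows essentially the same route as the paper's proof: continuity of the spectral radius and of the determinant in $\ve$ to get $\varrho(Q_\ve)<1$ and linear independence of $\{d_i+\ve v_i\}$ for small $\ve$, then an appeal to the generalized Harrison--Reiman class of Example \ref{ex:gHR} for both the sufficiency and (at $\ve=0$, when $J=2$) the necessity. The extra details you spell out (the vanishing diagonal of $Q(\ve)$ and the deduction of Assumption \ref{ass:linearlyind} from nondegeneracy of $R(\ve)$) are consistent with what the paper leaves implicit.
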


\begin{proof}
Suppose $\varrho(Q)<1$. According to Example \ref{ex:gHR}, the SP satisfies Assumption \ref{ass:setB} and Assumption \ref{ass:projection}. Let $\{v_i,i=1,\dots,J\}$ be such that $\ip{v_i,n_i}=0$ for all $i\in\allN$. For $\ve>0$, define the matrix $Q_\ve\in\R^{J\times J}$ by
	$$
	(Q_\ve)_i^j=
	\begin{cases}
		|\ip{d_i+\ve v_i,n_j}|&\text{if }i\neq j,\\
		0&\text{if }i=j.
	\end{cases}
	$$
Since the spectral radius and determinant of a matrix depend continuously on its entries, there exists $\ve_0\in(0,\infty)$ such that for all $\ve\in(0,\ve_0)$, $\varrho(Q_\ve)<1$ and $\{d_i+\ve v_i,i=1,\dots,J\}$ is a set of linearly independent vectors. For such $\ve$, the SP $\{(d_i+\ve v_i,n_i,c_i),i=1,\dots,J\}$ lies in the class of generalized Harrison-Reiman SPs considered in Example \ref{ex:gHR}. Thus, the SP $\{(d_i,n_i,c_i),i=1,\dots,J\}$ satisfies Assumption \ref{ass:SPperturb}. The last assertion follows because, as stated in Example \ref{ex:gHR}, if $J=2$, the SP $\{(d_i,n_i,c_i),i=1,2\}$ satisfies Assumption \ref{ass:setB} only if $\varrho(Q)<1$.
\end{proof}

We now state our main theorem on differentiability of the ESM with respect to the directions of reflection.

\begin{theorem}\label{thm:directions}
Fix an SP $\{(d_i,n_i,c_i),i\in\allN\}$ satisfying Assumption \ref{ass:SPperturb} and let $\sm$ be the associated SM. Let $(\z,\y)$ denote the solution of the SP for $\x\in\cts_G$ and let $L$ be as in Lemma \ref{lem:local}. Suppose $\{v_i,i\in\allN\}$ are vectors in $\R^J$ satisfying $\ip{v_i,n_i}=0$ for each $i\in\allN$ and define the matrix $V\in\R^{J\times N}$ as in \eqref{eq:Smatrix}. Let $\zeta\in\cts$ and define $\psi\doteq \zeta+VL\in\cts$. Let $\ve_0\in(0,\infty)$ be as in Assumption \ref{ass:SPperturb} and for each $\ve\in(0,\ve_0)$ let $\sm_\ve$ denote the SM associated with the SP $\{(d_i+\ve v_i,n_i,c_i),i\in\allN\}$. Given $T\in(0,\infty]$, assume that $\nabla_\psi\sm(\x)$ exists on $[0,T)$. Then
	\be\label{eq:directionslimit}\lim_{\ve\downarrow0}\frac{\sm_\ve(\x+\ve\zeta)(t)-\sm(\x)(t)}{\ve}=\nabla_\psi\sm(\x)(t),\qquad t\in[0,T).\ee
\end{theorem}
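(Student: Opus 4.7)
The plan is to reduce the two-parameter perturbation (both input path and directions of reflection) to a single-parameter perturbation of only the input path for the original SM, which then reduces the limit to a directional derivative that Proposition \ref{prop:psive} can handle. The key algebraic splitting is $(R + \ve V)L = RL + \ve VL$: the extra pushing produced by perturbing the reflection directions can be absorbed into the input path, leaving admissible pushing for the original SP.

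For each $\ve \in (0,\ve_0)$, let $(\z_\ve, \y_\ve)$ solve the perturbed SP $\{(d_i + \ve v_i, n_i, c_i),i \in \allN\}$ for $\x + \ve\zeta$. Since the perturbed SP satisfies Assumption \ref{ass:linearlyind} by Assumption \ref{ass:SPperturb}, Lemma \ref{lem:local} applied to the perturbed SP yields a unique $L_\ve \in \cts([0,\infty):\R_+^N)$, nondecreasing componentwise with $\int_0^\infty 1_{\{\z_\ve(s) \not\in F_i\}}dL_\ve^i(s) = 0$, such that $\y_\ve = (R + \ve V)L_\ve$. Set $\psi_\ve \doteq \zeta + V L_\ve \in \cts$. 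I would then verify that $(\z_\ve, R L_\ve)$ solves the \emph{original} SP for the input $\x + \ve\psi_\ve$: condition 1 holds because $\z_\ve = (\x + \ve\zeta) + (R+\ve V)L_\ve = (\x + \ve\psi_\ve) + R L_\ve$; condition 2 is immediate; and condition 3 together with $RL_\ve(0) \in d(\z_\ve(0))$ follow from the support property of $L_\ve^i$, since $RL_\ve(t) - RL_\ve(s) = \sum_i d_i(L_\ve^i(t) - L_\ve^i(s)) \in \conv[\cup_{u \in (s,t]} d(\z_\ve(u))]$. Uniqueness in Theorem \ref{thm:speu} then gives $\sm_\ve(\x+\ve\zeta) = \sm(\x+\ve\psi_\ve)$, so
\begin{equation*}
\frac{\sm_\ve(\x+\ve\zeta)(t) - \sm(\x)(t)}{\ve} \;=\; \frac{\sm(\x+\ve\psi_\ve)(t) - \sm(\x)(t)}{\ve} \;=\; \nabla_{\psi_\ve}^\ve \sm(\x)(t).
\end{equation*}

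Next, I would show $\psi_\ve \to \psi$ uniformly on compacts, which via $\psi - \psi_\ve = V(L - L_\ve)$ reduces to $L_\ve \to L$. A crucial by-product of the reduction is that $L_\ve$ is also the unique Lemma \ref{lem:local} decomposition associated with the \emph{original} SP at input $\x+\ve\psi_\ve$, so the Lipschitz bound \eqref{eq:locallip} applied to the original SP yields
\begin{equation*}
\norm{L_\ve - L}_T \;\leq\; \lip_L \norm{\ve\psi_\ve}_T \;\leq\; \lip_L \ve\bigl(\norm{\zeta}_T + C\norm{L_\ve}_T\bigr),
\end{equation*}
for any constant $C = C(V)$ with $\norm{VL'}_T \leq C\norm{L'}_T$. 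The implicit dependence on $L_\ve$ is resolved by a short bootstrap: for $\ve$ small enough that $\lip_L C\ve \leq 1/2$, the triangle inequality combined with the above forces $\norm{L_\ve}_T \leq 2\norm{L}_T + 2\lip_L\ve\norm{\zeta}_T$, which is uniformly bounded in $\ve$; substituting back gives $\norm{L_\ve - L}_T = O(\ve)$. Since $\nabla_\psi \sm(\x)$ exists on $[0,T)$ by hypothesis and $\psi_\ve \to \psi$ in $\cts$, Proposition \ref{prop:psive} then gives $\nabla_{\psi_\ve}^\ve\sm(\x)(t) \to \nabla_\psi\sm(\x)(t)$ for all $t \in [0,T)$, which combined with the identity from the previous paragraph is precisely \eqref{eq:directionslimit}.

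The principal obstacle is the self-referential nature of the bound on $L_\ve$ in the second step, which is handled by the bootstrap once uniform boundedness is established. A minor technical point is that when $\x(0) \in \partial G$ the perturbed inputs $\x + \ve\zeta$ and $\x + \ve\psi_\ve$ need not lie in $\cts_G$, so the decomposition of $\y_\ve(0) \in \conv\{d_i + \ve v_i : i \in \allN(\z_\ve(0))\}$ at time zero must be obtained directly from the linear independence in Assumption \ref{ass:linearlyind}, and Lemma \ref{lem:local} applied to the time-shifted problem via Lemma \ref{lem:esmshift}.
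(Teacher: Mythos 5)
Your proposal is correct and follows essentially the same route as the paper: decompose $\y_\ve=(R+\ve V)L_\ve=RL_\ve+\ve VL_\ve$ via Lemma \ref{lem:local}, absorb the $\ve VL_\ve$ term into the input so that $\sm_\ve(\x+\ve\zeta)=\sm(\x+\ve\psi_\ve)$ with $\psi_\ve=\zeta+VL_\ve$, establish $\psi_\ve\to\psi$ from the Lipschitz bound \eqref{eq:locallip} by resolving the self-referential estimate, and conclude with Proposition \ref{prop:psive}. Your bootstrap for the self-referential bound and your remark about inputs possibly starting outside $G$ are minor variants of, and slightly more careful than, the paper's rearrangement argument, but the substance is identical.
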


\begin{remark}
In the statement of Theorem \ref{thm:directions} we do not impose conditions under which $\nabla_\psi\sm(\x)$ exists. Broad sufficient conditions for existence follow from our other results. In particular, if $\theta_2$ is defined as in \eqref{eq:theta2}, then by Proposition \ref{prop:theta2}, $\nabla_\psi\sm(\x)$ exists on $[0,\theta_2)$. If $\tau$ is defined as in \eqref{eq:tau}, and $\x$ satisfies the boundary jitter property on $[0,\tau)$, then by Theorem \ref{thm:main}, $\nabla_\psi\sm(\x)$ exists on $[0,\tau)$. Alternatively, if $N=J$ and the reflection matrix $R$ satisfies the Harrison-Reiman condition (see \cite[Definition 1.2]{Mandelbaum2010}), then by \cite[Theorem 1.1]{Mandelbaum2010}, $\nabla_\psi\sm(\x)$ exists.
\end{remark}

\begin{proof}
For each $\ve\in(0,\ve_0)$, let $(\z_\ve,\y_\ve)$ be the solution to the SP $\{(d_i+\ve v_i,n_i,c_i),i\in\allN\}$ for $\x+\ve\zeta$. By Lemma \ref{lem:local}, there exists $L_\ve\in\cts([0,\infty):\R_+^N)$ such that $\y_\ve=(R+\ve V)L_\ve$ and for each $i\in\allN$, $L_\ve^i$ is nondecreasing and can only increase when $\z_\ve$ is on face $F_i$. Thus, $\z_\ve=\x+\ve\psi_\ve+\widetilde{\y}_\ve$, where $\psi_\ve\doteq\zeta+VL_\ve$ and $\widetilde{\y}_\ve\doteq R L_\ve$, and $\widetilde{\y}_\ve$ satisfies \eqref{eq:ytys}, with $\widetilde{\y}_\ve$ and $\z_\ve$ in place of $\y$ and $\z$, respectively. It follows that $(\z_\ve,\widetilde{\y}_\ve)$ solves the SP $\{(d_i,n_i,c_i),i\in\allN\}$ for $\x+\ve\psi_\ve$. Below, we prove that $\psi_\ve$ converges to $\psi$ in $\cts$ as $\ve\downarrow0$, where we recall that $\psi=\zeta+VL$. Since $\nabla_\psi\sm(\x)$ exists on $[0,T)$ by assumption, Lemma \ref{prop:psive} will then imply that \eqref{eq:directionslimit} holds.

We are left to show that $\psi_\ve$ converges to $\psi$ in $\cts$ as $\ve\downarrow0$. Let $t\in(0,\infty)$. By the definitions of $\psi_\ve$ and $\psi$, and \eqref{eq:locallip}, 
	$$\norm{\psi_\ve-\psi}_t\leq\norm{V}\norm{L_\ve-L}_t\leq\ve\kappa_L\norm{V}\norm{\psi_\ve}_t\leq\ve\kappa_L\norm{V}(\norm{\psi}_t+\norm{\psi_\ve-\psi}_t),$$
where $\norm{V}$ denotes the finite operator norm of the matrix $V$ when viewed as a linear operator from $\R^N$ to $\R^J$. Rearranging (for $\ve>0$ sufficiently small), we see that
	$$\limsup_{\ve\downarrow0}\norm{\psi_\ve-\psi}_t\leq\lim_{\ve\downarrow0}\frac{\ve\lip_L\norm{V}\norm{\psi}_t}{1-\ve\lip_L\norm{V}}=0,$$
which completes the proof of the lemma.
\end{proof}
	
\subsection{Pathwise differentiability of reflected Brownian motion}\label{sec:pathwise}

In this section we establish what we refer to as pathwise differentiability of an RBM in the nonnegative quadrant $G\doteq\R_+^2$. Fix a filtered probability space $(\Omega,\F,\{\F_t\},\P)$ satisfying the usual conditions (see, e.g., \cite[Chapter II, Definition 67.1]{Rogers2000}). To avoid notational confusion with other sections in this work, we emphasize that \emph{throughout this subsection $W,\Z,\X,L,\nabla\Z,\psi,\phi,\zeta$ denote stochastic processes on the filtered probability space $(\Omega,\F,\{\F_t\},\P)$}. Fix a two-dimensional standard Brownian motion $W$ on $(\Omega,\F,\{\F_t\},\P)$. An RBM on the nonnegative quadrant is defined as follows.

\begin{defn}\label{def:rbm}
Given an initial condition $x\in G$, drift vector $b\in\R^2$, dispersion matrix $\sigma\in\R^{2\times 2}$ and directions of reflection $\{d_i,i=1,2\}$ in $\R^2$, suppose the SP $\{(d_i,e_i,0),i=1,2\}$ satisfies Assumption \ref{ass:setB} and Assumption \ref{ass:projection}. Let $\sm$ denote the associated SM and define the reflection matrix $R\in\R^{2\times 2}$ by $R\doteq\begin{pmatrix}d_1&d_2\end{pmatrix}$. Then the RBM on $G$ associated with $(x,b,\sigma,R)$ is an $\{\F_t\}$-adapted two-dimensional process $\Z$ defined by 
	\be\label{eq:Zxbsigma}\Z\doteq\sm(\X),\ee
where $\X=\{\X_t,t\in[0,\infty)\}$ is the $\{\F_t\}$-adapted two-dimensional process defined by 
	\be\label{eq:X}\X_t\doteq x+bt+\sigma W_t,\qquad t\in[0,\infty).\ee
\end{defn}

See Figure \ref{fig:rbm} for an example of a sample path of an RBM.

\begin{remark}\label{rmk:local}
By Lemma \ref{lem:local}, given the RBM $\Z$ associated with $(x,b,\sigma,R)$, there exists an $\{\F_t\}$-adapted two-dimensional process $L=\{L_t,t\in[0,\infty)\}$ such that 
	\be\label{eq:Zlocal} \Z_t=x+bt+\sigma W_t+RL_t,\qquad t\in[0,\infty),\ee
and for $i=1,2$, $L^i$ is nondecreasing and can only increase when $\Z$ lies on face $F_i$.
\end{remark}

The following result states that under a nondegeneracy condition, the RBM, along with its constraining term, $\P$ a.s.\ satisfies the boundary jitter property. The proof of Proposition \ref{prop:rbmjitter} is given in Section \ref{sec:rbmjitter}.

\begin{prop}\label{prop:rbmjitter}
Given an SP $\{(d_i,e_i,0),i=1,2\}$ satisfying Assumption \ref{ass:setB} and Assumption \ref{ass:projection}, $x\in G$, $b\in\R^2$ and $\sigma\in\R^{2\times 2}$, suppose that $\sigma\sigma'$ is a nondegenerate covariance matrix. Set $R\doteq\begin{pmatrix}d_1&d_2\end{pmatrix}$. Let $\Z$ be the RBM associated with $(x,b,\sigma,R)$ and define $\X$ as in \eqref{eq:X}. Then $\P$ a.s.\ $(\Z,\Z-\X)$ satisfies the boundary jitter property (Definition \ref{def:jitter}).
\end{prop}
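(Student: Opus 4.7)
The plan is to verify, one at a time, the four conditions of Definition \ref{def:jitter} for $G = \R_+^2$, where $\U = \{0\}$ and $\S = (F_1 \cup F_2) \setminus \{0\}$. Since the boundary jitter property is pathwise and invariant under absolutely continuous changes of measure, I would first apply Girsanov's theorem to reduce to the driftless case $b = 0$. For condition 2, each projection $Z^i$ is a continuous one-dimensional semimartingale whose martingale part has strictly positive quadratic variation $(\sigma\sigma')^{ii}\,t$, so by the occupation-density formula the set $\{t : Z^i_t = 0\}$ has Lebesgue measure zero $\P$-a.s.; the set $\{t : \Z_t = 0\}$ is contained therein, giving condition 2.

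For condition 1, fix $t$ with $\Z_t \in \S$, say $\Z_t \in F_1 \setminus \{0\}$. By continuity, $\Z$ stays uniformly away from $F_2$ on some neighborhood $(t - \epsilon, t + \epsilon) \cap [0,\infty)$, so by the support property in Lemma \ref{lem:local} the coordinate $L^2$ is constant on that interval. Restricted there, $Z^1$ satisfies a one-dimensional SP at zero driven by a shifted copy of $X^1$, and the explicit formula \eqref{eq:y} together with the fact that Brownian motion attains new running minima arbitrarily close to any time at which the reflected process vanishes ensures that $L^1$ is strictly increasing on every subinterval containing $t$. Since $\Y = d_1 L^1 + d_2 L^2$ and $L^2$ is constant there, $\Y$ is nonconstant on those subintervals.

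Conditions 3 and 4 are the heart of the matter. By the strong Markov property applied at hitting times of $\{0\}$, it suffices to establish condition 3: if $\Z_0 = 0$ then for each $i \in \{1,2\}$ and every $\delta > 0$ there exists $u \in (0,\delta)$ with $\Z_u \in F_i \setminus \{0\}$ a.s. The relevant event lies in the germ $\sigma$-algebra at $0$ of the augmented natural filtration, so by Blumenthal's $0$-$1$ law its probability is $0$ or $1$; I would establish positive probability via the excursion structure of $Z^{3-i}$. Specifically, $Z^{3-i}$ is a one-dimensional reflected semimartingale starting at $0$ with nondegenerate martingale part, hence has infinitely many excursions above $0$ in any $(0,\delta)$; on each excursion $(s_k, t_k)$, $L^{3-i}$ is constant, so $Z^i$ reduces to a one-dimensional reflected Brownian motion started from $Z^i_{s_k} \geq 0$ over a positive-length interval, which with positive probability hits $0$ during the excursion, producing $u$ with $Z^i_u = 0$ and $Z^{3-i}_u > 0$.

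For condition 4, fix $t > 0$ with $\Z_t = 0$. If $\{s < t : \Z_s = 0\}$ has $t$ as a left-limit point, pick $s_n \uparrow t$ with $\Z_{s_n} = 0$ and apply the strong Markov property at each $s_n$ together with condition 3 to obtain hits of $F_i \setminus \{0\}$ inside $(s_n, (s_n + t)/2)$, accumulating at $t$. Otherwise $\Z$ stays in $G \setminus \{0\}$ throughout some $(t - \epsilon, t)$; the zero set of the one-dimensional reflected semimartingale $Z^i$ is $\P$-a.s.\ perfect (a standard consequence of the nondegenerate martingale part combined with the explicit one-dimensional SP representation), so $t$ is a left-limit of $\{s : Z^i_s = 0\}$, and since $\Z_s \neq 0$ on $(t - \epsilon, t)$ those accumulating times automatically have $Z^{3-i}_s > 0$, yielding the desired hits of $F_i \setminus \{0\}$. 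The principal technical obstacle is the positive-probability step in condition 3, which requires carefully coordinating the excursion structure of $Z^{3-i}$ with the simultaneous Brownian fluctuations of $Z^i$ while using nondegeneracy of $\sigma\sigma'$ in an essential way; an alternative route would be a direct small-time coupling with a simpler RBM (e.g.\ normally reflected) for which the conclusion is classical.
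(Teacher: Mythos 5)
Your treatment of conditions 1 and 2 is sound and essentially parallels the paper's (which handles condition 2 by citing a known SRBM occupation-time result and condition 1 via Girsanov plus the fact that a Brownian motion is a.s.\ not nonnegative on any interval $[0,u]$, combined with the strong Markov property at boundary hitting times). But conditions 3 and 4 --- which you correctly identify as the heart of the matter --- both contain genuine gaps. For condition 3, the step ``on each excursion of $Z^{3-i}$ the process $Z^i$ hits $0$ with positive probability'' is exactly the difficulty, not a routine observation: the excursion endpoints are not stopping times, the starting value $Z^i_{s_k}$ and the excursion length both shrink at the same diffusive rate as $s_k\downarrow 0$, and positive probability on individual excursions does not combine into positive probability for the germ event without a quantitative estimate you do not supply (your Blumenthal reduction needs $\liminf_{\delta\downarrow 0}\P(E_\delta)>0$, not merely $\P(E_\delta)>0$ for each $\delta$). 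The paper resolves this quite differently: on $\{\rho_1^0>0\}$ only face-$2$ pushing is active, so $\Z_t^0=\X_t^0+\sup_{s\le t}(-\ip{\X_s^0,e_2})\,d_2$ explicitly, and the constraint $\ip{\Z_t^0,e_1}\ge 0$ forces the ratio $\ip{\X_t^0,e_1}/\sup_{s\le t}(-\ip{\X_s^0,e_2})$ to stay bounded below; this is contradicted a.s.\ because one Brownian coordinate evaluated at the passage times of an independent one is a Cauchy process, whose $\liminf_{c\downarrow 0}\zeta_c/c=-\infty$ (Lemma \ref{lem:cauchy}). Nothing in your sketch substitutes for this step.

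For condition 4, the second case of your dichotomy is flawed: a perfect set need not accumulate at each of its points \emph{from the left}, and the scenario you must exclude --- $Z^i>0$ on $(t-\epsilon,t)$ with $Z^i_t=0$ and $\Z_t=0$, i.e.\ the path reaching the corner at the right endpoint of an excursion of $Z^i$ away from $F_i$ --- is entirely consistent with perfectness of the zero set. (Moreover, on an interval where $\Z$ merely avoids the corner both $L^1$ and $L^2$ may increase, so $Z^i$ has no one-dimensional SP representation there and its zero set being perfect is itself unproven.) The substantive input needed is a corner-avoidance result: when only one face is active, the stopped process $\sigma^{-1}\Z$ is an obliquely reflected driftless Brownian motion in a half-plane started away from a fixed boundary point, which by the Varadhan--Williams theorem a.s.\ never reaches that point. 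The paper runs this through the stopping times $\rho_0^{s,x},\rho_i^{s,x}$ indexed by rational $s$ precisely to handle the ``for all $t$ simultaneously'' quantifier; your appeal to the strong Markov property at the non-stopping times $s_n$ and at an arbitrary $t$ leaves that quantifier unaddressed as well.
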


We now present our main result on the existence of pathwise derivatives of a two-dimensional RBM on the nonnegative quadrant.

\begin{theorem}\label{thm:pathwise}
Fix an SP $\{(d_i,e_i,0),i=1,2\}$ satisfying Assumption \ref{ass:SPperturb} and set $R\doteq\begin{pmatrix}d_1&d_2\end{pmatrix}$. Let $\Z$ be the RBM associated with $(x,b,\sigma,R)$, define $\X$ as in \eqref{eq:X} and $L$ as in Remark \ref{rmk:local}. Then $\P$ a.s.\ the following holds: given $y\in\R^2$, $c\in\R^2$, $\theta\in\R^{2\times 2}$ and $V\in\R^{2\times 2}$ such that $x+\ve y\in\R_+^2$ for all $\ve>0$ sufficiently small and $V$ has zeros along its diagonal, let $\ve_0\in(0,\infty)$ be as in Assumption \ref{ass:SPperturb}. For each $\ve\in(0,\ve_0)$ such that $x+\ve y\in\R_+^2$, let $\Z^\ve$ denote the RBM associated with $(x+\ve y,b+\ve c,\sigma+\ve\theta,R+\ve V)$. Then
\begin{itemize}
	\item[1.] the process $\nabla\Z=\{\nabla\Z_t,t\in[0,\infty)\}$, defined by
		\be\label{eq:nablaZnablaesm}\nabla\Z_t\doteq\lim_{\ve\downarrow0}\frac{\Z_t^\ve-\Z_t}{\ve},\qquad t\in[0,\infty),\ee
	exists, takes values in $\dlr$ and satisfies $\nabla\Z=\nabla_\psi\sm(\X)$, where $\psi=\{\psi_t,t\in[0,\infty)\}$ is given by
		\be\label{eq:psiyct}\psi_t\doteq y+ct+\theta W_t+VL_t,\qquad t\in[0,\infty);\ee
	\item[2.] if $\phi=\{\phi_t,t\in[0,\infty)\}$ denotes the right continuous regularization of $\nabla\Z=\{\nabla\Z_t,t\in[0,\infty)\}$, then $(\phi,\phi-\psi)$ solves the DP associated with $\Z$ for $\psi$.
\end{itemize}
\end{theorem}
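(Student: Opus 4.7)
The plan is to assemble Proposition \ref{prop:rbmjitter}, Theorem \ref{thm:main}, and Theorem \ref{thm:directions} into a short chain of implications. First, I would rewrite the perturbed input as $\X_t^\ve = \X_t + \ve \zeta_t$ with $\zeta_t \doteq y + ct + \theta W_t$, so that $\Z^\ve = \sm_\ve(\X + \ve\zeta)$, where $\sm_\ve$ is the SM associated with the perturbed SP $\{(d_i + \ve v_i, e_i, 0), i = 1, 2\}$. The condition that $V$ has zeros on its diagonal is exactly $\ip{v_i, e_i} = 0$ for $i = 1, 2$, so Assumption \ref{ass:SPperturb} applies. Lemma \ref{lem:local} produces the decomposition $\Y = RL$ of the (a.s.\ continuous) constraining term into an a.s.\ continuous nondecreasing $L$, so $\psi = \zeta + VL$ lies in $\cts$ almost surely and is exactly the $\psi$ appearing in Theorem \ref{thm:directions}.

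Next I would verify the hypotheses of Theorem \ref{thm:main} on a full-probability event. Assumption \ref{ass:SPperturb} includes Assumption \ref{ass:linearlyind} at $\ve = 0$, and Proposition \ref{prop:rbmjitter} yields that $\P$-a.s.\ $(\Z, \Y)$ satisfies the boundary jitter property on $[0, \infty)$. Since $\X \in \cts_G$ a.s., Theorem \ref{thm:main} then delivers on this event: (i) $\tau = \infty$; (ii) the existence of $\nabla_\psi \sm(\X)$ on $[0, \infty)$ as an element of $\dlr$; (iii) the unique solvability of the DP associated with $\Z$ for $\psi$; and (iv) the identification $\phi(t) = \nabla_\psi \sm(\X)(t+)$ of the DP's $\phi$-component with the right-continuous regularization of $\nabla_\psi \sm(\X)$. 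The underlying full-probability event depends only on the sample path $\Z$ (hence on the Brownian driver), not on the perturbation parameters $(y, c, \theta, V)$, so Theorem \ref{thm:main} is applicable on the same event for every admissible choice.

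With existence of $\nabla_\psi \sm(\X)$ in hand, I would invoke Theorem \ref{thm:directions} with $T = \infty$ to conclude
\[
	\lim_{\ve \downarrow 0}\frac{\sm_\ve(\X + \ve \zeta)(t) - \sm(\X)(t)}{\ve} = \nabla_\psi \sm(\X)(t), \qquad t \in [0, \infty).
\]
Since the left-hand side equals $(\Z_t^\ve - \Z_t)/\ve$ by construction, the pathwise limit in \eqref{eq:nablaZnablaesm} exists, takes values in $\dlr$, and equals $\nabla_\psi \sm(\X)$. This is precisely part 1. For part 2, the right-continuous regularization $\phi$ of $\nabla\Z = \nabla_\psi\sm(\X)$ coincides, by (iv) above, with the $\phi$-component of the unique DP solution; setting $\eta \doteq \phi - \psi$ (condition 1 of the DP) then makes $(\phi, \eta) = (\phi, \phi - \psi)$ the unique solution of the DP associated with $\Z$ for $\psi$.

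I do not anticipate a genuine technical obstacle: the substantive work has all been done in Proposition \ref{prop:rbmjitter}, Theorem \ref{thm:main}, and Theorem \ref{thm:directions}, and this proof is essentially their assembly. The only mild care needed is to intersect the relevant a.s.\ events (continuity of $\X$, the continuous decomposition $\Y = RL$, and the boundary jitter property of $(\Z, \Y)$) into a single full-probability event on which the conclusion simultaneously holds for every admissible $(y, c, \theta, V)$.
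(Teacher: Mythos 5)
Your proposal is correct and follows essentially the same route as the paper's proof: write $\Z^\ve=\sm_\ve(\X+\ve\zeta)$, use Proposition \ref{prop:rbmjitter} for the a.s.\ boundary jitter property, note $\W=\emptyset$ (hence $\tau=\infty$) from Assumption \ref{ass:linearlyind}, and then combine Theorem \ref{thm:main} with Theorem \ref{thm:directions}. Your explicit remark that the full-probability event depends only on the sample path and not on $(y,c,\theta,V)$ is a point the paper leaves implicit, but it is the same argument.
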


See Figure \ref{fig:rbmpathwise} for an illustration of a sample path of an RBM along with an associated pathwise derivative.

\begin{remark}
Given an SP $\{(d_i,e_i,0),i=1,2\}$ satisfying Assumption \ref{ass:linearlyind}, define $Q\in\R^{2\times 2}$ as in \eqref{eq:Q}. Then by Lemma \ref{lem:SPperturb}, Assumption \ref{ass:SPperturb} holds if and only if $\varrho(Q)<1$.
\end{remark}

\begin{figure}[h!]
	\centering
	\begin{subfigure}{.5\textwidth}
		\centering
		\includegraphics[width=.7\textwidth]{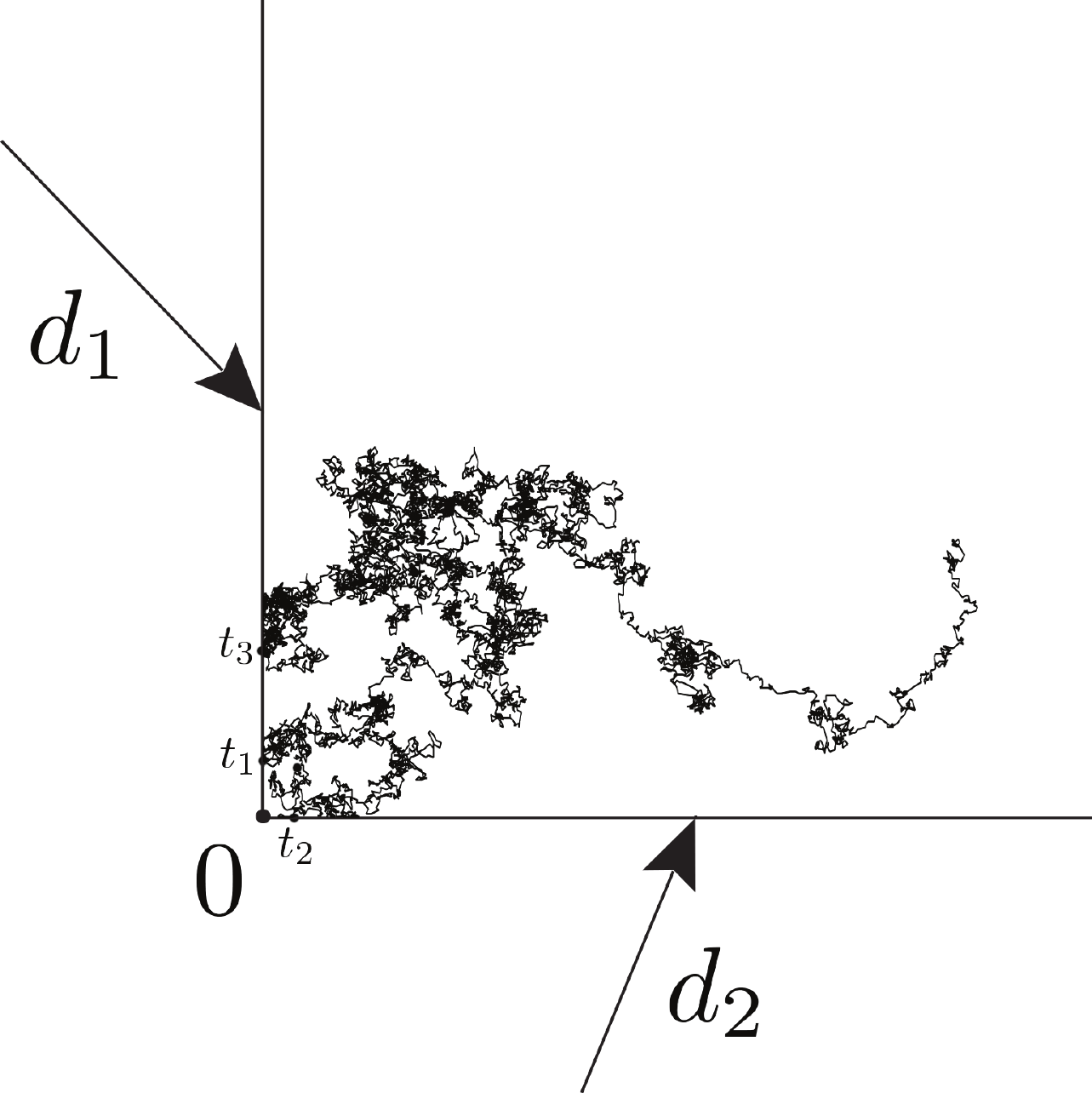}
		\caption{A sample path of an RBM.}
		\label{fig:rbm}
	\end{subfigure}%
	\begin{subfigure}{.5\textwidth}
		\centering
		\includegraphics[width=.7\textwidth]{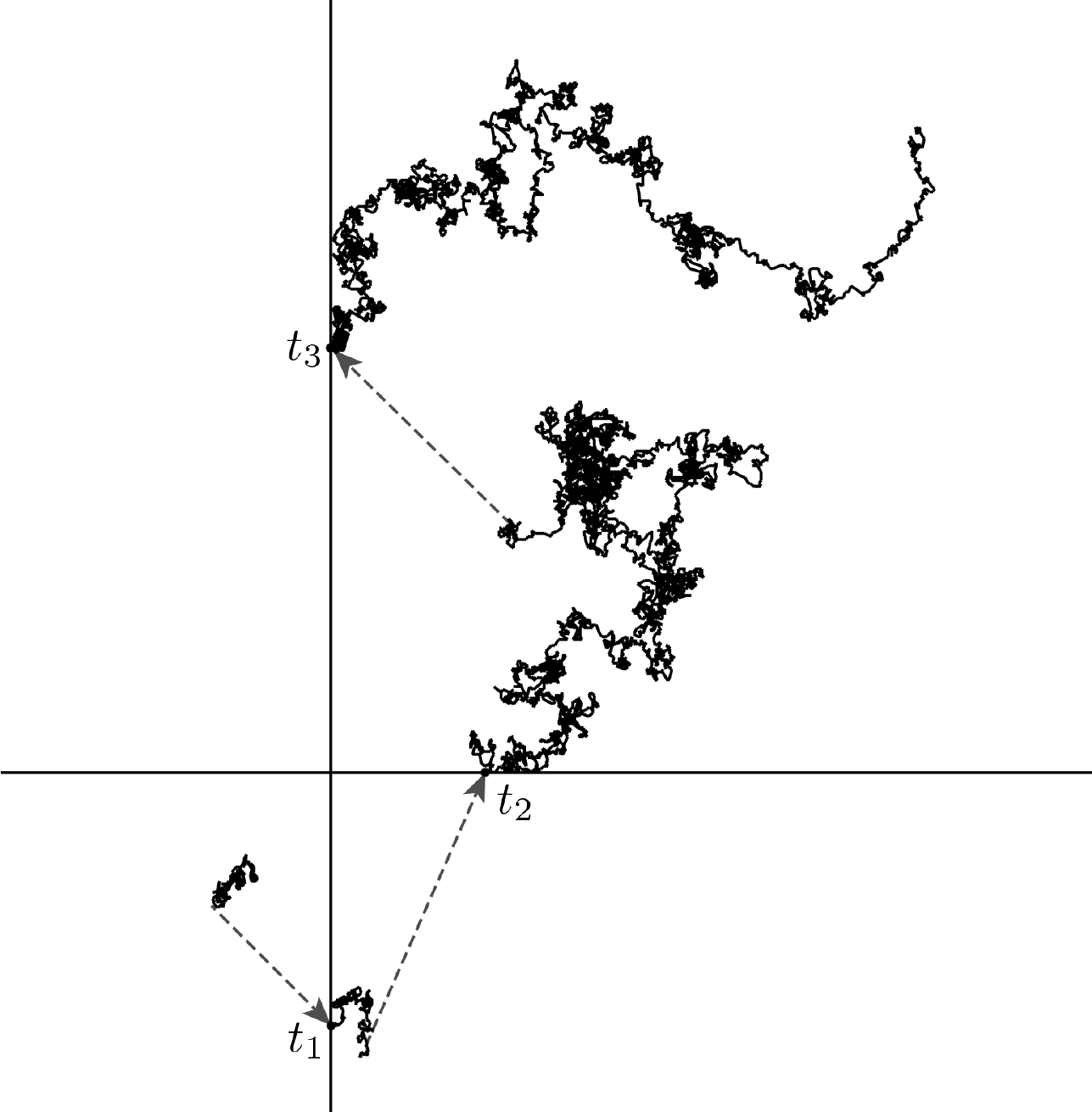}
		\caption{An associated pathwise derivative.}
		\label{fig:phi}
	\end{subfigure}
	\caption{(A). A sample path of an RBM $\Z$ on the nonnegative orthant associated with the SP depicted in Figure \ref{fig:G}. (B) The sample path of an associated pathwise derivative $\nabla\Z$. The gray dashed lines denote (some, though not all) discontinuity points of $\nabla\Z$. In both plots, $t_1\doteq\inf\{t>0:Z_t\in \partial G\}$, $t_2\doteq\inf\{t>t_1:Z_t\in F_2\}$ and $t_3\doteq\inf\{t>t_2:Z_t\in F_1\}$.}\label{fig:rbmpathwise}
\end{figure}

\begin{proof}
By Proposition \ref{prop:rbmjitter}, the event
	\be\label{eq:ZZXjitter}\{(\Z,\Z-\X)\text{ satisfies the boundary jitter property}\},\ee
has $\P$-measure one. Let $y\in\R^2$, $c\in\R^2$, $\theta\in\R^{2\times 2}$ and $V\in\R^{2\times 2}$ be such that $x+\ve y\in\R_+^2$ for all $\ve>0$ sufficiently small and $V$ has zeros along its diagonal. For each $\ve\in(0,\ve_0)$, let $\sm_\ve$ denote the SM associated with SP $\{(d_i+\ve v_i,e_i,0),i=1,2\}$. Define $\zeta=\{\zeta_t,t\in[0,\infty)\}$ by 
	$$\zeta_t=y+ct+\theta W_t,\qquad t\in[0,\infty),$$
and define $\psi=\{\psi_t,t\in[0,\infty)\}$ as in \eqref{eq:psiyct}. Then for each $\ve\in(0,\ve_0)$ such that $x+\ve y\in\R_+^2$, $\Z^\ve=\sm_\ve(\X+\ve\zeta)$. Furthermore, Assumption \ref{ass:linearlyind} implies $\W=\emptyset$, where $\W$ is defined as in \eqref{eq:Wset}. It follows that $\tau\doteq\inf\{t\in[0,\infty):\Z_t\in\W\}=\infty$. By part 1 of Theorem \ref{thm:main} and Theorem \ref{thm:directions}, on the set \eqref{eq:ZZXjitter}, $\nabla_\psi\sm(\X)$ exists, and
\begin{align*}
	\nabla\Z_t=\lim_{\ve\downarrow0}\frac{\sm_\ve(\X+\ve\zeta)(t)-\sm(\X)(t)}{\ve}&=\nabla_\psi\sm(\X)(t),\qquad t\in[0,\infty).
\end{align*}
This, along with Theorem \ref{thm:main}, completes the proof of the theorem.
\end{proof}

In forthcoming work we prove a result that is analogous to Theorem \ref{thm:pathwise}, but for a larger class of reflected diffusions in convex polyhedral domains, and study stability properties of the joint process $\{(\Z_t,\phi_t),t\in[0,\infty)\}$.

\section{The boundary jitter property}\label{sec:jitter}

In this section we verify that the boundary jitter property (Definition \ref{def:jitter}) holds for certain two-dimensional RBMs in the nonnegative quadrant and discuss some useful ramifications of the boundary jitter property.

\subsection{Verification for reflected Brownian motion in the nonnegative quadrant}\label{sec:rbmjitter}

In this section we prove Proposition \ref{prop:rbmjitter}. Fix an SP $\{(d_i,e_i,0),i=1,2\}$, $b\in\R^2$, $\sigma\in\R^{2\times 2}$ as in the statement of Proposition \ref{prop:rbmjitter}. Let $\sm$ denote the associated SM and $R\doteq\begin{pmatrix}d_1&d_2\end{pmatrix}$. Observe that $\U=\{0\}$ and $F_i=\{x\in G:\ip{x,e_i}=0\}$ for $i=1,2$. Since $\sigma\sigma'$ is symmetric and positive definite, the square matrix $\sigma$ is nondegenerate. Recall that $\sigma^{-1}\in\R^{2\times 2}$ denotes the matrix inverse of $\sigma$. 

Throughout this section, we fix a filtered probability space $(\Omega,\F,\{\F_t\},\P)$ satisfying the usual conditions. To avoid confusion with other sections of this work, we emphasize that \emph{throughout this subsection, $W,\Z^x,\X^x,\Y^x,L,\zeta$ denote stochastic processes on the filtered probability space $(\Omega,\F,\{\F_t\},\P)$}. Fix a two-dimensional standard Brownian motion $W$ on $(\Omega,\F,\{\F_t\},\P)$. Also, for $u>0$, define 
	\be\label{eq:Cu}C_u\doteq\lcb W_t\geq0\;\forall\;t\in[0,u]\rcb\ee 
and recall the well known property that $\P(C_u)=0$ for every $u>0$. For each $x\in G$, define $\X_t^x\doteq x+bt+\sigma W_t$ for $t\geq0$, $\Z^x\doteq\sm(\X^x)$ and $\Y^x\doteq\Z^x-\X^x$. We begin by establishing that $(\Z^x,\Y^x)$ satisfies condition 1 of the boundary jitter property.

\begin{lem}\label{lem:jitter1}
For each $x\in G$, $\P$ a.s.\ $(\Z^x,\Y^x)$ satisfies condition 1 of the boundary jitter property.
\end{lem}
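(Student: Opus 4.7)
The plan is to exploit the key structural observation that on any interval where $\Y^x$ is constant, $\Z^x$ is a deterministic translate of $\X^x$, which is itself a two-dimensional BM with drift; the nondegeneracy of $\sigma\sigma'$ will then force the relevant event to have probability zero. The first step is to reduce the uncountable quantifier ``there exists $t$ with $\Z^x_t \in \S$ and $\Y^x$ constant around $t$'' to a countable union indexed by rational endpoints and a face label. Specifically, by continuity of $(\Z^x, \Y^x)$ and density of $\Q$, the failure of condition 1 at some $t > 0$ is contained in $\bigcup_{i \in \{1,2\}} \bigcup_{0 \leq q_1 < q_2,\, q_1, q_2 \in \Q} A^i_{q_1, q_2}$, where $A^i_{q_1, q_2}$ is the event that $\Y^x$ is constant on $[q_1, q_2]$ and $(\Z^x_t)^i = 0$ for some $t \in (q_1, q_2)$.

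The case $t = 0$ I would handle directly: since $x \in G$ we have $\Y^x_0 = 0$, and continuity of $\Y^x$ then forces $\Y^x \equiv 0$ on $[0,b]$ whenever $\Y^x$ is constant on $(0,b)$, so $\Z^x_r = x + br + \sigma W_r$ on $[0,b]$. If in addition $\Z^x_0 \in F_i \setminus \{0\}$, then $x^i = 0$, and $(\Z^x_r)^i$ is a nondegenerate one-dimensional BM with drift starting at $0$ (the nondegeneracy follows from $\det \sigma \neq 0$, which in turn follows from nondegeneracy of $\sigma\sigma'$), which a.s.\ takes strictly negative values in every right neighborhood of $0$, contradicting $\Z^x_r \in G$.

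For the main case it suffices to show $\P(A^i_{q_1, q_2}) = 0$ for each rational pair and each $i$. By symmetry I would fix $i = 1$ and introduce $\delta = q_2 - q_1$, $U = (\Z^x_{q_1})^1 \geq 0$, and $B_r = (\X^x_{q_1 + r} - \X^x_{q_1})^1$ for $r \in [0, \delta]$. On $A^1_{q_1, q_2}$ one has $\Z^x_r = \Z^x_{q_1} + (\X^x_r - \X^x_{q_1})$ for $r \in [q_1, q_2]$, hence $U + B_r \geq 0$ on $[0,\delta]$ with equality at some interior point; in particular $\inf_{r \in [0, \delta]} B_r = -U$. By the strong Markov property of the standard Brownian motion $W$, $B$ is a nondegenerate one-dimensional BM with drift $b^1$ (the positive variance rate again follows from $\det\sigma \neq 0$), independent of $\F_{q_1}$; since $U$ is $\F_{q_1}$-measurable, $U$ and $B$ are independent. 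The infimum of a nondegenerate BM with drift on a compact interval has an absolutely continuous distribution, so $\P(\inf_{r \in [0, \delta]} B_r = -u) = 0$ for every $u \geq 0$. Conditioning on $U$ and applying Fubini gives $\P(A^1_{q_1, q_2}) = 0$, and the countable union together with the symmetric argument for $F_2$ then yields the lemma.

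The main technical point I expect to defer to is the absolute continuity of the infimum of a BM with drift on a compact interval, which follows from the reflection principle for driftless Brownian motion combined with a Cameron--Martin--Girsanov change of measure to remove the drift; once this is in hand, the rest of the argument is a careful application of continuity, density of the rationals, and the strong Markov property. Notably, the argument uses only the decomposition $\Z^x = \X^x + \Y^x$ and not any finer properties of $\Y^x$, so no assumption of linear independence of the reflection vectors is needed.
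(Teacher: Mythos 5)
Your proof is correct, and for the main case it takes a genuinely different route from the paper's. Both arguments share the same skeleton: reduce to a countable family of rational intervals, observe that on any interval where $\Y^x$ is constant the path $\Z^x$ is a deterministic translate of $\X^x$, and then use nondegeneracy of $\sigma$ to kill the resulting event. The paper, however, handles the generic time $t>0$ by stopping at $\rho^s$, the first hitting time of $\partial G$ after a rational time $s$, invoking the strong Markov property and the time-shift property of the ESP, disintegrating over the hitting position, and thereby reducing to the boundary-start case (where constancy of $\Y^x$ forces $\ip{\X^x_\cdot,e_i}\ge 0$, which after a Girsanov change of measure contradicts $\P(C_u)=0$). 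You instead work directly on the deterministic rational endpoint $q_1$: on the bad event the running infimum of the increment $B_\cdot=(\X^x_{q_1+\cdot}-\X^x_{q_1})^i$, which is independent of $\F_{q_1}$, must \emph{exactly} equal $-(\Z^x_{q_1})^i$, an $\F_{q_1}$-measurable quantity, and atomlessness of the law of the infimum plus Fubini finishes it. Your route avoids the stopping time, the strong Markov disintegration, and the explicit Girsanov step, at the cost of the (standard, reflection-principle) fact that $\inf_{[0,\delta]}B$ has no atoms on $(-\infty,0)$ — note you still need the paper's fact $\P(C_u)=0$ to rule out the atom at $0$, which your phrase ``absolutely continuous distribution'' quietly subsumes. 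Two cosmetic points: the independence of $B$ from $\F_{q_1}$ is just the simple Markov property (independence of increments at the deterministic time $q_1$), not the strong one; and in the $t=0$ case the union over endpoints $b$ should be taken over rationals to keep the exceptional set null, which is implicit but worth stating.
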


\begin{proof}
For $x\in G$, define
	$$A_0^x\doteq\bigcap_{u\in\Q\cap(0,\infty)}\lcb\Y^x\text{ is nonconstant on }(0,u)\rcb,$$
and
	$$A^x\doteq\bigcap_{s\in\Q\cap(0,\infty)}\bigcap_{u\in\Q\cap(s,\infty)}\lcb\Y^x\text{ is nonconstant on }(s,u)\rcb\cup\lcb\Z_t^x\in G^\circ\;\forall\;t\in(s,u)\rcb.$$
Then for $x\in\S$, $A_0^x\cap A^x$ is contained in the event that $(\Z^x,\Y^x)$ satisfies condition 1 of the boundary jitter property, and for $x\in G^\circ\cup\U$, $A^x$ is contained in the event that $(\Z^x,\Y^x)$ satisfies condition 1 of the boundary jitter property. We show that $\P(A_0^x)=1$ for all $x\in\partial G$ and $\P(A^x)=1$ for all $x\in G$, which will complete the proof of the lemma.	
	
Suppose $x\in\partial G$ and let $i\in\{1,2\}$ be such that $i\in\allN(x)$. Fix $u\in\Q\cap(0,\infty)$. By conditions 1 and 2 of the ESP,
\begin{align*}
	\{\Y^x\text{ is constant on }[0,u]\}&\subseteq\{\ip{\X_t^x,e_i}=\ip{\Z_t^x,e_i}\geq0\;\forall\;t\in[0,u]\}.
\end{align*}
Due to Girsanov's theorem (see, e.g., \cite[Chapter IV, Theorem 38.5]{Rogers2000a}), there is a probability measure $\widetilde{\P}$ on $(\Omega,\F)$ that is equivalent to $\P$ such that under $\widetilde{\P}$,
 	$$\widetilde{\X}\doteq\lcb\frac{1}{|\sigma^i|}\ip{\X_t^x,e_i},t\in[0,u]\rcb=\lcb\frac{1}{|\sigma^i|}\ip{b,e_i}t+\frac{1}{|\sigma^i|}\sigma^iW_t,t\in[0,u]\rcb.$$
is a standard one-dimensional Brownian motion on $[0,u]$. Thus, by \eqref{eq:Cu},
	\be\label{eq:Yxconstant}\widetilde{\P}(\Y^x\text{ is constant on }[0,u])=\widetilde{\P}(\widetilde{\X}_t\geq0\;\forall\;t\in[0,u])=\P(C_u)=0.\ee
Since $\P$ and $\widetilde{\P}$ are equivalent, $\P(A_0^x)=1$. 

Now suppose $x\in G$. Fix $s\in\Q\cap(0,\infty)$ and $u\in\Q\cap(s,\infty)$. Define the stopping time $\rho^s$ as follows:
	$$\rho^s\doteq\inf\{t>s:\Z_t^x\in\partial G\}.$$
Note that $\{\rho^s\geq u\}=\{\Z_s^x\in G^\circ\;\forall\;t\in(s,u)\}$, so we are left to consider the set $\{\rho^s\in[s,u)\}$. On the set $\{\rho^s\in[s,u)\}$, define $\X^{\rho^s},\Z^{\rho^s},\Y^{\rho^s}$ as in \eqref{eq:xS}--\eqref{eq:yS}, but with $\rho^s,\X^x,\Z^x,\Y^x$ in place of $S,\x,\z,\y$, respectively. By the time-shift property of the ESP (Lemma \ref{lem:esmshift}), $(\Z^{\rho^s},\Y^{\rho^s})$ solves the ESP for $\X^{\rho^s}$. This, along with the strong Markov property for $W$ and the fact that $b$ and $\sigma$ are constant, imply that for $y\in\partial G$, $\Y^{\rho^s}$, conditioned on $\{\rho^s\in[s,u),\rho^s=y\}$, is equal in distribution to $\Y^y$. Hence, by \eqref{eq:Yxconstant} and the fact that $\P$ and $\widetilde{\P}$ are equivalent, for $r\in\Q\cap(0,\infty)$,
\begin{align*}
	&\P(\Y^{\rho^s}\text{ is nonconstant on }[0,r]|\rho^s\in[s,u))\\
	&\qquad=\int_{\partial G}\P(\Y^y\text{ is nonconstant on }[0,r])\P(\Z_{\rho^s}^x\in dy|\rho^s\in[s,u))\\
	&\qquad=1.
\end{align*}
Since the above holds for every $r\in\Q\cap(0,\infty)$, it follows that $\P(A^x)=1$, which completes the proof.
\end{proof}

We now turn to the proof that condition 2 of the boundary jitter property holds.

\begin{lem}\label{lem:jitter2}
For each $x\in G$, 
	\be\label{eq:int1zero}\P\lb\int_0^\infty 1_{\partial G}(\Z_t^x)dt=0\rb=1.\ee
Consequently, $\P$ a.s.\ $\Z^x$ satisfies condition 2 of the boundary jitter property.
\end{lem}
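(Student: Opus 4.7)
The plan is to reduce the claim to showing that, for each $i\in\{1,2\}$, the reflected process $\Z^x$ spends zero Lebesgue time on the face $F_i=\{y\in G:\ip{y,e_i}=0\}$. Since $\partial G=F_1\cup F_2$, we have $1_{\partial G}(\Z_t^x)\le 1_{F_1}(\Z_t^x)+1_{F_2}(\Z_t^x)$, so the two face-wise statements immediately yield \eqref{eq:int1zero}, and then condition 2 of the boundary jitter property follows (the integrand is nonnegative and pathwise zero a.s.).

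Fix $i\in\{1,2\}$ and let $j\in\{1,2\}\setminus\{i\}$. By Remark \ref{rmk:local} combined with the normalization $\ip{d_i,e_i}=1$, the one-dimensional projection
$$\ip{\Z_t^x,e_i} = x^i+\ip{b,e_i}\,t+\sigma^i W_t+L_t^i+\ip{d_j,e_i}\,L_t^j,\qquad t\ge 0,$$
is a continuous semimartingale whose martingale part is $\sigma^i W$ and whose remaining terms (the drift and the two nondecreasing regulator components) are of bounded variation. Consequently $\langle\ip{\Z^x,e_i}\rangle_t=|\sigma^i|^2 t$, and the nondegeneracy hypothesis on $\sigma\sigma'$ ensures $|\sigma^i|^2=(\sigma\sigma')_{ii}>0$.

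The key step is then to apply the occupation times formula for continuous semimartingales: for every Borel $f\colon\R\to[0,\infty)$ and every $t\ge 0$,
$$\int_0^t f(\ip{\Z_s^x,e_i})\,d\langle\ip{\Z^x,e_i}\rangle_s=\int_\R f(a)\,L_t^a\,da,$$
where $L^a$ denotes the semimartingale local time of $\ip{\Z^x,e_i}$ at level $a$. Choosing $f=1_{\{0\}}$ makes the right-hand side vanish (the singleton has Lebesgue measure zero), so
$$|\sigma^i|^2\int_0^t 1_{F_i}(\Z_s^x)\,ds=0,\qquad t\ge 0.$$
Dividing by $|\sigma^i|^2>0$ and letting $t\uparrow\infty$ by monotone convergence yields $\int_0^\infty 1_{F_i}(\Z_s^x)\,ds=0$ $\P$-a.s. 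Taking a union over $i=1,2$ gives \eqref{eq:int1zero}.

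I do not expect a substantive obstacle: the argument is a direct consequence of the occupation density formula and the nondegeneracy hypothesis. The only items to verify carefully are that the bounded-variation terms contribute nothing to $\langle\ip{\Z^x,e_i}\rangle$ (standard) and that the nondegeneracy of $\sigma\sigma'$ gives strictly positive diagonal entries (immediate). An alternative route using Girsanov to remove the drift and Fubini applied to the absolutely continuous time-$t$ marginal of $\Z_t^x$ is possible but requires proving the density statement, so I would avoid it in favor of the occupation-times argument above.
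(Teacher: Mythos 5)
Your proof is correct, but it takes a different route from the paper. The paper's proof is a two-line citation: it observes that $\Z^x$ is an SRBM associated with $(G,b,\sigma\sigma',R)$ in the sense of Taylor--Williams and then invokes \cite[Lemma 2.1]{Taylor1993}, which is exactly the statement that an SRBM spends zero Lebesgue time on the boundary. You instead prove the fact from scratch: you use the decomposition $\Z^x=\X^x+RL^x$ of Remark \ref{rmk:local} to see that $\ip{\Z^x,e_i}$ is a continuous semimartingale with quadratic variation $|\sigma^i|^2t$, note that nondegeneracy of $\sigma\sigma'$ gives $|\sigma^i|^2>0$, and apply the occupation times formula with $f=1_{\{0\}}$ to kill the time spent on each face $F_i$. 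This is essentially the argument underlying the cited Taylor--Williams lemma, so your proof is a self-contained replacement for the citation; what it costs you is the reliance on the semimartingale structure, i.e.\ on the regulator components $L^i$ being nondecreasing (hence of bounded variation), which you correctly source from Remark \ref{rmk:local}. Both routes are sound; the paper's is shorter, yours is more transparent about where nondegeneracy enters.
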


\begin{proof}
It is readily verified that $\Z^x$ is an SRBM associated with $(G,b,\sigma\sigma',R)$ that starts from $x$ (see \cite[Definition 1.1]{Taylor1993}, with $X=\X^x$ and $Y=L^x$). Then by \cite[Lemma 2.1]{Taylor1993}, the lemma follows.
\end{proof}

Next, we consider condition 4 of the boundary jitter property. For the following lemma, given $x\in G$, let $L^x=\{L_t^x,t\in[0,\infty)\}$ be as in Remark \ref{rmk:local}, but with $\Z^x$ and $L^x$ in place of $\Z$ and $L$, respectively.

\begin{lem}\label{lem:jitter4}
For each $x\in G$, $\P$ a.s.\ $\Z^x$ satisfies condition 4 of the boundary jitter property.
\end{lem}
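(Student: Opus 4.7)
The plan is to fix $x \in G$ and $i \in \{1,2\}$ (with $j$ denoting the other index) and reduce the claim to two assertions: (a) for every rational $q > 0$,
\[
A_{q,i} \doteq \lcb\rho_q < \infty,\; \Z_q^x \neq 0,\; \Z_s^x \notin F_i \setminus \{0\}\ \forall\, s \in (q, \rho_q)\rcb
\]
has $\P$-probability zero, where $\rho_q \doteq \inf\{s \geq q : \Z_s^x = 0\}$; and (b) an RBM started at the origin $\P$-almost surely visits $F_i \setminus \{0\}$ in every right-neighborhood of $0$. A case split on whether a putative violation time $t > 0$ (where $\Z_t^x = 0$) is left-isolated or a left-accumulation point of the zero set of $\Z^x$ then reduces condition 4 to (a) and (b): in the left-isolated case, Lemma \ref{lem:jitter2} gives density of rationals in the open set $\{s : \Z_s^x \neq 0\}$, so one can pick a rational $q \in (t - \delta, t)$ with $\Z_q^x \neq 0$, yielding $\rho_q = t$ and $A_{q,i}$; in the left-accumulated case, one applies the strong Markov property at $\rho_r$ for a sequence of rationals $r \uparrow t$ with $\rho_r \uparrow t$, together with (b).

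To prove (b), I would apply Blumenthal's $0$-$1$ law to the $\F_{0+}$-measurable event that $\Z^0$ visits $F_i \setminus \{0\}$ in $(0, \epsilon)$; positivity follows by Girsanov reduction to the driftless case combined with the nondegeneracy of $\sigma$, by constructing a positive-probability family of Brownian paths that drive the reflected process into $F_i \setminus \{0\}$ for small times, in the spirit of the proof of Lemma \ref{lem:jitter1}.

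To prove (a), note that on $A_{q,i}$ the local time $L^i$ is constant on $[q, \rho_q]$ (as $\Z^x \notin F_i$ on $[q, \rho_q)$ and $L^i$ is continuous). Hence $(\Z^x)^j$ solves a one-dimensional SP on $[q, \rho_q]$ driven by $M^j(s) \doteq (\Z_q^x)^j + b^j(s - q) + \sigma^j(W_s - W_q)$, so by Example \ref{ex:1dsp} one has $L_s^j - L_q^j = \sup_{q \leq r \leq s}[-M^j(r)]_+$. After applying Girsanov to remove the drift (producing a driftless $\widetilde{W}$), the combined terminal conditions $(\Z^x)^i_{\rho_q} = 0$ and $(\Z^x)^j_{\rho_q} = 0$ yield the scalar identity
\[
v \cdot (\widetilde{W}_{\rho_q} - \widetilde{W}_q) = d_j^i (\Z_q^x)^j - (\Z_q^x)^i,\qquad v \doteq \sigma^i - d_j^i \sigma^j,
\]
with $v \neq 0$ by the nondegeneracy of $\sigma \sigma'$. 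When $\Z^x$ stays in $G^\circ$ throughout $(q, \rho_q)$, both $L^i$ and $L^j$ are constant on $[q, \rho_q]$, so the identity sharpens to $\widetilde{W}_{\rho_q} - \widetilde{W}_q = -\sigma^{-1} \Z_q^x \neq 0$, a polar event for $2$-dimensional Brownian motion and hence $\F_q$-conditionally null. When $\Z^x$ visits $F_j \setminus \{0\}$ during $(q, \rho_q)$, one extracts a rational $r \in (q, \rho_q)$ with $(\Z^x)^j_r > 0$ lying in the last excursion of $(\Z^x)^j$ away from zero before $\rho_q$, applies the same polarity argument to the displacement $\widetilde{W}_{\rho_q} - \widetilde{W}_r = -\sigma^{-1} \Z_r^x \neq 0$ on the excursion interval, and takes a countable union over $r \in \Q$.

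The main obstacle I anticipate is the residual sub-case in which the zero set of $(\Z^x)^j$ accumulates at $\rho_q$ from the left on $A_{q,i}$, so that no rational $r$ sits in a well-defined ``last'' excursion of $(\Z^x)^j$. In this case one must instead exploit the accumulating sequence of stopping times $s_n$ (the $n$-th hit of $F_j$ by $\Z^x$ after $q$, with $s_n \uparrow \rho_q$), and verify via the strong Markov property at each $s_n$ and the scaling invariance of the driftless reflected process that the probability of the restarted RBM from $\Z_{s_n}^x \in F_j \setminus \{0\}$ hitting the origin without first visiting $F_i \setminus \{0\}$ is zero; this in turn bootstraps from a polarity-type argument applied to the first hit of $F_i$ by the $(\Z^x)^i$-coordinate. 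Carefully tracking $\F_{s_n}$-measurability uniformly over the countable union in $n$, and handling the degenerate possibility that $\rho_q$ is systematically a level time of $v \cdot \widetilde{W}$, are the most technical parts of the argument.
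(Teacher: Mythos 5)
Your reduction to the events $A_{q,i}$ is sound, and you have correctly located the difficulty: the sub-case of (a) in which $\Z^x$ bounces off $F_j\setminus\{0\}$ at times accumulating at $\rho_q$ from the left. But that sub-case is not a technical residue to be mopped up -- it is the entire mathematical content of the lemma, and your proposal does not resolve it. Your only quantitative tool is polarity of points for planar Brownian motion, which applies exactly on intervals where neither $L^i$ nor $L^j$ moves, i.e.\ where $\Z^x$ is an unreflected Brownian motion. In the accumulating case there is no final excursion interval of $(\Z^x)^j$ on which to run that argument: the process reaches the origin only through infinitely many oblique reflections off $F_j$, and whether this can happen with positive probability is precisely the question of attainability of a distinguished boundary point for an obliquely reflected Brownian motion in a half-plane. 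The answer depends on the reflection direction and is not decidable by a polarity argument for the free Brownian motion; the appeal to ``a polarity-type argument applied to the first hit of $F_i$ by the $(\Z^x)^i$-coordinate'' is circular, since the restarted process from $\Z^x_{s_n}\in F_j\setminus\{0\}$ hitting the origin before $F_i\setminus\{0\}$ is exactly the event you are trying to rule out. The paper closes this gap differently: since $L^i$ is frozen up to the first hit of $F_i$, the stopped process $\sigma^{-1}\Z^x_{\cdot\wedge\rho_i}$ is (after Girsanov) an RBM in the half-plane $\{z:\ip{z,\sigma'e_j}\geq 0\}$ with oblique reflection $\sigma^{-1}d_j$, and one invokes \cite[Theorem 2.2]{Varadhan1984} (a wedge of angle $\xi=\pi$ with $\theta_1=-\theta_2$, hence $\alpha=0$) to conclude the origin is not attained in finite time. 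Some input of this strength is indispensable.

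Two secondary points. First, your ingredient (b) is essentially condition 3 of the boundary jitter property (Lemma \ref{lem:jitter3}); its positivity step is not a routine support-theorem exercise near the corner, and in the paper it requires the Cauchy-process/first-passage argument of Lemma \ref{lem:cauchy} (and, note, the paper's proof of condition 3 itself uses condition 4, so importing it here would have to be done with care to avoid circularity). Second, your global decomposition -- Markov property at rational times plus a countable union over restarts at hitting times of the corner -- is structurally the same as the paper's, which works with $\rho_0^{s,x}=\rho_i^{s,x}<\infty$ for rational $s$; that part of the proposal is fine.
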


\begin{proof}
For $x\in G$ define
	$$\rho_0^{s,x}\doteq\inf\lcb t>s:\Z_t^x\in\U\rcb,\qquad s\in[0,\infty),$$
and for $i=1,2$, define
	$$\rho_i^{s,x}\doteq\inf\lcb t>s:\Z_t^x\in F_i\rcb,\qquad s\in[0,\infty).$$
Clearly $\rho_i^{s,x}\leq\rho_0^{s,x}$ for $i=1,2$ and $s\in(0,\infty)$. For each $x\in G$,
\begin{align*}
	&\lcb \Z^x\text{ satisfies condition 4 of the boundary jitter property}\rcb\\
	&\qquad=\bigcap_{s\in\Q\cap(0,\infty)}\bigcap_{i=1,2}\lcb\rho_0^{s,x}=\rho_i^{s,x}<\infty\rcb^c.
\end{align*}
Fix $x\in G$, $s\in\Q\cap(0,\infty)$ and $i\in\{1,2\}$. It clearly suffices to show that $\P(\rho_0^{s,x}=\rho_i^{s,x}<\infty)=0$. By the Markov property and the fact that $\P(\Z_s^x\in\partial G)=0$ (see, e.g., \cite[Lemma 5.7]{Budhiraja2007} and its proof), we have
	$$\P(\rho_0^{s,x}=\rho_i^{s,x}<\infty)=\int_{G^\circ}\P(\rho_0^{0,y}=\rho_i^{0,y}<\infty)\P(\Z_s^x\in dy).$$
Thus, we are left to show that for each $y\in G^\circ$ and $i=1,2$, $\P(\rho_0^{0,y}=\rho_i^{0,y}<\infty)=0$; or equivalently, $\P(\Z_{\rho_i^{0,y}\wedge T}^y\in\U)=0$ for all $T<\infty$. Fix $y\in G^\circ$ and $T<\infty$. By Girsanov's theorem, there exists a probability measure $\widetilde{\P}$ on $(\Omega,\F)$ equivalent to $\P$ such that under $\widetilde{\P}$, $\widetilde{W}=\{\sigma^{-1}bt+W_t,t\in[0,T]\}$ is a standard two-dimensional Brownian motion on $[0,T]$. By Remark \ref{rmk:local}, $[L^y]^i$ is constant on $[0,\rho_i^{0,y}]$. Let $j\doteq 3-i$. Then under $\widetilde{\P}$, the stopped process 
	$$\widetilde{\Z}\doteq\lcb\sigma^{-1}\Z_{t\wedge\rho_i^{0,y}}^y,t\in[0,T]\rcb=\lcb\sigma^{-1}y+\widetilde{W}_t+\lsb L_{t\wedge\rho_i^{0,y}}^y\rsb^j(\sigma^{-1}d_j),t\in[0,T]\rcb$$
is a two-dimensional RBM on $[0,T]$ with zero drift, identity covariance matrix on the half plane $\{z\in\R^2:\ip{z,\sigma'e_j}\geq0\}$ with oblique direction of reflection $\sigma^{-1}d_j$ at the boundary $\{z\in\R^2:\ip{z,\sigma'e_j}=0\}$, starting at $\sigma^{-1}y\neq0$ and stopped at the first time it reaches the line $\{z\in\R^2:\ip{z,\sigma'e_i}=0\}$. Additionally, $\widetilde{\Z}_T\in\U$ if and only if $\Z_{\rho_i^{0,y}\wedge T}^y\in\U$. By \cite[Theorem 2.2]{Varadhan1984}, with $\xi=\pi$, $\theta_1=-\theta_2$ and $\alpha=0$, an RBM on the half plane with zero drift, identity covariance matrix and nonzero starting position a.s.\ does not reach the origin in finite time. Therefore, $\widetilde{\P}(Z_{\rho_i^{0,y}\wedge T}\in\U)=0$. Since $\P$ and $\widetilde{\P}$ are equivalent, this completes the proof.
\end{proof}

In preparation for the proof that $\Z^x$ satisfies condition 3 of the boundary jitter property, we state the following useful lemma. Recall the definition of a Cauchy process (see, e.g., \cite[Section 3]{Spitzer1958}).

\begin{lem}\label{lem:cauchy}
Let $\zeta=\{\zeta_t,t\in[0,\infty)\}$ be a one-dimensional Cauchy process on $(\Omega,\F,\{\F_t\},\P)$. Then
	\be\label{eq:cauchylimsup}\P\lb\limsup_{t\downarrow0}\frac{\zeta_t}{t}=\infty\rb=\P\lb\liminf_{t\downarrow0}\frac{\zeta_t}{t}=-\infty\rb=1.\ee
\end{lem}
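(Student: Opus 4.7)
The approach is to exploit the 1-self-similarity of the Cauchy process $\zeta$ (namely, $\{c\zeta_{t/c}\}_{t\geq 0}\stackrel{d}{=}\{\zeta_t\}_{t\geq 0}$ for each $c>0$) together with Blumenthal's 0-1 law for L\'{e}vy processes. I will prove that $\P(\limsup_{t\downarrow 0}\zeta_t/t=\infty)=1$; the statement for the $\liminf$ follows by applying the same argument to $-\zeta$, which is again a Cauchy process by the symmetry of the Cauchy law.

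Fix $M>0$ and, for each $n\in\N$, define
	$$B_n\doteq\lcb\sup_{t\in(2^{-n-1},2^{-n}]}(\zeta_t-Mt)>0\rcb.$$
By the 1-self-similarity of $\zeta$, the process $\{2^n\zeta_{2^{-n}s}\}_{s\in[0,1]}$ has the same law as $\{\zeta_s\}_{s\in[0,1]}$, so the substitution $s=2^nt$ gives
	$$\P(B_n)=\P\lb\sup_{s\in(1/2,1]}(\zeta_s-Ms)>0\rb\doteq p,$$
and $p>0$ since $\zeta_1$ has a Cauchy distribution and hence $\P(\zeta_1>M)>0$. The reverse Fatou inequality then yields
	$$\P\lb\limsup_{n\to\infty}B_n\rb\geq\limsup_{n\to\infty}\P(B_n)=p>0.$$
Since $\zeta$ has c\`{a}dl\`{a}g paths, $B_n$ is $\F_{2^{-n}}$-measurable (the supremum may be taken over a countable dense subset), so $\limsup_nB_n=\cap_N\cup_{n\geq N}B_n$ lies in the germ $\sigma$-algebra $\F_{0+}=\cap_{t>0}\F_t$. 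Blumenthal's 0-1 law for the L\'{e}vy process $\zeta$ therefore forces $\P(\limsup_nB_n)=1$. On this event there exist arbitrarily small $t$ with $\zeta_t/t>M$, so $\limsup_{t\downarrow 0}\zeta_t/t\geq M$ a.s. Intersecting over $M\in\N$ gives $\limsup_{t\downarrow 0}\zeta_t/t=\infty$ a.s., as required.

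The only mild technical obstacle is the germ-measurability step: one must confirm that the sliding-window events $B_n$, which live on the interval $(2^{-n-1},2^{-n}]$, can be arranged so that $\limsup_nB_n\in\F_{0+}$. This is automatic from the c\`{a}dl\`{a}g regularity of $\zeta$ and the fact that the windows shrink to $0$. Everything else is a direct application of self-similarity (to make the sequence $\P(B_n)$ constant), Borel-Cantelli-type reasoning (via reverse Fatou), and Blumenthal's 0-1 law to upgrade positive probability to probability one.
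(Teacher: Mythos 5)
Your argument is correct. The paper does not actually prove this lemma; it simply cites \cite[Chapter VIII, Theorem 5(iii)]{Bertoin1996}, so your self-contained proof is a genuine addition rather than a variant of the paper's route. The three ingredients you use -- exact $1$-self-similarity of the Cauchy process to make $\P(B_n)$ constant in $n$, the reverse Fatou inequality to get $\P(\limsup_n B_n)>0$ (correctly avoiding the second Borel--Cantelli lemma, which would fail here since the $B_n$ are not independent), and Blumenthal's $0$--$1$ law to upgrade to probability one -- are exactly the standard machinery behind such small-time growth results, and each step checks out: the change of variables $t=2^{-n}s$ does turn $B_n$ into the fixed event $\{\sup_{s\in(1/2,1]}(\zeta_s-Ms)>0\}$, whose probability is bounded below by $\P(\zeta_1>M)>0$; the c\`adl\`ag regularity does make $B_n$ measurable with respect to the path of $\zeta$ up to time $2^{-n}$; and the reduction of the $\liminf$ statement to the $\limsup$ statement via $-\zeta$ is legitimate because the Cauchy family is closed under negation. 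One small point of hygiene: Blumenthal's $0$--$1$ law gives triviality of the germ $\sigma$-algebra of the \emph{natural} filtration of $\zeta$, not of an arbitrary ambient filtration $\{\F_t\}$ (whose $\F_0$ could already carry nontrivial events); since your events $\limsup_nB_n$ manifestly lie in $\bigcap_{N}\sigma(\zeta_s:s\leq 2^{-N})$, the argument goes through, but you should state the $0$--$1$ law for that $\sigma$-algebra rather than for $\bigcap_{t>0}\F_t$.
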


\begin{proof}
See, for example, \cite[Chapter VIII, Theorem 5(iii)]{Bertoin1996}.
\end{proof}

We now prove condition 3 of the boundary jitter property. Since $\U=\{0\}$, we only need to consider the RBM starting at the origin.

\begin{lem}\label{lem:jitter3}
$\P$ a.s. $\Z^0$ satisfies condition 3 of the boundary jitter property.
\end{lem}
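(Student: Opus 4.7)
The goal is to show $\P$-a.s.\ that for each $i \in \allN(0) = \{1, 2\}$ and each $\delta > 0$, there exists $u \in (0, \delta)$ with $\allN(\Z_u^0) = \{i\}$, equivalently $\Z_u^0 \in F_i \setminus \{0\}$. By relabeling the faces and reflection vectors, it suffices to establish the claim for $i = 1$; the case $i = 2$ is analogous.

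First I would apply Girsanov's theorem, exactly as in Lemmas \ref{lem:jitter1} and \ref{lem:jitter4}, to obtain a probability measure $\widetilde{\P}$ on each $\F_T$ equivalent to $\P$ under which the drift vanishes. Since the event in question is invariant under equivalent measure change, I may assume $b = 0$ henceforth. For $\delta > 0$ define $A_\delta \doteq \{\omega : \exists\, u \in (0, \delta),\; \allN(\Z_u^0(\omega)) = \{1\}\}$; since $\delta \mapsto A_\delta$ is decreasing, $A \doteq \bigcap_{k \in \N} A_{1/k}$ lies in the germ $\sigma$-algebra $\bigcap_{t > 0} \F_t = \F_{0+}$. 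Because $\Z_0^0 = 0$ is deterministic, Blumenthal's 0-1 law yields $\P(A) \in \{0, 1\}$, so it suffices to show $\P(A_\delta) > 0$ for some $\delta > 0$.

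This is where Lemma \ref{lem:cauchy} enters. The strategy is to embed a Cauchy process in $\Z^0$ via a random time change based on an inverse local time on one face. Since the coordinate process $\Z^{0,1} = \sigma^1 W + L^1 + d_2^1 L^2$ is a nonnegative continuous semimartingale, it has a semimartingale local time $\Lambda^1$ at $0$, and its right-continuous inverse $\gamma_t \doteq \inf\{s \geq 0 : \Lambda_s^1 > t\}$ is increasing and right-continuous; at its continuity points $t$, $\Z_{\gamma_t}^{0,1} = 0$, so $\Z_{\gamma_t}^0 \in F_1$. After a linear change of coordinates $\widetilde{Z} = \sigma^{-1} \Z^0$ normalizing the covariance to the identity (so the orthant becomes a wedge), the classical trace-at-inverse-local-time computation (see, e.g., \cite[Chapter VIII]{Bertoin1996}) identifies a Cauchy process $\zeta$ inside $\Z_{\gamma_t}^{0,2}$, in the sense that $\Z_{\gamma_t}^{0,2}$ equals a nondecreasing part plus $\zeta$ plus a controlled error. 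Lemma \ref{lem:cauchy} then gives $\limsup_{t \downarrow 0} \zeta_t / t = +\infty$ almost surely, so for a sequence $t_n \downarrow 0$, $\Z_{\gamma_{t_n}}^{0,2} > 0$. Restricting to $t_n$ that are continuity points of $\gamma$ yields $\allN(\Z_{\gamma_{t_n}}^0) = \{1\}$ and hence $\P(A) = 1$.

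The main obstacle is isolating the Cauchy component of $\Z_{\gamma_t}^{0,2}$ rigorously: the oblique reflection couples the local times $L^1$ and $L^2$ through the reflection matrix $R$, so $\Z^{0,2}$ is not simply a trace of planar Brownian motion on $\{x^1 = 0\}$. Overcoming this requires the strong Markov property applied at $\gamma_t$ together with a careful excursion-theoretic analysis near the corner to decompose the martingale part of $\Z^{0,2}$ at $\gamma_t$ into an independent Cauchy component plus a controllable remainder driven by the $L^2$ contribution.
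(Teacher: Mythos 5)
Your plan misses the central difficulty of this lemma, and the step you flag as ``the main obstacle'' is not the only gap. The crux of condition 3 here is showing that $\Z^0$ returns to the face $F_1$ at arbitrarily small times at all, i.e., that $\rho_1^0\doteq\inf\{t>0:\Z_t^0\in F_1\}$ is a.s.\ zero; once that is known, the upgrade from ``visits $F_1$'' to ``visits $F_1\setminus\{0\}$'' follows from condition 4 (Lemma \ref{lem:jitter4}), which is exactly how the paper proceeds. Your construction via the inverse $\gamma_t$ of the local time $\Lambda^1$ of $\Z^{0,1}$ at $0$ presupposes this fact: unless local time accumulates on $F_1$ immediately after time $0$, one has $\gamma_t\geq\rho_1^0>0$ for every $t>0$ on $\{\rho_1^0>0\}$, and the times $\gamma_{t_n}$ you produce do not tend to $0$. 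Whether $\Z^{0,1}$ returns to $0$ immediately is genuinely delicate, because the oblique push $\ip{d_2,e_1}L^2$ from face $F_2$ may drive the first coordinate away from zero. This is precisely what the paper's argument resolves: on $\{\rho_1^0>0\}$ only face $F_2$ acts, so $\Z^0$ has the explicit one-dimensional Skorokhod form \eqref{eq:Zt0}, positivity of the first coordinate forces $\ip{\X_t^0,e_1}/\sup_{s\in[0,t]}(-\ip{\X_s^0,e_2})>-\ip{d_2,e_1}$, and this contradicts Lemma \ref{lem:cauchy} applied to the genuine Cauchy process $c\mapsto B_{T_c}^1$ built from the \emph{unconstrained} Brownian motion $\X^0$. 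Your proposal never confronts this step, so it is circular exactly where the real work lies.

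Two further problems. First, the reduction ``it suffices to show $\P(A_\delta)>0$ for some $\delta>0$'' is not valid: Blumenthal's law applies to $A=\cap_k A_{1/k}$, and $\P(A)=\lim_{\delta\downarrow0}\P(A_\delta)$ can vanish even if each $\P(A_\delta)$ is positive; you would need a uniform lower bound or a scaling argument (available after reducing to $b=0$, but not supplied). Second, the decomposition of $\Z^{0,2}_{\gamma_t}$ into an independent Cauchy component plus a controllable remainder is asserted rather than proved, and it is not a routine instance of the classical trace computation: with oblique reflection in a wedge the trace on a face need not contain an independent Cauchy part in any evident sense. The paper sidesteps all of this by extracting the Cauchy process from $\X^0$ rather than from $\Z^0$, where Spitzer's identification applies verbatim and Lemma \ref{lem:cauchy} can be invoked directly.
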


\begin{proof}
Define
	\be\label{eq:rhoi}\rho_i^0\doteq\inf\lcb t>0:\Z_t^0\in F_i\rcb,\qquad i=1,2.\ee
According to Lemma \ref{lem:jitter4}, $\P$ a.s.\ the following statement holds: given $i\in\{1,2\}$, if $\Z_t^0\in\U$ for some $t>0$, then $\allN(\Z_s^0)=\{i\}$ for some $s\in[0,t)$. Thus, it suffices to show that $\P(\rho_1^0=0)=\P(\rho_2^0=0)=1$. We prove $\P(\rho_1^0=0)$ with the proof of $\P(\rho_2^0=0)$ being identical. 

Define $\X_t^0\doteq bt+\sigma W_t$ for $t\geq0$ and set $\Y^0\doteq\Z^0-\X^0$ so that $(\Z^0,\Y^0)$ solves the SP for $\X^0$. On the set $\{\rho_1^0>0\}$, $\ip{\Z_t^0,e_1}>0$ for all $t\in(0,\rho_1^0)$, or equivalently, $\allN(\Z_t^0)\subseteq\{2\}$ for all $t\in(0,\rho_1^0)$. Therefore, by condition 3 of the ESP and the fact $\ip{d_2,e_2}>0$, $\{\ip{\Y_t^0,e_2},t\in[0,\rho_1^0)\}$ is nondecreasing and can only increase when $\ip{\Z_t^0,e_2}=0$. Since $\ip{\Z_t^0,e_2}\geq0$ for all $t\in[0,\rho_1^0)$, it follows that $(\ip{\Z^0,e_2},\ip{\Y^0,e_2})$ solves the one-dimensional SP for $\ip{\X^0,e_2}$ on $[0,\rho_1^0)$. This, combined with the explicit form of the one-dimensional SM given in \eqref{eq:sm1}, \eqref{eq:rhoi}, condition 3 of the ESP and the fact that $\ip{d_2,e_2}=1$, implies that, on the set $\{\rho_1^0>0\}$,
	\be\label{eq:Zt0}\Z_t^0=\X_t^0+\sup_{s\in[0,t]}(-\ip{\X_s^0,e_2}) d_2,\qquad t\in[0,\rho_1^0).\ee
On the set $\{\rho_1^0>0\}\cap\{\ip{\X_t^0,e_2}<0\text{ i.o. as }t\downarrow0\}$, we rearrange \eqref{eq:Zt0} to obtain
	$$\frac{\ip{\X_t^0,e_1}}{\sup_{s\in[0,t]}(-\ip{\X_s^0,e_2})}>-\ip{d_2,e_1},\qquad t\in(0,\rho_1^0).$$
However, we will show that in fact $\widetilde{\P}(\ip{\X_t^0,e_2}<0\text{ i.o. as }t\downarrow0)=1$ and
	\be\label{eq:liminfXe2}\widetilde{\P}\lb\liminf_{t\downarrow0}\frac{\ip{\X_t^0,e_1}}{\sup_{s\in[0,t]}(-\ip{\X_s^0,e_2})}=-\infty\rb=1.\ee
This will imply that $\P(\rho_1^0=0)=1$.

Fix $T<\infty$. By Girsanov's theorem, there exists a probability measure $\widetilde{\P}$ on $(\Omega,\F)$ equivalent to $\P$ such that under $\widetilde{\P}$,
	$$\widetilde{W}\doteq\lcb\sigma^{-1}\X_t^0,t\in[0,T]\rcb=\lcb\sigma^{-1}bt+W_t,t\in[0,T]\rcb$$
is a standard two-dimensional Brownian motion on $[0,T]$. It follows that, under $\widetilde{\P}$, $\{\ip{\X_t^0,e_2},t\in[0,T]\}=\{\ip{\widetilde{W}_t,\sigma'e_2},t\in[0,T]\}$ is a one-dimensional Brownian motion (with zero drift and infinitesimal variance $|\sigma'e_2|^2$) on $[0,T]$. By \eqref{eq:Cu}, we have 
	\be\label{eq:Wsigmae2io}\widetilde{\P}(\ip{\X_t^0,e_2}<0\text{ i.o. as }t\downarrow0)=\P(\cap_{u\in\Q\cap(0,T)}C_u^c)=1.\ee
Now let $v_2\doteq|\sigma'e_2|^{-1}\sigma'e_2\in\mathbb{S}^1$ and choose $v_1\in\mathbb{S}^1$ such that $\ip{v_1,v_2}=0$. Since $\sigma$ is nondegenerate, there exist $\alpha,\beta\in\R$ with $\alpha\neq0$ such that $\sigma'e_1=\alpha v_1+\beta v_2$. Observe that under $\widetilde{\P}$, $B^1\doteq\{\ip{\widetilde{W}_t,v_1},t\in[0,T]\}$ and $B^2\doteq\{\ip{\widetilde{W}_t,v_2},t\in[0,T]\}$ are independent one-dimensional Brownian motions on $[0,T]$ and, for $t\in[0,T]$,
\begin{align*}
	\frac{\ip{\X_t^0,e_1}}{\sup_{s\in[0,t]}(-\ip{\X_s^0,e_2})}&=\frac{\ip{\widetilde{W}_t,\sigma'e_1}}{\sup_{s\in[0,t]}(-\ip{\widetilde{W}_s,\sigma'e_2})}=\frac{\alpha B_t^1+\beta B_t^2}{|\sigma'e_2|\sup_{s\in[0,t]}(-B_s^2)}.
\end{align*}
For $c>0$, let $T_c\doteq\inf\{t>0:-B_t^2=c\}$. Then, to show \eqref{eq:liminfXe2}, it suffices to show that
	\be\label{eq:Btaucinfty}\widetilde{\P}\lb\liminf_{c\downarrow0}\frac{\alpha B_{T_c}^1-\beta c}{|\sigma'e_2|c}=-\infty\rb=\widetilde{\P}\lb\liminf_{c\downarrow0}\frac{\alpha B_{T_c}^1}{c}=-\infty\rb=1.\ee
Since $\{B_{T_c}^1,c\in[0,T]\}$ is a Cauchy process on $[0,T]$ (see, e.g., \cite[Lemma 3]{Spitzer1958}), \eqref{eq:Btaucinfty} follows from Lemma \ref{lem:cauchy}. This proves \eqref{eq:liminfXe2}, which along with \eqref{eq:Wsigmae2io}, completes the proof of the lemma.
\end{proof}

\begin{proof}[Proof of Proposition \ref{prop:rbmjitter}]
Let $x\in G$. By Lemma \ref{lem:jitter1}, Lemma \ref{lem:jitter2}, Lemma \ref{lem:jitter4}, Lemma \ref{lem:jitter3} and the fact that $\U=\{0\}$, $\P$ a.s.\ $(\Z^x,\Y^x)$ satisfies the boundary jitter property.
\end{proof}

\subsection{Ramifications of the boundary jitter property}\label{sec:ramificationsjitter}

The following two lemmas state consequences of the boundary jitter property that are used in the proof that the directional derivative of the ESM exists at times $t\in[0,\infty)$ that $\z(t)\in\U$, the nonsmooth part of the boundary defined in \eqref{eq:Uset}.

\begin{lem}\label{lem:xij}
Given $U\in(0,\infty]$ and $\z\in\cts([0,U):G)$, suppose $\z$ satisfies condition 4 of the boundary jitter property (Definition \ref{def:jitter}) on $[0,U)$ and $0\leq S<T<U$ are such that $\z(T)\in\U$ and $\allN(\z(t))\subsetneq\allN(\z(T))$ for all $t\in[S,T)$. Then there is a nested increasing sequence 
	\be\label{eq:Sxi0} S\doteq\xi_0<s_1\leq\xi_1<\cdots<s_j\leq\xi_j<\cdots<T\ee
such that $\xi_j\to T$ as $j\to\infty$ and for each $j\in\N$, $\z(\xi_j)\in\partial G$ and 
	\be\label{eq:xijsj}\bigcup_{t\in[\xi_{j-1},s_j)}\allN(\z(t))\subseteq\allN(\z(\xi_{j-1}))\qquad\text{and}\qquad\bigcup_{t\in[s_j,\xi_j]}\allN(\z(t))\subseteq\allN(\z(\xi_j)).\ee
\end{lem}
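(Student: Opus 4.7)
My plan is to construct the sequence inductively using a ``first face escape'' rule, combined with Lemma~\ref{lem:allNusc} (upper semicontinuity of $\allN(\cdot)$) and condition 4 of the boundary jitter property applied at~$T$. I set $\xi_0 := S$ and, given $\xi_{j-1}$, define
\[
\xi_j := \inf\bigl\{u \in (\xi_{j-1}, T) : \allN(\z(u)) \not\subseteq \allN(\z(\xi_{j-1}))\bigr\}.
\]
The defining set is a finite union of the closed sets $\z^{-1}(F_i)$ taken over $i \notin \allN(\z(\xi_{j-1}))$, hence is closed in $(\xi_{j-1},T)$; and it is nonempty because the hypothesis $\allN(\z(\xi_{j-1})) \subsetneq \allN(\z(T))$ lets me fix $i \in \allN(\z(T)) \setminus \allN(\z(\xi_{j-1}))$ and invoke condition 4 at $T$ to produce times $u \in (\xi_{j-1},T)$ arbitrarily close to $T$ with $\allN(\z(u)) = \{i\}$. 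Lemma~\ref{lem:allNusc} applied at $\z(\xi_{j-1})$, combined with continuity of $\z$, furnishes an open right-neighborhood of $\xi_{j-1}$ on which $\allN(\z(\cdot)) \subseteq \allN(\z(\xi_{j-1}))$, so $\xi_j > \xi_{j-1}$; closedness gives that the infimum is attained, so $\allN(\z(\xi_j)) \not\subseteq \allN(\z(\xi_{j-1}))$ and in particular $\z(\xi_j) \in \partial G$.

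To pick the intermediate time, Lemma~\ref{lem:allNusc} at $\z(\xi_j)$ produces $\rho_j > 0$ with $\allN(\z(t)) \subseteq \allN(\z(\xi_j))$ on $(\xi_j - \rho_j, \xi_j + \rho_j)$, and I choose any $s_j \in \bigl(\max(\xi_{j-1}, \xi_j - \rho_j),\, \xi_j\bigr)$. The first union condition holds on $[\xi_{j-1}, s_j) \subseteq [\xi_{j-1}, \xi_j)$ directly from the definition of $\xi_j$, and the second on $[s_j, \xi_j] \subseteq (\xi_j - \rho_j, \xi_j]$ from the choice of $s_j$ relative to $\rho_j$.

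The main obstacle is to establish $\xi_j \to T$. Being monotone and bounded above by $T$, the sequence converges to some $\xi^* \in (S, T]$; I must rule out $\xi^* < T$. Setting $A_j := \allN(\z(\xi_j))$, continuity and Lemma~\ref{lem:allNusc} at $\z(\xi^*)$ give $A_j \subseteq \allN(\z(\xi^*))$ for all $j$ sufficiently large. If $\z(\xi^*) \in G^\circ$, then $A_j = \emptyset$ contradicts $\z(\xi_j) \in \partial G$; if $\z(\xi^*) \in \S$, then $A_j$ is eventually the unique singleton $\allN(\z(\xi^*))$, giving $A_j \subseteq A_{j-1}$ and contradicting the escape rule. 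The case $\z(\xi^*) \in \U$ is the crux and requires the most delicate argument; here I expect to combine (i) the hypothesis $\allN(\z(\xi^*)) \subsetneq \allN(\z(T))$, (ii) condition 4 at $T$ applied to an index $i \in \allN(\z(T)) \setminus \allN(\z(\xi^*))$, yielding singleton visit times $t_k \uparrow T$ which by Lemma~\ref{lem:allNusc} at $\z(\xi^*)$ are bounded away from $\xi^*$ by some fixed $\delta > 0$, and (iii) condition 4 applied at $\xi^*$ itself (valid since $\z(\xi^*) \in \U$) to produce singleton visit times approaching $\xi^*$ from the left, one for each face in $\allN(\z(\xi^*))$. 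The constraint $A_{j+1} \not\subseteq A_j$ restricts the trajectory of $A_j$ within the finite subset lattice of $\allN(\z(\xi^*))$, and the expected conclusion, via a careful combinatorial argument, is that once $A_{j-1}$ contains all of $\allN(\z(\xi^*))$ the right-stability radius at $\xi_{j-1}$ extends past $\xi^* + \delta$, forcing $\xi_j > \xi^* + \delta$ and contradicting $\xi_j \to \xi^*$.
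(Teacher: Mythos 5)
Your construction of $\xi_j$ as the \emph{first} escape time after $\xi_{j-1}$ differs structurally from the paper's: there, $s_j$ is the first escape time and $\xi_j$ is then defined as a \emph{supremum}, namely $\xi_j=\sup\{t\in[s_j,T):\allN(\z(s))\subseteq\allN(\z(t))\ \forall s\in[s_j,t]\}$. This difference is not cosmetic; it is exactly what makes the convergence $\xi_j\to T$ provable, and it is where your argument has a genuine gap. Consider a path for which some $\xi^*\in(S,T)$ satisfies $\z(\xi^*)\in\U$ with, say, $\allN(\z(\xi^*))=\{1,2\}\subsetneq\allN(\z(T))$ (admissible under the hypotheses when $|\allN(\z(T))|\geq 3$, or more generally whenever $\U$ is not a single point). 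Condition 4 applied at $\xi^*$ forces the path to visit the relative interiors of $F_1$ and $F_2$ alternately, infinitely often, in every left neighborhood of $\xi^*$. Under your first-escape rule the sets $A_j=\allN(\z(\xi_j))$ then alternate between $\{1\}$ and $\{2\}$ and the escape times accumulate at $\xi^*<T$. Your proposed contradiction rests on the premise that ``once $A_{j-1}$ contains all of $\allN(\z(\xi^*))$'' a stability radius carries you past $\xi^*$ --- but in this scenario $A_{j-1}$ \emph{never} contains all of $\allN(\z(\xi^*))$, so the premise never fires and no contradiction is obtained. The step you flag as requiring ``a careful combinatorial argument'' is thus not merely unfinished; as formulated it cannot be completed for your choice of $\xi_j$.

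The paper's supremum definition repairs precisely this: at the accumulation point $\xi^*$ one has $\allN(\z(s))\subseteq\allN(\z(\xi^*))$ for all $s\in[s_j,\xi^*]$, so the paper's $\xi_j$ jumps to (at least) $\xi^*$ in a single step rather than stalling below it. The convergence proof then goes through cleanly: if $\xi_j\uparrow\xi_\infty<T$, upper semicontinuity of $\allN(\cdot)$ gives $\allN(\z(\xi_j))\subseteq\allN(\z(\xi_\infty))$ for large $j$, the nested inclusions yield $\allN(\z(t))\subseteq\allN(\z(\xi_\infty))$ for all $t\in[s_{j_0},\xi_\infty]$, and the maximality in the definition of $\xi_{j_0}$ forces $\xi_\infty\leq\xi_{j_0}$, a contradiction. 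The rest of your construction (closedness and nonemptiness of the escape set via condition 4 at $T$, the choice of $s_j$ via Lemma \ref{lem:allNusc}, and the cases $\z(\xi^*)\in G^\circ$ and $\z(\xi^*)\in\S$) is sound, but the argument cannot be salvaged without replacing the first-escape definition of $\xi_j$ by something that absorbs accumulation points in $\U$, which is in effect the paper's supremum.
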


See Figure \ref{fig:jitter} (in Section \ref{sec:projdp}) for an illustration of a path $\z$ that satisfies condition 4 of the boundary jitter property with the sequence of times in \eqref{eq:Sxi0} marked.

\begin{proof}
To construct the nested increasing sequences \eqref{eq:Sxi0}, recursively define, for $j\in\N$, $s_j$ to be the first time after $\xi_{j-1}$ such that $\z(s_j)\in F_i$ for some $i\not\in\allN(\z(\xi_{j-1}))$; that is,
	\be\label{eq:sj}s_j\doteq\inf\{t\in(\xi_{j-1},T]:\allN(\z(t))\not\subseteq\allN(\z(\xi_{j-1}))\},\ee
and let
	\be\label{eq:tj}\xi_j\doteq\sup\{t\in[s_j,T):\allN(\z(s))\subseteq\allN(\z(t))\;\forall\;s\in[s_j,t]\}.\ee
We claim that for each $j\in\N$, 
	\be\label{eq:Sxij} S\leq\xi_{j-1}<s_j\leq\xi_j<T\qquad\text{and}\qquad\z(\xi_j)\in\partial G.\ee 
This, along with \eqref{eq:sj} and \eqref{eq:tj}, will establish \eqref{eq:Sxi0} and \eqref{eq:xijsj}. 

To prove the claim \eqref{eq:Sxij}, we use the principle of mathematical induction. By definition, $\xi_0\doteq S\in[S,T)$. Now suppose $\xi_{j-1}\in[S,T)$ for some $j\in\N$. By \eqref{eq:sj}, the continuity of $\z$ and the upper semicontinuity of $\allN(\cdot)$ (Lemma \ref{lem:allNusc}), we have $s_j>\xi_{j-1}$. To see that $s_j<T$, first choose $i\in\allN(\z(T))\setminus\allN(\z(\xi_{j-1}))$, where the set is nonempty because, by assumption, the strict inclusion $\allN(\z(t))\subsetneq\allN(\z(T))$ holds for all $t\in[S,T)$. According to condition 4 of the boundary jitter property, since $\z(T)\in\U$, there exists $t\in(\xi_{j-1},T)$ such that $\allN(\z(t))=\{i\}$. Thus, \eqref{eq:sj} implies that $s_j\leq t<T$. This, together with the previously established lower bound $s_j>\xi_{j-1}$, implies $s_j\in(\xi_{j-1},T)$. By definition \eqref{eq:tj}, $s_j\leq\xi_j$.

Proceeding, we show that $\xi_j<T$. Since $\z(T)\in\U$, condition 4 of the boundary jitter property, with $t=T$ and $\delta=T-s_j$, implies that for $i\in\allN(\z(T))$,
	$$t_0^i\doteq\inf\{t\in[s_j,T):\allN(\z(t))=\{i\}\}<T.$$
Consequently,
	\be\label{eq:t0maxiT}t_0\doteq\max_{i\in\allN(\z(T))}t_0^i<T\qquad\text{and}\qquad\bigcup_{u\in[s_j,t_0]}\allN(\z(u))=\allN(\z(T)).\ee
Combining \eqref{eq:t0maxiT} with the strict inclusion $\allN(\z(t))\subsetneq\allN(\z(T))$ for all $t\in[t_0,T)$, and the definition \eqref{eq:tj} for $\xi_j$, we have $\xi_j<t_0<T$. To see that $\z(\xi_j)\in\partial G$, first observe that \eqref{eq:sj} clearly implies that $\z(s_j)\in\partial G$. Since \eqref{eq:tj} and the continuity of $\z$ imply that $\allN(\z(s_j))\subseteq\allN(\z(\xi_j))$, this ensures $\z(\xi_j)\in\partial G$. Thus, we have proved the claim in \eqref{eq:Sxij}, and hence, that \eqref{eq:Sxi0} and \eqref{eq:xijsj} hold.

We are left to show that $\xi_j\to T$ as $j\to\infty$. Since \eqref{eq:Sxij} implies $\xi_j$ is increasing and bounded above by $T$, there exists $\xi_\infty\leq T$ such that $\xi_j<\xi_\infty$ for all $j\in\N$ and $\xi_j\to\xi_\infty$ as $j\to\infty$. By the continuity of $\z$ and the upper semicontinuity of $\allN(\cdot)$, there exists $j_0\in\N$ such that $\allN(\z(\xi_j))\subseteq\allN(\z(\xi_\infty))$ holds for all $j\geq j_0$. The inclusions in \eqref{eq:xijsj} imply that for each $j\in\N$, $\allN(\z(t))\subseteq\allN(\z(\xi_j))$ for all $t\in[s_j,s_{j+1})$. Combining these properties, we have 
	$$\bigcup_{t\in[s_{j_0},\xi_\infty]}\allN(\z(t))=\bigcup_{j\geq j_0}\bigcup_{t\in[s_j,s_{j+1})}\allN(\z(t))=\bigcup_{j\geq j_0}\allN(\z(\xi_j))\subseteq\allN(\z(\xi_\infty)).$$
Thus, $\allN(\z(t))\subseteq\allN(\z(\xi_\infty))$ for all $t\in[s_0,\xi_\infty]$. If $\xi_\infty<T$, then \eqref{eq:tj} would imply the contradiction $\xi_\infty\leq\xi_{j_0}$. Therefore, $\xi_\infty=T$.
\end{proof}

The following result describes the behavior of a path that starts at the nonsmooth part of the boundary and satisfies the boundary jitter property. The proof uses a time-reversal argument in conjunction with Lemma \ref{lem:xij}.

\begin{lem}\label{lem:chij}
Given $\z\in\cts([0,\infty):G)$, suppose $\z$ satisfies conditions 3 and 4 of the boundary jitter property (Definition \ref{def:jitter}), $\z(0)\in\U$ and $T\in(0,\infty)$ is such that $\allN(\z(t))\subsetneq\allN(\z(0))$ for all $t\in(0,T)$. Then there is a nested decreasing sequence
	\be\label{eq:chi0u1} T>\chi_0>u_1\geq\chi_1>\cdots>u_j\geq\chi_j>\cdots>0\ee
such that $\chi_0\in G^\circ$, $\chi_j\to0$ as $j\to\infty$ and for each $j\in\N$, $\z(\chi_j)\in\partial G$ and
	\be\label{eq:chijuj}\bigcup_{t\in[\chi_j,u_j]}\allN(\z(t))\subseteq\allN(\z(\chi_j))\qquad\text{and}\qquad\bigcup_{t\in(u_j,\chi_{j-1}]}\allN(\z(t))\subseteq\allN(\z(\chi_{j-1})).\ee
\end{lem}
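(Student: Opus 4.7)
The strategy is to reduce to Lemma \ref{lem:xij} by a time-reversal argument. I define $\tilde{\z}(t) \doteq \z(T-t)$ for $t \in [0,T]$. Then $\tilde{\z}$ is continuous, $\tilde{\z}(T) = \z(0) \in \U$, and the hypothesis $\allN(\z(t)) \subsetneq \allN(\z(0))$ for $t \in (0,T)$ translates directly to $\allN(\tilde{\z}(t)) \subsetneq \allN(\tilde{\z}(T))$ for $t \in [0,T)$.

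The key observation is that, under the change of variables $s = T - u$, condition 3 of the boundary jitter property for $\z$ at time $0$ rewrites as: for each $i \in \allN(\tilde{\z}(T))$ and each $\delta \in (0,T)$, there exists $s \in (T-\delta, T)$ with $\allN(\tilde{\z}(s)) = \{i\}$. This is exactly condition 4 of the boundary jitter property at the endpoint $T$ for $\tilde{\z}$. A close inspection of the proof of Lemma \ref{lem:xij} reveals that condition 4 is invoked there only at the endpoint $T$ (specifically, to establish the strict inequalities $s_j < T$ and $\xi_j < T$), while every other step relies solely on continuity of the path and the upper semicontinuity of $\allN(\cdot)$ (Lemma \ref{lem:allNusc}).

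Applying the construction of Lemma \ref{lem:xij} to $\tilde{\z}$ on $[\tilde{S}, T]$ for a suitable $\tilde{S} \in (0, T)$ with $\tilde{\z}(\tilde{S}) \in G^\circ$, I obtain an increasing sequence $\tilde{S} = \tilde{\xi}_0 < \tilde{s}_1 \leq \tilde{\xi}_1 < \cdots < T$ with $\tilde{\xi}_j \to T$ as $j \to \infty$, $\tilde{\z}(\tilde{\xi}_j) \in \partial G$ for $j \geq 1$, and the inclusions analogous to \eqref{eq:xijsj} for $\tilde{\z}$. Setting $\chi_j \doteq T - \tilde{\xi}_j$ and $u_j \doteq T - \tilde{s}_j$ then produces the decreasing sequence $T > \chi_0 > u_1 \geq \chi_1 > \cdots > 0$ with $\chi_j \to 0$ and $\z(\chi_j) = \tilde{\z}(\tilde{\xi}_j) \in \partial G$ for $j \geq 1$. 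Under time-reversal, the interval $[\tilde{\xi}_{j-1}, \tilde{s}_j)$ maps to $(u_j, \chi_{j-1}]$ and $[\tilde{s}_j, \tilde{\xi}_j]$ maps to $[\chi_j, u_j]$, from which the two inclusions in \eqref{eq:chijuj} follow at once.

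The main obstacle I anticipate is producing $\tilde{S} \in (0, T)$ with $\tilde{\z}(\tilde{S}) \in G^\circ$, equivalently, a $\chi_0 \in (0, T)$ with $\z(\chi_0) \in G^\circ$. I would establish this by combining condition 3 with the strict inclusion: condition 3 forces $\z$ to visit the smooth part of each face $F_i$ with $i \in \allN(\z(0))$ infinitely often near $0$, while the strict inclusion precludes $\z(t)$ from returning to the stratum indexed by $\allN(\z(0))$ itself. A continuous transition between the smooth parts of two distinct faces in $\allN(\z(0))$ must therefore pass through $G^\circ$, yielding arbitrarily small $\chi_0 > 0$ with $\z(\chi_0) \in G^\circ$.
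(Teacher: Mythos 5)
Your overall strategy---time-reversal plus the construction of Lemma \ref{lem:xij}, together with the observation that condition 3 for $\z$ at time $0$ becomes condition 4 for $\widetilde{\z}$ at the terminal time $T$---is essentially the paper's proof, and your reading that the proof of Lemma \ref{lem:xij} invokes condition 4 only at the endpoint is accurate (the paper instead concatenates the reversed path with a segment into $G^\circ$ so that Lemma \ref{lem:xij} applies verbatim). The gap is in your final step, the production of $\chi_0\in(0,T)$ with $\z(\chi_0)\in G^\circ$. The claim that a continuous transition between the relative interiors of two distinct faces $F_i$ and $F_j$, $i,j\in\allN(\z(0))$, must pass through $G^\circ$ is false in general: when $|\allN(\z(0))|\geq 3$, the path can stay on $\partial G$ throughout and cross from $\{\allN(\z(\cdot))=\{i\}\}$ to $\{\allN(\z(\cdot))=\{j\}\}$ via an intermediate stratum with index set $\{i,j\}$, which has cardinality two but is still a \emph{strict} subset of $\allN(\z(0))$, so the hypothesis $\allN(\z(t))\subsetneq\allN(\z(0))$ does not exclude it. Your argument is airtight only when $|\allN(\z(0))|=2$.

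The missing ingredient is condition 4 at \emph{interior} times, which is how the paper closes this step. By condition 3 choose $0<s<t<T$ with $\allN(\z(s))=\{i\}$, $\allN(\z(t))=\{j\}$, $i\neq j$, and suppose for contradiction that $\z(u)\in\partial G$ for all $u\in[s,t]$. Set $u_0\doteq\inf\{u\in[s,t]:i\notin\allN(\z(u))\}$. Upper semicontinuity of $\allN(\cdot)$ (Lemma \ref{lem:allNusc}) and the fact that $\allN(\z(u))\neq\emptyset$ on $[s,t]$ give $u_0>s$ and $i\in\allN(\z(u))$ for all $u\in[s,u_0)$; continuity of $u\mapsto\ip{\z(u),n_i}$ gives $i\in\allN(\z(u_0))$, whence $u_0<t$. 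Taking $u_k\downarrow u_0$ with $i\notin\allN(\z(u_k))\neq\emptyset$ and using upper semicontinuity again yields some $m\neq i$ with $m\in\allN(\z(u_0))$, so $\z(u_0)\in\U$. Condition 4 applied at $u_0$ with this $m$ and $\delta=u_0-s$ produces $s'\in(s,u_0)$ with $\allN(\z(s'))=\{m\}\not\ni i$, contradicting $i\in\allN(\z(s'))$. Hence $\z$ enters $G^\circ$ somewhere in $(s,t)$, which supplies the required $\chi_0$; with this repair the rest of your proof goes through.
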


\begin{proof}
Let $x\in G^\circ$ be arbitrary. Define the path $\widetilde{\z}\in\cts([0,\infty):G)$ to be the time reversal of $\z$ on $[0,T]$ concatenated with the line segment connecting $\z(0)$ and $x$ as follows:
\begin{equation}\label{eq:zreversed}
	\widetilde{\z}(t)\doteq
	\begin{cases}
		\z(T-t)&\text{for }t\in[0,T],\\
		\z(0)e^{T-t}+(x-\z(0))(1-e^{T-t})&\text{for }t\in(T,\infty).
	\end{cases}
\end{equation}
Note that the definition of $\widetilde{\z}(t)$ for $t\in[T,\infty)$ and the fact that $x\in G^\circ$ and $G$ is convex together ensure that $\widetilde{\z}(t)$ remains in $G^\circ$ for all $t\in(T,\infty)$. 

We now show that $\widetilde{\z}$ satisfies condition 4 of the boundary jitter property. Suppose $t\in(0,T]$ is such that $\allN(\widetilde{\z}(t))=\allN(\z(T-t))\in\U$. By condition 3 of the boundary jitter property, for each $i\in\allN(\widetilde{\z}(t))=\allN(\z(T-t))$ and every $\delta\in(0,t)$, there exists $u\in(T-t,T-t+\delta)\subsetneq(T-t,T)$ such that $\allN(\z(u))=\{i\}$, which implies that $\tilde{u}\doteq T-u\in(t-\delta,t)$ and $\allN(\widetilde{\z}(\tilde{u}))=\{i\}$. Since $\widetilde{\z}(t)\in G^\circ$ for all $t>T$, this proves that $\widetilde{\z}$ satisfies condition 4 of the boundary jitter property.

By condition 3 of the boundary jitter property, there exist $0<s<t<T$ and $i,j\in\allN(\z(0))$ such that $i\neq j$, $\allN(\z(s))=\{i\}$ and $\allN(\z(t))=\{j\}$. Then by condition 4 of the boundary jitter property, the continuity of $\z$ and the upper semicontinuity of $\allN(\cdot)$, there exists $S\in(0,T)$ such that $T-S\in(s,t)$ and $\z(T-S)\in G^\circ$. It follows that $\widetilde{\z}(t)=\z(T-t)\subsetneq\allN(\z(0))=\allN(\widetilde{\z}(T))$ for all $t\in[S,T)$. By Lemma \ref{lem:xij}, there is a nested increasing sequence as in \eqref{eq:Sxi0} such that $\xi_j\to T$ as $j\to\infty$ and for each $j\in\N$, $\widetilde{\z}(\xi_j)\in\partial G$ and \eqref{eq:xijsj} holds with $\widetilde{\z}$ in place of $\z$. For each $j\in\N$, set $\chi_j\doteq T-\xi_j$ and $u_j\doteq T-s_j$. It is then a straightforward to verify that these properties along with the definition \eqref{eq:zreversed} of $\widetilde{\z}$ imply that \eqref{eq:chi0u1} holds, $\chi_j\to0$ as $j\to\infty$ and for each $j\in\N$, $\z(\chi_j)=\widetilde{\z}(\xi_j)\in\partial G$ and \eqref{eq:chijuj} holds.
\end{proof}

\section{The derivative problem}\label{sec:dp}

In this section we discuss some useful properties of the DP and the associated DM, which were introduced in Definition \ref{def:dp}. Some of these properties are analogous to properties satisfied by the ESP that are stated in \cite[Section 2.1 and Section 3.1]{Ramanan2006}. Due to the similarity between the axiomatic framework for the DP and the axiomatic framework for the ESP, we are able to leverage arguments used to prove properties of the ESP to prove analogous properties for the DP. Throughout this section, fix an ESP $\{(d_i,n_i,c_i),i\in\allN\}$ and a solution $(\z,\y)$ to the ESP for $\x\in\cts_G$.

\subsection{Basic properties}

In this section we discuss basic properties of solutions to the DP. Our first result states that the DM is linear. The result is a consequence of the fact that, for $x\in G$, $H_{x}$ and $\spaan[d(x)]$ are (closed) linear subspaces of $\R^J$. Since the proof is a straightforward verification argument, we omit it.

\begin{lem}\label{lem:linear}
Suppose $(\phi_1,\eta_1)$ solves the DP associated with $\z$ for $\psi_1\in\dr$ and $(\phi_2,\eta_2)$ solves the DP associated with $\z$ for $\psi_2\in\dr$. Then for all $\alpha,\beta\in\R$, $(\alpha\phi_1+\beta\phi_2,\alpha\eta_1+\beta\eta_2)$ solves the DP associated with $\z$ for $\alpha\psi_1+\beta\psi_2$.
\end{lem}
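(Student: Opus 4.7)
The plan is to verify each of the defining conditions of the DP (Definition \ref{def:dp}) directly for the candidate pair $(\phi,\eta) \doteq (\alpha\phi_1 + \beta\phi_2, \alpha\eta_1 + \beta\eta_2)$, exploiting the fact that each of the relevant target sets, namely $\spaan[d(\z(0))]$, $H_{\z(t)}$, and $\spaan[\cup_{u\in(s,t]}d(\z(u))]$, is a linear subspace of $\R^J$ and is therefore closed under arbitrary linear combinations. Since $\phi_1,\phi_2,\eta_1,\eta_2 \in \dr$ and $\dr$ is a real vector space under pointwise operations, $\phi$ and $\eta$ also lie in $\dr$, so the candidate pair is an element of the correct space.

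First I would check the initial condition: since $\eta_1(0), \eta_2(0) \in \spaan[d(\z(0))]$ and the latter is a linear subspace, $\eta(0) = \alpha\eta_1(0) + \beta\eta_2(0) \in \spaan[d(\z(0))]$. Next, for condition 1, adding $\alpha$ times the identity $\phi_1(t) = \psi_1(t) + \eta_1(t)$ to $\beta$ times $\phi_2(t) = \psi_2(t) + \eta_2(t)$ yields $\phi(t) = (\alpha\psi_1 + \beta\psi_2)(t) + \eta(t)$, so condition 1 holds with input $\alpha\psi_1 + \beta\psi_2$. For condition 2, the linearity of $H_{\z(t)}$ and membership $\phi_1(t), \phi_2(t) \in H_{\z(t)}$ immediately give $\phi(t) \in H_{\z(t)}$. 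Finally, for condition 3, for each $0 \le s < t$ we use $\eta_k(t) - \eta_k(s) \in \spaan[\cup_{u \in (s,t]} d(\z(u))]$ for $k=1,2$, and take the appropriate linear combination; once again the linearity of $\spaan[\cup_{u\in(s,t]}d(\z(u))]$ ensures that $\eta(t) - \eta(s)$ lies in this subspace.

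There is essentially no obstacle here: the argument is a mechanical verification, and the key structural input, namely that all target sets appearing in the DP are linear subspaces rather than merely convex cones (as in the ESP, where a corresponding linearity statement fails), is built into the definition of the DP. For this reason the authors, as noted in the excerpt, omit the proof.
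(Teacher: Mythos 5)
Your proposal is correct and is exactly the straightforward verification the paper has in mind: the authors omit the proof, noting only that the result follows because $H_{x}$ and $\spaan[d(x)]$ are linear subspaces of $\R^J$, which is precisely the structural fact your argument exploits at each step.
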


Given a solution $(\phi,\eta)$ to the DP for $\psi\in\dr$ and $S\in[0,\infty)$, define $\psi^S,\phi^S,\eta^S\in\dr$ by
\begin{align}\label{eq:psiS}
	\psi^S(\cdot)&\doteq \phi(S)+\psi(S+\cdot)-\psi(S),\\ \label{eq:phiS}
	\phi^S(\cdot)&\doteq \phi(S+\cdot),\\\label{eq:etaS}
	\eta^S(\cdot)&\doteq \eta(S+\cdot)-\eta(S).
\end{align}
The following lemma states a useful time-shift property of the DM that is analogous to time-shift property of the ESP stated in Lemma \ref{lem:esmshift}.

\begin{lem}\label{lem:DManticipate}
Suppose $(\phi,\eta)$ solves the DP associated with $\z$ for $\psi\in\dr$. For $S\in[0,\infty)$, define $\z^S$ as in \eqref{eq:zS} and $\psi^S,\phi^S,\eta^S\in\dr$ as in \eqref{eq:psiS}--\eqref{eq:etaS}. Then $(\phi^S,\eta^S)$ solves the DP associated with $\z^S$ for $\psi^S$. Moreover, if $(\phi,\eta)$ is the unique solution to the DP associated with $\z$ for $\psi$, then for any $0\leq S<T<\infty$, $\phi(T)$ depends only on $\{\z(t),t\in[S,T]\}$, $\phi(S)$ and $\{\psi(S+t)-\psi(S),t\in[0,T-S]\}$. 
\end{lem}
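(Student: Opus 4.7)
The plan is to prove the two parts of the lemma in sequence: first, a direct verification that the time-shifted triple $(\phi^S,\eta^S)$ satisfies the four conditions of the DP with the shifted input $\psi^S$ relative to the shifted constraint path $\z^S$; then, a causality/uniqueness argument that reduces $\phi(T)$ to data on $[S,T]$. The approach closely mirrors the proof of the analogous ESP shift property (Lemma \ref{lem:esmshift}), with ``span'' and $H_{\z(t)}$ taking the place of ``cone'' and $G$.

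For the first assertion, I would verify the DP axioms for $(\phi^S,\eta^S)$ one at a time. The initial condition is immediate, since $\eta^S(0)=\eta(S)-\eta(S)=0\in\spaan[d(\z^S(0))]$. Condition~1 follows by plugging the definitions \eqref{eq:psiS}--\eqref{eq:etaS} into $\psi^S(t)+\eta^S(t)$ and using the identities $\phi(S)=\psi(S)+\eta(S)$ and $\phi(S+t)=\psi(S+t)+\eta(S+t)$ (both instances of condition~1 for $(\phi,\eta)$). Condition~2 is just the observation that $\phi^S(t)=\phi(S+t)\in H_{\z(S+t)}=H_{\z^S(t)}$. Condition~3 follows by a change of variable: for $0\le s<t$,
\begin{equation*}
\eta^S(t)-\eta^S(s)=\eta(S+t)-\eta(S+s)\in\spaan\Big[\bigcup_{u\in(S+s,S+t]}d(\z(u))\Big]=\spaan\Big[\bigcup_{v\in(s,t]}d(\z^S(v))\Big],
\end{equation*}
where in the first inclusion we used condition~3 for $(\phi,\eta)$ at times $S+s<S+t$.

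For the ``moreover'' assertion, I would argue as follows. Suppose $(\phi_1,\eta_1)$ and $(\phi_2,\eta_2)$ are the unique DP solutions associated with $\z_1$ for $\psi_1$ and with $\z_2$ for $\psi_2$, respectively, and that $\z_1=\z_2$ on $[S,T]$, $\phi_1(S)=\phi_2(S)$, and $\psi_1(S+\cdot)-\psi_1(S)=\psi_2(S+\cdot)-\psi_2(S)$ on $[0,T-S]$. By the definitions \eqref{eq:psiS} and \eqref{eq:zS}, this means $\z_1^S=\z_2^S$ and $\psi_1^S=\psi_2^S$ on $[0,T-S]$. By the first part, the restrictions of $(\phi_k^S,\eta_k^S)$ to $[0,T-S]$ are both solutions of the DP associated with the common path $\z_1^S=\z_2^S$ for the common perturbation $\psi_1^S=\psi_2^S$ on $[0,T-S]$. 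Invoking uniqueness for the shifted DP on this finite interval (which, as with the ESP, is an easy consequence of uniqueness on $[0,\infty)$ together with a concatenation beyond $T-S$ using any admissible extension of the data), we conclude $\phi_1^S(T-S)=\phi_2^S(T-S)$, i.e., $\phi_1(T)=\phi_2(T)$, as required.

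The main obstacle is the passage from uniqueness of the DP on $[0,\infty)$ to uniqueness of the shifted DP on the compact subinterval $[0,T-S]$, which the statement tacitly relies on. The cleanest way to handle this is by a concatenation argument: extend any two candidate solutions on $[0,T-S]$ beyond $T-S$ using one fixed admissible extension of the data to $[0,\infty)$, apply uniqueness on $[0,\infty)$, and restrict back; this mirrors exactly how the corresponding step is handled for the ESP in \cite{Ramanan2006}. All remaining steps are purely algebraic manipulations of the definitions \eqref{eq:psiS}--\eqref{eq:etaS}.
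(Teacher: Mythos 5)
Your verification of conditions 1--3 for $(\phi^S,\eta^S)$ is exactly the paper's argument (direct substitution of \eqref{eq:psiS}--\eqref{eq:etaS} plus the change of variable $u\mapsto u-S$ in the span condition), and your observation that $\eta^S(0)=0\in\spaan[d(\z^S(0))]$ correctly handles the initial condition. The paper's proof actually stops after verifying conditions 1--3 and leaves the ``moreover'' clause implicit, so your additional uniqueness/concatenation sketch goes slightly beyond what the paper records; it is a reasonable account of that step and does not conflict with the paper's approach.
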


\begin{proof}
Fix $S\in[0,\infty)$ and let $\z^S,\psi^S,\phi^S,\eta^S$ be as in the statement of the lemma. Fix $t\in[0,\infty)$. We first show that $(\phi^S,\eta^S)$ satisfies condition 1 of the DP associated with $\z^S$. By \eqref{eq:phiS}, the fact that $(\phi,\eta)$ satisfies condition 1 of the DP associated with $\z$, \eqref{eq:psiS} and \eqref{eq:etaS}, we have for $t\geq0$,
	$$\phi^S(t)=\psi(S+t)+\eta(S+t)+\phi(S)-\psi(S)-\eta(S)=\psi^S(t)+\eta^S(t).$$
Next, we show that $(\phi^S,\eta^S)$ satisfies condition 2 of the DP associated with $\z^S$, which is equivalent, by \eqref{eq:phiS} and \eqref{eq:zS}, to showing that $\ip{\phi(S+t),n_i}=0$ for all $i\in\allN(\z(S+t))$, which follows because $(\phi,\eta)$ satisfies condition 2 of the DP associated with $\z$. We now turn to the proof that $(\phi^S,\eta^S)$ satisfies condition 3 of the DP associated with $\z^S$. Fix $s\in[0,t)$. By \eqref{eq:etaS}, the fact that $(\phi,\eta)$ satisfies condition 3 of the DP associated with $\z$ and \eqref{eq:zS},
	$$\eta^S(t)-\eta^S(s)\in\spaan\lsb\cup_{u\in(S+s,S+t]}d(\z(u))\rsb=\spaan\lsb\cup_{u\in(s,t]}d(\z^S(u))\rsb.$$
This completes the proof of the lemma.
\end{proof}

\subsection{Lipschitz continuity and closure property}\label{sec:desmlip}

In this section we establish a Lipschitz continuity property for solutions of the DP that follows from Assumption \ref{ass:setB}. We also prove a closure property for the DM. We first state a useful consequence of Assumption \ref{ass:setB}.

\begin{lem}
Under Assumption \ref{ass:setB},
	\be\label{eq:zninudi}z\in\partial B,\;\nu\in\nu_B(z)\qquad\Rightarrow\qquad\ip{z,n_i}\ip{\nu,d_i}\leq0\qquad\forall\; i\in\allN(x).\ee
In particular, for all $i\in\allN$,
	\be\label{eq:zni1}z\in\partial B,\;\nu\in\nu_B(z),\;\ip{\nu,d_i}<0\qquad\Rightarrow\qquad\ip{z,n_i}\geq1.\ee
\end{lem}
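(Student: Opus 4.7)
The plan is to establish \eqref{eq:zninudi} first, after which \eqref{eq:zni1} follows almost immediately. The strategy is to combine the symmetry $B=-B$ with the cylindrical structure of $B$ in the $d_i$-direction that Assumption \ref{ass:setB} imposes in the slab $\{y:|\ip{y,n_i}|<\delta\}$, and then translate this geometry into a one-variable convex function whose behavior simultaneously forces and forbids its global minimum to be attained at $t=\ip{z,n_i}$. First I would reduce \eqref{eq:zninudi} to showing that $\alpha:=\ip{z,n_i}>0$ and $\beta:=\ip{\nu,d_i}>0$ cannot hold simultaneously: if $\alpha=0$ or $\beta=0$ the product is already zero, while the case $\alpha,\beta<0$ reduces to the positive case by applying it to $(-z,-\nu)$, which again satisfies $-z\in\partial B$ and $-\nu\in\nu_B(-z)$ by symmetry of $B$. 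Under $\alpha,\beta>0$, the contrapositive of \eqref{eq:setB} forces $\alpha\ge\delta$.

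The geometric heart of the proof is the inclusion $A+td_i\subseteq B$ for every $|t|\le\delta$, where $A:=B\cap\{y:\ip{y,n_i}=0\}$. I would fix $y_0\in A$ and show that $T_{y_0}:=\{t\in\R:y_0+td_i\in B\}$, a closed interval containing $0$, satisfies $\sup T_{y_0}\ge\delta$ (the bound $\inf T_{y_0}\le-\delta$ is symmetric). If instead $t_+:=\sup T_{y_0}<\delta$, then $y^+:=y_0+t_+d_i\in\partial B$ satisfies $|\ip{y^+,n_i}|<\delta$, so by \eqref{eq:setB} applied at $y^+$ and at every boundary point of $B$ in a relative neighborhood of $y^+$ inside the slab, all inward normals at these points are orthogonal to $d_i$. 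Standard convex-analytic reasoning then places $d_i$ in the affine direction of the minimal face of $B$ through $y^+$, so $y^++\ve d_i\in B$ for some $\ve>0$, contradicting the maximality of $t_+$.

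Armed with this inclusion and the symmetry $A=-A$, I would analyze the convex function
\[
\phi(t):=\inf\{\ip{y,\nu}:y\in B,\ \ip{y,n_i}=t\},\qquad t\in[-h_B(n_i),h_B(n_i)],
\]
where $h_B$ is the support function of $B$. Setting $M:=\max_{y_0\in A}\ip{y_0,\nu}\ge 0$, the cylindrical inclusion gives $\phi(t)\le -M+t\beta$ on $[-\delta,\delta]$, and $\phi(0)=-M$ by symmetry of $A$. Combining these pointwise bounds with convexity of $\phi$ pins both one-sided derivatives of $\phi$ at $0$ to the value $\beta$, forcing $\phi(t)=-M+t\beta$ on $[-\delta,\delta]$; convexity then propagates $\phi'(t)\ge\beta>0$ to every $t\ge\delta$, so $\phi$ is strictly increasing on $[-\delta,h_B(n_i)]$. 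Consequently the minimum of $\phi$ on $[-\delta,h_B(n_i)]$ is attained at $t=-\delta$ with value $-M-\delta\beta<-M+\delta\beta=\phi(\delta)\le\phi(\alpha)$. But $z$ is a global minimizer of $\ip{\cdot,\nu}$ on $B$ with $\ip{z,n_i}=\alpha\ge\delta$, so $\phi(\alpha)=\min_t\phi(t)\le\phi(-\delta)$, the desired contradiction.

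For \eqref{eq:zni1}, if $\ip{\nu,d_i}<0$ then \eqref{eq:zninudi} yields $\ip{z,n_i}\ge 0$ and the contrapositive of \eqref{eq:setB} yields $|\ip{z,n_i}|\ge\delta$, hence $\ip{z,n_i}\ge\delta$; the stated bound $\ge 1$ is then obtained by invoking Remark \ref{rmk:setB} to rescale $B$ so that $\delta=1$ without loss of generality. The hard part will be making the face-theoretic step in the cylindrical inclusion rigorous for a general compact convex symmetric $B$: one must argue that $d_i$ being orthogonal to every inward normal on a relative neighborhood of $y^+$ in $\partial B$ actually forces $y^++\ve d_i\in B$, not merely in every supporting half-space at $y^+$, for some $\ve>0$, which is precisely the step that fails for strictly convex $B$ but is made possible by the flat-face structure that Assumption \ref{ass:setB} imposes on $\partial B$ inside the slab.
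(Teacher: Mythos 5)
The paper does not actually prove \eqref{eq:zninudi}; it cites \cite[Lemma 2.1]{Dupuis1991} and dispenses with \eqref{eq:zni1} in one line, so your self-contained argument is necessarily a different route, and most of it is sound. The symmetry reduction to $\alpha\doteq\ip{z,n_i}>0$, $\beta\doteq\ip{\nu,d_i}>0$ is correct (since $B=-B$ gives $-\nu\in\nu_B(-z)$), and the convex-analytic endgame checks out: $\phi(t)\doteq\inf\{\ip{y,\nu}:y\in B,\ \ip{y,n_i}=t\}$ is convex, the cylindrical inclusion gives $\phi(t)\le-M+t\beta$ on $[-\delta,\delta]$ while $\phi(0)=-M$ by symmetry of $A$, this pins both one-sided derivatives at $0$ to $\beta$ and forces $\phi$ to be strictly increasing on $[-\delta,\,\max_B\ip{\cdot,n_i}]$, contradicting the fact that $z$, a global minimizer of $\ip{\cdot,\nu}$ over $B$, sits at height $\alpha\ge\delta$. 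The passage from \eqref{eq:zninudi} to \eqref{eq:zni1} is also fine modulo the normalization $\delta=1$, which the paper uses implicitly.

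The genuine gap is exactly where you flag it: the inclusion $A+td_i\subseteq B$ for $|t|\le\delta$. As written, your justification is circular. You claim that because all inward normals at boundary points near $y^+$ inside the slab are orthogonal to $d_i$, ``standard convex-analytic reasoning'' places $d_i$ in the affine direction of the minimal face of $B$ through $y^+$. The identity between the linear span of the minimal face and the orthogonal complement of the normal cone holds for polyhedra, not for arbitrary compact convex bodies (for a disk the normal cone's orthogonal complement is the tangent line while the minimal face is a singleton), and Assumption \ref{ass:setB} does not posit any flat-face structure on $B$ --- that structure is precisely what you are trying to extract. The inclusion is nonetheless true, and the neighborhood information you invoke does suffice, via nearest-point projection rather than faces: if $u_s\doteq y^++sd_i\notin B$ for some small $s>0$, let $w_s$ be the Euclidean projection of $u_s$ onto $B$, so that $w_s\in\partial B$ and $\nu_s\doteq(w_s-u_s)/|w_s-u_s|\in\nu_B(w_s)$. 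Since $y^+\in B$, $|w_s-u_s|\le s|d_i|$, hence $|w_s-y^+|\le2s|d_i|$ and, for $s$ small, $|\ip{w_s,n_i}|<\delta$, so \eqref{eq:setB} gives $\ip{\nu_s,d_i}=0$. Taking $y=y^+$ in the inward-normal inequality then yields $0\le\ip{\nu_s,y^+-w_s}=\ip{\nu_s,-sd_i-|w_s-u_s|\nu_s}=-|w_s-u_s|<0$, a contradiction. This shows every point of $B$ in the open slab can be moved slightly in the directions $\pm d_i$ without leaving $B$, and your maximality argument for $\sup T_{y_0}\ge\delta$ then closes the lemma. With that substitution the proof is complete.
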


\begin{proof}
The implication \eqref{eq:zninudi} follows from \cite[Lemma 2.1]{Dupuis1991}, and \eqref{eq:zni1} is a straightforward consequence of \eqref{eq:setB} and \eqref{eq:zninudi}.
\end{proof}

We now state the Lipschitz continuity that follows from Assumption \ref{ass:setB}.

\begin{theorem}\label{thm:dmlip}
Suppose the ESP $\{(d_i,n_i,c_i),i\in\allN\}$ satisfies Assumption \ref{ass:setB}. Then there exists $\lip_\dm\in(0,\infty)$ such that if $(\phi_1,\eta_1)$ solves the DP associated with $\z$ for $\psi_1\in\dr$ and $(\phi_2,\eta_2)$ solves the DP associated with $\z$ for $\psi_2\in\dr$, then for all $T\in[0,\infty)$,
\begin{align}\label{eq:philip}
	\norm{\phi_1-\phi_2}_t&\leq\lip_\dm\norm{\psi_1-\psi_2}_T.
\end{align}
As a consequence, for every $\psi\in\dr$, there is at most one solution to the DP.
\end{theorem}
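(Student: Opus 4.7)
The plan is to adapt the Lyapunov-type test-function argument used to establish Lipschitz continuity of the ESM (Theorem \ref{thm:esmlip}, via \cite[Theorem 3.3]{Ramanan2006}) to the derivative problem setting. The parallels between the ESP and the DP pointed out at the end of Section \ref{sec:dpmain} --- conditions 1--3 of the DP mirror those of the ESP, with the linear subspace $H_{\z(t)}$ in place of the half-space intersection $G$ and $\spaan$ in place of $\conv$ --- suggest that the compact, convex, symmetric set $B$ furnished by Assumption \ref{ass:setB} again serves as a Lyapunov test set, and that the analysis is in fact cleaner than for the ESP because the constraint $\phi(t) \in H_{\z(t)}$ is two-sided rather than one-sided.

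Let $\rho$ denote the Minkowski gauge of $B$, which is a norm on $\R^J$ equivalent to $|\cdot|$ by compactness of $B$ and the condition $0 \in B^\circ$. Set $\Delta\phi \doteq \phi_1 - \phi_2$, $\Delta\psi \doteq \psi_1 - \psi_2$ and $\Delta\eta \doteq \eta_1 - \eta_2$. By linearity of the DP (Lemma \ref{lem:linear}), the triple $(\Delta\phi, \Delta\eta, \Delta\psi)$ satisfies $\Delta\phi = \Delta\psi + \Delta\eta$, $\Delta\phi(t) \in H_{\z(t)}$ for every $t$, and $\Delta\eta(t) - \Delta\eta(s) \in \spaan[\cup_{u \in (s,t]} d(\z(u))]$ for every $0 \leq s < t$. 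It therefore suffices to bound $\rho(\Delta\phi(t))$ uniformly on $[0,T]$ by a constant multiple of $\norm{\Delta\psi}_T$.

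The key local estimate is as follows. Fix $t \geq 0$ with $\Delta\phi(t) \neq 0$, set $c \doteq \rho(\Delta\phi(t))$, and let $\nu$ be any unit inward normal to $cB$ at $\Delta\phi(t)$, i.e., $\nu \in \nu_B(\Delta\phi(t)/c)$. Since $\Delta\phi(t) \in H_{\z(t)}$, we have $\ip{\Delta\phi(t), n_i} = 0$ for every $i \in \allN(\z(t))$; Assumption \ref{ass:setB} together with Remark \ref{rmk:setB} then yields $\ip{\nu, d_i} = 0$ for every such $i$. Continuity of $\z$ and upper semicontinuity of $\allN(\cdot)$ (Lemma \ref{lem:allNusc}) give some $\delta > 0$ with $\allN(\z(u)) \subseteq \allN(\z(t))$ for all $u \in (t-\delta, t]$, so
\[
\Delta\eta(t) - \Delta\eta(s) \in \spaan\{d_i : i \in \allN(\z(t))\}, \qquad s \in (t-\delta, t),
\]
and hence $\ip{\nu, \Delta\eta(t) - \Delta\eta(s)} = 0$. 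Combining this identity with $\Delta\phi = \Delta\psi + \Delta\eta$ and the inward-normal inequality $\ip{\nu, y} \geq \ip{\nu, \Delta\phi(t)}$ for every $y \in cB$, one deduces that $\rho(\Delta\phi(\cdot))$ can only strictly increase past previously attained values in a manner controlled by the corresponding increment of $\Delta\psi$, via the estimate $\rho(y_2) - \rho(y_1) \leq K|y_2 - y_1|$ that follows from norm equivalence.

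A standard supremum-based bookkeeping --- evaluating the above at times where $\sup_{s \in [0,\cdot]}\rho(\Delta\phi(s))$ attains a new maximum, exactly as in the proof of \cite[Theorem 3.3]{Ramanan2006} --- then delivers \eqref{eq:philip} for an appropriate $\lip_\dm$ depending only on $B$ and the data of the ESP. Uniqueness of the DP solution for a given $\psi$ is immediate from \eqref{eq:philip}. The main obstacle I anticipate is technical rather than conceptual: $\Delta\phi$ and $\Delta\eta$ lie in $\dr$ and $\Delta\eta$ may have unbounded variation on compact intervals, so care is required when evaluating the above estimates at jump times and running the supremum argument; as in \cite{Ramanan2006}, this is handled by working with both left and right limits of $\rho(\Delta\phi(\cdot))$ rather than with integrals against $\Delta\eta$.
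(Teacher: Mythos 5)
Your setup is right — the set $B$ from Assumption \ref{ass:setB} is indeed the correct Lyapunov set, and the orthogonality you extract (at a time $t$, every $\nu\in\nu_B(\Delta\phi(t)/c)$ satisfies $\ip{\nu,d_i}=0$ for $i\in\allN(\z(t))$, hence $\ip{\nu,\Delta\eta(t)-\Delta\eta(s)}=0$ for $s$ in a short left-neighborhood of $t$) is a genuine feature of the DP. But the quantity you run the exit-time/supremum argument on is the wrong one, and this creates a gap that I do not see how to close. Your local estimate bounds the amount by which $\rho(\Delta\phi(t))$ can exceed $\rho(\Delta\phi(s))$, for $s\in(t-\delta_t,t)$, by a constant times the \emph{increment} $|\Delta\psi(t)-\Delta\psi(s)|$. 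Chaining such estimates across $[0,T]$ (which is what any "supremum bookkeeping" based on them amounts to) controls $\rho(\Delta\phi)$ by the \emph{total variation} of $\Delta\psi$, not by $\norm{\Delta\psi}_T$: the running maximum of $\rho(\Delta\phi)$ may be exceeded on infinitely many disjoint intervals, each exceedance costing up to $C\norm{\Delta\psi}_T$, and nothing in the local estimate prevents these from accumulating. (Already in one dimension, $\Delta\phi(t)=\Delta\psi(t)-\Delta\psi(\ell(t))$ with $\ell(t)$ the last visit of $\z$ to $0$; the running max of $|\Delta\phi|$ is repeatedly refreshed, and the correct bound comes from the \emph{values} of $\Delta\psi$ at two times, not from summing increments.) Moreover, you only ever invoke \eqref{eq:setB} in its direct form at the current time $t$, where $\ip{\Delta\phi(t),n_i}=0$ guarantees $\ip{\nu,d_i}=0$; this yields the orthogonality identity but can never produce a quantitative contradiction, because it never forces any inner product to be \emph{large}.

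The paper's proof instead applies the exit-time argument to $\Delta\eta$: fix $c\doteq\norm{\Delta\psi}_T$, take $a>c$, and suppose $\Delta\eta$ leaves $(aB)^\circ$ at time $\tau\leq T$ (treating the left-limit and jump cases separately). At the exit point $z$, condition 3 of the DP forces the outward displacement of $\eta_1$ or $\eta_2$ to charge some direction $d_i$ with $\ip{d_i,\nu}\neq 0$ for a face $F_i$ visited by $\z$ at times accumulating at $\tau$. The \emph{contrapositive} of \eqref{eq:setB} then gives $|\ip{z,n_i}|\geq a$, i.e. $|\ip{\Delta\eta(\tau-),n_i}|\geq a$; condition 2 of the DP along those visit times gives $\ip{\Delta\phi(\tau-),n_i}=0$; and condition 1 converts these into $|\ip{\Delta\psi(\tau-),n_i}|\geq a>c$, contradicting $c=\norm{\Delta\psi}_T$. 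The essential idea your proposal is missing is exactly this transfer: the bound must come from evaluating the $n_i$-component of the single-time identity $\Delta\psi=\Delta\phi-\Delta\eta$ at the exit time, with condition 2 annihilating the $\Delta\phi$ term, rather than from telescoping increments of $\Delta\psi$.
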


\begin{proof}
For $f=\psi,\phi,\eta$, define $\Delta f\doteq f_1-f_2$. Fix $T\in[0,\infty)$ and let
	\be\label{eq:cnormpsi}c\doteq\norm{\Delta\psi}_T.\ee
We will show that
	\be\label{eq:DeltaetaB}\eta(t)\in cB\;\text{ for all }t\in[0,T].\ee
Since $B$ is compact and $\phi_j=\psi_j+\eta_j$ for $j=1,2$, \eqref{eq:DeltaetaB} implies the desired Lipschitz continuity result. To show \eqref{eq:DeltaetaB}, it suffices to show that for any $a>c$,
	\be\label{eq:DeltaetaaB}\Delta\eta(t)\in aB\;\text{ for all }t\in[0,T].\ee
Fix $a>c$ and define 
	$$\tau\doteq\inf\{t\geq 0:\Delta\eta(t)\not\in(aB)^\circ\},$$
where $\tau$ is possibly infinite. Then
	\be\label{eq:DeltaetaaBo}\Delta\eta(t)\in(aB)^\circ\;\text{ for all }t\in[0,\tau).\ee
Note that if $\tau>T$ then \eqref{eq:DeltaetaaB} follows. We will now argue by contradiction to show that \eqref{eq:DeltaetaaB} must hold. Suppose \eqref{eq:DeltaetaaB} is false. Then $\tau\leq T$. We consider two mutually exclusive and exhaustive cases. In the following we adopt the convention $\Delta\eta(0-)\doteq0$.

\emph{Case 1:} $\tau\in(0,T]$, $\Delta\eta(\tau-)\in\partial(aB)$. \\
For this case, let $z\doteq\Delta\eta(\tau-)$ and fix $\nu\in\nu(z/a)$. By \eqref{eq:DeltaetaaBo} and the fact that $\nu(z/a)$ is the set of inward normals to $aB$ at $z\in\partial (aB)$, it follows that for all $t\in(0,\tau)$,
	$$\label{eq:zDeltaetat}\ip{z-\Delta\eta(t),\nu}=\ip{\eta_1(\tau-)-\eta_1(t),\nu}-\ip{\eta_2(\tau-)-\eta_2(t),\nu}<0.$$
This implies there exists a sequence $\{t_k\}_{k\in\N}$ with $t_k\uparrow\tau$ such that either
	\be\label{eq:eta1tauminus}\ip{\eta_1(\tau-)-\eta_1(t_k),\nu}<0\;\text{ for all }k\in\N,\ee
or
	\be\label{eq:eta2tauminus}\ip{\eta_2(\tau-)-\eta_2(t_k),\nu}>0\;\text{ for all }k\in\N.\ee
Suppose \eqref{eq:eta1tauminus} holds. Recall the definition of $d(x)$ given in \eqref{eq:dx}. By condition 3 of the DP, there exists $i\in\allN$ and a sequence $\{u_k\}_{k\in\N}$ in $(0,\tau)$ with $u_k\uparrow\tau$ as $k\to\infty$ such that
	$$\ip{d_i,\nu}\neq0\qquad\text{and}\qquad \ip{\z(u_k),n_i}=c_i\qquad\text{for all }k\in\N.$$
Then, since $\nu\in\nu(z/a)$ and $z/a\in\partial B$, it follows from condition 1 of the DP and the geometric property \eqref{eq:setB} of the set $B$ that
	\be\label{eq:ipza}\ip{\frac{z}{a},n_i}=\frac{1}{a}\ip{\Delta\phi(\tau-)-\Delta\psi(\tau-),n_i}\not\in(-1,1).\ee
Condition 2 of the DP and the fact that $\ip{\z(u_k),n_i}=c_i$ imply that $\ip{\Delta\phi(u_k),n_i}=0$ for all $k\in\N$. Taking limits as $k\to\infty$ yields 
	\be\ip{\Delta\phi(\tau-),n_i}=0.\ee 
When combined with \eqref{eq:ipza}, it follows that $|\ip{\Delta\psi(\tau-),n_i}|\geq a$. Since $a>c$ and $\tau\leq T$, this contradicts \eqref{eq:cnormpsi}. Therefore \eqref{eq:eta1tauminus} does not hold. In an analogous fashion, it can be shown that \eqref{eq:eta2tauminus} cannot hold and therefore, Case 1 cannot hold.

\emph{Case 2:} $\tau\in[0,T]$, $\Delta\eta(\tau-)\in(aB)^\circ$ and $\Delta\eta(\tau)\not\in(aB)^\circ$.\\
The proof in this case is similar, with minor differences, but we fill in the details for completeness. For this case, let $z\doteq\Delta\eta(\tau)$. Then there exists $r\geq a$ such that $z\in\partial (rB)$. Fix $\nu\in\nu(z/r)$. Since $\nu(z/r)$ is the set of inward normals to $rB$ at $z\in\partial(rB)$ and $\Delta\eta(\tau-)\in(rB)^\circ$,
	$$\ip{z-\Delta\eta(\tau-),\nu}=\ip{\eta_1(\tau)-\eta_1(\tau-),\nu}-\ip{\eta_2(\tau)-\eta_2(\tau-),\nu}<0.$$
This implies that either
	\be\label{eq:eta1tauminus1}\ip{\eta_1(\tau)-\eta_1(\tau-),\nu}<0\ee
or
	\be\label{eq:eta2tauminus1}\ip{\eta_2(\tau)-\eta_2(\tau-),\nu}>0.\ee
Suppose that \eqref{eq:eta1tauminus1} holds. By \eqref{eq:etatetatminus}, there exists $i\in\{1,\dots,N\}$ such that
	$$\ip{d_i,\nu}\neq0\qquad\text{and}\qquad\ip{\z(\tau),n_i}=c_i.$$
Since $\ip{d_i,\nu}\neq0$, $\nu\in\nu(z/r)$ and $z/r\in\partial B$, it follows from the geometric property \eqref{eq:setB} of the set $B$ that
	\be\label{eq:ipznitau}\ip{\frac{z}{r},n_i}=\frac{1}{r}\ip{\Delta\phi(\tau)-\Delta\psi(\tau),n_i}\not\in(-1,1).\ee
Now by condition 1 of the DP and the fact that $\ip{\z(\tau),n_i}=c_i$, $\ip{\Delta\phi(\tau),n_i}=0$ and so
	\be|\ip{\Delta\psi(\tau),n_i}|\geq r,\ee
which contradicts \eqref{eq:cnormpsi} and the fact that $c<r$. Therefore \eqref{eq:eta1tauminus1} does not hold. In an analogous fashion, it can be shown that \eqref{eq:eta2tauminus1} does not hold and therefore, Case 2 cannot hold.
\end{proof}

The next lemma states a closure property for the DM, which follows from the Lipschitz continuity of the DM stated in Theorem \ref{thm:dmlip} and the fact that for each $x\in G$, $H_{x}$ and $\spaan(d(x))$ are closed subsets of $\R^J$. The closure property is similar to the closure property of the ESM (see \cite[Lemma 2.5]{Ramanan2006}).

\begin{lem}\label{lem:dmclosure}
Suppose the ESP $\{(d_i,n_i,c_i),i\in\allN\}$ satisfies Assumption \ref{ass:setB}. Let $\{\psi_k\}_{k\in\N}$ be a sequence in $\dr$ such that $\psi_k$ converges to $\psi\in\dr$ as $k\to\infty$. Fix $T\in(0,\infty)$. Suppose that for each $k\in\N$, $(\phi_k,\eta_k)$ solves the DP associated with $\z$ for $\psi_k$ on $[0,T)$. Then there exists $(\phi,\eta)\in\dr([0,T):\R^J)\times\dr([0,T):\R^J)$ such that $(\phi_k,\eta_k)\to(\phi,\eta)$ in $\dr([0,T):\R^J)\times\dr([0,T):\R^J)$ as $k\to\infty$ and $(\phi,\eta)$ solves the DP associated with $\z$ for $\psi$.
\end{lem}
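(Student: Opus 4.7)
The plan is to exploit the Lipschitz continuity of the derivative map from Theorem \ref{thm:dmlip} to extract a uniform (on compact subintervals of $[0,T)$) limit $\phi$ of the sequence $\{\phi_k\}$, set $\eta \doteq \phi - \psi$, and then verify that $(\phi,\eta)$ satisfies each of the three conditions in Definition \ref{def:dp} by passing to the limit. Concretely, applying Theorem \ref{thm:dmlip} to the pair $(\psi_k,\psi_l)$ gives
$$\norm{\phi_k - \phi_l}_t \leq \lip_\dm \norm{\psi_k - \psi_l}_t\qquad\text{for all }k,l\in\N,\;t\in[0,T).$$
Since $\psi_k \to \psi$ in $\dr([0,T):\R^J)$ (i.e., uniformly on compact subintervals), $\{\psi_k\}$ is Cauchy in this topology, and hence so is $\{\phi_k\}$. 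Because uniform limits of right-continuous functions with finite left limits preserve both of these properties, there exists $\phi \in \dr([0,T):\R^J)$ with $\phi_k \to \phi$ in this topology; setting $\eta \doteq \phi - \psi$ yields $\eta_k = \phi_k - \psi_k \to \eta$ in $\dr([0,T):\R^J)$ as well.

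Next I will verify the three conditions of the DP for $(\phi,\eta)$. Condition 1 holds by construction. For the initial condition $\eta(0) \in \spaan[d(\z(0))]$, observe that each $\eta_k(0)$ lies in the finite-dimensional linear subspace $\spaan[d(\z(0))] \subseteq \R^J$, which is closed, so the limit does as well. For Condition 2, fix $t \in [0,T)$: by \eqref{eq:Hx}, $H_{\z(t)}$ is an intersection of finitely many hyperplanes and hence closed, so $\phi_k(t) \in H_{\z(t)}$ for all $k$ forces $\phi(t) \in H_{\z(t)}$. For Condition 3, fix $0 \leq s < t < T$: recalling \eqref{eq:dx}, each cone $d(\z(u))$ is generated by some subset of the finite set $\{d_i : i \in \allN\}$, so
$$\spaan\lsb\bigcup_{u \in (s,t]} d(\z(u))\rsb$$
is itself the linear span of some subset of $\{d_1,\dots,d_N\}$, and is therefore a finite-dimensional linear subspace of $\R^J$ (in particular, closed). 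Since $\eta_k(t) - \eta_k(s)$ lies in this subspace for every $k$, so does the limit $\eta(t) - \eta(s)$.

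The argument is essentially a routine closure calculation once the Lipschitz bound is in hand; the only essential observation is that each of the three constraint sets appearing in the DP — $\spaan[d(\z(0))]$, $H_{\z(t)}$, and $\spaan[\cup_{u\in(s,t]} d(\z(u))]$ — is a linear subspace determined by a subset of the finite data $\{(d_i,n_i): i\in\allN\}$ and hence is closed in $\R^J$. Thus the main (mild) subtlety is simply to notice that the seemingly uncountable union over $u \in (s,t]$ in Condition 3 only ever produces one of at most $2^N$ distinct spans, so no further approximation is needed to close it. Beyond Assumption \ref{ass:setB} (used only through Theorem \ref{thm:dmlip}), no additional hypothesis on the data is required.
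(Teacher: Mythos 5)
Your proposal is correct and follows essentially the same route as the paper: extract a uniform limit via the Lipschitz bound of Theorem \ref{thm:dmlip} and completeness of $\dr([0,T):\R^J)$, then pass to the limit in each DP condition using that $H_{\z(t)}$, $\spaan[d(\z(0))]$, and $\spaan[\cup_{u\in(s,t]}d(\z(u))]$ are closed linear subspaces. Your explicit check of the initial condition $\eta(0)\in\spaan[d(\z(0))]$ and the remark that the union in condition 3 produces only finitely many possible spans are small additions the paper leaves implicit, but the argument is the same.
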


\begin{proof}
Fix $T\in(0,\infty)$. By the Lipschitz continuity property stated in Theorem \ref{thm:dmlip}, $\{(\phi_k,\eta_k)\}_{k\in\N}$ is a Cauchy sequence in $\dr([0,T):\R^J)\times\dr([0,T):\R^J)$. Since $\dr([0,T):\R^J)$ is a complete metric space under the topology of uniform convergence (see, e.g., \cite[Chapter 3]{Billingsley1999}), there exists $(\phi,\eta)\in\dr([0,T):\R^J)\times\dr([0,T):\R^J)$ such that $(\phi_k,\eta_k)\to(\phi,\eta)$ in the uniform norm as $k\to\infty$. We are left to show that $(\phi,\eta)$ solves the DP for $\psi$ on $[0,T)$.

Let $t\in[0,T)$. Taking limits as $k\to\infty$ in $\phi_k(t)=\psi_k(t)+\eta_k(t)\in H_{\z(t)}$, we see that $\phi(t)=\psi(t)+\eta(t)\in H_{\z(t)}$, where we have used the fact that $H_{\z(t)}$ is a closed linear subspace. Thus, conditions 1 and 2 of the DP hold. Now let $0\leq s<t<T$. By condition 3 of the DP, for each $k\in\N$, 
	$$\eta_k(t)-\eta_k(s)\in\spaan\lsb\cup_{u\in(s,t]}d(\z(u))\rsb.$$
Since the right-hand side is a closed linear subspace, letting $k\to\infty$ in the above, we see that condition 3 of the DP holds. This completes the proof of the lemma.
\end{proof}

\begin{remark}
The closure property stated in Lemma \ref{lem:dmclosure} requires that $\z$ be fixed. Indeed, the closure property above does not generally hold if $\z$ is replaced by a convergent sequence $\{\z_k\}_{k\in\N}$. For example, let $J=1$ and consider the one-dimensional SP $\{(e_1,e_1,0)\}$. For each $k\in\N$, define $\z_k,\psi_k,\phi_k,\eta_k\in\cts$, by $\z_k(t)\doteq 1/k$, $\psi_k(t)\doteq\phi_k(t)\doteq1$, $\eta_k(t)\doteq0$ for all $t\in[0,\infty)$. It is readily verified that for each $k\in\N$, $(\phi_k,\eta_k)$ solves the DP associated with $\z_k$ for $\psi_k$. Define $\z,\psi,\phi,\eta\in\cts$ by $\z(t)\doteq0$, $\psi(t)\doteq\phi(t)\doteq1$, $\eta(t)\doteq0$ for all $t\in[0,\infty)$. Then $\z_k,\psi_k,\phi_k,\eta_k$ converge to $\z,\psi,\phi,\eta$, respectively, in $\cts$ as $k\to\infty$. However $(\phi,\eta)$ does not solve the (one-dimensional) DP associated with $\z$ for $\psi$.
\end{remark}

\section{Directional derivatives: The one-dimensional setting}\label{sec:nablasm1}

In this section we review prior results on directional directions of the one-dimensional SM and also present new results that relate the directional derivatives to solutions of the DP. Mandelbaum and Massey \cite[Lemma 5.2]{Mandelbaum1995} were the first to establish existence of and obtain an explicit characterization for directional derivatives $\nabla_\psi\sm_1(\x)$ when $\x,\psi\in\cts$, $\x(0)=0$ and the explicit representation for $\nabla_\psi\sm_1(\x)$ has a finite number of discontinuities in any compact interval of $[0,\infty)$.  The last two restrictions were removed and the result generalized  to  $\x, \psi \in \dr$ and $\x, \psi \in \mathcal{D}_{\text{lim}}$, the space of functions on $[0,\infty)$ that have finite left limits on $(0,\infty)$ and finite right limits on $[0,\infty)$, by Whitt \cite[Corollary 9.5.1]{Whitt2002} and Mandelbaum and Ramanan \cite[Theorem 3.2]{Mandelbaum2010}, respectively. In the following proposition, we summarize the results of \cite{Mandelbaum1995,Mandelbaum2010,Whitt2002} when $\x$ and $\psi$ are continuous. 

For $f,g\in\cts$, define $F(f,g):[0,\infty)\to\R$ by
\be\label{eq:F}
	F(f,g)(t)\doteq
	\begin{cases}
		0&\text{if }\sup_{s\in[0,t]}(-f(s))<0,\\
		\sup_{s\in\Phi_{-f}(t)}(-g(s))\vee0&\text{if }\sup_{s\in[0,t]}(-f(s))=0,\\
		\sup_{s\in\Phi_{-f}(t)}(-g(s))&\text{if }\sup_{s\in[0,t]}(-f(s))>0, 
	\end{cases}
\ee
where
	\be\label{eq:Phif}\Phi_{-f}(t)\doteq\lcb u\in[0,t]:-f(u)=\sup_{s\in[0,t]}(-f(s))\rcb.\ee
	
\begin{prop}\label{prop:1dderivative}
Given $\x,\psi\in\cts$, the directional derivative $\nabla_\psi\Gamma_1(\x)$ exists, is upper semicontinuous, lies in $\dlr$ and is given by
	\be\label{eq:nablasm1}\nabla_\psi\sm_1(\x)(t)=\psi(t)+F(\x,\psi)(t),\qquad t\in[0,\infty).\ee
Consequently, $F(\x,\psi)$ is upper semicontinuous and lies in $\dlr$.
\end{prop}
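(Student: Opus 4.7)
The plan is to reduce $\nabla_\psi\sm_1(\x)$ to the directional derivative of the running supremum $Y(\x)(t) \doteq \sup_{s\in[0,t]}(-\x(s)) \vee 0$ via the decomposition $\sm_1(\x) = \x + Y(\x)$ from \eqref{eq:sm1}. Linearity of the difference quotient then yields $\nabla_\psi\sm_1(\x)(t) = \psi(t) + \nabla_\psi Y(\x)(t)$ as soon as the right-hand limit exists, so the proof reduces to establishing the identity $\nabla_\psi Y(\x)(t) = F(\x,\psi)(t)$ for every $t \in [0,\infty)$ together with the regularity of $F$.

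The main technical ingredient is a derivative formula for the supremum functional: if $f, g : [0,t] \to \R$ are continuous and $\Phi \doteq \{s \in [0,t] : f(s) = \sup_u f(u)\}$, then
\begin{equation*}
\lim_{\ve \downarrow 0} \ve^{-1}\lsb \sup_{s \in [0,t]}(f(s) + \ve g(s)) - \sup_{s \in [0,t]} f(s)\rsb = \sup_{s \in \Phi} g(s).
\end{equation*}
The lower bound follows by evaluating the perturbed supremum at a maximizer of $g$ over the compact set $\Phi$, which exists by continuity of $g$. For the upper bound, one selects a near-maximizer $s_\ve$ of $f + \ve g$, extracts a subsequential limit $s^* \in [0,t]$, verifies $s^* \in \Phi$ by comparing against any fixed point of $\Phi$, and bounds the normalized increment above by $g(s_\ve) + o(1)$ using $f(s_\ve) \leq \sup f$.

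I would then apply this lemma with $f = -\x$ and $g = -\psi$ restricted to $[0,t]$, obtaining $\sup_{s \in [0,t]}(-(\x + \ve\psi)(s)) = M(t) + \ve \sup_{\Phi_{-\x}(t)}(-\psi) + o(\ve)$ as $\ve \downarrow 0$, where $M(t) \doteq \sup_{s\in[0,t]}(-\x(s))$, and perform the three-case analysis dictated by \eqref{eq:F}. If $M(t) < 0$, uniform continuity of $\x + \ve\psi$ on $[0,t]$ keeps the perturbed running supremum negative for small $\ve$, so $Y(\x + \ve\psi)(t) = 0 = Y(\x)(t)$ and the derivative is zero. If $M(t) > 0$, positivity persists under perturbation, the $\vee 0$ truncation becomes inactive, and the lemma directly gives $\nabla_\psi Y(\x)(t) = \sup_{\Phi_{-\x}(t)}(-\psi)$. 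If $M(t) = 0$, the lemma gives $Y(\x + \ve\psi)(t) = \lsb \ve \sup_{\Phi_{-\x}(t)}(-\psi) + o(\ve)\rsb \vee 0$, whose normalized limit is $\sup_{\Phi_{-\x}(t)}(-\psi) \vee 0$. These three expressions coincide with $F(\x,\psi)(t)$, so adding $\psi(t)$ produces \eqref{eq:nablasm1}.

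Finally, upper semicontinuity and $\dlr$-regularity of $\nabla_\psi\sm_1(\x)$ reduce, by continuity of $\psi$, to the same claims for $F(\x,\psi)$, which I would deduce from two structural properties of the map $t \mapsto \Phi_{-\x}(t)$: first, $M(\cdot)$ is continuous and nondecreasing, so the three cases in \eqref{eq:F} partition $[0,\infty)$ into intervals of fixed type; second, on any maximal subinterval on which $M$ is constant, $\Phi_{-\x}(\cdot)$ is nondecreasing in the set-inclusion order, while at any time where $M$ strictly increases, $\Phi_{-\x}$ collapses into a subset of the recent past. Combined with continuity of $\psi$ and compactness of the argmax set, these properties yield one-sided limits of $F(\x,\psi)$ everywhere and the inequality $\limsup_{s \to t} F(\x,\psi)(s) \leq F(\x,\psi)(t)$. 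I expect the main obstacle to be tracking regularity across transition points where $M$ becomes positive or where $\Phi_{-\x}$ jumps; this bookkeeping is precisely what is carried out in \cite{Mandelbaum1995,Whitt2002,Mandelbaum2010}, and my proof would follow their case analysis.
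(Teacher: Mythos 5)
Your argument is correct, but it takes a different route from the paper: the paper's proof of Proposition \ref{prop:1dderivative} is purely by citation (existence and upper semicontinuity from \cite[Theorem 1.1]{Mandelbaum2010}, the formula \eqref{eq:nablasm1} from \eqref{eq:sm1} and \cite[Theorem 3.2]{Mandelbaum2010}, and membership in $\dlr$ from \cite[Theorem 1.2]{Mandelbaum2010} together with the absence of chains in one dimension), whereas you reconstruct the underlying argument directly. Your reduction to $\nabla_\psi Y(\x)$ via \eqref{eq:sm1}, the Danskin-type lemma for the supremum functional (both bounds are sound: the lower bound from evaluating at a maximizer of $g$ over the compact nonempty set $\Phi$, the upper bound from subsequential limits of near-maximizers landing in $\Phi$), and the three-case analysis matching \eqref{eq:F} all check out, including the $M(t)=0$ case where the $\vee 0$ truncation passes through the limit. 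What your approach buys is self-containedness and transparency about why the formula holds; what it costs is that the regularity claims (upper semicontinuity and $\dlr$) remain the thinnest part of your sketch — the structural properties of $t\mapsto\Phi_{-\x}(t)$ you identify are the right ones, but turning them into one-sided limits at the transition points (where $M$ becomes positive, or where $\Phi_{-\x}$ collapses) is exactly the bookkeeping the paper itself defers to \cite{Mandelbaum2010} here and then partially carries out in its own Lemma \ref{lem:F} for the left-continuity characterization. Since you explicitly flag this and point to the same references, the proposal is acceptable as written, though a fully self-contained version would need to spell out that case analysis.
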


\begin{proof}
By \cite[Theorem 1.1]{Mandelbaum2010} $\nabla_\psi\sm_1(\x)$ exists and is upper semicontinuous. By \eqref{eq:sm1} and \cite[Theorem 3.2]{Mandelbaum2010}, it follows that $\nabla_\psi\sm_1(\x)$ is given by \eqref{eq:nablasm1}. By \cite[Theorem 1.2]{Mandelbaum2010} and because there are no \emph{chains} (see \cite[Definition 1.5]{Mandelbaum2010}) in the one-dimensional setting, $\nabla_\psi\sm_1(x)$ lies in $\dlr$.
\end{proof}

In the next proposition we characterize directional derivatives of $\sm_1$ via solutions of the (one-dimensional) DP when $(\z,\y)$ satisfies condition 1 of the boundary jitter property (conditions 2--4 of the boundary jitter property are automatic in the one-dimensional setting). Recall from Example \ref{ex:1dsp} that $G=\R_+$ and $\pi_1(x)=x\vee0$. It is straightforward to check that $\nabla_v\pi_1(x)$, defined as in \eqref{eq:pixv} for $(x,v)\in\R_+\times\R$, is given by
	\be\label{eq:nablavpix}\nabla_v\pi_1(x)=\begin{cases}v&\text{if }x>0,\\ v\vee0&\text{if }x=0.\end{cases}\ee
In addition, observe that $\S=\partial G=\{0\}$ and $G_0$, defined as in \eqref{eq:Gx} with $x=0$, is given by $G_0=\R_+$; $H_x$, defined as in \eqref{eq:Hx}, is given by
	\be\label{eq:Hx1d}H_x=\begin{cases}\R&\text{if }x>0,\\ \{0\}&\text{if }x=0,\end{cases}\ee
and $d(x)$, defined as in \eqref{eq:dx}, is given by $d(0)=\R_+$ and $d(x)=0$ for all $x>0$.

\begin{prop}\label{prop:1dsmderivative}
Given $\x\in\cts_G$, let $(\z,\y)$ denote the solution of the one-dimensional SP for $\x$. Then for all $\psi\in\cts$, 
\begin{itemize}
	\item[1.] $\nabla_\psi\sm_1(\x)$ exists and lies in $\dlr$; 
	\item[2.] $\nabla_\psi\sm_1(\x)(0)=\nabla_{\psi(0)}\pi_1(\x(0))$ and if $\nabla_\psi\sm_1(\x)$ is discontinuous at $t\in(0,\infty)$, then $\z(t)\in\S$ and $\nabla_\psi\sm_1(\x)$ is left continuous at $t$ if and only if $\nabla_\psi\sm_1(\x)(t-)\in G_{\z(t)}$;
	\item[3.] if $(\z,\y)$ satisfies condition 1 of the boundary jitter property (Definition \ref{def:jitter}), then there is a unique solution $(\phi,\eta)$ of the DP associated with $\z$ for $\psi$, and $\phi(t)=\nabla_\psi\sm_1(\x)(t+)$ for all $t\in[0,\infty)$.
\end{itemize}
\end{prop}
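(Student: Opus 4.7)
The plan is to build on the explicit formula $\nabla_\psi\sm_1(\x)=\psi+F(\x,\psi)$ from Proposition~\ref{prop:1dderivative}, which immediately gives Part~1. For the opening claim of Part~2, I would evaluate both sides of the identity at $t=0$ directly from \eqref{eq:F} and \eqref{eq:nablavpix}, splitting into the cases $\x(0)>0$ (both sides equal $\psi(0)$) and $\x(0)=0$ (both sides equal $\psi(0)\vee 0$).

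For the remaining claims of Part~2, I would first show that any discontinuity of $\nabla_\psi\sm_1(\x)$ at $t\in(0,\infty)$ forces $\z(t)\in\S=\{0\}$. Indeed, if $\z(t)>0$, then by continuity $\z>0$ on some open neighborhood of $t$, on which the supremum $M(u)\doteq\sup_{s\in[0,u]}(-\x(s))\vee 0$ (which equals $\y(u)$) is constant, because it can only increase at zeros of $\z$; hence $\Phi_{-\x}$ and thus $F(\x,\psi)$ is constant on that neighborhood, and continuity of $\psi$ yields continuity of $\nabla_\psi\sm_1(\x)=\psi+F(\x,\psi)$ at $t$. For the left-continuity iff, when $\z(t)=0$ I would establish the identity $\nabla_\psi\sm_1(\x)(t)=\nabla_\psi\sm_1(\x)(t-)\vee 0$ by examining how $\Phi_{-\x}(u)$ evolves as $u\uparrow t$: the key point is that $t\in\Phi_{-\x}(t)$ whenever $\z(t)=0$, so the value at $t$ absorbs the fresh contribution $-\psi(t)$ from this new argmax (with an extra $\vee 0$ coming from Case~2 of \eqref{eq:F}); since $G_{\z(t)}=\R_+$ when $\z(t)=0$, the identity yields the iff.

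For Part~3, I would set $\phi$ to be the right-continuous regularization of $\nabla_\psi\sm_1(\x)$ and $\eta\doteq\phi-\psi$; both lie in $\dr$ by Part~1, and Condition~1 of the DP holds by construction. The constancy argument from Part~2 shows $\eta$ is constant on each open interval where $\z>0$, which together with the observation that $d(\z(u))=\R_+$ whenever $\z(u)=0$ yields Condition~3. The initial condition $\eta(0)\in\spaan[d(\z(0))]$ is automatic when $\z(0)=0$ (as $\spaan[\R_+]=\R$), and when $\z(0)>0$ the continuity argument applied at $0^+$ gives $\phi(0)=\psi(0)$, hence $\eta(0)=0$. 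Uniqueness of $(\phi,\eta)$ then follows from Theorem~\ref{thm:dmlip}.

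The main obstacle is verifying Condition~2 of the DP, namely $\phi(t)=\nabla_\psi\sm_1(\x)(t+)=0$ whenever $\z(t)=0$; the difficulty is that $\nabla_\psi\sm_1(\x)(t)$ itself is generally nonzero, so the right limit must be computed carefully. Here Condition~1 of the boundary jitter property enters through a dichotomy on the behavior of $M$ just after $t$: either $M$ strictly increases immediately after $t$, in which case $\Phi_{-\x}(u)\subseteq(t,u]$ for $u>t$ small and $F(\x,\psi)(u)\to-\psi(t)$ as $u\downarrow t$, yielding $\nabla_\psi\sm_1(\x)(u)\to 0$; or $M$ is constant on some $(t,t+\delta)$, in which case Condition~1 of the boundary jitter at $t$ forces $|\Phi_{-\x}(t)|=1$ (else $\y=M$ would be constant on an interval straddling $t$, contradicting the nonconstancy required by the jitter property), and the contribution of any further zeros of $\z$ in $(t,u]$ to $F(\x,\psi)(u)$ still converges to $-\psi(t)$ by continuity of $\psi$, again giving $\nabla_\psi\sm_1(\x)(u)\to 0$. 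In either case, $\phi(t)=0$.
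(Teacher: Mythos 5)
Your proposal is correct and follows essentially the same route as the paper: both rest on the explicit formula $\nabla_\psi\sm_1(\x)=\psi+F(\x,\psi)$, both reduce part 2 to the behaviour of $\Phi_{-\x}$ at zeros of $\z$ (your identity $\nabla_\psi\sm_1(\x)(t)=\nabla_\psi\sm_1(\x)(t-)\vee0$ is Lemma \ref{lem:F}(ii) restated, and verifying it requires the same case analysis on $\Phi_{-\x}$ carried out there), and both establish condition 2 of the DP via the same dichotomy — either $\y$ strictly increases immediately after $t$, or it is locally constant to the right, in which case condition 1 of the jitter property forces $\y(s)<\y(t)$ for all $s<t$, hence $\Phi_{-\x}(t)=\{t\}$ and $\Phi_{-\x}(u)\subseteq[t,u]$ for $u\downarrow t$. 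The only differences are cosmetic: you verify the DP initial condition $\eta(0)\in\spaan[d(\z(0))]$ explicitly (which the paper leaves implicit) and compute the right limit directly rather than along a subsequence.
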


Before proving Proposition \ref{prop:1dsmderivative}, we first prove the following useful lemma.

\begin{lem}\label{lem:F}
Given $f,g\in\cts$, $F(f,g)$ is upper semicontinuous and lies in $\dlr$. Moreover, if $t\in(0,\infty)$ is a discontinuity point of $F(f,g)$, then the following properties hold:
\begin{itemize}
	\item[(i)] $\sm_1(f)(t)=0$;
	\item[(ii)] $F(f,g)$ is left continuous at $t$ if and only if $F(f,g)(t-)\geq-g(t)$.
\end{itemize}
\end{lem}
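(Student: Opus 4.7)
The first assertion, that $F(f,g)$ is upper semicontinuous and lies in $\dlr$, is an immediate consequence of Proposition~\ref{prop:1dderivative}. Indeed, rewriting \eqref{eq:nablasm1} gives $F(f,g) = \nabla_g \sm_1(f) - g$, and since $g \in \cts$ while $\nabla_g\sm_1(f)$ is upper semicontinuous and in $\dlr$, these regularity properties transfer to $F(f,g)$.

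For (i) I would argue by contrapositive. Set $M(s) \doteq \sup_{u\in[0,s]}(-f(u))$, so that by \eqref{eq:sm1}, $\sm_1(f)(s) = f(s) + M(s)\vee 0$. Fix $t$ with $\sm_1(f)(t) > 0$ and split on the sign of $M(t)$. If $M(t) < 0$, continuity of $f$ (hence of $M$) yields a neighborhood of $t$ on which $M$ remains negative, so the top branch of \eqref{eq:F} gives $F(f,g) \equiv 0$ there. Otherwise $M(t) \vee 0 > -f(t)$ (equivalently $t \notin \Phi_{-f}(t)$), and continuity of $f$ provides $\delta, \eta > 0$ with $-f(s) < M(t) - \eta$ for $s \in [t-\delta, t+\delta]$; this forces $M(u) = M(t)$ and $\Phi_{-f}(u) = \Phi_{-f}(t) \subseteq [0, t-\delta]$ for all $u$ in a neighborhood of $t$, so by \eqref{eq:F} the value of $F(f,g)$ is constant (and in particular continuous) at $t$.

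For (ii), assume $t \in (0,\infty)$ is a discontinuity of $F(f,g)$. By (i) we have $\sm_1(f)(t) = 0$, which forces $M(t) \geq 0$ and $-f(t) = M(t)$, so $t \in \Phi_{-f}(t)$. I would then compute the left limit $F(f,g)(t-)$ by analyzing $\Phi_{-f}(u)$ as $u \uparrow t$. Two sub-cases arise. If $\Phi_{-f}(t) \cap [0,t) \neq \emptyset$, fix $s^\ast$ in this intersection; then $M(u) = M(t)$ and $\Phi_{-f}(u) = \Phi_{-f}(t) \cap [0,u]$ for all $u \in (s^\ast, t)$, and monotone convergence yields $F(f,g)(t-) = \sup_{s \in \Phi_{-f}(t)\cap[0,t)}(-g(s))$ (with an additional $\vee 0$ when $M(t) = 0$). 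If instead $\Phi_{-f}(t) = \{t\}$, then continuity of $f$ and uniqueness of the maximizer force any selection of maximizers $s_u \in \Phi_{-f}(u)$ to satisfy $s_u \to t$ as $u \uparrow t$, so by continuity of $g$ we obtain $F(f,g)(t-) = -g(t)$ when $M(t) > 0$; when $M(t) = 0$, $M(u) < 0$ for $u$ just below $t$ and thus $F(f,g)(t-) = 0$. In every sub-case, comparing with \eqref{eq:F} at $t$ (using $t \in \Phi_{-f}(t)$) yields the identity
\[
F(f,g)(t) = F(f,g)(t-) \vee (-g(t))
\]
(with an additional $\vee 0$ when $M(t) = 0$, which is absorbed since then $F(f,g)(t-) \geq 0$), from which left continuity at $t$ is equivalent to $F(f,g)(t-) \geq -g(t)$.

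The main subtlety lies in the singleton case $\Phi_{-f}(t) = \{t\}$: the argmax sets for $u < t$ sit strictly below $t$, and one must invoke both the uniqueness of the global maximizer at $t$ and the continuity of $f$ to conclude that any maximizers $s_u$ accumulate only at $t$, and then the continuity of $g$ to evaluate the limit $-g(s_u) \to -g(t)$. Elsewhere, the analysis is a bookkeeping exercise in the three branches of \eqref{eq:F} and the behavior of the monotone function $M$ near $t$.
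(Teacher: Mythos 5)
Your proof is correct and follows essentially the same route as the paper's: both reduce the analysis to the behavior of the argmax set $\Phi_{-f}(u)$ as $u\uparrow t$, use the local constancy of $\Phi_{-f}$ when $t\notin\Phi_{-f}(t)$ to get (i), and split the left-limit computation for (ii) according to whether $\Phi_{-f}(t)$ is a singleton (your $M(t)=0$ versus $M(t)>0$ dichotomy is the paper's case split on the position of $t$ relative to the interval $I_f=\{s:\sup_{u\in[0,s]}(-f(u))=0\}$ in disguise). The only organizational difference is that you package (ii) as the single identity $F(f,g)(t)=F(f,g)(t-)\vee(-g(t))$ rather than proving the two implications separately, which is a harmless and slightly tidier presentation.
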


\begin{proof}
By Proposition \ref{prop:1dderivative}, $F(f,g)$ is upper semicontinuous and lies in $\dlr$. Fix a discontinuity point $t\in(0,\infty)$ of $F(f,g)$. By \eqref{eq:F}--\eqref{eq:Phif}, $t$ lies at an endpoint of the closed interval 
	$$I_f\doteq\{t\in[0,\infty):\sup_{s\in[0,t]}(-f(s))=0\}$$ 
and/or $\sup_{s\in\Phi_{-f}(\cdot)}(-g(s))$ is discontinuous at $t$. In either case, $t\in\Phi_{-f}(t)$, which along with the explicit formula for $\sm_1$ given in \eqref{eq:sm1} implies that $\sm_1(f)(t)=0$, so (i) holds. Suppose $F(f,g)$ is left continuous at $t$. Then by \eqref{eq:F} and the fact that $t\in\Phi_{-f}(t)$, $F(f,g)(t-)=F(f,g)(t)\geq-g(t)$. Alternatively, suppose $F(f,g)(t-)\geq-g(t)$. If $t<a\doteq\min I_f$, then \eqref{eq:F} implies $F(f,g)$ is continuous at $t$. Next, if $t=a$, then $F(f,g)(t-)=0$, $\Phi_{-f}(t)=\{t\}$ and $F(f,g)(t)=(-g(t))\vee0=0$, so $F(f,g)$ is left continuous at $t$. Proceeding, if $t\in I_f\setminus\{a\}$, then \eqref{eq:F} implies
	$$F(f,g)(t-)=\sup_{s\in[0,t)\cap I_f}(-g(s))\vee0,\qquad F(f,g)(t)=\sup_{s\in[0,t]\cap I_f}(-g(s))\vee0,$$
so the fact that $F(f,g)(t-)\geq-g(t)$ implies $F(f,g)$ is left continuous at $t$. Finally, suppose $t>\max I_f$. If $\Phi_{-f}(t)=\{t\}$, then the continuity of $f$ implies there is a sequence $\{s_k\}_{k\in\N}$ such that $s_k\uparrow t$ as $k\to\infty$ and for each $k\in\N$, $s_k>\max I_f$ and $\Phi_{-f}(s_k)=\{s_k\}$. In this case, 
	$$F(f,g)(t-)=\lim_{k\to\infty}F(f,g)(s_k)=\lim_{k\to\infty}(-g(s_k))=-g(t-)=F(f,g)(t).$$ 
On the other hand, if $\Phi_{-f}(t)\neq\{t\}$, then set $\Phi_{-f}(t-)\doteq\Phi_{-f}(t)\setminus\{t\}$. Then for all $u\in[0,t)$ sufficiently large, $\Phi_{-f}(u)=\Phi_{-f}(t-)\cap[0,u]$. Thus, by the continuity of $g$,
\begin{align*}
	F(f,g)(t-)&=\lim_{u\uparrow t}\sup_{s\in\Phi_{-f}(t-)\cap[0,u]}(-g(s))=\sup_{s\in\Phi_{-f}(t-)}(-g(s)),
\end{align*}
and, using that $F(f,g)(t-)\geq-g(t)$, we have 
	$$F(f,g)(t)=\sup_{s\in\Phi_{-f}(t-)}(-g(s))\vee(-g(t))=F(f,g)(t-).$$ 
The completes the proof of (ii).
\end{proof}

\begin{proof}[Proof of Proposition \ref{prop:1dsmderivative}]
Fix $\psi\in\cts$. By \cite[Theorem 3.2]{Mandelbaum2010}, $\nabla_\psi\sm_1(\x)$ exists, lies in $\dlr$ and is characterized by \eqref{eq:nablasm1}--\eqref{eq:Phif}. It follows from \eqref{eq:nablavpix} that $\nabla_\psi\sm_1(\x)(0)=\nabla_{\psi(0)}\pi_1(\x(0))$. Let $t\in(0,\infty)$ be a discontinuity point of $\nabla_\psi\sm_1(\x)$. Then by \eqref{eq:nablasm1}, the continuity of $\psi$ and Lemma \ref{lem:F}, $\z(t)=\sm_1(\x)(t)=0$, or equivalently, $\z(t)\in\S$, and $\nabla_\psi\sm_1(\x)$ is left continuous at $t$ if and only if $\nabla_\psi\sm_1(\x)(t)\geq0$, or equivalently, $\nabla_\psi\sm_1(\x)(t-)\in G_{\z(t)}$. This proves of parts 1 and 2 of Proposition \ref{prop:1dsmderivative}.

Now suppose $(\z,\y)$ satisfies condition 1 of the boundary jitter property. We show that $(\hat{\phi},\hat{\phi}-\psi)$, where $\hat{\phi}(t)\doteq\nabla_\psi\sm_1(\x)(t+)$ for all $t\in[0,\infty)$, solves the DP associated with $\z$ for $\psi$. Since solutions of the DP are unique under Assumption \ref{ass:setB}, by Theorem \ref{thm:dmlip}, this will complete the proof of part 3 of the proposition. Condition 1 of the DP holds automatically. Let $t\in[0,\infty)$. In view of \eqref{eq:Hx1d}, we need to show that if $\z(t)=0$, then $\hat{\phi}(t)=0$. Suppose $\z(t)=0$. By condition 1 of the boundary jitter property and the fact that $\y$ is nondecreasing, \emph{at least} one of the following holds:
\begin{itemize} 
	\item[(i)] $t\geq0$ and $\y(u)>\y(t)$ for all $u>t$;
	\item[(ii)] $t>0$ and $\y(s)<\y(t)$ for all $s<t$.
\end{itemize} 
First consider case (i). Since $\y$ is nondecreasing, there is a sequence $\{u_\ell\}_{\ell\in\N}$ such that $u_\ell\downarrow t$ as $\ell\to\infty$ and for each $\ell\in\N$, $0\leq\y(s)<\y(u_\ell)$ for all $s<u_\ell$. Then for each $\ell\in\N$, \eqref{eq:y} and \eqref{eq:nablasm1}--\eqref{eq:Phif} imply that $\Phi_{-\x}(u_\ell)=\{u_\ell\}$ and $\nabla_\psi\sm_1(\x)(u_\ell)=0$. Letting $\ell\to\infty$ yields $\hat{\phi}(t)=0$. Next, suppose case (ii) holds and case (i) does \emph{not} hold. Then $-\x(t)=\y(t)>0$ and by \eqref{eq:y} and \eqref{eq:nablasm1}--\eqref{eq:Phif}, $\Phi_{-\x}(t)=\{t\}$ and $\nabla_\psi\sm_1(\x)(t)=0$. Since case (i) does not hold, \eqref{eq:Phif} implies $\Phi_{-\x}(u)\subseteq[t,u]$ for all $u\in[t,t+\delta)$ for $\delta>0$ sufficiently small. Upon substituting the last relation into \eqref{eq:F}, we see that $F(\x,\psi)(u)=\sup_{r\in[t,u]}(-\psi(r))$ for all $u\in[t,t+\delta)$. Since $\psi$ is continuous, this implies that $F(\x,\psi)$ is right continuous at $t$, so $\hat{\phi}(t)=0$. This proves that $\hat{\phi}$ satisfies condition 2 of the DP.

We are left to show that $(\hat{\phi},\hat{\phi}-\psi)$ satisfies condition 3 of the DP. By \eqref{eq:nablasm1}, $\hat{\phi}(t)-\psi(t)=F(\x,\psi)(t+)$ for all $t\in[0,\infty)$. Fix $0\leq s<t<\infty$. In order to prove condition 3, due to the fact that $\spaan[d(0)]=\R$, it suffices to show that if $\z$ is positive on $(s,t]$, then $F(\x,\psi)(t+)-F(\x,\psi)(s+)=0$. By the continuity of $\z$, if $\z$ is positive on $(s,t]$, there exists $u>t$ such that $\z$ is positive on $(s,u)$. By \eqref{eq:sm1}--\eqref{eq:y} and \eqref{eq:Phif}, $\Phi_{\x}(\cdot)$ must be constant on $(s,u)$, which, along with \eqref{eq:F}, implies the desired conclusion $F(\x,\psi)(t+)-F(\x,\psi)(s+)=0$.
\end{proof}

\section{Directional derivatives: Up to the first hitting time of the nonsmooth part of the boundary}\label{sec:theta2}

In this section we prove existence of and characterize directional derivatives of the ESM up until the first time that the constrained path reaches $\U$, the nonsmooth part of the boundary. In order to prove our main result, we introduce the following statement, which will be referred to multiple times for different values of $T$. Recall the definition of $G_x$, for $x\in\S$, given in \eqref{eq:Gx} and that $\nabla_v\pi(x)$, defined in \eqref{eq:pixv}, was shown in Lemma \ref{lem:projxv} to exist for all $(x,v)\in G\times\R^J$.

\begin{statement}\label{state:A}
For all $\psi\in\cts$, the following hold:
\begin{itemize}
	\item[1.] $\nabla_\psi\esm(\x)$ exists on $[0,T)$ and lies in $\dlr([0,T):\R^J)$. 
	\item[2.] $\nabla_\psi\esm(\x)(0)=\nabla_{\psi(0)}\pi(\x(0))$ and if $t\in(0,T)$ is a discontinuity point of $\nabla_\psi\esm(\x)$, then $\z(t)\in\S$ and $\nabla_\psi\esm(\x)$ is left continuous at $t$ if and only if $\nabla_\psi\esm(\x)(t-)\in G_{\z(t)}$.
	\item[3.] If $(\z,\y)$ satisfies condition 1 of the boundary jitter property on $[0,T)$, then there exists a unique solution $(\phi,\eta)$ to the DP associated with $\z$ for $\psi$ on $[0,T)$ and $\phi(t)=\nabla_\psi\esm(\x)(t+)$ for $t\in[0,T)$.
\end{itemize}
\end{statement}

Given a solution $(\z,\y)$ to the ESP for $\x\in\cts_G$, let $\theta_2$ be the first time $\z$ reaches the nonsmooth part of the boundary $\U$; that is,
	\be\label{eq:theta2}\theta_2\doteq\inf\{t\in[0,\infty):\z(t)\in\U\}.\ee
The following proposition is the main result of this section.

\begin{prop}\label{prop:theta2}
Fix an ESP $\{(d_i,n_i,c_i),i\in\allN\}$ satisfying Assumption \ref{ass:setB} and Assumption \ref{ass:projection}. Given $\x\in\cts_G$, let $(\z,\y)$ denote the solution to the ESP for $\x$ and define $\theta_2$ as in \eqref{eq:thetan}. Then Statement \ref{state:A} holds with $T=\theta_2$.
\end{prop}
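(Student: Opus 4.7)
The plan is to reduce the multidimensional problem to the one-dimensional setting of Proposition \ref{prop:1dsmderivative} on successive subintervals of $[0,\theta_2)$ on which $\z$ interacts with at most a single face, and then patch the local results together. This strategy is viable because $\z$ avoids $\U$ on $[0,\theta_2)$, so $|\allN(\z(t))|\le 1$ for every $t\in[0,\theta_2)$. The initial condition $\nabla_\psi\esm(\x)(0)=\nabla_{\psi(0)}\pi(\x(0))$ is immediate from Theorem \ref{thm:speu} (which gives $\esm(\x)(0)=\pi(\x(0))$) together with Lemma \ref{lem:projxv}.

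I will construct a partition $0=s_0<s_1<s_2<\cdots$ of $[0,\theta_2)$ recursively: given $s_k<\theta_2$, let $i_k\in\allN$ be the unique index with $\z(s_k)\in F_{i_k}$ (if any), and set $s_{k+1}$ to be the first time $t>s_k$ at which $\z$ hits $F_j$ for some $j\neq i_k$, or the first return to $\partial G$ in the case $\z(s_k)\in G^\circ$, capped at $\theta_2$. Lemma \ref{lem:allNusc} guarantees $s_{k+1}>s_k$. To establish $s_k\uparrow\theta_2$: if instead $s_k\uparrow s_\infty<\theta_2$, then $\z(s_\infty)\in G^\circ\cup\S$, and Lemma \ref{lem:allNusc} yields a neighborhood of $s_\infty$ inside which $\z$ can only touch at most one specific face; within such a neighborhood the recursive construction can register at most two new transitions, contradicting the accumulation.

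On each $(s_k,s_{k+1})$, condition 3 of the ESP forces $\y(t)-\y(s_k)=L^{i_k}(t)d_{i_k}$ for a nondecreasing scalar $L^{i_k}$ (interpreted as zero in the $G^\circ$ case), so taking the inner product of $\z=\x+\y$ with $n_{i_k}$ shows that $(\langle\z(s_k+\cdot),n_{i_k}\rangle-c_{i_k},L^{i_k})$ solves the one-dimensional SP for the $n_{i_k}$-projection of the time-shifted path $\x^{s_k}$ from \eqref{eq:xS}. Theorem \ref{thm:esmlip} and Lemma \ref{lem:allNusc} imply that for every $\eta>0$ and all sufficiently small $\ve>0$, $\z_\ve\doteq\esm(\x+\ve\psi)$ also touches only $F_{i_k}$ on $[s_k,s_{k+1}-\eta]$, so the analogous 1D reduction applies to $\z_\ve$ with a perturbed scalar input. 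Arguing inductively under the hypothesis that $\nabla_\psi\esm(\x)(s_k)$ exists, this perturbed scalar input converges in $\cts$ as $\ve\downarrow0$; Proposition \ref{prop:1dsmderivative} together with Proposition \ref{prop:psive} applied to the one-dimensional SM then yields the scalar directional derivative, and the identity
\[
\z_\ve(t)-\z(t)=(\z_\ve(s_k)-\z(s_k))+\ve(\psi(t)-\psi(s_k))+\bigl[(L^{i_k}_\ve-L^{i_k})(t)-(L^{i_k}_\ve-L^{i_k})(s_k)\bigr]d_{i_k}
\]
reconstructs $\nabla_\psi\esm(\x)(t)$ on $[s_k,s_{k+1}-\eta]$ as $\nabla_\psi\esm(\x)(s_k)+\psi(t)-\psi(s_k)$ plus a $d_{i_k}$-multiple of the 1D derivative. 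Letting $\eta\downarrow0$ extends existence to $[s_k,s_{k+1})$. To obtain $\nabla_\psi\esm(\x)(s_{k+1})$ (needed to launch the next induction step), I will switch to a 1D reduction using $F_{i_{k+1}}$: since $\z(s_{k+1})\in F_{i_{k+1}}\setminus F_{i_k}$ (because $\z(s_{k+1})\notin\U$), Lemma \ref{lem:allNusc} yields a neighborhood of $s_{k+1}$ on which $\z$ lies in $G^\circ\cup F_{i_{k+1}}$, and Theorem \ref{thm:esmlip} confines $\z_\ve$ there as well for $\ve$ small, enabling a 1D reduction with $F_{i_{k+1}}$ initialized at some $s_{k+1}-\delta$ already covered by the prior step. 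Parts 1 and 2 of Statement \ref{state:A} then follow: the $\dlr$ regularity and the characterization of discontinuities (only in the $d_{i_k}$-component at times with $\z(t)\in\S$, with the left-continuity criterion $\nabla_\psi\esm(\x)(t-)\in G_{\z(t)}$) transfer directly from the corresponding parts of Proposition \ref{prop:1dsmderivative}.

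For part 3, condition 1 of the multidimensional boundary jitter property restricts on each subinterval to condition 1 of the one-dimensional boundary jitter property, so Proposition \ref{prop:1dsmderivative} part 3 produces a 1D DP solution on each piece. Setting $\phi(t)\doteq\nabla_\psi\esm(\x)(t+)$ and $\eta\doteq\phi-\psi$, condition 2 of the multidimensional DP reduces at each $t$ with $\z(t)\in F_i$ to $\langle\phi(t),n_i\rangle=0$, which is precisely the 1D conclusion, while condition 3 holds because the increments of $\eta$ on each $(s_k,s_{k+1})$ are scalar multiples of $d_{i_k}$ and $F_{i_k}$ is hit on that subinterval whenever the increment is nonzero. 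Uniqueness follows from Theorem \ref{thm:dmlip}. The chief obstacle is managing the patching at transition times: specifically, ensuring the recursion exhausts $[0,\theta_2)$ (handled by the upper semicontinuity of $\allN(\cdot)$) and switching cleanly between 1D reductions attached to different faces at each $s_{k+1}$, which relies crucially on the fact that $\z(s_{k+1})$ lies in the relative interior of a single face rather than at $\U$.
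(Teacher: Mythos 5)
Your proposal is correct and takes essentially the same route as the paper: the paper likewise partitions $[0,\theta_2)$ by the face-transition times \eqref{eq:t1}--\eqref{eq:tk}, reduces to the one-dimensional SM via the inner product with $n_i$ on each piece (Lemma \ref{lem:t2}), and handles each transition by time-shifting to an interior point just before it and re-running the single-face argument, with Proposition \ref{prop:psive} and the Lipschitz continuity of the ESM supplying the convergence of the perturbed shifted input (Lemmas \ref{lem:stitchA} and \ref{lem:tk}). Your accumulation argument for the transition times and your treatment of the DP conditions also match the paper's.
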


In Section \ref{sec:preliminary} we introduce some standard notation that will be used in this section and present some useful lemmas. In Section \ref{sec:theta2a} we prove Proposition \ref{prop:theta2}. 

\subsection{Preliminary setup and results}\label{sec:preliminary}

In this section we introduce our standard setup and present a useful lemma. Fix an ESP $\{(d_i,n_i,c_i)\}$ satisfying Assumption \ref{ass:setB} and Assumption \ref{ass:projection}. Given $\x\in\cts_G$ and $\psi\in\cts$, let $\z\doteq\esm(\x)$ and for $\ve>0$, let $\z_\ve\doteq\esm(\x+\ve\psi)$. Given $S\in(0,\infty)$, define $\x^S,\z^S$ as in \eqref{eq:xS}--\eqref{eq:xS}, so by the time-shift property of the ESP (Lemma \ref{lem:esmshift}), $\z^S=\esm(\x^S)$ and for $\ve>0$, define $\x_\ve^S,\z_\ve^S\in\cts$ by
\begin{align}\label{eq:xveS}
	\x_\ve^S(\cdot)&=\z_\ve(S)+\x(S+\cdot)+\ve\psi(S+\cdot)-\x(S)-\ve\psi(S)\\ \label{eq:zveS}
	\z_\ve^S(\cdot)&=\z_\ve(S+\cdot).
\end{align}
so by the time-shift property of the ESP, $\z_\ve^S=\esm(\x_\ve^S)$. By \eqref{eq:xveS} and \eqref{eq:xS},
\begin{align}\label{eq:xveSxS}
	\x_\ve^S(\cdot)&=\x^S(\cdot)+\z_\ve(S)-\z(S)+\ve\psi(S+\cdot)-\ve\psi(S)\\ \notag
	&=\x^S(\cdot)+\ve\hat{\psi}_\ve^S(\cdot),
\end{align}
where $\hat{\psi}_\ve^S\in\cts$ is given by
\begin{align}\label{eq:psiveS}
	\hat{\psi}_\ve^S(\cdot)&\doteq\frac{\z_\ve(S)-\z(S)}{\ve}+\psi(S+\cdot)-\psi(S)\\ \notag
	&=\nabla_\psi^\ve\esm(\x)(S)+\psi(S+\cdot)-\psi(S),
\end{align}
where the last equality uses \eqref{eq:nablapsive} and the definitions of $\z_\ve$ and $\z$. Suppose that $T\in(S,\infty)$ and $\nabla_\psi\esm(\x)$ exists on $[0,T)$. Then $\hat{\psi}_\ve^S\to\hat{\psi}^S$ uniformly on $[0,\infty)$ as $\ve\downarrow0$, where $\hat{\psi}^S\in\cts$ is given by
	\be\label{eq:psiSlimit}\hat{\psi}^S(\cdot)\doteq\nabla_\psi\esm(\x)(S)+\psi(S+\cdot)-\psi(S).\ee
The proof of Proposition \ref{prop:theta2} will proceed by showing the existence of and characterizing $\nabla_\psi\esm(\x)$ on intervals where $\z$ only hits a single face. The following lemma will allow us to piece together these results to establish existence on $[0,\theta_2)$.

\begin{lem}\label{lem:stitchA}
Fix $0\leq S<T<U<\infty$. Let $(\z,\y)$ be the solution of the ESP for $\x\in\cts_G$. Assume that Statement \ref{state:A} holds. Define $\x^S,\z^S,\y^S,\hat{\psi}^S$ as in \eqref{eq:xS}--\eqref{eq:yS} and \eqref{eq:psiSlimit}, and assume that Statement \ref{state:A} holds with $\x^S,\z^S,\y^S,\hat{\psi}^S,U-S$ in place of $\x,\z,\y,\psi,T$, respectively. Then Statement \ref{state:A} holds with $U$ in place of $T$. Furthermore, for $t\in[S,U)$, $\nabla_\psi\esm(\x)(t)=\nabla_{\hat{\psi}^S}\esm(\x^S)(t-S)$.
\end{lem}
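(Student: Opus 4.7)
The plan is to establish a pointwise ``anticipation'' identity relating $\nabla_\psi\esm(\x)$ on $[S,U)$ to $\nabla_{\hat{\psi}^S}\esm(\x^S)$ on $[0,U-S)$ via the time-shift property of the ESP, and then patch this with the derivative on $[0,T)$ furnished by the first Statement \ref{state:A} hypothesis. Concretely, by Lemma \ref{lem:esmshift} together with \eqref{eq:xveSxS}, for every $\ve>0$ and $t\in[0,U-S)$,
\begin{equation*}
\nabla_\psi^\ve\esm(\x)(S+t)=\nabla_{\hat{\psi}_\ve^S}^\ve\esm(\x^S)(t).
\end{equation*}
The difference $\hat{\psi}_\ve^S-\hat{\psi}^S$ is the time-independent constant $\nabla_\psi^\ve\esm(\x)(S)-\nabla_\psi\esm(\x)(S)$, which tends to zero as $\ve\downarrow0$ by part 1 of the first Statement \ref{state:A} hypothesis together with $S<T$; hence $\hat{\psi}_\ve^S\to\hat{\psi}^S$ uniformly on $[0,\infty)$. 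Applying Proposition \ref{prop:psive} and invoking part 1 of the shifted Statement \ref{state:A} hypothesis to guarantee existence of $\nabla_{\hat{\psi}^S}\esm(\x^S)$ on $[0,U-S)$, we take $\ve\downarrow0$ to obtain
\begin{equation*}
\nabla_\psi\esm(\x)(S+t)=\nabla_{\hat{\psi}^S}\esm(\x^S)(t),\qquad t\in[0,U-S),
\end{equation*}
which is the ``furthermore'' claim and establishes existence of $\nabla_\psi\esm(\x)$ on $[S,U)$; combined with existence on $[0,T)$ from the first hypothesis and the overlap $S<T$, this gives existence on $[0,U)$.

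Next I would verify parts 1 and 2 of Statement \ref{state:A} with $U$ in place of $T$ by transferring properties through the identity. Membership of $\nabla_\psi\esm(\x)$ in $\dlr([0,U):\R^J)$ follows from the corresponding membership on $[0,T)$ and on $[0,U-S)$ for the shifted problem. The initial value $\nabla_\psi\esm(\x)(0)=\nabla_{\psi(0)}\pi(\x(0))$ is inherited from the first hypothesis. For a discontinuity $t\in(0,U)$, either $t\in(0,T)$ and the first hypothesis applies, or $t\in[S,U)$, in which case the identity exhibits $t-S$ as a discontinuity of $\nabla_{\hat{\psi}^S}\esm(\x^S)$, whence the shifted hypothesis yields $\z(t)=\z^S(t-S)\in\S$ together with the left-continuity characterization (noting $G_{\z(t)}=G_{\z^S(t-S)}$).

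For part 3, assume $(\z,\y)$ satisfies condition 1 of the boundary jitter property on $[0,U)$, a property that passes automatically to $(\z^S,\y^S)$ on $[0,U-S)$ via time-shift. Let $(\phi_1,\eta_1)$ and $(\phi_2,\eta_2)$ denote the unique DP solutions supplied by part 3 of each Statement \ref{state:A} hypothesis. Define $\phi$ on $[0,U)$ by $\phi(t)\doteq\phi_1(t)$ for $t\in[0,S)$ and $\phi(t)\doteq\phi_2(t-S)$ for $t\in[S,U)$, and set $\eta\doteq\phi-\psi$. Applying the identity to the right-continuous regularizations gives $\phi_1(S)=\nabla_\psi\esm(\x)(S+)=\nabla_{\hat{\psi}^S}\esm(\x^S)(0+)=\phi_2(0)$, so the two pieces are consistent at the seam. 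Conditions 1 and 2 of the DP hold piecewise. For condition 3, the only new case is $0\leq s<S\leq t<U$, where
\begin{equation*}
\eta(t)-\eta(s)=[\eta_2(t-S)-\eta_2(0)]+[\eta_1(S)-\eta_1(s)]\in\spaan\lsb\cup_{u\in(s,t]}d(\z(u))\rsb,
\end{equation*}
since the two bracketed terms lie in the relevant spans over $(S,t]$ and $(s,S]$, respectively, both of which are subspaces of the right-hand side. Uniqueness on $[0,U)$ then follows from Theorem \ref{thm:dmlip}.

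The main obstacle is the seam compatibility $\phi_1(S)=\phi_2(0)$; this is handled cleanly by the observation that the pre-limit perturbation paths $\hat{\psi}_\ve^S$ and $\hat{\psi}^S$ differ only by a time-independent constant, so the identity passes through right-continuous regularization without additional technicalities.
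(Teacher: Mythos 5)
Your proposal is correct and follows essentially the same route as the paper: the same time-shift identity $\nabla_\psi\esm(\x)(S+t)=\nabla_{\hat{\psi}^S}\esm(\x^S)(t)$ obtained from Lemma \ref{lem:esmshift}, \eqref{eq:xveSxS} and Proposition \ref{prop:psive}, the same transfer of parts 1 and 2 through that identity, and the same patching of the two DP solutions with the crossing case $0\leq s<S\leq t<U$ of condition 3 handled by splitting at $S$. The only cosmetic difference is that the paper defines the candidate as the right-continuous regularization of $\nabla_\psi\esm(\x)$ and identifies it with $\phi$ and $\phi^S$ on the two pieces, whereas you build it piecewise and check the seam, which amounts to the same verification.
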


\begin{proof}
For $\ve>0$, let $\z_\ve\doteq\esm(\x+\ve\psi)$ and define $\x_\ve^S,\z_\ve^S,\hat{\psi}_\ve^S$ as in \eqref{eq:xveS}, \eqref{eq:zveS} and \eqref{eq:psiveS}. Since Statement \ref{state:A} holds by assumption, $\nabla_\psi\esm(\x)$ exists on $[0,T)$, which, along with the fact that $S\in[0,T)$, implies $\hat{\psi}_\ve^S\to\hat{\psi}^S$ uniformly on $[0,\infty)$ as $\ve\downarrow0$. We show that $\nabla_\psi\esm(\x)$ exists on $[T,U)$. By our assumption that Statement \ref{state:A} holds with $\x^S,\hat{\psi}^S,\z^S,\y^S,U-S$ in place of $\x,\psi,\z,\y,T$, respectively, Proposition \ref{prop:psive}, and the fact that $\hat{\psi}_\ve^S\to\hat{\psi}^S$ uniformly on $[0,\infty)$ as $\ve\downarrow0$, we have
	\be\label{eq:nablahatpsive}\lim_{\ve\downarrow0}\nabla_{\hat{\psi}_\ve^S}^\ve\esm(\x^S)(t)=\nabla_{\hat{\psi}^S}\esm(\x^S)(t),\qquad t\in[0,U-S).\ee
By \eqref{eq:SMderivative}, \eqref{eq:nablapsive}, the time-shift property of the ESP (Lemma \ref{lem:esmshift}), \eqref{eq:xveSxS} and \eqref{eq:nablahatpsive}, for $t\in[S,U)$, we have
\begin{align}\label{eq:nablaTU}
	\nabla_\psi\esm(\x)(t)&=\lim_{\ve\downarrow0}\frac{\esm(\x+\ve\psi)-\esm(\x)}{\ve}\\ \notag
	&=\lim_{\ve\downarrow0}\frac{\esm(\x^S+\ve\hat{\psi}_\ve^S)(t-S)-\esm(\x^S)(t-S)}{\ve}=\nabla_{\hat{\psi}^S}\esm(\x^S)(t-S).
\end{align}
This establishes the existence of $\nabla_\psi\esm(\x)$ on $[0,U)$ as well as the final assertion of the lemma.

Proceeding, by assumption, $\nabla_\psi\esm(\x):[0,T)\mapsto\R^J$ lies in $\dlr([0,T):\R^J)$ and $\nabla_{\hat{\psi}^S}\esm(\x^S):[0,U-S)\mapsto\R^J$ lies in $\dlr([0,U-S):\R^J)$. Since $S\in[0,T)$, it follows from \eqref{eq:nablaTU} that $\nabla_\psi\esm(\x):[0,U)\mapsto\R^J$ lies in $\dlr([0,U):\R^J)$. By assumption, if $t\in(0,T)$ is a discontinuity point of $\nabla_\psi\esm(\x)$ then $\z(t)\in\S$ and  $\nabla_\psi\esm(\x)$ is left continuous at $t$ if and only if $\nabla_\psi\esm(\x)(t-)\in G_{\z(t)}$. Now suppose $t\in[T,U)$ is a discontinuity point of $\nabla_\psi\esm(\x)$. Then by \eqref{eq:nablaTU}, $t-S$ is a discontinuity point of $\nabla_{\hat{\psi}^S}\esm(\x^S)$. Since Statement \ref{state:A} holds with $U-S$ in place of $T$ for the time shifted paths and $S<T$, it follows from \eqref{eq:nablaTU} that $\nabla_\psi\esm(\x)(t)=\nabla_{\hat{\psi}^S}\esm(\x^S)(t-S)\in\S$ and $\nabla_\psi\esm(\x)(\cdot)=\nabla_{\hat{\psi}^S}\esm(\x^S)(\cdot-S)$ is left continuous at $t$ if and only if
	$$\nabla_\psi\esm(\x)(t-)=\nabla_{\hat{\psi}^S}\esm(\x^S)((t-S)-)\in G_{\z^S(t-S)}=G_{\z(t)},$$
where the last equality uses \eqref{eq:zS}. This proves that part 2 of Statement \ref{state:A} holds with $U$ in place of $T$.

We are left to show part 3 of Statement \ref{state:A} with $U$ in place of $T$. Suppose $(\z,\y)$ satisfies condition 1 of the boundary jitter property on $[0,U)$. Then it is readily verified (using the relations \eqref{eq:zS}--\eqref{eq:yS}) that $(\z^S,\y^S)$ satisfies condition 1 of the boundary jitter property on $[0,U-S)$. Therefore, by assumption, there exists a unique solution $(\phi,\eta)$ of the DP associated with $\z$ for $\psi$ on $[0,T)$, $\phi(t)=\nabla_\psi\esm(\x)(t+)$ for all $t\in[0,T)$, there exists a unique solution $(\phi^S,\eta^S)$ of the DP associated with $\z^S$ for $\psi^S$ on $[0,U-S)$ and $\phi^S(t)=\nabla_{\hat{\psi}^S}\esm(\x^S)(t+)$ for all $t\in[0,U-S)$. Let $\hat{\phi}(t)\doteq\nabla_\psi\esm(\x)(t+)$ on $[0,U)$. Note that by condition 2 of the DP,
	\be\label{eq:hatphit0T}\hat{\phi}(t)=\phi(t)\in H_{\z(t)},\qquad t\in[0,T),\ee 
and, by \eqref{eq:nablaTU}, condition 2 of the DP and \eqref{eq:zS}, for all $t\in[S,U)$,
\begin{align}\label{eq:hatphitSU}
	\hat{\phi}(t)\doteq\nabla_\psi\esm(\x)(t+)&=\nabla_{\hat{\psi}^S}\esm(\x^S)((t-S)+)=\phi^S(t-S)\in H_{\z^S(t-S)}=H_{\z(t)}.
\end{align}
Let $\hat{\eta}\doteq\hat{\phi}-\psi$ on $[0,U)$. We prove that $(\hat{\phi},\hat{\eta})$ solves the DP associated with $\z$ for $\psi$ on $[0,U)$, which along with the uniqueness of solutions implied by the Lipschitz continuity of the DM (Theorem \ref{thm:dmlip}) will prove part 3 of Statement \ref{state:A}.

Condition 1 of the DP holds automatically. Condition 2 of the DP follows from \eqref{eq:hatphit0T} and \eqref{eq:hatphitSU}. It remains to prove that $\hat{\eta}$ satisfies condition 3 of the DP on $[0,U)$. Let $0\leq s<t<U$. If $0\leq s< t<T$, then by \eqref{eq:hatphit0T}, \eqref{eq:psiSlimit} and because $(\phi,\eta)$ solves the DP for $\psi$ on $[0,T)$, 
	\be\label{eq:hatetathatetas0T}\hat{\eta}(t)-\hat{\eta}(s)=\phi(t)-\psi(t)-(\phi(s)-\psi(s))\in\spaan[\cup_{u\in(s,t]}d(\z(u))].\ee
If $S\leq s<t<U$, then by \eqref{eq:hatphitSU} and because $(\phi^S,\eta^S)$ solves the DP for $\hat{\psi}^S$ on $[0,U-S)$,
\begin{align}\label{eq:hatetathatetasSU}
	\hat{\eta}(t)-\hat{\eta}(s)&\doteq\hat{\phi}^S(t-S)-\hat{\psi}^S(t)-(\hat{\phi}^S(s-S)-\hat{\psi}^S(s))\\ \notag
	&\in\spaan[\cup_{u\in(s-S,t-S]}d(\z^S(u-S))]=\spaan[\cup_{u\in(s,t]}d(\z(u))]
\end{align}
That leaves the remaining case when $0\leq s<S<T\leq t< U$. By \eqref{eq:hatetathatetas0T}--\eqref{eq:hatetathatetasSU},
\begin{align*}
	\hat{\eta}(t)-\hat{\eta}(s)&=\hat{\eta}(t)-\hat{\eta}(S)+\hat{\eta}(S)-\hat{\eta}(s)\\
	&\in\spaan\lsb\lcb\cup_{u\in(S,t]}d(\z(u))\rcb\cup\lcb\cup_{u\in(s,S]}d(\z(u))\rcb\rsb\\
	&=\spaan\lsb\cup_{u\in(s,t]}d(\z(u))\rsb,
\end{align*}
which completes the proof of condition 3.
\end{proof}

\subsection{Existence and characterization}\label{sec:theta2a}

In this section we prove Proposition \ref{prop:theta2}. If $\theta_2=0$ the lemma is trivial. Alternatively, if $\z(t)\in G^\circ$ for all $t\in[0,\infty)$, then it is readily verified that for all $\psi\in\cts$, $\nabla_\psi\esm(\x)=\psi$ and $(\psi,0)$ solves the DP associated with $\z$ for $\psi$. For the remainder of this section we assume that $\theta_2\in(0,\infty]$ and $\z(t)\in\partial G$ for some $t\in[0,\infty)$. 

Recursively define the increasing sequence $\{t_k\}_{k=1,\dots,K}$, $K\in\N_\infty$, in $[0,\theta_2)$ as follows: first, set 
	\be\label{eq:t1}t_1\doteq\inf\{t\in[0,\theta_2):\z(t)\in\partial G\}.\ee
Since $\z(t)\in\partial G$ for some $t\in[0,\infty)$ by assumption, $t_1<\infty$. Given $k\in\N$ for which $t_k$ is defined, if $t_k=\theta_2$, set $K=k$, whereas if $t_k<\theta_2$, then $\z(t_k)$ lies in the relative interior of some $(J-1)$-dimensional face $F_{i_k}$ of $\partial G$ and we recursively define $t_{k+1}$ to be the first time after $t_k$ that $\z$ hits $\cup_{j\neq i_k}F_j$; that is,
	\be\label{eq:tk}t_{k+1}\doteq\inf\{t\in(t_k,\theta_2]:\z(t)\in\partial G,\;\allN(\z(t))\neq\allN(\z(t_k))\}.\ee
If $t_k<\theta_2$ for all $k\in\N$, set $K=\infty$. In other words,
	\be\label{eq:K}K\doteq\inf\{k\in\N:t_k=\theta_2\}.\ee
If $K=\infty$, then \eqref{eq:tk} and the continuity of $\z$ imply that $t_k\to\theta_2$ as $k\to\infty$.  

If $t_1=\theta_2$, then $\z(t)\in G^\circ$ for all $t\in[0,\theta_2)$ and it is straightforward to prove that for all $\psi\in\cts$, $\nabla_\psi\esm(\x)=\psi$ and $(\psi,0)$ solves the DP associated with $\z$ for $\psi$. We assume that $t_1\in[0,\theta_2)$. Using induction, we prove that for $2\leq k<K+1$, Statement \ref{state:A} holds with $T=t_k$. Since $t_K=\theta_2$ if $K<\infty$ and $t_k\to\theta_2$ if $K=\infty$, Proposition \ref{prop:theta2} will then follow. In the next lemma, we establish the base case of the induction hypothesis. Since $\Z(t)$ lies on at most one face of $G$ for $t\in[0,t_2)$, we are able to reduce the problem to the one-dimensional setting and invoke Proposition \ref{prop:1dsmderivative}.

\begin{lem}\label{lem:t2}
Given a solution $(\z,\y)$ to the ESP for $\x\in\cts_G$, define $t_2$ as in \eqref{eq:tk}. Then Statement \ref{state:A} holds with $T=t_2$.
\end{lem}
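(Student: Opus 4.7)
The plan is to reduce the multidimensional problem on $[0, t_2)$ to the one-dimensional Skorokhod problem by projecting onto the normal $n_{i_1}$ of the unique face $F_{i_1}$ that $\z$ visits during this interval, and then invoking Proposition \ref{prop:1dsmderivative}. As the first step (single-face confinement), I fix $T \in (0, t_2)$ and $\psi \in \cts$; by continuity of $\z$ and compactness of $[0, T]$, there exists $\rho > 0$ with $\ip{\z(t), n_j} - c_j \geq \rho$ for all $t \in [0, T]$ and $j \neq i_1$. The Lipschitz bound $\norm{\esm(\x + \ve\psi) - \z}_T \leq \lip_{\esm} \ve \norm{\psi}_T$ from Theorem \ref{thm:esmlip} then yields, for all sufficiently small $\ve > 0$, $\allN(\esm(\x + \ve\psi)(t)) \subseteq \{i_1\}$ on $[0, T]$. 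By the ESP axioms, this forces $\y_\ve \doteq \esm(\x + \ve\psi) - (\x + \ve\psi)$ to have the form $\y_\ve(t) = \gamma_\ve(t) d_{i_1}$ for a scalar function $\gamma_\ve \geq 0$ that can only increase when $\esm(\x + \ve\psi)(t) \in F_{i_1}$.

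Next, taking the inner product with $n_{i_1}$ and using $\ip{d_{i_1}, n_{i_1}} = 1$, one checks that the pair $(\ip{\esm(\x + \ve\psi), n_{i_1}} - c_{i_1}, \gamma_\ve)$ solves the one-dimensional SP for $\tilde{\x} + \ve\tilde{\psi}$ on $[0, T]$, where $\tilde{\x}(t) \doteq \ip{\x(t), n_{i_1}} - c_{i_1}$ and $\tilde{\psi}(t) \doteq \ip{\psi(t), n_{i_1}}$. By Example \ref{ex:1dsp}, this yields the explicit representation
\[ \esm(\x + \ve\psi)(t) = \x(t) + \ve\psi(t) + \lsb\sm_1(\tilde{\x} + \ve\tilde{\psi})(t) - \tilde{\x}(t) - \ve\tilde{\psi}(t)\rsb d_{i_1}, \qquad t \in [0, T]. \]
Dividing by $\ve$, sending $\ve \downarrow 0$ and applying Proposition \ref{prop:1dsmderivative} gives
\[ \nabla_\psi \esm(\x)(t) = \psi(t) + \lsb\nabla_{\tilde{\psi}}\sm_1(\tilde{\x})(t) - \tilde{\psi}(t)\rsb d_{i_1}, \qquad t \in [0, T], \]
and since $T \in (0, t_2)$ is arbitrary, this establishes existence of $\nabla_\psi \esm(\x)$ on $[0, t_2)$ and, via Proposition \ref{prop:1dsmderivative}, membership in $\dlr([0, t_2):\R^J)$; this is part 1 of Statement \ref{state:A}.

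Parts 2 and 3 of Statement \ref{state:A} follow by projecting and lifting between the multi- and one-dimensional settings. The value at $t = 0$ is obtained by a case analysis on whether $\z(0) \in G^\circ$ (so $\tilde{\x}(0) > 0$) or $\z(0) \in F_{i_1}$ (so $\tilde{\x}(0) = 0$), comparing $\psi(0) + [\nabla_{\tilde{\psi}(0)}\pi_1(\tilde{\x}(0)) - \tilde{\psi}(0)] d_{i_1}$ with $\nabla_{\psi(0)}\pi(\x(0))$ via \eqref{eq:nablavpix}. Discontinuities of $\nabla_\psi\esm(\x)$ on $(0, t_2)$ coincide with those of $\nabla_{\tilde{\psi}}\sm_1(\tilde{\x})$; at such a point Proposition \ref{prop:1dsmderivative} yields $\tilde{\z}(t) = 0$, i.e., $\z(t) \in F_{i_1} \subseteq \S$, and projecting onto $n_{i_1}$ converts the left-continuity criterion $\nabla_\psi\esm(\x)(t-) \in G_{\z(t)}$ into $\nabla_{\tilde{\psi}}\sm_1(\tilde{\x})(t-) \geq 0$. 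For part 3, boundary jitter condition 1 for $(\z, \y)$ on $[0, t_2)$ translates, using that $\y$ is pushed only along $d_{i_1}$, into the one-dimensional jitter condition for the projected pair, so Proposition \ref{prop:1dsmderivative} produces a unique 1D DP solution $(\tilde{\phi}, \tilde{\eta})$ that lifts to $(\phi, \eta) \doteq (\psi + \tilde{\eta} d_{i_1}, \tilde{\eta} d_{i_1})$; one verifies that this pair solves the multidimensional DP on $[0, t_2)$, with uniqueness following from Theorem \ref{thm:dmlip}.

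The main obstacle is the quantitative single-face confinement in the first step: without it, the perturbation $\x + \ve\psi$ could activate a second face before the analysis can reduce to the one-dimensional setting, and the hypothesis $T < t_2$ is precisely what provides the uniform positive distance needed for the Lipschitz bound to yield such confinement. Once this is in hand, the remaining arguments are essentially projection/lift translations between the one- and $J$-dimensional pictures via the key identity $\ip{d_{i_1}, n_{i_1}} = 1$.
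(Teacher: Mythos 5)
Your proposal is correct and follows essentially the same route as the paper: confine the perturbed path to the single face $F_{i_1}$ on $[0,T]$ for $T<t_2$ via the Lipschitz bound, project onto $n_{i_1}$ (using $\ip{d_{i_1},n_{i_1}}=1$) to reduce to the one-dimensional SP, obtain the representation $\nabla_\psi\esm(\x)=\psi+F(\tilde\x,\tilde\psi)\,d_{i_1}$, and lift the one-dimensional results of Propositions \ref{prop:1dderivative} and \ref{prop:1dsmderivative} back to verify parts 1--3 of Statement \ref{state:A}. The only cosmetic difference is that you phrase the correction term as $\nabla_{\tilde\psi}\sm_1(\tilde\x)-\tilde\psi$ rather than $F(f,g)$, which is the same object by \eqref{eq:nablasm1}.
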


\begin{proof}
Fix $\psi\in\cts$. It suffices to prove that Statement \ref{state:A} holds for all $T\in(t_1,t_2)$. Fix $T\in(t_1,t_2)$. Let $i\in\allN$ denote the unique index such that $\z(t_1)\in F_i$, or equivalently, $\allN(\z(t_1))=\{i\}$. Define $f,g\in\cts([0,\infty):\R)$ by
\begin{align*}
	f(t)&\doteq\ip{\x(t),n_i}-c_i,&&t\in[0,\infty),\\
	g(t)&\doteq\ip{\psi(t),n_i},&&t\in[0,\infty).
\end{align*}
Since $\x(0)\in G$, $f(0)\geq0$ holds. According to \eqref{eq:t1}--\eqref{eq:tk}, $\z(t)\not\in\cup_{j\neq i} F_j$ for all $t\in[0,T]$, so by condition 3 of the ESP and the normalization $\ip{d_i,n_i}=1$,
	\be\label{eq:ynd}\y(t)=\ip{\y(t),n_i}d_i,\qquad t\in[0,T],\ee
and $\ip{\y(\cdot),n_i}$ is nondecreasing on $[0,T]$ and can only increase when $\ip{\z(t),n_i}=c_i$. When combined with conditions 1 and 2 of the ESP, it is readily verified that 
	\be\label{eq:zn}\ip{\z(t),n_i}-c_i=\sm_1(f)(t),\qquad t\in[0,T).\ee
As usual, let $\z_\ve\doteq\esm(\x+\ve\psi)$. By \eqref{eq:Zlip}, \eqref{eq:allNusc} and the fact that $\z$ and $\psi$ are bounded on $[0,T]$, we have, for $\ve>0$ sufficiently small, $\z_\ve(t)\not\in\cup_{j\neq i} F_j$ for all $t\in[0,T]$. For such $\ve>0$, we can follow an argument analogous to the one above to obtain that 
	\be\label{eq:yvend}\y_\ve(t)=\ip{\y_\ve(t),n_i}d_i,\qquad t\in[0,T],\ee
and, with $f$ and $g$ defined as above,
	\be\label{eq:zven}\ip{\z_\ve(t),n_i}-c_i=\sm_1(f+\ve g)(t),\qquad t\in[0,T).\ee
Therefore, by Proposition \ref{prop:1dderivative}, for $t\in[0,T)$,
\begin{align}\label{eq:limzvez}
	\ip{\nabla_\psi\esm(\x)(t),n_i}=\nabla_g\sm_1(f)(t),
\end{align}
which along with condition 1 of the ESP implies
\begin{align}\label{eq:limyvey}
	\lim_{\ve\downarrow0}\frac{\ip{\y_\ve(t),n_i}-\ip{\y(t),n_i}}{\ve}&=F(f,g)(t),
\end{align}
where $F(f,g)$ is the function defined in \eqref{eq:F}. By condition 1 of the ESP, \eqref{eq:ynd}, \eqref{eq:yvend} and \eqref{eq:limyvey}, for all $t\in[0,T)$,
	\be\label{eq:derivativet2}\nabla_\psi\esm(\x)(t)=\psi(t)+\lim_{\ve\downarrow0}\frac{\y_\ve(t)-\y(t)}{\ve}=\psi(t)+F(f,g)(t)d_i.\ee
By Lemma \ref{lem:F}, $F(f,g)$ lies in $\dlr([0,T):\R)$, and so, $\psi$ being continuous, $\nabla_\psi\esm(\x)$ also lies in $\dlr([0,T):\R^J)$. This proves part 1 of Statement \ref{state:A}.

The fact that $\nabla_\psi\esm(\x)(0)$ exists and is equal to $\nabla_{\psi(0)}\pi(\x(0))$ follows from Theorem \ref{thm:speu} and Lemma \ref{lem:projxv}. Now suppose $t\in(0,T)$ is a discontinuity point of $\nabla_\psi\esm(\x)$. Then \eqref{eq:derivativet2} and the continuity of $\psi$ imply that $F(f,g)$ is discontinuous. By part (i) of Lemma \ref{lem:F} and \eqref{eq:zn}, we have $\ip{\z(t),n_i}=c_i$, and since $t<\theta_2$, $\z(t)\in F_i\cap\S$. By \eqref{eq:Gx}, $G_{\z(t)}=\{x\in\R^J:\ip{x,n_i}\geq0\}$. By \eqref{eq:limzvez} and Proposition \ref{prop:1dsmderivative}, $\ip{\nabla_\psi\esm(\x)(\cdot),n_i}$ is left continuous at $t$ if and only if $\ip{\nabla_\psi\esm(\x)(t-),n_i}\geq0$, or equivalently, $\nabla_\psi\esm(\x)(t-)\in G_{\z(t)}$. This proves part 2 of Statement \ref{state:A}.

Proceeding, suppose $(\z,\y)$ satisfies condition 1 of the boundary jitter property. Then, using \eqref{eq:zn}, it is readily verified that $(\sm_1(f),\sm_1(f)-f)$ satisfies condition 1 of the (one-dimensional) boundary jitter property on $[0,T)$. Define $\hat{\phi}\in\dr([0,T):\R^J)$ by $\hat{\phi}(t)\doteq\nabla_\psi\esm(\x)(t+)$ for $t\in[0,T)$. We show that $(\hat{\phi},\hat{\phi}-\psi)$ solves the DP for $\psi$ on $[0,T)$. Condition 1 of the DP holds automatically. By \eqref{eq:limzvez}, for $t\in[0,T)$,
	\be\label{eq:ht}h(t)\doteq\ip{\hat{\phi}(t),n_i}=\nabla_g\sm_1(f)(t+).\ee
Then by Proposition \ref{prop:1dsmderivative}, $(h,h-g)$ solves the (one-dimensional) DP associated with $\sm_1(f)$ for $g$ on $[0,T)$. Let $t\in[0,T)$. If $\z(t)\in G^\circ$, then $H_{\z(t)}=\R^J$ so $\hat{\phi}(t)\in H_{\z(t)}$ clearly holds. Alternatively, if $\z(t)\in F_i$, then by \eqref{eq:Hx} and \eqref{eq:zn}, $H_{\z(t)}=\{y\in\R^J:\ip{y,n_i}=0\}$ and $\sm_1(f)(t)=0$. This combined with condition 2 of the (one-dimensional) DP, implies that $h(t)=0$; or equivalently, by \eqref{eq:ht}, $\hat{\phi}(t)\in H_{\z(t)}$. This proves that $\hat{\phi}$ satisfies condition 2 of the DP. Now let $0\leq s<t<T$. By the definition of $\hat{\phi}$, \eqref{eq:derivativet2} and the continuity of $\psi$, 
	\be\label{eq:phipsiFF}\hat{\phi}(t)-\psi(t)-(\hat{\phi}(s)-\psi(s))=(F(f,g)(t+)-F(f,g)(s+))d_i\in\spaan(d_i).\ee
By \eqref{eq:Hx} and the fact that $\allN(\z(u))\subseteq\{i\}$ for all $u\in(s,t]$, we have,
	$$\spaan\lsb\cup_{u\in(s,t]}d(\z(u))\rsb=\begin{cases}\spaan(d_i)&\text{if }\cup_{u\in(s,t]}\allN(\z(u))=\{i\},\\ \{0\}&\text{if }\cup_{u\in(s,t]}\allN(\z(u))=\emptyset.\end{cases}$$
In view of \eqref{eq:phipsiFF}, we are left to show that if $\cup_{u\in(s,t]}\allN(\z(u))=\emptyset$, or equivalently, $\z(u)\in G^\circ$ for all $u\in(s,t]$, then $F(f,g)(t+)-F(f,g)(s+)=0$. Suppose $\z(u)\in G^\circ$ for all $u\in(s,t]$. By \eqref{eq:zn}, $\sm_1(f)(u)>0$ for all $u\in(s,t]$. Then, by \eqref{eq:ht}, \eqref{eq:nablasm1} and because $(h,h-g)$ satisfies condition 3 of the (one-dimensional) DP associated with $\sm_1(f)$ for $g$,
\begin{align*}
	F(f,g)(t+)-F(f,g)(s+)&=h(t)-g(t)-(h(s)-g(s))=0.
\end{align*}
This completes the proof that $(\hat{\phi},\hat{\phi}-\psi)$ solves the DP associated with $\z$ for $\psi$. Thus, part 3 of Statement \ref{state:A} holds.
\end{proof}

In the following lemma we establish the induction step. The proof relies on performing a certain time shift and then applying Lemma \ref{lem:t2}.

\begin{lem}\label{lem:tk}
Given a solution $(\z,\y)$ to the ESP for $\x\in\cts_G$, define $\{t_k\}_{k=1,\dots,K}$, $K\in\N_\infty$, as in \eqref{eq:tk}--\eqref{eq:K}. Let $2\leq k<K$. Assume that Statement \ref{state:A} holds with $T=t_k$. Then Statement \ref{state:A} holds with $T=t_{k+1}$.
\end{lem}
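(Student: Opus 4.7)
The plan is to reduce this inductive step to the already-established base case (Lemma \ref{lem:t2}) applied to a time-shifted path, which is then glued to the inductive hypothesis via the stitching lemma (Lemma \ref{lem:stitchA}). Fix $\psi\in\cts$, and for $j=k-1,k$ write $i_j$ for the unique index with $\allN(\z(t_j))=\{i_j\}$; by the very definition of $t_k$ in \eqref{eq:tk}, $i_k\neq i_{k-1}$.

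The key preliminary step is to produce a shift time $S\in(t_{k-1},t_k)$ at which $\z(S)\in G^\circ$. Since $\z(t_k)\in F_{i_k}\setminus F_{i_{k-1}}$ and $F_{i_{k-1}}$ is closed, continuity of $\z$ yields a $\delta>0$ with $\z(t)\notin F_{i_{k-1}}$ for all $t\in(t_k-\delta,t_k]$. On the other hand, the defining property of $t_k$ forces $\z(t)\in G^\circ\cup F_{i_{k-1}}$ for every $t\in(t_{k-1},t_k)$. Hence any $S\in(t_k-\delta,t_k)\cap(t_{k-1},t_k)$ satisfies $\z(S)\in G^\circ$, and in particular $\x^S(0)=\z(S)\in G$, so $\x^S\in\cts_G$.

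With such an $S$ fixed, the inductive hypothesis (Statement~\ref{state:A} on $[0,t_k)$) ensures that $\nabla_\psi\esm(\x)(S)$ exists, so $\hat{\psi}^S\in\cts$ defined via \eqref{eq:psiSlimit} is legitimate. Define $\x^S,\z^S,\y^S$ as in \eqref{eq:xS}--\eqref{eq:yS}; by Lemma \ref{lem:esmshift}, $(\z^S,\y^S)$ solves the ESP for $\x^S$. The choice of $S$ guarantees that the analogs of $t_1$ and $t_2$ from \eqref{eq:t1}--\eqref{eq:tk} for the shifted path $\z^S$ are exactly $t_k-S$ (which lies in the relative interior of $F_{i_k}$) and $t_{k+1}-S$, respectively. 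Therefore Lemma \ref{lem:t2}, applied to the shifted data with direction $\hat{\psi}^S$, yields Statement~\ref{state:A} on $[0,t_{k+1}-S)$ for the shifted problem. This is precisely the hypothesis needed to invoke Lemma \ref{lem:stitchA} with $T=t_k$ and $U=t_{k+1}$, whose conclusion is Statement~\ref{state:A} on $[0,t_{k+1})$, as desired. The edge case $t_{k+1}=\infty$ is handled by running the same argument with an arbitrary finite $U\in(t_k,\infty)$ in place of $t_{k+1}$ and letting $U\uparrow\infty$.

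The main obstacle is the first step: producing an interior shift time $S$ to the left of $t_k$. This relies in an essential way on the strict change of active face index $i_{k-1}\neq i_k$ across $t_k$ together with continuity of $\z$, since without such a strict change it would be possible for $\z$ to remain on $\partial G$ throughout a left neighborhood of $t_k$ and the reduction to Lemma \ref{lem:t2} would fail. Once $S$ is produced, everything else is a bookkeeping verification of the hypotheses of Lemmas \ref{lem:t2} and \ref{lem:stitchA}.
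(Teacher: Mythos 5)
Your proposal is correct and follows essentially the same route as the paper: choose a shift time $S$ just before $t_k$ at which the path is in the interior, apply Lemma \ref{lem:t2} to the time-shifted data to get Statement \ref{state:A} on $[0,t_{k+1}-S)$, and glue with Lemma \ref{lem:stitchA}. One small remark: what the argument actually requires is $\z(t)\in G^\circ$ for \emph{all} $t\in[S,t_k)$ (so that $t_1^S=t_k-S$ rather than something earlier), not merely $\z(S)\in G^\circ$; your construction via the exclusion of $F_{i_{k-1}}$ near $t_k$ does deliver this interval property, so the proof stands, but the stated goal understates what is needed.
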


\begin{proof}
Fix $2\leq k<K$ and $\psi\in\cts$. By the continuity of $\z$, the upper semicontinuity of $\allN(\cdot)$ (Lemma \ref{lem:allNusc}) and \eqref{eq:tk}, we can choose $S\in[0,t_k)$ such that $\z(t)\in G^\circ$ for all $t\in[S,t_k)$. Define $\x^S,\z^S,\hat{\psi}^S$ as in \eqref{eq:xS}--\eqref{eq:zS} and \eqref{eq:psiSlimit}. Define $\theta_2^S,t_1^S,t_2^S$ as in \eqref{eq:theta2}, \eqref{eq:t1}--\eqref{eq:tk}, but with $\z^S,\theta_2^S,t_1^S,t_2^S$ in place of $\z,\theta_2,t_1,t_2$, respectively. Since $\z(t)\in G^\circ$ for all $t\in[S,t_k)$ and $S<\theta_2$, we have $t_1^S=t_k-S$ and $t_2^S=t_{k+1}-S$. By the time-shift property of the ESP, $\z^S=\esm(\x^S)$. Thus Lemma \ref{lem:t2} implies that Statement \ref{state:A} holds with $\x^S,\z^S,\hat{\psi}^S,t_2^S$ in place of $\x,\z,\psi,T$, respectively. Therefore, by Lemma \ref{lem:stitchA} (with $t_k$ and $t_{k+1}$ in place of $T$ and $U$, respectively), Statement \ref{state:A} holds with $T=t_{k+1}$.
\end{proof}

\begin{proof}[Proof of Proposition \ref{prop:theta2}]
By Lemma \ref{lem:t2}, Lemma \ref{lem:tk} and the principle of mathematical induction, for $k=1,\dots,K$, Statement \ref{state:A} holds with $T=t_k$. Since either $K<\infty$ and $t_K=\theta_2$, or $K=\infty$ and $t_k\to\theta_2$ as $k\to\infty$, Statement \ref{state:A} holds with $T=\theta_2$.
\end{proof}

\section{Derivative projection operators}\label{sec:derivativeprojection}

In Section \ref{sec:psiconst} we show that when $\x\in\cts_G$ satisfies the boundary jitter property, in order to characterize directional derivatives of the ESM at $\x$, is suffices to only consider perturbations $\psi$ that are constant in neighborhoods of times that $\z$ lies in $\U$, the nonsmooth part of the boundary (see Lemma \ref{lem:CZdense} and the proof of Theorem \ref{thm:main}). The study of directional derivatives for such perturbations $\psi$ is largely governed by properties of a family of (oblique) projection operators, which we introduce in this section. 

Fix an ESP $\{(d_i,n_i,c_i),i\in\allN\}$ satisfying Assumption \ref{ass:setB}. \emph{For the remainder of this section fix a compact, convex, symmetric set $B$ with $0\in B^\circ$ satisfying \eqref{eq:setB} as in Assumption \ref{ass:setB}}. A useful interpretation of $B$ is in terms of an associated norm on $\R^J$, denoted $\norm{\cdot}_{B}$, defined as follows:
\begin{equation}\label{eq:normB}
	\norm{y}_{B}\doteq\min\{r\geq0:y\in rB\},\qquad y\in\R^J.
\end{equation}
For more on this norm, as well as an in-depth discussion of the set $B$, see \cite[Section 2]{Dupuis1999a}. We will write $(\R^J,\norm{\cdot}_B)$ to denote $\R^J$ equipped with the norm $\norm{\cdot}_B$. 

Recall that $\mathcal{W}\doteq\{x\in\U:\spaan(H_{x}\cup d(x))\neq\R^J\}$. In the following lemma we identify a certain decomposition of $\R^J$ that is associated with each $x\in \partial G\setminus\W$. 

\begin{lem}\label{lem:Wdecomp}
For each $x\in\partial G$, $H_{x}\cap\spaan[d(x)]=\{0\}$. In addition, if $x\in \partial G\setminus\W$, then for each $y\in\R^J$, there exist a unique pair of vectors $v_y\in H_{x}$ and $w_y\in\spaan[d(x)]$ such that $y=v_y+w_y$.
\end{lem}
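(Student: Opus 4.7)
The plan is to prove the intersection claim $H_x \cap \spaan[d(x)] = \{0\}$ by a contradiction argument driven by Assumption \ref{ass:setB}, and then deduce the direct-sum decomposition when $x \in \partial G \setminus \W$ by a short case split on whether $x \in \S$ or $x \in \U \setminus \W$.

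For the intersection claim, suppose for contradiction that there exists a nonzero $y \in H_x \cap \spaan[d(x)]$. Using the norm $\norm{\cdot}_B$ defined in \eqref{eq:normB}, set $z \doteq y/\norm{y}_B \in \partial B$. Because $H_x$ is a linear subspace and $y \in H_x$, also $z \in H_x$, so $\ip{z,n_i}=0$ for every $i \in \allN(x)$. Assumption \ref{ass:setB} then yields $\ip{\nu, d_i}=0$ for every $\nu \in \nu_B(z)$ and every $i \in \allN(x)$; by linearity this extends to $\ip{\nu, w}=0$ for all $w \in \spaan[d(x)]$, and in particular $\ip{\nu, z}=0$ since $z$ is a scalar multiple of $y \in \spaan[d(x)]$. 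The contradiction will come from showing $\ip{\nu, z} < 0$: by the supporting hyperplane theorem, $\nu_B(z)$ is nonempty, and any $\nu \in \nu_B(z)$ satisfies $\ip{\nu, w - z} \geq 0$ for all $w \in B$; the hypothesis $0 \in B^\circ$ guarantees an open ball about $0$ inside $B$, which would contain points $w$ with $\ip{\nu, w} < 0$ if $\ip{\nu, z} = 0$, contradicting the inward-normal property. This rules out $\ip{\nu, z}=0$ and completes the contradiction.

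For the decomposition claim, fix $x \in \partial G \setminus \W$ and observe that $\partial G \setminus \W = \S \cup (\U \setminus \W)$. If $x \in \S$, write $\allN(x) = \{i_x\}$; then $H_x$ is the hyperplane $\{y : \ip{y,n_{i_x}}=0\}$ and $\spaan[d(x)] = \R d_{i_x}$, and the normalization $\ip{d_{i_x}, n_{i_x}}=1$ forces $d_{i_x} \notin H_x$, hence $H_x + \spaan[d(x)] = \R^J$. If $x \in \U \setminus \W$, the definition \eqref{eq:Wset} of $\W$ directly gives $\spaan(H_x \cup d(x)) = \R^J$, which equals $H_x + \spaan[d(x)]$ since $H_x$ is already a subspace. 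In either case, combining with the intersection claim produces the direct-sum decomposition $\R^J = H_x \oplus \spaan[d(x)]$, from which the existence and uniqueness of $v_y \in H_x$ and $w_y \in \spaan[d(x)]$ with $y = v_y + w_y$ follow immediately.

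The only real obstacle is the intersection claim; it requires translating the algebraic condition $y \in H_x \cap \spaan[d(x)]$ into the geometric hypothesis needed to invoke Assumption \ref{ass:setB} via rescaling to $\partial B$, and then using $0 \in B^\circ$ to get strict negativity of $\ip{\nu, z}$. Once that is in hand, the second assertion is essentially bookkeeping.
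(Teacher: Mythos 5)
Your proof is correct and follows essentially the same route as the paper: both arguments rescale a nonzero element of $H_x$ onto the boundary of (a dilate of) $B$, invoke Assumption \ref{ass:setB} to get an inward normal $\nu$ orthogonal to every $d_i$, $i\in\allN(x)$, and derive a contradiction from $\ip{\nu,z}<0$ via $0\in B^\circ$; the direct-sum conclusion then follows from $\spaan(H_x\cup d(x))=\R^J$. Your explicit treatment of the case $x\in\S$ in the second part is a small (and welcome) addition of detail that the paper elides.
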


\begin{proof}
Suppose $H_x\neq\{0\}$ and $z\in H_{x}\setminus\{0\}$, so that $\ip{z,n_i}=0$ for $i\in\allN(x)$. Since $0\in B^\circ$, there exist $r>0$ such that $z\in\partial(rB)$ and $\nu\in\nu_{rB}(z)$ such that $\ip{z,\nu}<0$. Then \eqref{eq:setB} and Remark \ref{rmk:setB} imply that $\ip{\nu,d_i}=0$ for all $i\in\allN(x)$, so $z\not\in\spaan[d(x)]$. Thus, $H_{x}\cap\spaan[d(x)]=\{0\}$. The last assertion of the lemma follows because $x\in \partial G\setminus\W$ by assumption, so $\spaan(H_{x}\cup d(x))=\R^J$.
\end{proof}

For the following lemma, given a linear subspace $A$ of $\R^J$, let $\text{dim}(A)$ denote the dimension of $A$.

\begin{lem}\label{lem:Wempty}
Under Assumption \ref{ass:linearlyind}, $\W=\emptyset$.
\end{lem}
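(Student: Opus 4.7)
The plan is to use Lemma \ref{lem:Wdecomp} together with a dimension count: Assumption \ref{ass:linearlyind} fixes the dimension of $\spaan[d(x)]$, and the zero-intersection conclusion of Lemma \ref{lem:Wdecomp} will force $\dim H_x$ to be exactly the complementary dimension, so that $H_x + \spaan[d(x)] = \R^J$.

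Concretely, I would fix $x \in \U$ and set $m \doteq |\allN(x)|$. By Assumption \ref{ass:linearlyind}, the $m$ vectors $\{d_i : i \in \allN(x)\}$ are linearly independent, so $\dim \spaan[d(x)] = m$. By Lemma \ref{lem:Wdecomp}, $H_x \cap \spaan[d(x)] = \{0\}$, so the composition
\begin{equation*}
\spaan[d(x)] \hookrightarrow \R^J \twoheadrightarrow \R^J / H_x
\end{equation*}
has trivial kernel and is therefore injective. This gives $m \leq \dim(\R^J/H_x) = J - \dim H_x$. On the other hand, by definition \eqref{eq:Hx}, $H_x^\perp = \spaan\{n_i : i \in \allN(x)\}$, and the span of $m$ vectors has dimension at most $m$, so $J - \dim H_x \leq m$. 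Combining these inequalities yields $\dim H_x = J - m$.

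Finally, using Lemma \ref{lem:Wdecomp} once more,
\begin{equation*}
\dim \spaan(H_x \cup d(x)) = \dim H_x + \dim \spaan[d(x)] - \dim(H_x \cap \spaan[d(x)]) = (J - m) + m - 0 = J,
\end{equation*}
so $\spaan(H_x \cup d(x)) = \R^J$, which means $x \notin \W$. Since this holds for every $x \in \U$, we conclude $\W = \emptyset$.

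I do not anticipate any real obstacle here: the content is purely linear-algebraic once Lemma \ref{lem:Wdecomp} (which already does the hard work of extracting orthogonality information from the set $B$) is in hand. The only subtlety worth noting in the write-up is that the argument incidentally shows that the normals $\{n_i : i \in \allN(x)\}$ are linearly independent under Assumption \ref{ass:linearlyind}, so that the direct sum decomposition of $\R^J$ into $H_x \oplus \spaan[d(x)]$ has the expected component dimensions.
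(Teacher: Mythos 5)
Your proof is correct and follows essentially the same route as the paper's: a dimension count combining $\dim\spaan[d(x)]=|\allN(x)|$ from Assumption \ref{ass:linearlyind} with the trivial intersection $H_x\cap\spaan[d(x)]=\{0\}$ from Lemma \ref{lem:Wdecomp}. The only (harmless) difference is that you pin down $\dim H_x = J-|\allN(x)|$ exactly, whereas the paper only needs the one-sided bound $\dim H_x\geq J-|\allN(x)|$ coming from $H_x$ being an intersection of $|\allN(x)|$ hyperplanes.
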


\begin{proof}
Let $x\in\U$. Since $H_x$, defined in \eqref{eq:Hx}, is equal to the intersection of $|\allN(x)|$ $(J-1)$-dimensional hyperplanes, it follows that $\text{dim}(H_x)\geq J-|\allN(x)|$. Under Assumption \ref{ass:linearlyind}, it follows from \eqref{eq:dx} that $\text{dim}(\spaan[d(x)])=|\allN(x)|$. By Lemma \ref{lem:Wdecomp}, $H_x\cap\spaan[d(x)]=\{0\}$, from which it follows that $\text{dim}(\spaan[H_x\cup d(x)])=\text{dim}(H_x)+\text{dim}(\spaan[d(x)])\geq J$, so $x\not\in\W$.
\end{proof}

\subsection{Derivative projection operator and its adjoint operator}\label{sec:derivativeprojectionoperator}

In the following lemma we associate with each $x\in \partial G\setminus\W$ a linear operator $\proj_x$, which we refer to as the \emph{derivative projection operator at $x$}. The derivative projection operator $\proj_x$ projects points in $\R^J$ to the linear subspace $H_{x}$, defined in \eqref{eq:Hx}, along a direction that lies in the span of $d(x)$, defined in \eqref{eq:dx}. 

\begin{lem}\label{lem:projx}
For each $x\in \partial G\setminus\W$, there exists a unique operator
	\be\proj_x:(\R^J,\norm{\cdot}_B)\mapsto(\R^J,\norm{\cdot}_B)\ee 
such that for each $y\in\R^J$,
	\be\label{eq:Hxdx}\proj_x y\in H_{x}\qquad\text{and}\qquad\proj_x y-y\in\spaan[d(x)].\ee
Furthermore, $\proj_x$ is linear and its operator norm, denoted $\norm{\proj_x}$, satisfies
	\be\norm{\proj_x}\doteq\sup_{y\neq 0}\frac{\norm{\proj_xy}_B}{\norm{y}_B}\leq1.\ee
In other words, the derivative projection operator $\proj_x$ is a contraction on $(\R^J,\norm{\cdot}_B)$ that maps $B$ into $B\cap H_{x}$.
\end{lem}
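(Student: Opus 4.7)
The plan is to construct $\proj_x$ directly from the decomposition provided by Lemma~\ref{lem:Wdecomp}, deduce linearity from uniqueness of that decomposition, and then derive the contraction bound from the geometric property \eqref{eq:setB} of the set $B$.

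For existence and uniqueness: since $x \in \partial G \setminus \W$, Lemma~\ref{lem:Wdecomp} guarantees that each $y \in \R^J$ admits a unique decomposition $y = v_y + w_y$ with $v_y \in H_x$ and $w_y \in \spaan[d(x)]$. Setting $\proj_x y \doteq v_y$, we immediately have $\proj_x y \in H_x$ and $\proj_x y - y = -w_y \in \spaan[d(x)]$, so \eqref{eq:Hxdx} holds; any other operator satisfying \eqref{eq:Hxdx} must coincide with $\proj_x$ by the uniqueness of the decomposition. Linearity follows from the same uniqueness principle applied to $\alpha y_1 + \beta y_2$: the pair $(\alpha v_{y_1} + \beta v_{y_2},\, \alpha w_{y_1} + \beta w_{y_2})$ is a valid decomposition of $\alpha y_1 + \beta y_2$ and hence coincides with the unique one.

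The main obstacle is the contraction bound $\norm{\proj_x} \leq 1$. I would fix $y \in \R^J$ with $z \doteq \proj_x y \neq 0$ (the case $z = 0$ being trivial) and set $s \doteq \norm{z}_B > 0$, so that $z/s \in \partial B$. Pick any $\nu \in \nu_B(z/s)$. Since $z \in H_x$, we have $|\ip{z/s, n_i}| = 0 < \delta$ for each $i \in \allN(x)$, and the geometric property \eqref{eq:setB} then forces $\ip{\nu, d_i} = 0$ for all $i \in \allN(x)$. Thus $\nu$ is orthogonal to $\spaan[d(x)]$, and since $y - z = -w_y \in \spaan[d(x)]$, we obtain the key identity $\ip{\nu, y} = \ip{\nu, z}$.

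To convert this identity into the norm bound, I would use two further observations: first, the condition $0 \in B^\circ$ implies $\ip{\nu, z/s} < 0$ (since the supporting hyperplane at $z/s$ cannot contain the origin); second, if $y \in tB$ for some $t > 0$, then $y/t \in B$, so the inward-normal inequality $\ip{\nu, y/t - z/s} \geq 0$, combined with $\ip{\nu, y} = \ip{\nu, z}$, rearranges to $\ip{\nu, z}(1/t - 1/s) \geq 0$. Since $\ip{\nu, z} < 0$, this forces $t \geq s$; taking the infimum over admissible $t$ yields $\norm{y}_B \geq s = \norm{\proj_x y}_B$, which is the desired bound. The statement that $\proj_x$ maps $B$ into $B \cap H_x$ is then immediate from the contraction together with $\proj_x y \in H_x$. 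The critical step is the use of \eqref{eq:setB} to convert $z \in H_x$ into $\nu \perp \spaan[d(x)]$, which is precisely what turns the geometric hypothesis on $B$ into a norm bound on $\proj_x$.
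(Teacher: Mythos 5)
Your proposal is correct and follows essentially the same route as the paper: existence, uniqueness and linearity come from the decomposition in Lemma \ref{lem:Wdecomp}, and the contraction bound comes from applying \eqref{eq:setB} at the (normalized) projected point to conclude that every inward normal there is orthogonal to $\spaan[d(x)]$. The only difference is cosmetic --- the paper normalizes the set (working with $\nu_{cB}(\proj_xy)$ for $c=\norm{\proj_xy}_B$) and argues by contradiction that $y$ lies in a supporting hyperplane to $cB$, whereas you normalize the point ($z/s\in\partial B$) and convert the same orthogonality, together with the strict inequality $\ip{\nu,z}<0$ coming from $0\in B^\circ$, into the direct bound $\norm{y}_B\geq\norm{\proj_xy}_B$; both are valid.
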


\begin{remark}
The derivative projection operator $\proj_x$ is characterized by $H_{x}$ and $d(x)$, which depend on the sets $\{n_i,i\in\allN(x)\}$ and $\{d_i,i\in\allN(x)\}$, respectively. Therefore, given $x,\tilde{x}\in\partial G\setminus\W$ such that $\allN(x)=\allN(\tilde{\x})$, then the projection operators $\proj_x$ and $\proj_{\tilde{x}}$ are equal.
\end{remark}

\begin{proof}
Fix $x\in \partial G\setminus\W$. Given $y\in\R^J$, Lemma \ref{lem:Wdecomp} implies that there exist unique $v_y\in H_{x}$ and $w_y\in\spaan[d(x)]$ such that $y=v_y+w_y$. Set $\proj_xy\doteq v_y$ for all $y\in\R^J$. The uniqueness of the decomposition $y=v_y+w_y$ implies that $\proj_x$ is well defined on $\R^J$. This proves \eqref{eq:Hxdx}. To verify that $\proj_x$ is linear, let $\alpha,\beta\in\R$ and $y,z\in\R^J$. Since $H_{x}$ and $\spaan[d(x)]$ are linear subspaces of $\R^J$, we have 
	$$\alpha\proj_xy+\beta\proj_xz\in H_{x}\qquad\text{and}\qquad\alpha(\proj_xy-y)+\beta(\proj_xz-z)\in \spaan[d(x)].$$ 
The uniqueness of $\proj_x$ then establishes the linear relation $\proj_x(\alpha y+\beta z)=\alpha\proj_xy+\beta\proj_xz$.

We now prove that $\norm{\proj_x}\leq1$. By the linearity of $\proj_x$, it suffices to show that given $y\in\partial B$, $\proj_xy$ lies in $B$. Fix $y\in\partial B$ and set
	\be\label{eq:c}c\doteq\norm{\proj_xy}_{B}=\min\{r\geq0:\proj_xy\in rB\}.\ee
We need to show that $c\leq1$. If $c=0$, then this inequality is automatically satisfied. For the remainder of the proof assume that $c>0$. We claim, and prove below, that the set
	\be\label{eq:supphyper}\lcb\proj_xy+\sum_{i\in\allN(x)}r_id_i,r_i\in\R\rcb\ee
lies in a supporting hyperplane to $cB$ at $\proj_xy$. Since \eqref{eq:Hxdx} implies $y$ lies in the set \eqref{eq:supphyper}, $y$ lies in a supporting hyperplane to $cB$. In particular, $y\not\in(cB)^\circ$ and so $\norm{y}_{B}\geq c$.

To prove the claim, first note that by \eqref{eq:Hxdx} and \eqref{eq:c}, 
	\be\label{eq:HxpartialcB}\proj_xy\in H_{x}\cap\partial(cB).\ee 
Thus, the definition \eqref{eq:Hx} of $H_{x}$ implies that $\ip{\proj_xy,n_i}=0$ for all $i\in\allN(x)$. Since \eqref{eq:setB} holds for the set $cB$, we have 
	\be\label{eq:dinupixy}\ip{d_i,\nu}=0\qquad\text{for all }\nu\in\nu_{cB}(\proj_xy),\;i\in\allN(x).\ee
For a proof by contradiction, suppose the set \eqref{eq:supphyper} does not lie in a supporting hyperplane to $cB$ at $\proj_xy$. Since the set \eqref{eq:supphyper} is an affine subspace and $cB$ is a convex set, there exist $r_i\in\R$, $i\in\allN(x)$, such that 
	$$\proj_xy+\sum_{i\in\allN(x)}r_id_i\in(cB)^\circ.$$
The above display and \eqref{eq:HxpartialcB} together imply that $\sum_{i\in\allN(x)}r_i\ip{d_i,\nu}>0$ for some inward normal $\nu\in\nu_{cB}(\proj_xy)$, which contradicts \eqref{eq:dinupixy}. Therefore, the claim must hold.
\end{proof}

Consider the dual closed convex set $B^\ast$, introduced in this context in \cite{Dupuis1999a}, defined as
	\be\label{eq:Bast}B^\ast\doteq\lcb y\in\R^J:\sup_{z\in B}\ip{y,z}\leq1\rcb.\ee
Then $B^\ast$ is compact, convex, symmetric with $0\in(B^\ast)^\circ$ (see, e.g., \cite[Section 3.2]{Dupuis1999a}), so, analogous to \eqref{eq:normB}, $B^\ast$ defines a norm $\norm{\cdot}_{B^\ast}$ on $\R^J$ as follows
	\be\label{eq:normBast}\norm{y}_{B^\ast}\doteq\min\{r\geq0:y\in rB^\ast\},\qquad y\in\R^J.\ee
Let $\proj_x^\ast$ denote the linear operator that is adjoint to the derivative projection operator $\proj_x$ defined in Lemma \ref{lem:projx}; that is, $\ip{\proj_xy,z}=\ip{y,\proj_x^\ast z}$ for every $y,z\in\R^J$. In the lemma below, we summarize some important properties of $\proj_x^\ast$. Recall that given a subset $A\subset\R^J$, we define $A^\perp\doteq\{y\in\R^J:\ip{x,y}=0\;\forall\;x\in A\}$.

\begin{lem}\label{lem:projxast}
For each $x\in \partial G\setminus\W$, 
	\be\label{eq:projxast}\proj_x^\ast:(\R^J,\norm{\cdot}_{B^\ast})\mapsto(\R^J,\norm{\cdot}_{B^\ast})\ee
is the unique linear operator such that for each $y\in\R^J$,
	\be\label{eq:dxperpHxperp}\proj_x^\ast y\in\spaan[d(x)]^\perp\qquad\text{and}\qquad\proj_x^\ast y-y\in H_{x}^\perp.\ee
Furthermore, the operator norm of $\proj_x^\ast$, denoted $\norm{\proj_x^\ast}$, satisfies
	\be\label{eq:Lastnorm}\norm{\proj_x^\ast}\doteq\sup_{y\neq 0}\frac{\norm{\proj_x^\ast y}_B}{\norm{y}_B}\leq1.\ee
In other words, the adjoint derivative projection operator $\proj_x^\ast$ is a contraction on $(\R^J,\norm{\cdot}_{B^\ast})$ that maps $B^\ast$ into $B^\ast\cap \spaan[d(x)]^\perp$.
\end{lem}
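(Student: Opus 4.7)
The plan is to first establish existence and uniqueness of the operator satisfying \eqref{eq:dxperpHxperp} by showing directly that the usual adjoint of $\proj_x$ (in the sense that $\ip{\proj_x y,z}=\ip{y,\proj_x^\ast z}$ for all $y,z\in\R^J$) meets the two conditions, and then derive the norm bound via convex duality between $B$ and $B^\ast$.

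For existence, I would exploit two facts that fall out of the characterization of $\proj_x$ in Lemma \ref{lem:projx} together with the uniqueness of the decomposition established in Lemma \ref{lem:Wdecomp}: (i) if $v\in H_{x}$, then $v\in H_{x}$ and $v-v=0\in\spaan[d(x)]$ force $\proj_x v=v$; and (ii) if $w\in\spaan[d(x)]$, then $0\in H_{x}$ and $0-w\in\spaan[d(x)]$ force $\proj_x w=0$. Given $y\in\R^J$ and any $w\in\spaan[d(x)]$, this yields
\[
\ip{\proj_x^\ast y,w}=\ip{y,\proj_x w}=0,
\]
showing $\proj_x^\ast y\in\spaan[d(x)]^\perp$; and for any $v\in H_{x}$,
\[
\ip{\proj_x^\ast y-y,v}=\ip{y,\proj_x v-v}=0,
\]
showing $\proj_x^\ast y-y\in H_{x}^\perp$. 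This establishes \eqref{eq:dxperpHxperp}. For uniqueness, if $\tilde{L}$ is any linear operator satisfying \eqref{eq:dxperpHxperp} with $\tilde{L}y$ in place of $\proj_x^\ast y$, then for every $y\in\R^J$,
\[
(\proj_x^\ast-\tilde{L})y\in\spaan[d(x)]^\perp\cap H_{x}^\perp=\spaan[d(x)\cup H_{x}]^\perp.
\]
Because $x\in\partial G\setminus\W$, the definition of $\W$ in \eqref{eq:Wset} gives $\spaan[H_{x}\cup d(x)]=\R^J$, whose orthogonal complement is $\{0\}$, so $\tilde{L}=\proj_x^\ast$.

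For the norm bound, I would invoke the standard polarity between $\norm{\cdot}_B$ and $\norm{\cdot}_{B^\ast}$: the definition \eqref{eq:Bast} of $B^\ast$ is exactly the polar of $B$, so $\norm{\cdot}_{B^\ast}$ is the dual norm of $\norm{\cdot}_B$, i.e.\ $\norm{y}_{B^\ast}=\sup_{\norm{z}_B\leq 1}\ip{y,z}$. Applying this and the contraction property $\norm{\proj_x}\leq1$ from Lemma \ref{lem:projx}, for any $y\in\R^J$,
\[
\norm{\proj_x^\ast y}_{B^\ast}=\sup_{\norm{z}_B\leq 1}\ip{\proj_x^\ast y,z}=\sup_{\norm{z}_B\leq 1}\ip{y,\proj_x z}\leq\norm{y}_{B^\ast}\sup_{\norm{z}_B\leq 1}\norm{\proj_x z}_B\leq\norm{y}_{B^\ast},
\]
which gives $\norm{\proj_x^\ast}\leq 1$ (with the intended dual norm in \eqref{eq:Lastnorm}). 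The geometric rephrasing that $\proj_x^\ast$ maps $B^\ast$ into $B^\ast\cap\spaan[d(x)]^\perp$ is then immediate by combining this contraction property with the inclusion $\proj_x^\ast y\in\spaan[d(x)]^\perp$ established above.

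I do not expect any real obstacle here. The only delicate point is the uniqueness-of-decomposition step, which requires $x\notin\W$ to ensure $\spaan[H_{x}\cup d(x)]=\R^J$; this is why the lemma is restricted to $x\in\partial G\setminus\W$. Everything else is a direct computation using the adjoint relation, the defining properties of $\proj_x$, and the convex-analytic duality between $B$ and $B^\ast$ encoded in \eqref{eq:Bast}--\eqref{eq:normBast}.
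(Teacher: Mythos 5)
Your proof is correct, and the existence half and the contraction half follow the paper's own argument almost exactly: you verify \eqref{eq:dxperpHxperp} by testing the adjoint relation against $\spaan[d(x)]$ and $H_x$ (where $\proj_x$ acts as $0$ and as the identity, respectively), and you get $\norm{\proj_x^\ast}\leq1$ from the polarity $\norm{y}_{B^\ast}=\sup_{z\in B}\ip{y,z}$ together with $\proj_x(B)\subseteq B$ from Lemma \ref{lem:projx} — the paper phrases this as ``$y\in\partial B^\ast$ implies $\sup_{z\in B}\ip{\proj_x^\ast y,z}\leq 1$,'' which is the same computation. The one place you genuinely diverge is uniqueness: the paper takes a candidate $A$ satisfying \eqref{eq:dxperpHxperp}, observes that its adjoint $A^\ast$ then satisfies \eqref{eq:Hxdx}, and invokes the uniqueness of $\proj_x$; you instead note that the difference of two candidates lands in $\spaan[d(x)]^\perp\cap H_x^\perp=\spaan[H_x\cup d(x)]^\perp=\{0\}$ for $x\in\partial G\setminus\W$. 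Your version is more direct and avoids a second appeal to Lemma \ref{lem:projx}, at the cost of using the full-span property of $H_x\cup d(x)$ explicitly (the same property the paper already uses in Lemma \ref{lem:Wdecomp}, so nothing new is being assumed). You are also right that \eqref{eq:Lastnorm} as printed uses $\norm{\cdot}_B$ where $\norm{\cdot}_{B^\ast}$ is intended.
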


\begin{proof}
Fix $x\in \partial G\setminus\W$. Since $d_i\in\spaan[d(x)]$ for each $i\in\allN(x)$ and the derivative projection operator at $x$ is uniquely defined, \eqref{eq:Hxdx} implies $\proj_xd_i=0$ for each $i\in\allN(x)$. Thus, for all $y\in\R^J$ and $i\in\allN(x)$,
	$$\ip{\proj_x^\ast y,d_i}=\ip{y,\proj_xd_i}=0,$$
which implies that $\proj_x^\ast y\in\spaan[d(x)]^\perp$. Similarly, by \eqref{eq:Hxdx} and the uniqueness of $\proj_x$, $\proj_xz=z$ for all $z\in H_{x}$. Hence, given $y\in\R^J$ and $z\in H_{x}$, we have 
	$$\ip{\proj_x^\ast y-y,z}=\ip{y,\proj_xz-z}=0.$$
Since this holds for all $y\in\R^J$ and $z\in H_{x}$, it follows that $\proj_x^\ast y-y\in H_{x}^\perp$. 

We now establish that $\proj_x^\ast$ is the unique linear operator satisfying \eqref{eq:dxperpHxperp}. Suppose $A:\R^J\mapsto\R^J$ is a linear operator that satisfies \eqref{eq:dxperpHxperp} for all $y\in\R^J$, but with $A$ in place of $\proj_x^\ast$. Then one can readily verify that the adjoint of $A$, denoted $A^\ast$, satisfies \eqref{eq:Hxdx} for all $y\in\R^J$, but with $A^\ast$ in place of $\proj_x$. By the uniqueness of $\proj_x$ shown in Lemma \ref{lem:projx}, it must hold that $A^\ast=\proj_x$. Since the adjoint of $\proj_x$ is uniquely defined, this implies that $A=\proj_x^\ast$. Therefore, $\proj_x^\ast$ is well defined.

We are left to show that $\norm{\proj_x^\ast}_{B^\ast}\leq1$. By the linearity of $\proj_x^\ast$, it suffices to show that $\proj_x^\ast y\in B^\ast$ for all $y\in\partial B^\ast$. Let $y\in \partial B^\ast$. Then
\begin{align*}
	\sup_{z\in B}\ip{\proj_x^\ast y,z}&=\sup_{z\in B}\ip{y,\proj_xz}\leq\sup_{z\in B}\ip{y,z}=1,
\end{align*}
where the inequality follows since $\proj_x$ maps $B$ into $B$, as shown in Lemma \ref{lem:projx}, and the last equality follows from \eqref{eq:Bast} and because $y\in\partial B^\ast$. Again recalling \eqref{eq:Bast}, we see that $\proj_x^\ast y\in B^\ast$. Since $x\in \partial G\setminus\W$ was arbitrary, the proof is complete.
\end{proof}

\subsection{Contraction properties}\label{sec:contraction}

In this section we prove some key contraction properties for sequences of derivative projection operators.

\begin{lem}\label{lem:Bastprop}
For each $x\in \partial G\setminus\W$ and $y\in\R^J$,
	\be\label{eq:projxasty}\proj_x^\ast y=y\qquad\text{if }y\in\spaan[d(x)]^\perp\ee
and
	\be\label{eq:setBast}\norm{\proj_x^\ast y}_{B^\ast}<\norm{y}_{B^\ast}\qquad\text{if }y\not\in\spaan[d(x)]^\perp.\ee
\end{lem}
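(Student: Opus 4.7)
The plan is to derive both parts directly from the characterization of $\proj_x^\ast$ given in Lemma \ref{lem:projxast}, combined with the geometric property \eqref{eq:setB} of the set $B$.

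For part (i), I would observe that if $y\in\spaan[d(x)]^\perp$, then $y$ itself trivially satisfies the two defining properties of $\proj_x^\ast y$: $y\in\spaan[d(x)]^\perp$ and $y-y=0\in H_x^\perp$. Uniqueness of this decomposition follows because $x\in\partial G\setminus\W$ forces $\spaan(H_x\cup d(x))=\R^J$, hence $\spaan[d(x)]^\perp\cap H_x^\perp=(\spaan(H_x\cup d(x)))^\perp=\{0\}$. This yields $\proj_x^\ast y=y$.

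For part (ii), the bound $\norm{\proj_x^\ast y}_{B^\ast}\leq\norm{y}_{B^\ast}$ is already provided by Lemma \ref{lem:projxast}, so the task reduces to ruling out equality when $y\notin\spaan[d(x)]^\perp$. Since $B$ is compact, convex, symmetric with $0\in B^\circ$, the dual norm admits the representation $\norm{v}_{B^\ast}=\sup_{z\in B}\ip{v,z}$ for every $v\in\R^J$, and by adjointness $\norm{\proj_x^\ast y}_{B^\ast}=\sup_{z\in B}\ip{y,\proj_x z}$. Assuming equality holds for contradiction, compactness of $B$ and continuity of $\proj_x$ would yield some $z^\ast\in B$ at which $\ip{y,\proj_x z^\ast}=\sup_{w\in B}\ip{y,w}=\norm{y}_{B^\ast}$. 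Since $y\notin\spaan[d(x)]^\perp$ forces $y\neq 0$ and $0\in B^\circ$ makes this supremum strictly positive, $w^\ast\doteq\proj_x z^\ast$ is a nonzero element of $\partial B\cap H_x$ at which $\ip{y,\cdot}$ attains its maximum over $B$; in particular, $y$ lies in the outward normal cone to $B$ at $w^\ast$.

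The main obstacle---and the heart of the argument---is converting this into the desired contradiction $y\in\spaan[d(x)]^\perp$. Here I would invoke \eqref{eq:setB} exactly as in the proof of Lemma \ref{lem:projx}: since $w^\ast\in H_x$, we have $|\ip{w^\ast,n_i}|=0<\delta$ for every $i\in\allN(x)$, so every unit inward normal $\nu\in\nu_B(w^\ast)$ satisfies $\ip{\nu,d_i}=0$ for all $i\in\allN(x)$. Because the outward normal cone to $B$ at $w^\ast$ is the nonnegative conic hull of the negatives of these unit inward normals, membership of $y$ in this cone forces $\ip{y,d_i}=0$ for every $i\in\allN(x)$, i.e., $y\in\spaan[d(x)]^\perp$, contradicting the hypothesis. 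This establishes the strict inequality in \eqref{eq:setBast}.
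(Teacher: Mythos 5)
Your proposal is correct and follows essentially the same route as the paper: part (i) via the uniqueness of the operator characterized in Lemma \ref{lem:projxast}, and part (ii) by contradiction, using the duality $\norm{y}_{B^\ast}=\sup_{z\in B}\ip{y,z}$, the fact that $\proj_x$ maps $B$ into $B\cap H_x$, and the geometric condition \eqref{eq:setB} applied at the maximizing boundary point. The only cosmetic difference is that the paper first normalizes to $y\in\partial B^\ast$ and works with the value $1$, whereas you work with general $y$ through the dual-norm representation (and you are slightly more careful than the paper about rescaling $-y$ to a unit inward normal before invoking \eqref{eq:setB}).
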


\begin{proof}
Fix $x\in \partial G\setminus\W$. By the linearity of $\proj_x^\ast$, the fact that $(B^\ast)^\circ$ is nonempty and the definition of $\norm{\cdot}_{B^\ast}$, it suffices to show that \eqref{eq:projxasty} and \eqref{eq:setBast} hold for all $y\in\partial B^\ast$. Let $y\in\partial B^\ast$. Suppose $y\in\spaan[d(x)]^\perp$. By \eqref{eq:dxperpHxperp} and the uniqueness of $\proj_x^\ast$ shown in Lemma \ref{lem:projxast}, $\proj_x^\ast y=y$, so \eqref{eq:projxasty} is satisfied. On the other hand, suppose $y\not\in\spaan[d(x)]^\perp$. We show that $\proj_x^\ast y\in(B^\ast)^\circ$, so \eqref{eq:setBast} holds. For a proof by contradiction, suppose that $\proj_x^\ast y\in\partial B^\ast$. Then, in view of \eqref{eq:Bast} and using the compactness of $B$, there exists $\tilde{z}\in\partial B$ such that 
	$$\ip{\proj_x\tilde{z},y}=\ip{\tilde{z},\proj_x^\ast y}=1.$$
By Lemma \ref{lem:projx}, $\proj_x\tilde{z}\in B$. This, together with the above display, the fact that $y\in\partial B^\ast$ and \eqref{eq:Bast}, implies that $\proj_x\tilde{z}\in\partial B$. Furthermore, given $z\in B$,
	$$\ip{-y,\proj_x\tilde{z}-z}=-1+\ip{y,z}\leq0,$$ 
so $-y\in\nu_{B}(\proj_x\tilde{z})$. Since $\ip{\proj_x\tilde{z},n_i}=0$ for all $i\in\allN(x)$, $\proj_x\tilde{z}\in\partial B$ and $-y\in\nu_{B}(\proj_x\tilde{z})$, \eqref{eq:setB} implies that $\ip{-y,d_i}=0$ for all $i\in\allN(x)$. However, this contradicts the fact that $y\not\in\spaan[d(x)]^\perp$, so \eqref{eq:setBast} must hold.
\end{proof}

\begin{lem}\label{lem:projxKastzero}
There exists $\delta\in[0,1)$ such that given $x\in \partial G\setminus\W$ and a finite sequence $\{x_k\}_{k=1,\dots,K}$ in $\partial G\setminus\W$ such that 
	\be\label{eq:allNcup1K}\allN(x)=\cup_{k=1,\dots,K}\allN(x_k),\ee 
the following inequality holds for all $y\in H_{x}^\perp$:
	\be\label{eq:normBdelta}\norm{\proj_{x_1}^\ast\cdots\proj_{x_K}^\ast y}_{B^\ast}\leq\delta\norm{y}_{B^\ast}.\ee
Consequently, given $x\in \partial G\setminus\W$ and a sequence $\{x_k\}_{k\in\N}$ in $\partial G\setminus\W$ such that $\allN(x)=\cup_{k\geq\ell}\allN(x_k)$ for all $\ell\in\N$, it follows that for any $y\in H_{x}^\perp$,
	\be\label{eq:normBlimit}\lim_{K\to\infty}\lsb\proj_{x_1}^\ast\cdots\proj_{x_K}^\ast y\rsb=0.\ee
\end{lem}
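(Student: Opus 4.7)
The plan is to prove the lemma in four stages: invariance of $H_x^\perp$ under the adjoint projections together with a dual direct-sum decomposition; pointwise strict contraction for any fixed configuration; an upgrade to a uniform $\delta<1$ by compactness and finiteness of the operator alphabet; and finally deducing the stated convergence.

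First, I would show that if $x'\in\partial G\setminus\W$ with $\allN(x')\subseteq\allN(x)$, then the definition \eqref{eq:Hx} gives $H_x\subseteq H_{x'}$ and hence $H_{x'}^\perp\subseteq H_x^\perp$, so the identity $\proj_{x'}^\ast y-y\in H_{x'}^\perp$ from Lemma \ref{lem:projxast} forces $\proj_{x'}^\ast(H_x^\perp)\subseteq H_x^\perp$. Next, for $x\in\partial G\setminus\W$, any $y\in H_x^\perp\cap\spaan[d(x)]^\perp$ is orthogonal to $\spaan(H_x\cup d(x))=\R^J$ (using $x\notin\W$ and Lemma \ref{lem:Wdecomp}), hence $y=0$; combined with a dimension count this yields the dual direct-sum decomposition $\R^J=H_x^\perp\oplus\spaan[d(x)]^\perp$.

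Second, for a fixed $(x,x_1,\dots,x_K)$ satisfying the hypotheses and $y\in H_x^\perp\setminus\{0\}$, I would set $y_K:=y$ and $y_{k-1}:=\proj_{x_k}^\ast y_k$, noting that each $y_k$ lies in $H_x^\perp$ by invariance. Each step satisfies $\norm{y_{k-1}}_{B^\ast}\leq\norm{y_k}_{B^\ast}$ by Lemma \ref{lem:projxast}, and if the overall product were norm-preserving, Lemma \ref{lem:Bastprop} would force $y_k\in\spaan[d(x_k)]^\perp$ and $y_{k-1}=y_k$ at every step. Iterating this back to $k=K$ gives $y\in\bigcap_k\spaan[d(x_k)]^\perp=\spaan[\cup_k d(x_k)]^\perp=\spaan[d(x)]^\perp$, where the last equality uses $\cup_k\allN(x_k)=\allN(x)$. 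Combined with $y\in H_x^\perp$ and the direct sum from the first step, this forces $y=0$, a contradiction.

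Third, I would upgrade this pointwise strict contraction to a uniform bound. Since $\proj_{x_k}^\ast$ depends only on $\allN(x_k)$ (see the remark following Lemma \ref{lem:projx}) and there are only $2^N$ subsets of $\allN$, there is a finite alphabet of operators to consider. For each realizable $J\subseteq\allN$ and each minimal cover of $J$ by realizable subsets (of length at most $|J|\leq N$), compactness of the unit sphere of the finite-dimensional space $H_J^\perp$ under $\norm{\cdot}_{B^\ast}$ combined with the second step yields a contraction constant strictly less than $1$; the maximum over this finite collection is a candidate $\delta<1$. To handle arbitrarily long covering sequences, I would argue by contradiction: if the supremum of $\norm{T}_{\mathrm{op}}$ over all covering products $T$ acting on $H_J^\perp$ equalled $1$, then by finite-dimensional compactness in $\mathcal{L}(H_J^\perp)$ one extracts a limit operator $T_\ast$ of norm $1$ in the closure of the set of covering products; since this set is a two-sided ideal under composition with the finite alphabet, the second step applied to suitable left- and right-multiplications of $T_n$ yields the contradiction. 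For the consequence, under $\allN(x)=\cup_{k\geq\ell}\allN(x_k)$ for every $\ell$, recursively define $K_0:=0$ and $K_j:=\min\{K>K_{j-1}:\cup_{k=K_{j-1}+1}^K\allN(x_k)=\allN(x)\}$ (finite by hypothesis); each factor $F_j:=\proj_{x_{K_{j-1}+1}}^\ast\cdots\proj_{x_{K_j}}^\ast$ satisfies the main bound and preserves $H_x^\perp$, so iterating gives $\norm{F_1\cdots F_j y}_{B^\ast}\leq\delta^j\norm{y}_{B^\ast}\to 0$, while the trailing factors $\proj_{x_{K_j+1}}^\ast\cdots\proj_{x_K}^\ast$ are contractions and so preserve this bound for all $K\geq K_j$.

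The principal obstacle is the uniformity claim in the third stage. Pointwise strict contraction for each fixed minimal cover is immediate, but bounding \emph{arbitrarily long} covering products uniformly requires care: one cannot simply prune interior ``redundant'' operators because contractions do not commute, and the monotonicity of operator norm under composition is available only at the two endpoints of a product. The resolution relies on the semigroup and ideal structure of covering products inside the finitely-generated operator algebra combined with a compactness argument in $\mathcal{L}(H_J^\perp)$.
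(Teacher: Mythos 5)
Your Stages 1, 2 and 4 are sound and line up with the ingredients the paper uses: the invariance of $H_x^\perp$ under the adjoint projections and the dual direct sum $\R^J=H_x^\perp\oplus\spaan[d(x)]^\perp$ follow from Lemmas \ref{lem:projxast} and \ref{lem:Wdecomp}; the pointwise strict contraction of a fixed covering product via Lemma \ref{lem:Bastprop} is exactly right; and the block decomposition into consecutive covering segments is how the paper deduces \eqref{eq:normBlimit}. The problem is Stage 3, which is the heart of the lemma, and your argument there does not close.

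Concretely: if $T_n$ are covering products with operator norm (on $H_x^\perp$, with respect to $\norm{\cdot}_{B^\ast}$) tending to $1$, a subsequential limit $T_\ast$ exists by finite-dimensionality and has norm $1$, but $T_\ast$ need not be a covering product, nor any finite product of letters from the alphabet, because the lengths of the $T_n$ are unbounded and the set of covering products is not closed. Stage 2 applies only to genuine finite covering products, so it says nothing about $T_\ast$. The ``two-sided ideal'' observation is true but inert: composing $T_n$ on either side with more letters produces longer covering products of no larger norm, so it cannot manufacture a contradiction, and it does not yield a factorization of $T_n$ or $T_\ast$ through a bounded-length cover. Restricting to minimal covers of length at most $N$ does give a finite family and hence a uniform constant for those, but the lemma must handle arbitrarily long covering sequences, and as you yourself note, interior factors cannot be pruned.

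The paper's proof gets a bound that is \emph{independent of the length of the product} by arguing vector-by-vector rather than in operator norm, and the mechanism is a collapse you are missing. Fix $y\in H_x^\perp\cap\partial B^\ast$; by your direct sum $y\not\in\spaan[d(x)]^\perp$, and since $\spaan[d(x)]^\perp=\cap_k\spaan[d(x_k)]^\perp$, the index $\tilde k\doteq\max\{k:y\not\in\spaan[d(x_k)]^\perp\}$ is well defined. By \eqref{eq:projxasty} every factor to the right of $\tilde k$ \emph{fixes} $y$, so $\proj_{x_1}^\ast\cdots\proj_{x_K}^\ast y=\proj_{x_1}^\ast\cdots\proj_{x_{\tilde k}}^\ast y$ no matter how long the product is; the single factor $\proj_{x_{\tilde k}}^\ast$ contracts $y$ to norm at most $\delta_{x,y}$ as in \eqref{eq:deltaxy}, which is a maximum over the finitely many operators indexed by subsets of $\allN(x)$ and is therefore strictly less than $1$; and the remaining prefix is non-expansive by Lemma \ref{lem:projxast}. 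This yields $\norm{Ty}_{B^\ast}\leq\delta_{x,y}$ uniformly over all covering products $T$, which is precisely the uniformity-in-$T$ that your operator-norm compactness cannot deliver; the paper then removes the $y$-dependence using compactness of $H_x^\perp\cap\partial B^\ast$ and the finiteness of the collection of subsets $\allN(x)$. You should replace Stage 3 with this fixed-point/collapse argument.
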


\begin{proof}
We claim, and prove below, that for each $x\in\partial G\setminus\W$, there exists $\delta_x\in[0,1)$, depending only on the set $\allN(x)$, such that given $K\in\N$ and a finite sequence $\{x_k\}_{k=1,\dots,K}$ in $\partial G\setminus\W$ satisfying \eqref{eq:allNcup1K}, then for all $y\in H_{x}^\perp$,
	\be\label{eq:projx1Kasty}\norm{\proj_{x_1}^\ast\cdots\proj_{x_K}^\ast y}_{B^\ast}\leq\delta_x\norm{y}_{B^\ast}.\ee
Then, due to the fact that $\delta_x$ depends only on $\allN(x)$ and there are only a finite number of distinct subsets of $\allN$, we have $\delta\doteq\sup_{x\in\partial G\setminus\W}\delta_x\in[0,1)$, which will complete the proof of \eqref{eq:normBdelta}.

We are left to establish the claim. Fix $x\in \partial G\setminus\W$. Let $y\in H_{x}^\perp\cap\partial B^\ast$. The property $\spaan(H_{x}\cup d(x))=\R^J$ implies that $H_{x}^\perp\cap\spaan[d(x)]^\perp=\{0\}$, so $y\not\in\spaan[d(x)]^\perp$. By Lemma \ref{lem:Bastprop}, given $\tilde{x}\in\partial G\setminus\W$ such that $y\not\in\spaan[d(\tilde{x})]^\perp$, we have $\norm{\proj_{\tilde{x}}^\ast y}_{B^\ast}<1$. Define
	\be\label{eq:deltaxy}\delta_{x,y}\doteq\sup\lcb\norm{\proj_{\tilde{x}}^\ast y}_{B^\ast}:\tilde{x}\in\partial G\setminus\W,\allN(\tilde{x})\subseteq\allN(x),y\not\in\spaan[d(\tilde{x})]^\perp\rcb.\ee
Since $\proj_{\tilde{x}}^\ast$ depends only on $\allN(\tilde{x})$ and there are only a finite number of distinct subsets of $\allN(x)$, the above supremum is in fact over a finite number of elements, so $\delta_{x,y}\in[0,1)$. In addition, $\delta_{x,y}$ depends only on $\allN(x)$ and $y$. Now let $K\in\N$ and $\{x_k\}_{k=1,\dots,K}$ be a finite sequence in $\partial G\setminus\W$ such that \eqref{eq:allNcup1K} holds. Observe that \eqref{eq:allNcup1K} implies $\spaan[d(x)]^\perp=\cap_{k=1,\dots,K}\spaan[d(x_k)]^\perp$, and since $y\not\in\spaan[d(x)]^\perp$, this ensures that $y\not\in\spaan[d(x_k)]^\perp$ for some $k\in\{1,\dots,K\}$. Let $\tilde{k}\doteq\max\{1\leq k\leq K:y\not\in\spaan[d(x_k)]^\perp\}$. Then by Lemma \ref{lem:Bastprop}, \eqref{eq:allNcup1K}, \eqref{eq:deltaxy} and the nonexpansive property of the operators $\proj_{x_k}^\ast$ shown in Lemma \ref{lem:projxast},
\begin{align*}
	\norm{\proj_{x_1}^\ast\cdots\proj_{x_K}^\ast y}_{B^\ast}&=\norm{\proj_{x_1}^\ast\cdots\proj_{x_{\tilde{k}}}^\ast y}_{B^\ast}\leq\delta_{x,y}.
\end{align*}
Since $y\mapsto\norm{\proj_{x_1}^\ast\cdots\proj_{x_K}^\ast y}_{B^\ast}$ is continuous and $H_{x}^\perp\cap\partial B^\ast$ is compact, we have
	$$\delta_x\doteq\sup_{y\in H_{x}^\perp\cap\partial B^\ast}\delta_{x,y}\in[0,1),$$
and $\delta_x$ depends only on $\allN(x)$. Finally, by the linearity of the operators $\proj_{x_k}^\ast$, $1\leq k\leq K$, \eqref{eq:projx1Kasty} holds for any $y\in H_{x}^\perp$. This proves the claim.
\end{proof}

\begin{lem}\label{lem:accumulate}
Suppose $x\in \partial G\setminus\W$ and $\{x_k\}_{k\in\N}$ is a sequence in $\partial G\setminus\W$ such that
	\be\label{eq:allNxk}\allN(x)=\cup_{k\geq\ell}\allN(x_k)\qquad\text{for all }\ell\in\N.\ee
Then for all $y\in\R^J$,
	\be\label{eq:limKprod}\lim_{k\to\infty}\lsb\proj_{x_k}\cdots\proj_{x_1}y\rsb=\proj_xy\ee
and
	\be\label{eq:limKprod1}\lim_{k\to\infty}\lsb\proj_{x_1}\cdots\proj_{x_k}y\rsb=\proj_xy.\ee
\end{lem}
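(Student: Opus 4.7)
My plan is to dualize both products and reduce to Lemma~\ref{lem:projxKastzero}. Since $x\in\partial G\setminus\W$, Lemma~\ref{lem:Wdecomp} combined with $\spaan(H_x\cup d(x))=\R^J$ gives the direct-sum decomposition $\R^J=H_x\oplus\spaan[d(x)]$. Dualizing, together with the dimension identity and the fact that $(H_x+\spaan[d(x)])^\perp=\{0\}$, yields $\R^J=\spaan[d(x)]^\perp\oplus H_x^\perp$, so every $z\in\R^J$ has a unique decomposition $z=z_1+z_2$ with $z_1\in\spaan[d(x)]^\perp$ and $z_2\in H_x^\perp$; Lemma~\ref{lem:projxast} then identifies $\proj_x^\ast z=z_1$. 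Two observations will be used repeatedly: first, $\allN(x_k)\subseteq\allN(x)$ for each $k$ (take $\ell=1$ in the hypothesis) gives $\spaan[d(x_k)]^\perp\supseteq\spaan[d(x)]^\perp$, so by Lemma~\ref{lem:Bastprop} each $\proj_{x_k}^\ast$ fixes $z_1$; second, $H_{x_k}^\perp\subseteq H_x^\perp$ combined with \eqref{eq:dxperpHxperp} shows that each $\proj_{x_k}^\ast$ maps $H_x^\perp$ into itself.

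For \eqref{eq:limKprod} I take adjoints to write $\ip{\proj_{x_k}\cdots\proj_{x_1}y,z}=\ip{y,\proj_{x_1}^\ast\cdots\proj_{x_k}^\ast z}=\ip{y,z_1}+\ip{y,\proj_{x_1}^\ast\cdots\proj_{x_k}^\ast z_2}$. Since $z_2\in H_x^\perp$ and the covering hypothesis holds for the sequence $\{x_k\}$, the ``consequently'' conclusion of Lemma~\ref{lem:projxKastzero} sends the last term to zero. Hence $\ip{\proj_{x_k}\cdots\proj_{x_1}y,z}\to\ip{y,z_1}=\ip{\proj_xy,z}$ for every $z\in\R^J$, and testing against the standard basis upgrades this pointwise convergence to norm convergence in $\R^J$.

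The main obstacle is \eqref{eq:limKprod1}, for which the analogous dualization gives $\ip{\proj_{x_1}\cdots\proj_{x_k}y,z}=\ip{y,z_1}+\ip{y,\proj_{x_k}^\ast\cdots\proj_{x_1}^\ast z_2}$ and the reversed product $\proj_{x_k}^\ast\cdots\proj_{x_1}^\ast z_2$ is not directly controlled by the ``consequently'' part of Lemma~\ref{lem:projxKastzero}. I plan to circumvent this by exploiting that the finite-sequence bound \eqref{eq:normBdelta} is order-independent: its hypothesis $\allN(x)=\cup_k\allN(x_k)$ depends only on the underlying set, so the relabeling $\tilde{x}_k\doteq x_{K+1-k}$ preserves it and yields the reversed bound $\norm{\proj_{x_K}^\ast\cdots\proj_{x_1}^\ast w}_{B^\ast}\leq\delta\norm{w}_{B^\ast}$ for every $w\in H_x^\perp$.

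To conclude, the tail-covering hypothesis lets me pick an increasing sequence $0=K_0<K_1<K_2<\cdots$ with each block $\{x_{K_{j-1}+1},\dots,x_{K_j}\}$ covering $\allN(x)$. Setting $w_j\doteq\proj_{x_{K_j}}^\ast\cdots\proj_{x_1}^\ast z_2$, the preservation of $H_x^\perp$ keeps each $w_{j-1}$ in $H_x^\perp$, so applying the reversed bound block-by-block gives $\norm{w_j}_{B^\ast}\leq\delta\norm{w_{j-1}}_{B^\ast}$ and therefore $\norm{w_j}_{B^\ast}\leq\delta^j\norm{z_2}_{B^\ast}\to0$. Non-expansiveness of each $\proj_{x_k}^\ast$ (Lemma~\ref{lem:projxast}) extends this bound to general $k$ between consecutive $K_j$, and \eqref{eq:limKprod1} then follows exactly as in the previous paragraph.
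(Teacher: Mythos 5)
Your proof is correct, and it takes a noticeably different route from the paper's. The paper argues in the primal: it uses nonexpansiveness to extract a convergent subsequence with limit $\bar{y}$, shows $\bar{y}-y\in\spaan[d(x)]$ by telescoping the increments $\proj_{x_r}(\cdot)-(\cdot)\in\spaan[d(x_r)]\subseteq\spaan[d(x)]$, shows $\bar{y}\in H_{x}$ by pairing against $z\in H_{x}^\perp$ and invoking \eqref{eq:normBlimit}, and then identifies $\bar{y}=\proj_xy$ from the uniqueness of the $H_{x}\oplus\spaan[d(x)]$ decomposition (Lemma \ref{lem:Wdecomp}); the proof of \eqref{eq:limKprod1} is declared analogous and omitted. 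You instead work entirely in the dual: you decompose the test vector $z=z_1+z_2$ along $\R^J=\spaan[d(x)]^\perp\oplus H_{x}^\perp$, note that every $\proj_{x_k}^\ast$ fixes $z_1$ and preserves $H_x^\perp$, and let \eqref{eq:normBlimit} kill the $z_2$ component, which yields the limit $\ip{y,z_1}=\ip{\proj_xy,z}$ with no subsequence extraction. What your approach buys is most visible in \eqref{eq:limKprod1}: you correctly observe that the reversed product $\proj_{x_k}^\ast\cdots\proj_{x_1}^\ast$ is not covered verbatim by the ``consequently'' clause of Lemma \ref{lem:projxKastzero}, and you supply the missing argument by exploiting that the finite bound \eqref{eq:normBdelta} depends only on the covering set and not on the ordering, then iterating it over blocks each of which covers $\allN(x)$ (such blocks exist because the tail-covering hypothesis holds for every $\ell$ and $\allN(x)$ is finite), with nonexpansiveness filling in the indices between blocks. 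This makes explicit a point the paper's ``analogous'' remark glosses over, so your write-up is arguably more complete on that half of the statement.
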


\begin{proof}
Let $y\in\R^J$. We first prove \eqref{eq:limKprod}. By the nonexpansive property of $\proj_{x_k}$ stated in Lemma \ref{lem:projx}, there exists a subsequence $\{k_m\}_{m\in\N}$ and $\bar{y}\in\R^J$ such that $\bar{y}=\lim_{m\to\infty}[\proj_{x_{k_m}}\cdots\proj_{x_1}y]$. Since $\proj_xy$ does not depend on the subsequence, it suffices to establish that $\bar{y}=\proj_xy$.

For each $k\in\N$ and $\tilde{y}\in\R^J$, \eqref{eq:Hxdx} and \eqref{eq:allNxk} together imply $\proj_{x_k}\tilde{y}-\tilde{y}\in\spaan[d(x_k)]\subseteq\spaan[d(x)]$. Consequently,
\begin{align}\label{eq:baryydx}
	\bar{y}-y&=\lim_{m\to\infty}\lsb\proj_{x_{k_m}}\cdots\proj_{x_1}y\rsb-y\\ \notag
	&=\lim_{m\to\infty}\sum_{r=1}^{k_m}\lcb\proj_{x_r}\lsb\proj_{x_{r-1}}\cdots\proj_{x_1}y\rsb-\lsb\proj_{x_{r-1}}\cdots\proj_{x_1}y\rsb\rcb\\ \notag
	&\in \spaan[d(x)],
\end{align}
where we adopt the convention $\proj_{x_0}\cdots\proj_{x_1}y\doteq y$ and we have used the fact that $\spaan[d(x)]$ is a closed linear subspace. In addition, for any $z\in H_{x}^\perp$,
\begin{align*}
	|\ip{\bar{y},z}|=\lim_{m\to\infty}|\ip{\proj_{x_{k_m}}\cdots\proj_{x_1}y,z}|=\lim_{m\to\infty}|\ip{y,\proj_{x_1}^\ast\cdots\proj_{x_{k_m}}^\ast z}|=0,
\end{align*}
where the final equality is a consequence of \eqref{eq:normBlimit}. Therefore $\bar{y}\in H_{x}$. Combined with \eqref{eq:Hxdx} and \eqref{eq:baryydx}, this shows that $\bar{y}-\proj_xy\in H_{x}\cap\spaan[d(x)]$. Since Lemma \ref{lem:Wdecomp} implies $H_{x}\cap\spaan[d(x)]=\{0\}$, it follows that $\bar{y}=\proj_xy$, which completes the proof of \eqref{eq:limKprod}. The proof of \eqref{eq:limKprod1} is analogous, so we omit it. 
\end{proof}

The next result is a corollary to the previous lemma and states that the convergence described above holds uniformly for $y$ in compact subsets of $\R^J$.

\begin{cor}\label{lem:accumulateuniformly}
Suppose $x\in \partial G\setminus\W$ and $\{x_k\}_{k\in\N}$ is a sequence in $\partial G\setminus\W$ such that \eqref{eq:allNxk} holds. Then given any compact set $C\subseteq\R^J$,
	\be\label{eq:limKproduniform}\lim_{k\to\infty}\sup_{y\in C}\left|\lsb\proj_{x_k}\cdots\proj_{x_1}y\rsb-\proj_xy\right|=0\ee
and
	\be\label{eq:limKproduniform1}\lim_{k\to\infty}\sup_{y\in C}\left|\lsb\proj_{x_1}\cdots\proj_{x_k}y\rsb-\proj_xy\right|=0.\ee
\end{cor}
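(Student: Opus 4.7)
The plan is to deduce the uniform convergence from the pointwise convergence established in Lemma \ref{lem:accumulate} together with the uniform Lipschitz property of the operators in question. Specifically, for each $k\in\N$, the composition $\proj_{x_k}\cdots\proj_{x_1}$ is a linear operator on $(\R^J,\norm{\cdot}_B)$ whose operator norm is bounded by $1$, as a product of contractions given by Lemma \ref{lem:projx}; similarly $\norm{\proj_{x_1}\cdots\proj_{x_k}}\leq 1$ and $\norm{\proj_x}\leq 1$. Thus the entire family $\{\proj_{x_k}\cdots\proj_{x_1}\}_{k\in\N}$ together with its pointwise limit $\proj_x$ is uniformly $1$-Lipschitz on $(\R^J,\norm{\cdot}_B)$, and likewise for the family $\{\proj_{x_1}\cdots\proj_{x_k}\}_{k\in\N}$.

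Given a compact set $C\subseteq\R^J$ and $\ve>0$, I would cover $C$ by finitely many $\norm{\cdot}_B$-balls of radius $\ve$ centered at points $y_1,\dots,y_M\in C$ (possible since $C$ is compact and $\norm{\cdot}_B$ is equivalent to $|\cdot|$ on the finite-dimensional space $\R^J$). By Lemma \ref{lem:accumulate} there exists $K_0\in\N$ such that $\norm{\proj_{x_k}\cdots\proj_{x_1}y_m-\proj_x y_m}_B<\ve$ for all $k\geq K_0$ and $m=1,\dots,M$. Given any $y\in C$, choose $m$ with $\norm{y-y_m}_B<\ve$, and apply the triangle inequality along with the bound $\norm{\proj_{x_k}\cdots\proj_{x_1}(y-y_m)}_B\leq\norm{y-y_m}_B$ and the analogous bound for $\proj_x$ to obtain $\norm{\proj_{x_k}\cdots\proj_{x_1}y-\proj_x y}_B<3\ve$ uniformly in $y\in C$ for $k\geq K_0$. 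Equivalence of $\norm{\cdot}_B$ and $|\cdot|$ then yields \eqref{eq:limKproduniform}. The proof of \eqref{eq:limKproduniform1} is identical, using the second half of Lemma \ref{lem:accumulate} in place of the first.

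There is no real obstacle here: the content of the statement is entirely carried by Lemma \ref{lem:accumulate}, and the step from pointwise to uniform convergence on compact sets is the standard equicontinuity argument, made especially clean by the fact that $\R^J$ is finite-dimensional and all the relevant operators are nonexpansive in a common norm. The only minor care required is to work throughout in the norm $\norm{\cdot}_B$ (where the contraction property of Lemma \ref{lem:projx} is available) and then pass to $|\cdot|$ at the end via norm equivalence.
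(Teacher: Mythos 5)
Your proof is correct and uses essentially the same argument as the paper: both combine the pointwise convergence of Lemma \ref{lem:accumulate} with the uniform nonexpansiveness of the composed operators from Lemma \ref{lem:projx} via a three-term triangle inequality in $\norm{\cdot}_B$. The only cosmetic difference is that you pass from pointwise to uniform convergence with a finite $\ve$-net, whereas the paper extracts a convergent subsequence from an arbitrary sequence $\{y_k\}\subseteq C$; the two mechanisms are interchangeable here.
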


\begin{proof}
Fix a compact subset $C\subseteq\R^J$. We prove that \eqref{eq:limKproduniform} holds, with the proof of \eqref{eq:limKproduniform1} being analogous. It suffices to show that given any sequence $\{y_k\}_{k\in\N}$ in $C$,
	\be\label{eq:limsupLxk}\limsup_{k\to\infty}\left|\lsb\proj_{x_1}\cdots\proj_{x_k}y_k\rsb-\proj_xy_k\right|=0.\ee
Fix a sequence $\{y_k\}_{k\in\N}$ in $C$. By compactness, we can assume, by possibly taking a subsequence, that $\hat{y}\doteq\lim_{k\to\infty}y_k$ exists in $C$. Then, by the triangle inequality,
\begin{align*}
	\norm{\lsb\proj_{x_k}\cdots\proj_{x_1}y_k\rsb-\proj_xy_k}_B&\leq\norm{\proj_{x_k}\cdots\proj_{x_1}(y_k-\hat{y})}_B+\norm{\lsb\proj_{x_k}\cdots\proj_{x_1}\hat{y}\rsb-\proj_x\hat{y}}_B\\
	&\qquad+\norm{\proj_x(\hat{y}-y_k)}_B.
\end{align*}
Letting $k\to\infty$ in the above display and using the nonexpansive property of $\proj_{x_k}\cdots\proj_{x_1}$, the definition of $\hat{y}$ and \eqref{eq:limKprod} of Lemma \ref{lem:accumulate}, which is applicable since \eqref{eq:allNxk} holds, we see that each term on the right-hand side converges to zero. Since the convergence holds for any given subsequence, \eqref{eq:limsupLxk} holds.
\end{proof}

\subsection{Relation to solutions of the DP}\label{sec:projdp}

The main result of this section relates derivative projection operators to solutions of the DP associated with $\z$ at times $T\in(0,\infty)$ such that $\z(T)\in\U$, provided that $\z$ satisfies condition 4 of the boundary jitter property.

\begin{lem}\label{lem:projphirhok}
Let $(\z,\y)$ be a solution of the ESP for $\x\in\cts_G$ and define $\tau$ as in \eqref{eq:tau}. Suppose $\z$ satisfies condition 4 of the boundary jitter property on $[0,\tau)$ and $0\leq S<T<\tau$ are such that $\z(S)\in G^\circ$, $\z(T)\in\U$ and $\allN(\z(t))\subsetneq\allN(\z(T))$ for all $t\in[S,T)$. Suppose $\psi\in\cts$ is constant on $[S,T]$ and $(\phi,\eta)$ solves the DP associated with $\z$ for $\psi$ on $[0,T)$. Then $\phi(T-)$ exists and
	\be\label{eq:phixi0minus}\phi(T-)=\proj_{\z(T)}\phi(S).\ee
\end{lem}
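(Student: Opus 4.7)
My plan is to apply Lemma~\ref{lem:xij} on $[S, T)$ to obtain an approximating sequence $\xi_j \uparrow T$ of boundary times, express $\phi(\xi_j)$ as an iterated composition of derivative projection operators applied to $\phi(S)$, and invoke Lemma~\ref{lem:accumulate} to identify the limit as $\proj_{\z(T)}\phi(S)$.

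The hypotheses of Lemma~\ref{lem:xij} are satisfied by condition~4 of the boundary jitter property on $[0, \tau)$ together with the strict inclusion $\allN(\z(t)) \subsetneq \allN(\z(T))$ on $[S, T)$; its conclusion furnishes a nested sequence $S = \xi_0 < s_1 \leq \xi_1 < s_2 \leq \cdots < T$ with $\xi_j \to T$, $\z(\xi_j) \in \partial G$, and the face inclusions~\eqref{eq:xijsj}. Since $\psi$ is constant on $[S, T]$, condition~1 of the DP gives $\phi(t) - \phi(S) = \eta(t) - \eta(S)$ for $t \in [S, T)$, and condition~3 combined with $\allN(\z(u)) \subseteq \allN(\z(T))$ on $[S, T)$ confines this difference to the closed subspace $\spaan[d(\z(T))]$; in particular $\proj_{\z(T)}\phi(t) = \proj_{\z(T)}\phi(S)$ is constant on $[S, T)$.

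The key observation is that $\phi$ is constant on each subinterval $[\xi_{j-1}, s_j)$: using~\eqref{eq:xijsj} and the constancy of $\psi$, the constant path $\phi \equiv \phi(\xi_{j-1})$ paired with $\eta \equiv \eta(\xi_{j-1})$ satisfies the time-shifted DP from Lemma~\ref{lem:DManticipate}, because $\phi(\xi_{j-1}) \in H_{\z(\xi_{j-1})} \subseteq H_{\z(u)}$ for every $u \in [\xi_{j-1}, s_j)$; the uniqueness part of Theorem~\ref{thm:dmlip} then forces the actual $\phi$ to coincide with this constant, so $\phi(s_j-) = \phi(\xi_{j-1})$ and $\eta$ is constant on $[\xi_{j-1}, s_j)$. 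A left-limit truncation of condition~3 of the DP combined with~\eqref{eq:xijsj} then gives $\phi(\xi_j) - \phi(\xi_{j-1}) = \eta(\xi_j) - \eta(s_j-) \in \spaan[d(\z(\xi_j))]$, and right continuity plus condition~2 place $\phi(\xi_j) \in H_{\z(\xi_j)}$; the uniqueness part of Lemma~\ref{lem:projx} then identifies $\phi(\xi_j) = \proj_{\z(\xi_j)}\phi(\xi_{j-1})$, and iterating yields $\phi(\xi_j) = \proj_{\z(\xi_j)} \cdots \proj_{\z(\xi_1)}\phi(S)$.

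To apply Lemma~\ref{lem:accumulate} with $x = \z(T)$ and $x_k = \z(\xi_k)$, I will verify the hypothesis $\cup_{k \geq \ell}\allN(\z(\xi_k)) = \allN(\z(T))$ for every $\ell \in \N$: for each $i \in \allN(\z(T))$ and each $\ell$, applying condition~4 of the boundary jitter property at $t = T$ with $\delta = T - \xi_\ell$ produces $s \in (\xi_\ell, T)$ with $\allN(\z(s)) = \{i\}$, and~\eqref{eq:xijsj} forces $i$ into $\allN(\z(\xi_{m-1})) \cup \allN(\z(\xi_m))$ for the unique $m > \ell$ with $s \in [\xi_{m-1}, \xi_m]$. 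The lemma then yields $\phi(\xi_j) \to \proj_{\z(T)}\phi(S)$. Combined with the constancy of $\phi$ on each $[\xi_{j-1}, s_j)$ and the fact that the residual intervals $[s_j, \xi_j]$ shrink in length to zero as $j \to \infty$, this delivers $\phi(T-) = \proj_{\z(T)}\phi(S)$. The main obstacle I anticipate is rigorously bounding $\phi(t)$ on the residual intervals $[s_j, \xi_j]$: although $\phi(t) - \phi(\xi_{j-1}) \in \spaan[d(\z(\xi_j))]$ and $\proj_{\z(\xi_j)}\phi(t) = \phi(\xi_j) \to \proj_{\z(T)}\phi(S)$, the component of $\phi(t)$ in $\spaan[d(\z(\xi_j))]$ is not immediately small; Corollary~\ref{lem:accumulateuniformly} together with the convergence $\phi(\xi_j) - \phi(\xi_{j-1}) \to 0$ (which is itself in $\spaan[d(\z(\xi_j))]$) should suffice to pin the oscillations down, but this step requires care.
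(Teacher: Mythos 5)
Your argument coincides with the paper's up through the application of Lemma \ref{lem:accumulate}: the construction of the nested sequence via Lemma \ref{lem:xij}, the constancy of $(\phi,\eta)$ on $[\xi_{j-1},s_j)$ by uniqueness of solutions to the DP, the recursion $\phi(\xi_j)=\proj_{\z(\xi_j)}\phi(\xi_{j-1})$, and the verification that $\cup_{k\geq\ell}\allN(\z(\xi_k))=\allN(\z(T))$ are all exactly the paper's steps (you should also record that $\z(\xi_k)\in\partial G\setminus\W$, which holds because $\xi_k<T<\tau$, before invoking Lemma \ref{lem:accumulate}).

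The gap is the final step, which you correctly flag but do not close, and the tool you propose for it is not the right one. For $t\in[s_j,\xi_j]$ one only knows $\phi(t)\in H_{\z(t)}$ and $\phi(t)-\phi(\xi_{j-1})\in\spaan[\cup_{u\in[s_j,t]}d(\z(u))]\subseteq\spaan[d(\z(\xi_j))]$; since $H_{\z(t)}\cap\spaan[d(\z(\xi_j))]$ is in general a nontrivial subspace (as $\allN(\z(t))$ may be a strict subset of $\allN(\z(\xi_j))$), these constraints do not pin down $\phi(t)$, and Corollary \ref{lem:accumulateuniformly} --- which concerns uniform convergence of the projection products over compact sets of initial vectors $y$ --- says nothing about the oscillation of $\phi$ inside $[s_j,\xi_j]$. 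The paper closes this by a quantitative stability argument: time-shift the DP to $s_j$ (Lemma \ref{lem:DManticipate}), observe that the constant path $\tilde{\phi}\equiv\phi(\xi_j)$ solves the DP associated with $\z^{s_j}$ for the constant input $\phi(\xi_j)$ on $[0,\xi_j-s_j]$ (using $\phi(\xi_j)\in H_{\z(\xi_j)}\subseteq H_{\z(u)}$ for $u$ in that window), and apply the Lipschitz bound of Theorem \ref{thm:dmlip} to obtain $\sup_{u\in[s_j,\xi_j]}|\phi(u)-\phi(\xi_j)|\leq\lip_\dm|\phi(s_j)-\phi(\xi_j)|$. One then checks that $\phi(s_j)=\proj_{\z(s_j)}\phi(\xi_{j-1})$ and $\phi(\xi_j)=\proj_{\z(s_j)}\phi(\xi_j)$, so the right-hand side equals $\lip_\dm|\proj_{\z(s_j)}[\phi(\xi_{j-1})-\phi(\xi_j)]|$, which tends to zero by the already-established convergence of $\phi(\xi_j)$ and linearity of the projections. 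Some argument of this kind is genuinely needed: the convergence $\phi(\xi_j)-\phi(\xi_{j-1})\to0$ alone does not control $\phi$ between consecutive $\xi_j$'s.
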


\begin{figure}[h!]
	\centering
	\begin{subfigure}{.5\textwidth}
		\centering
		\includegraphics[width=.7\textwidth]{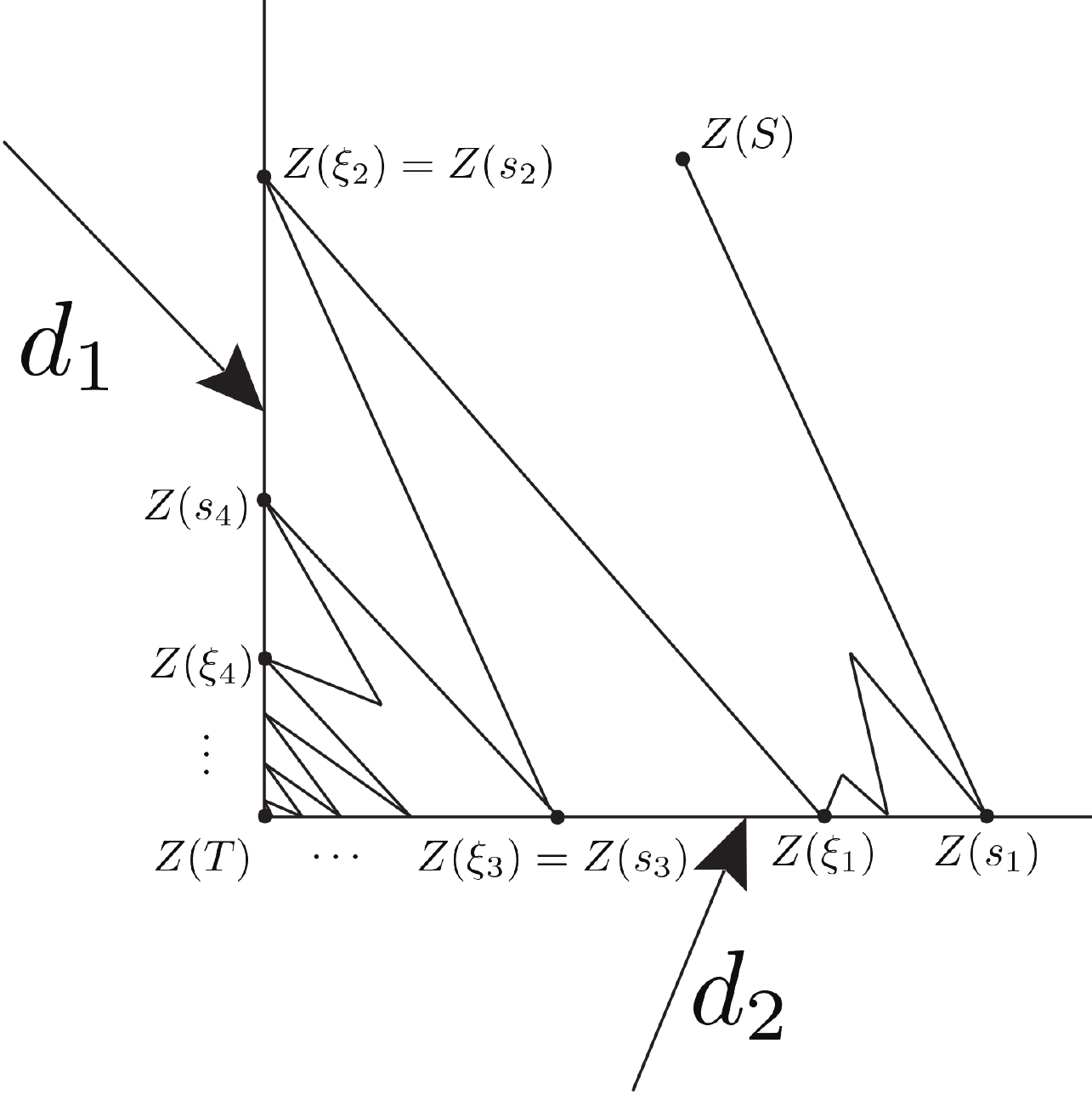}
		\caption{A constrained path $\z$.}
		\label{fig:jitter}
	\end{subfigure}%
	\begin{subfigure}{.5\textwidth}
		\centering
		\includegraphics[width=.7\textwidth]{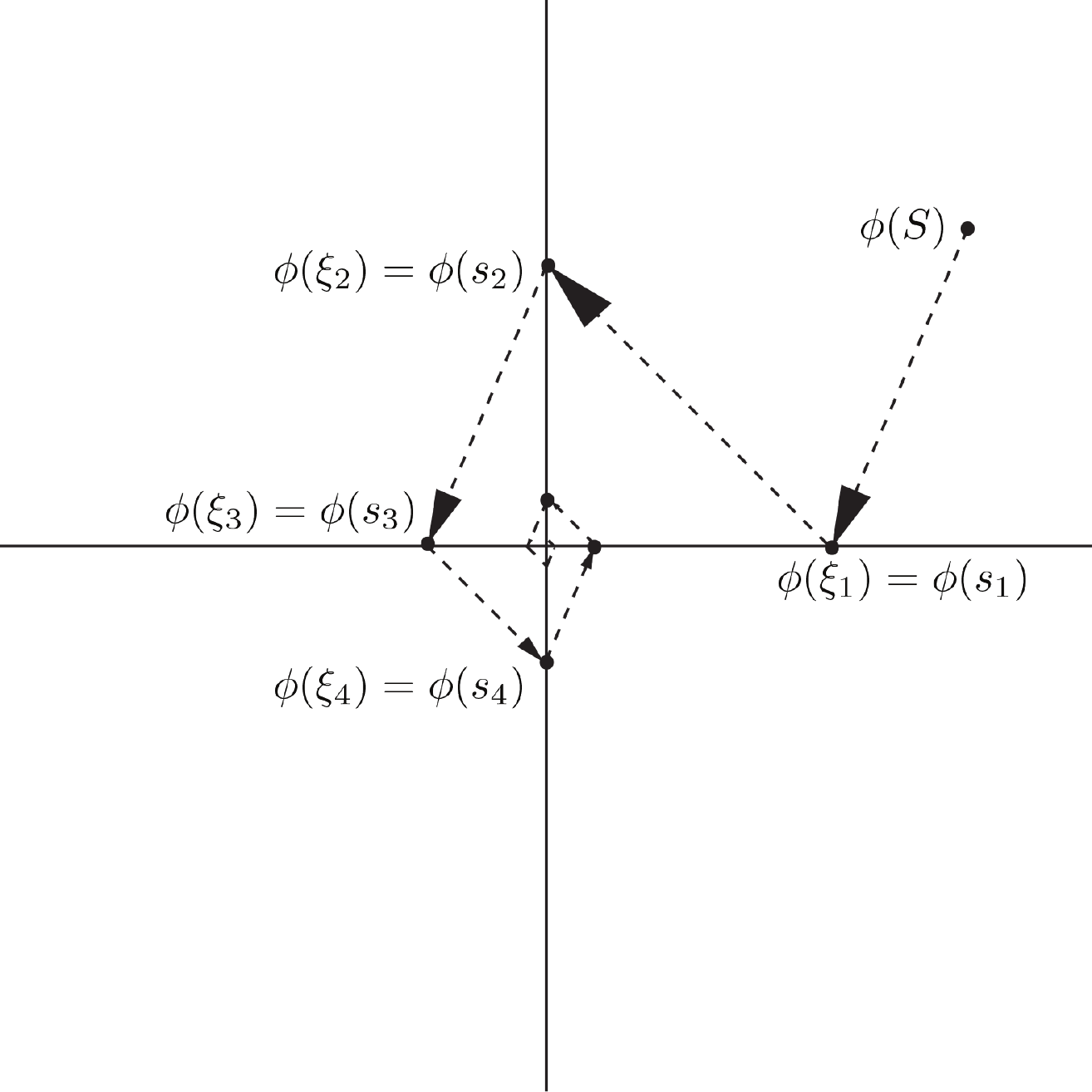}
		\caption{A path $\phi$.}
		\label{fig:dp}
	\end{subfigure}
	\caption{(A). A path $\z$ that satisfies condition 4 of the boundary jitter property on the interval $[S,T]$ and $\z(T)=0$. The times depicted correspond to the nested sequence \eqref{eq:Sxi0} defined in Lemma \ref{lem:xij}. (B). A solution $\phi$ of the DP associated with $\z$ for $\psi$ that is constant on $[S,T]$. Here $\phi$ is piecewise constant with jumps at times $s_1<s_2<\dots$ and $\phi(T-)=\proj_{\z(T)}\phi(S)=0$.}
	\label{fig:test}
\end{figure}

\begin{proof}
By Lemma \ref{lem:xij}, there is a nested sequence $S=\xi_0<s_1\leq\xi_1<\cdots<s_j\leq\xi_j<\cdots<T$ such that $\xi_j\to T$ as $j\to\infty$ and for each $j\in\N$, $\z(\xi_j)\in\partial G$ and \eqref{eq:xijsj} holds. We first prove that for each $j\in\N$,
	\be\label{eq:tjproj}\phi(\xi_j)=\proj_{\z(\xi_j)}\phi(\xi_{j-1}).\ee
See Figure \ref{fig:dp} for an illustration of $\phi$ when $\z$ satisfies condition 4 of the boundary jitter property. Fix $j\in\N$. By condition 1 of the DP and the fact that $\psi$ is constant on $[S,T]$, we have
	\be\label{eq:phisj1xij}\phi(t)=\phi(\xi_{j-1})+\eta(t)-\eta(\xi_{j-1}),\qquad t\in[\xi_{j-1},s_j),\ee
and
	\be\label{eq:phixijsj}\phi(t)=\phi(s_j-)+\eta(t)-\eta(s_j-),\qquad t\in[s_j,\xi_j].\ee
We first show that $(\phi,\eta)$ is constant on $[\xi_{j-1},s_j)$. By \eqref{eq:xijsj} and condition 2 of the DP, $\phi(\xi_{j-1})\in H_{\z(\xi_{j-1})}\subseteq H_{\z(t)}$ for all $t\in[\xi_{j-1},s_j)$. Thus, conditions 1--3 of the DP are satisfied with $\phi(t)=\phi(\xi_{j-1})$ and $\eta(t)=\eta(\xi_{j-1})$ for $t\in[\xi_{j-1},s_j)$. By uniqueness of solutions to the DP, $(\phi,\eta)$ must be constant on $[\xi_{j-1},s_j)$. Next, by \eqref{eq:etatetas}, \eqref{eq:etatetatminus}, the fact that $\eta$ is constant on $[\xi_{j-1},s_j)$ and \eqref{eq:xijsj}, we have
\begin{align}\label{eq:etaxijetaxij1a}
	\eta(\xi_j)-\eta(\xi_{j-1})&=\eta(\xi_j)-\eta(s_j-)\\ \notag
	&\in\spaan\lsb\cup_{u\in[s_j,\xi_j]}d(\z(u))\rsb\subseteq\spaan[d(\z(\xi_j))].
\end{align}
Combined with the fact that $\psi$ is constant on $[\xi_{j-1},\xi_j]$, this implies $\phi(\xi_j)-\phi(\xi_{j-1})\in\spaan[d(\z(\xi_j))]$. Moreover, by condition 2 of the DP, $\phi(\xi_j)\in H_{\z(\xi_j)}$. Relation \eqref{eq:tjproj} then follows from the characterization \eqref{eq:Hxdx} of $\proj_{\z(\xi_j)}$ established in Lemma \ref{lem:projx}.

Iterating the recursion relation \eqref{eq:tjproj} yields
	\be\label{eq:phixij}\phi(\xi_j)=\lsb\proj_{\z(\xi_j)}\cdots\proj_{\z(\xi_1)}\rsb\phi(\xi_0),\qquad j\in\N.\ee
By condition 4 of the boundary jitter property and the fact that $\allN(\z(t))\subsetneq\allN(\z(T))$ for all $t\in[S,T)$, we have $\cup_{t\in[\xi_j,T)}\allN(\z(t))=\allN(\z(T))$ for all $j\in\N$. Then by \eqref{eq:xijsj} and the fact that $\xi_j\to T$ as $j\to\infty$, we have
	$$\cup_{j\geq\ell}\allN(\z(\xi_j))=\cup_{t\in[\xi_\ell,T)}\allN(\z(t))=\allN(\z(T)),\qquad\ell\in\N.$$
Since $T<\tau$, Lemma \ref{lem:xij} implies that $\z(\xi_j)\in\partial G\setminus\W$ for all $j\in\N$. Therefore, Lemma \ref{lem:accumulate} with $x=\z(T)$ and $x_k=\Z(\xi_k)$ for $k\in\N$, and \eqref{eq:phixij} imply that
	\be\label{eq:limphixi0}\lim_{j\to\infty}\phi(\xi_j)=\proj_{\z(T)}\phi(\xi_0).\ee
To see that $\phi(T-)=\proj_{\z(T)}\phi(\xi_0)$, it suffices to show that 
	$$\lim_{j\to\infty}\sup_{u\in[s_j,\xi_j]}|\phi(u)-\phi(\xi_j)|=\lim_{j\to\infty}\sup_{u\in[s_j,s_{j+1})}|\phi(u)-\phi(\xi_j)|=0,$$
where the first equality follows because $\phi$ is constant on $[\xi_j,s_{j+1})$. 

Fix $j\in\N$ and define $\z^{s_j},\psi^{s_j},\phi^{s_j},\eta^{s_j}$ as in \eqref{eq:zS} and \eqref{eq:psiS}--\eqref{eq:etaS}, but with $s_j$ in place of $S$, so by Lemma \ref{lem:DManticipate}, $(\phi^{s_j},\eta^{s_j})$ solves the DP associated with $\z^{s_j}$ for $\psi^{s_j}$. By condition 1 of the DP, \eqref{eq:xijsj}, \eqref{eq:Hx} and \eqref{eq:zS}, $\phi(\xi_j)\in H_{\z(\xi_j)}\subseteq H_{\z(s_j+u)}=H_{\z^{s_j}(u)}$ for all $u\in[0,\xi_j-s_j]$. It is then readily verified that $(\tilde{\phi},0)$, where $\tilde{\phi}\equiv\phi(\xi_j)$, solves the DP associated with $\z^{s_j}$ for $\tilde{\psi}\equiv\phi(\xi_j)$ on $[0,\xi_j-{s_j}]$. By \eqref{eq:phiS}, the Lipschitz continuity of the DM (Theorem \ref{thm:dmlip}), \eqref{eq:psiS} and the fact that $\psi$ is constant on $[S,T]$, we have
\begin{align}\label{eq:phiuphixij}
	\sup_{u\in[s_j,\xi_j]}|\phi(u)-\phi(\xi_j)|&=\norm{\phi^{s_j}-\tilde{\phi}}_{\xi_j-s_j}\\ \notag
	&\leq\lip_\dm\norm{\psi^{s_j}-\tilde{\psi}}_{\xi_j-{s_j}}=\lip_\dm|\phi(s_j)-\phi(\xi_j)|.
\end{align}
By condition 2 of the DP, $\phi(s_j)\in H_{\z(s_j)}$. By conditions 1 and 3 of the DP, the facts that $\phi$ is constant on $[\xi_{j-1},s_j)$ and $\psi$ is constant on $[S,T]$, \eqref{eq:etatetatminus} and \eqref{eq:xijsj}, we have 
	$$\phi(s_j)-\phi(\xi_{j-1})=\phi(s_j)-\phi(s_j-)=\eta(s_j)-\eta(s_j-)\in\spaan[d(\z(s_j))].$$
It then follows from the characterization \eqref{eq:Hxdx} of $\proj_{\z(s_j)}$ established in Lemma \ref{lem:projx} that $\phi(s_j)=\proj_{\z(s_j)}\phi(\xi_{j-1})$. By condition 2 of the DP, \eqref{eq:Hx} and \eqref{eq:xijsj}, $\phi(\xi_j)\in H_{\z(\xi_j)}\subseteq H_{\z(s_j)}$. Again invoking the characterization \eqref{eq:Hxdx} of $\proj_{\z(s_j)}$ established in Lemma \ref{lem:projx}, we have $\phi(\xi_j)=\proj_{\z(s_j)}\phi(\xi_j)$. Combining these relations with \eqref{eq:phiuphixij} yields
\begin{align*}
	\sup_{u\in[s_j,\xi_j]}|\phi(u)-\phi(\xi_j)|\leq\lip_\dm|\proj_{\z(s_j)}[\phi(\xi_{j-1})-\phi(\xi_j)]|.
\end{align*}
Taking limits as $j\to\infty$, we see that the right-hand side converges to zero because of \eqref{eq:limphixi0} and the fact that for each $x\in\partial G\setminus\W$, $\proj_x$ is a linear operator. This completes the proof.
\end{proof}

\section{Directional derivatives: Proof of the main result}\label{sec:tau}

In this section we prove Theorem \ref{thm:main}, which is our main result. Fix an ESP $\{(d_i,n_i,c_i),i\in\allN\}$ satisfying Assumption \ref{ass:setB} and Assumption \ref{ass:projection}.

\subsection{Directional derivatives along a dense subset of paths}\label{sec:psiconst}

Given a solution $(\z,\y)$ of the ESP for $\x\in\cts_G$, define $\tau$ as in \eqref{eq:tau} and a subset of functions in $\cts$ that are constant in neighborhoods of times $t\in[0,\infty)$ that $\z(t)$ lies in $\U$, the nonsmooth part of the boundary. Specifically, for $\delta>0$, define
\begin{align}\label{eq:CZdelta}
	\cts^{\delta,\z}\doteq\lcb\psi\in\cts:\begin{array}{c}\forall\;t\in[0,\infty),\z(t)\in\U\Rightarrow \\ \psi\text{ is constant on }[(t-\delta)\vee0,t+\delta]\end{array}\rcb.
\end{align}
Set 
	$$\cts^\z\doteq\bigcup_{\delta>0}\cts^{\delta,\z}.$$ 
In the next lemma we provide sufficient conditions for $\cts^\z$ to be dense in $\cts$. The proof of Lemma \ref{lem:CZdense} is given in Appendix \ref{apdx:CZdense}.

\begin{lem}\label{lem:CZdense}
Suppose $\z$ satisfies condition 2 of the boundary jitter property (Definition \ref{def:jitter}). Then $\cts^\z$ is dense in $\cts$.
\end{lem}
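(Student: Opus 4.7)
The plan is to construct, for each $\psi \in \cts$, a sequence $\psi_n \in \cts^\z$ converging to $\psi$ uniformly on every compact interval, which gives density. Set $T_{\U} := \{t \geq 0 : \z(t) \in \U\} = \z^{-1}(\U)$. Since $\z$ is continuous and $\U$ is closed in $G$ (by the upper semicontinuity of $\allN(\cdot)$ in Lemma \ref{lem:allNusc}), $T_{\U}$ is closed in $[0,\infty)$, and condition 2 of the boundary jitter property says $T_{\U}$ has Lebesgue measure zero. For $\delta > 0$, put
\[
	A_\delta \doteq \bigcup_{t \in T_{\U}} [(t-\delta)\vee 0,\,t+\delta] = \{s \geq 0 : \mathrm{dist}(s,T_{\U}) \leq \delta\},
\]
which is closed. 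Any $\psi' \in \cts^{\delta,\z}$ must be constant on each connected component of $A_\delta$. Each component not containing $0$ has length at least $2\delta$, so in every compact interval there are only finitely many components.

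The geometric core of the argument is the following claim: for every $T, L > 0$ there exists $\delta_0 > 0$ such that for all $\delta \in (0,\delta_0)$, every connected component of $A_\delta \cap [0,T]$ has length strictly less than $L$. I would prove this by contradiction: if it failed, extract $\delta_n \downarrow 0$ and components $[a_n,b_n] \subseteq [0,T]$ of $A_{\delta_n} \cap [0,T]$ with $b_n - a_n \geq L$; pass to convergent subsequences $a_n \to a$, $b_n \to b$ with $b-a \geq L$. Since every point of $[a_n,b_n]$ lies within $\delta_n$ of $T_{\U}$, letting $n \to \infty$ and using that $T_{\U}$ is closed forces $[a,b] \subseteq T_{\U}$, contradicting $|T_{\U}| = 0$. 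This step is where condition 2 enters in an essential way, and it is the main obstacle; the rest is bookkeeping.

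Given the claim, construct $\psi_n$ as follows. By uniform continuity of $\psi$ on $[0,n+1]$, choose $L_n \in (0,1)$ with $|\psi(s) - \psi(t)| < \frac{1}{2n}$ whenever $|s-t| \leq L_n$ and $s,t \in [0,n+1]$. Apply the claim (with $T = n+1$) to pick $\delta_n \in (0,L_n/2)$ so that every component of $A_{\delta_n} \cap [0,n+1]$ has length less than $L_n$. Enumerate the connected components of $A_{\delta_n}$ as $\{[a_k,b_k]\}_k$, indexed so that $b_k < a_{k+1}$, and define
\[
	\psi_n(t) \doteq
	\begin{cases}
		\psi(a_k) & \text{if } t \in [a_k,b_k], \\
		\psi(t) + \bigl[\psi(a_k) - \psi(b_k)\bigr]\, \dfrac{a_{k+1} - t}{a_{k+1} - b_k} & \text{if } t \in (b_k,a_{k+1}),
	\end{cases}
\]
with constant extensions before the first component and beyond the last. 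A direct check shows $\psi_n$ is continuous and constant on every component of $A_{\delta_n}$, hence $\psi_n \in \cts^{\delta_n,\z} \subseteq \cts^\z$. Moreover, on each component of $A_{\delta_n}$ meeting $[0,n]$ one has $|\psi_n(t) - \psi(t)| = |\psi(a_k) - \psi(t)| < \frac{1}{2n}$ because $|a_k - t| \leq b_k - a_k < L_n$; on each gap, $|\psi_n(t) - \psi(t)| \leq |\psi(a_k) - \psi(b_k)| < \frac{1}{2n}$ by the same estimate. Thus $\|\psi_n - \psi\|_n \leq \frac{1}{2n} \to 0$, completing the proof.
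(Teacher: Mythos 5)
Your argument is correct in its core and takes a genuinely different route from the paper's. The paper works on a fixed compact $[0,T]$: it writes the relatively open set $\{s\in(0,T):\z(s)\in G\setminus\U\}$ as a countable union of disjoint open intervals, uses condition 2 to select finitely many of them whose total length exceeds $T-\gamma/4$, and concludes that the ``bad'' set $\z^{-1}(\U)\cap[0,T]$ is covered by finitely many intervals each of length at most $\gamma/4$, on ($\delta$-neighborhoods of) which it then flattens $\psi$ by linear interpolation. You instead fatten the closed null set $T_\U=\z^{-1}(\U)$ directly to $A_\delta$ and prove, by a compactness/contradiction argument, that the connected components of $A_\delta\cap[0,T]$ become uniformly short as $\delta\downarrow0$; this is a clean equivalent of the paper's selection step, and your limit argument (interior points of $[a,b]$ eventually lie in $A_{\delta_n}$, hence in the closed set $T_\U$, so $[a,b]\subseteq T_\U$ would have positive measure) is sound. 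The interpolation on components and gaps, the continuity check, the verification that constancy on components of $A_{\delta_n}$ implies membership in $\cts^{\delta_n,\z}$, and the error estimates on components and gaps are all correct (including the implicit point that a component of $A_{\delta_n}$ meeting $[0,n]$ cannot reach $n+1$, since otherwise it would produce a component of $A_{\delta_n}\cap[0,n+1]$ of length at least $1>L_n$). Your approach buys a single sequence converging uniformly on all compacts, at the cost of the extra compactness lemma; the paper's is more hands-on but works one compact interval at a time.

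One step as literally written would fail: the ``constant extensions before the first component and beyond the last.'' If the first component $[a_1,b_1]$ has $a_1$ large, setting $\psi_n\equiv\psi(a_1)$ on $[0,a_1)$ destroys the bound $\norm{\psi_n-\psi}_n\leq\tfrac{1}{2n}$ there; likewise, if there are finitely many components and the last one ends at $b_K<n$, the constant extension $\psi_n\equiv\psi(a_K)$ on $(b_K,n]$ is not close to $\psi$. The repair is immediate and consistent with your gap formula: take $\psi_n=\psi$ on $[0,a_1)$ (this is already continuous at $a_1$), and on $(b_K,\infty)$ take the constant shift $\psi_n(t)=\psi(t)+\psi(a_K)-\psi(b_K)$, whose error is $|\psi(a_K)-\psi(b_K)|<\tfrac{1}{2n}$. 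With that adjustment the proof is complete.
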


We now introduce the following statement, which will be called upon repeatedly throughout this section for different values of $T$. Recall the definition of $G_x$, $x\in\S$, given in \eqref{eq:Gx}.

\begin{statement}\label{state:main}
For all $\psi\in\cts^{\delta,\z}$,
\begin{itemize}
	\item[1.] $\nabla_\psi\esm(\x)$ exists on $[0,T)$ and lies in $\dlr([0,T):\R^J)$.
	\item[2.] $\nabla_\psi\esm(\x)(0)=\nabla_{\psi(0)}\pi(\x(0))$ and if $t\in(0,T)$ is a discontinuity point of $\nabla_\psi\esm(\x)$, then $\z(t)\in\S$ and $\nabla_\psi\esm(\x)$ is left continuous at $t$ if and only if $\nabla_\psi\esm(\x)(t-)\in G_{\z(t)}$.
	\item[3.] There exists a unique solution $(\phi,\eta)$ to the DP associated with $\z$ for $\psi$ and $\phi(t)=\nabla_\psi\esm(\x)(t+)$ for $t\in[0,T)$.
\end{itemize}
\end{statement}

We have the following proposition.

\begin{prop}\label{prop:mainalldelta}
Given $\x\in\cts_G$, let $(\z,\y)$ denote the solution of the ESP for $\x$ and define $\tau$ as in \eqref{eq:tau}. Suppose $(\z,\y)$ satisfies the boundary jitter property on $[0,\tau)$. Then for each $\delta>0$, Statement \ref{state:main} holds with $T=\tau$.
\end{prop}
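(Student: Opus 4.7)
The plan is to extend Proposition \ref{prop:theta2} past the first entrance time $\theta_2$ into $\U$, using the derivative projection operators of Section \ref{sec:derivativeprojection} to characterize the (possible) discontinuities that $\nabla_\psi\esm(\x)$ incurs at times when $\z \in \U$. The restriction $\psi \in \cts^{\delta,\z}$ is essential: it ensures $\psi$ is constant in a $\delta$-window around each such time, so that Lemma \ref{lem:projphirhok} and its time-reversed analogue apply to determine left- and right-values of the candidate solution $\phi$ of the DP via iterated projections.

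I would proceed by a stitching argument. On any subinterval on which $\z$ stays in $G \setminus \U$, Proposition \ref{prop:theta2} combined with (shifted versions of) Lemma \ref{lem:stitchA} yields Statement \ref{state:A} for arbitrary $\psi \in \cts$, which is strictly stronger than Statement \ref{state:main}. For a time $t \in (0,\tau)$ with $\z(t) \in \U$, Lemma \ref{lem:xij} produces a nested increasing sequence $\xi_j \uparrow t$ of boundary-approach times along which $\allN(\z(\xi_j))$ eventually exhausts $\allN(\z(t))$; iterating Lemma \ref{lem:projphirhok} then gives $\phi(\xi_j) = \proj_{\z(\xi_j)}\cdots\proj_{\z(\xi_1)}\phi(\xi_0)$, where $\xi_0$ is a time at which $\z \in G^\circ$ just before the approach to $\U$ begins. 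The contraction estimates (Lemma \ref{lem:accumulate} and Corollary \ref{lem:accumulateuniformly}) then imply convergence to a well-defined limit $\phi(t-) = \proj_{\z(t)}\phi(\xi_0)$. The value $\phi(t)$ is treated symmetrically using Lemma \ref{lem:chij} in place of Lemma \ref{lem:xij}, together with the same contraction lemmas. One then verifies directly that $(\phi, \phi - \psi)$ so defined satisfies conditions 1--3 of the DP, with uniqueness coming from Theorem \ref{thm:dmlip}; the closure property Lemma \ref{lem:dmclosure} may be used to piece together local solutions.

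The identification $\phi(t) = \nabla_\psi\esm(\x)(t+)$ is obtained by applying the same analysis to the perturbed paths $\z_\ve \doteq \esm(\x + \ve\psi)$ and passing to the limit: the Lipschitz continuity of the ESM (Theorem \ref{thm:esmlip}) together with the upper semicontinuity of $\allN(\cdot)$ (Lemma \ref{lem:allNusc}) ensures that for $\ve > 0$ sufficiently small the face-visit structure of $\z_\ve$ on each smooth subinterval matches that of $\z$, and across a $\U$-visit the constancy of $\psi$ on $[(t-\delta)\vee 0, t + \delta]$ lets one apply Lemma \ref{lem:projphirhok} uniformly in $\ve$. Proposition \ref{prop:psive} then transfers the $\dr$-limit to the pointwise derivative, and Statement \ref{state:A} on smooth intervals gives both the $\dlr([0,\tau):\R^J)$-regularity in part~1 and the one-sided continuity characterization in part~2 of Statement \ref{state:main}.

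The main obstacle is the topological complexity of $\{t \in [0,\tau) : \z(t) \in \U\}$: it may be uncountable or have accumulation points, so a naive induction on $\U$-visit times need not exhaust $[0,\tau)$. The boundary jitter property (in particular condition 4) is tailored precisely to ensure that near any accumulation of $\U$-visits, $\z$ oscillates through the relative interiors of the incident faces enough times that the iterated projection compositions converge. Combined with the fact that $\allN(\cdot)$ takes only finitely many distinct values, which yields the uniform contraction constant $\delta \in [0,1)$ of Lemma \ref{lem:projxKastzero}, this gives the quantitative control needed to patch the local descriptions of $\phi$ into a single function in $\dlr([0,\tau):\R^J)$ and to transfer the characterization to the perturbed paths uniformly in $\ve$.
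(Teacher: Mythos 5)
Your high-level picture is right (the role of $\cts^{\delta,\z}$, Lemma \ref{lem:projphirhok}, the contraction results of Section \ref{sec:contraction}, and the jitter property), but the plan has two genuine gaps, one of which sits at the technical heart of the proof. Lemma \ref{lem:projphirhok} is a statement about solutions of the \emph{DP}; it tells you what the candidate $\phi$ must look like, but it says nothing about the difference quotients $\nabla_\psi^\ve\esm(\x)$. Your proposed bridge --- that for small $\ve$ ``the face-visit structure of $\z_\ve$ matches that of $\z$,'' so that Lemma \ref{lem:projphirhok} can be applied ``uniformly in $\ve$'' --- is false: Lipschitz continuity of the ESM and upper semicontinuity of $\allN(\cdot)$ give only the one-sided inclusion $\allN(\z_\ve(u))\subseteq\allN(\z(t_k))$ on an interval where $\allN(\z(\cdot))\subseteq\allN(\z(t_k))$; the perturbed path may miss faces that $\z$ hits or hit them at different times, so no iterated-projection structure is inherited by $\z_\ve$. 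The inclusion alone yields, via condition 3 of the ESP and the constancy of $\psi$, that any subsequential limit $v$ of $\nabla_\psi^{\ve_\ell}\esm(\x)(t)$ satisfies $v-\phi(S)\in\spaan[d(\z(t_k))]$; but the complementary and harder fact $v\in H_{\z(t_k)}$ requires a separate mechanism, namely the Lipschitz bound of Lemma \ref{lem:projlip} for the orthogonally projected ESP: with $I=\allN(\z(t_k))$ one gets $|\Pi_I v|\le\lip_I|\Pi_I\nabla_\psi\esm(\x)(s)|$ for $s$ just below $t_k$, and sending $s\uparrow t_k$ kills the right side because $\phi(t_k-)=\proj_{\z(t_k)}\phi(S)\in H_{\z(t_k)}$. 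Nothing in your proposal supplies this step, and the case of a right limit at a time where $\z$ enters $\U$ (handled in the paper by a case split and the uniform contraction of Corollary \ref{lem:accumulateuniformly} over a compact set of initial values) is dismissed as ``symmetric'' when it is in fact considerably more delicate and uses condition 3 of the jitter property in an essential way.

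The second gap is organizational but not cosmetic: you correctly observe that the set $\{t:\z(t)\in\U\}$ may be topologically complicated and that a naive enumeration of $\U$-visits cannot exhaust $[0,\tau)$, but you then assert without a scheme that the contraction estimates ``patch the local descriptions together.'' The paper resolves this with a double induction: an outer induction on $n$ over the stopping times $\theta_n$ (first time $n$ or more faces meet), and, within $[\theta_n,\theta_{n+1})$, an inner induction over the sequence $t_k^{(n)}$ of $n$-fold face visits separated by the escape times $\rho_k^{(n)}$, where the gaps $(\rho_k^{(n)},t_{k+1}^{(n)})$ are handled by time-shifting and invoking the level-$n$ statement for the shifted path. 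Without some such well-founded exhaustion, the pointwise constructions at individual $\U$-times do not assemble into a proof that Statement \ref{state:main} holds on all of $[0,\tau)$.
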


The proof of Proposition \ref{prop:mainalldelta} proceeds as follows. Given a solution $(\z,\y)$ of the ESP for $\x\in\cts_G$, define $\tau$ as in \eqref{eq:tau} and $\theta_2$ as in \eqref{eq:theta2}. If $N\geq 3$, then for each $n=3,\dots,N$, let $\theta_n\in[0,\tau]$ be the first time in the interval $[0,\tau)$ that $\z$ reaches the intersection of $n$ or more faces; that is,
	\be\label{eq:thetan}\theta_n\doteq\inf\{t\in[0,\tau):|\allN(\z(t))|\geq n\}\wedge\tau,\qquad n=2,\dots,N.\ee
Set $\theta_{N+1}\doteq\tau$. Observe that this definition of $\theta_2$ is consistent with \eqref{eq:theta2}. Using a proof by induction, we prove the following statement for $n=2,\dots,N+1$.

\begin{statement}\label{state:thetan}
Given $\x\in\cts_G$, let $(\z,\y)$ denote the solution of the ESP for $\x$ and define $\tau$ as in \eqref{eq:tau} and $\{\theta_n\}_{n=2,\dots,N+1}$, as in \eqref{eq:thetan}. Suppose $(\z,\y)$ satisfies the boundary jitter property (Definition \ref{def:jitter}) on $[0,\tau)$. Then for all $\psi\in\cts^{\delta,\z}$, Statement \ref{state:main} holds with $T=\theta_n$.
\end{statement}

The base case ($n=2$) follows from Proposition \ref{prop:theta2}. The following lemma states the induction step and is the main challenge in proving Proposition \ref{prop:mainalldelta}. The proof of Lemma \ref{lem:inductionstep} is given in Section \ref{sec:induction}.

\begin{lem}\label{lem:inductionstep}
Fix $\delta>0$. Let $2\leq n\leq N$. Assume that Statement \ref{state:thetan} holds. Then Statement \ref{state:thetan} holds with $\theta_{n+1}$ in place of $\theta_n$.
\end{lem}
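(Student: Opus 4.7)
The plan is to extend the DP solution and directional derivative from $[0,\theta_n)$ to $[0,\theta_{n+1})$. If $\theta_n = \theta_{n+1}$ there is nothing to prove, so assume $\theta_n < \theta_{n+1}$ and fix $\psi \in \cts^{\delta,\z}$. Since $|\allN(\z(\theta_n))| = n \geq 2$ forces $\z(\theta_n) \in \U$, the perturbation $\psi$ is constant on $I \doteq [(\theta_n - \delta)\vee 0,\theta_n + \delta]$. By the inductive hypothesis, there is a unique solution $(\phi,\eta)$ of the DP for $\psi$ on $[0,\theta_n)$ with $\phi(t) = \nabla_\psi \esm(\x)(t+)$ for $t \in [0,\theta_n)$.

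First I would identify the left limit $\phi(\theta_n-)$. Using condition 4 of the jitter property at $\theta_n$ together with condition 2 (which rules out $\z$ lingering in $\U$), one can select $S \in (\theta_n - \delta,\theta_n)$ with $\z(S) \in G^\circ$; by the definition of $\theta_n$ and the upper semicontinuity of $\allN(\cdot)$ (Lemma \ref{lem:allNusc}), one has $\allN(\z(t)) \subsetneq \allN(\z(\theta_n))$ for all $t \in [S,\theta_n)$. Since $\psi$ is constant on $[S,\theta_n] \subseteq I$, Lemma \ref{lem:projphirhok} yields $\phi(\theta_n-) = \proj_{\z(\theta_n)}\phi(S)$. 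Because $\z(\theta_n) \notin \S$, part 2 of Statement \ref{state:main} forces $\nabla_\psi\esm(\x)$ to be continuous at $\theta_n$, so the target is to show the right limit exists and coincides with $\phi(\theta_n-)$.

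For the extension past $\theta_n$, I would time-shift and apply Lemma \ref{lem:chij} to $\z^{\theta_n}$, whose starting point lies in $\U$ and which, by the definition of $\theta_{n+1}$ and Lemma \ref{lem:allNusc}, satisfies $\allN(\z^{\theta_n}(t)) \subsetneq \allN(\z^{\theta_n}(0))$ for all small $t > 0$. This produces a nested decreasing sequence of times $\chi_j \downarrow \theta_n$ in $(\theta_n, \theta_{n+1})$ together with auxiliary times $u_j$ governing the structure of $\z$ on each piece $[\chi_j, u_j]$ and $(u_j, \chi_{j-1}]$. On each such piece, the constrained path only visits faces indexed by strict subsets of $\allN(\z(\theta_n))$, so iteratively applying the time-shift property (Lemma \ref{lem:esmshift}), Proposition \ref{prop:theta2}, Lemma \ref{lem:projphirhok}, and the patching Lemma \ref{lem:stitchA} establishes Statement \ref{state:main} on $[0,\chi_j]$ for every $j$ and produces the projection recursion $\phi(\chi_j) = \proj_{\z(\chi_{j+1})}\phi(\chi_{j+1})$ modulo contributions in $\spaan[d(\z(\chi_{j+1}))]$ that vanish because $\psi$ is constant on $I$.

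Passing to the limit $j \to \infty$ via Corollary \ref{lem:accumulateuniformly}, applied with accumulation point $\z(\theta_n)$ and the sequence $\{\z(\chi_j)\}_{j \in \N}$ satisfying $\bigcup_{j \geq \ell}\allN(\z(\chi_j)) = \allN(\z(\theta_n))$ for every $\ell$, yields $\lim_{j \to \infty}\phi(\chi_j) = \proj_{\z(\theta_n)}\phi(S) = \phi(\theta_n-)$, thereby establishing continuity of $\nabla_\psi\esm(\x)$ at $\theta_n$ and pinning down $\phi(\theta_n+)$. A final time shift at some $T' \in (\theta_n,\theta_{n+1})$ with $\z(T') \in G^\circ$ (which exists by condition 2 of the jitter property), together with Proposition \ref{prop:theta2} and Lemma \ref{lem:stitchA}, then extends Statement \ref{state:main} to all of $[0,\theta_{n+1})$. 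The main obstacle is the passage to the limit at $\theta_n$: controlling the infinitely many excursions to lower-dimensional faces as $t \downarrow \theta_n$ and showing that the iterated compositions of projection operators converge to the expected value is precisely what the derivative projection operator machinery of Section \ref{sec:derivativeprojection} was developed for, and is ultimately why the boundary jitter property is needed.
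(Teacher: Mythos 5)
Your proposal has the right ingredients (the left limit via Lemma \ref{lem:projphirhok}, the projection-operator machinery, time-shifting and patching), but there are two genuine gaps. First, the passage from $\theta_n$ to $\theta_{n+1}$ cannot be accomplished by ``a final time shift at some $T'\in(\theta_n,\theta_{n+1})$ together with Proposition \ref{prop:theta2}.'' Proposition \ref{prop:theta2} only carries you to the \emph{first} time the shifted path hits $\U$, and on $(\theta_n,\theta_{n+1})$ the path $\z$ may return to intersections of $2,\dots,n$ faces infinitely often; it merely avoids intersections of $n+1$ faces. What is needed after the time shift is the full induction hypothesis, Statement \ref{state:thetan} for the value $n$, applied to the shifted path (this is what handles all intermediate visits to lower-order corners), and then a further induction over the possibly infinite sequence of times $t_k^{(n)}$ at which $|\allN(\z(t))|=n$ again, each of which requires re-running the projection argument. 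Your proposal never invokes the induction hypothesis on $(\theta_n,\theta_{n+1})$ at all, which is the whole point of the lemma being an induction step.

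Second, your argument for crossing $\theta_n$ itself is circular when $\theta_n>0$: you propose to establish Statement \ref{state:main} on $[0,\chi_j]$ for a sequence $\chi_j\downarrow\theta_n$ from the right, but each such interval already contains $\theta_n$, and none of Proposition \ref{prop:theta2}, Lemma \ref{lem:projphirhok} or Lemma \ref{lem:stitchA} gets you across a time where $n$ faces meet --- that is precisely what is to be proved. (Relatedly, part 2 of Statement \ref{state:main} with $T=\theta_n$ only constrains discontinuities in $(0,\theta_n)$ and says nothing about continuity at $\theta_n$.) The correct route for $\theta_n>0$ is a direct subsequential-limit argument: take any limit $v$ of $\nabla_\psi^{\ve_\ell}\esm(\x)(t)$ for $t\in[\theta_n,\rho_1^{(n)})$, obtain $v-\phi(S)\in\spaan[d(\z(\theta_n))]$ from condition 3 of the ESP (using that $\psi$ is constant near $\theta_n$), and obtain $v\in H_{\z(\theta_n)}$ from a Lipschitz bound for the \emph{orthogonal} projections $\Pi_I$ of ESP solutions (Lemma \ref{lem:projlip}) combined with the left-limit identity $\phi(\theta_n-)=\proj_{\z(\theta_n)}\phi(S)$; uniqueness of the oblique projection then gives $v=\proj_{\z(\theta_n)}\phi(S)$. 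Lemma \ref{lem:projlip} is the key ingredient you are missing. The right-sided construction via Lemma \ref{lem:chij} and the adjoint contraction estimates is genuinely needed, but only in the separate case $\theta_n=0$, where no left limit is available and where one must also split into the subcases according to whether $\allN(\z(t))=\allN(\z(0))$ recurs; your proposal does not treat that case distinctly.
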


We can now prove Proposition \ref{prop:mainalldelta}.

\begin{proof}[Proof of Proposition \ref{prop:mainalldelta}]
Let $\delta>0$ be arbitrary. By Proposition \ref{prop:theta2}, Lemma \ref{lem:inductionstep} and the principle of mathematical induction, Statement \ref{state:thetan} holds for $n=2,\dots,N+1$. In particular, Since $\theta_{N+1}\doteq\tau$, the proof is complete.
\end{proof}

As we now show, the proof of Theorem \ref{thm:main} is then a consequence of Proposition \ref{prop:mainalldelta}, Lemma \ref{lem:CZdense} and the closure property for the DM (Lemma \ref{lem:dmclosure}).

\begin{proof}[Proof of Theorem \ref{thm:main}]
Let $\psi\in\cts$ be arbitrary. By Lemma \ref{lem:CZdense} and the definition of $\cts^\z$, there is a Cauchy sequence $\{\psi_k\}_{k\in\N}$ in $\cts^\z$ such that $\psi_k\to\psi$ in $\cts$ as $k\to\infty$ and for each $k\in\N$, there exists $\delta_k>0$ such that $\psi_k\in\cts^{\delta_k,\z}$. Thus, by Proposition \ref{prop:mainalldelta}, for each $k\in\N$, 
\begin{itemize}
	\item[(i)] $\nabla_{\psi_k}\esm(\x)$ exists on $[0,\tau)$ and lies in $\dlr([0,\tau):\R^J)$; 
	\item[(ii)] $\nabla_{\psi_k}\esm(\x)(0)=\nabla_{\psi_k(0)}\pi(\x(0))$ and if $\nabla_{\psi_k}\esm(\x)$ is discontinuous at $t\in(0,\tau)$, then $\z(t)\in\S$ and $\nabla_{\psi_k}\esm(\x)$ is left continuous at $t\in(0,\tau)$ if and only if $\nabla_{\psi_k}\esm(\x)(t-)\in G_{\z(t)}$; 
	\item[(iii)] there is a unique solution $(\phi_k,\eta_k)$ to the DP associated with $\z$ for $\psi_k$ on $[0,\tau)$ and $\phi_k(t)=\nabla_{\psi_k}\esm(\x)(t+)$ for all $t\in[0,\tau)$.
\end{itemize}
Let $t\in(0,\tau)$. By (i) and (iii), we have $\nabla_{\psi_k}\esm(\x)(t-)=\phi_k(t-)$, $\nabla_{\psi_k}\esm(\x)(t)\in\{\phi_k(t-),\phi_k(t)\}$, and $\nabla_{\psi_k}\esm(\x)(t)=\phi_k(t)$ if and only if $\nabla_{\psi_k}\esm(\x)$ is right continuous at $t$. When combined with (ii) and the definition of the functional $\Theta_\z$ given in \eqref{eq:ThetaZ}, this shows that $\nabla_{\psi_k}\esm(\x)(t)=\Theta_\z(\phi_k)(t)$.

Since $\{\psi_k\}_{k\in\N}$ is Cauchy in $\cts$ and the DM is Lipschitz continuous (Theorem \ref{thm:dmlip}), there exists $(\phi,\eta)\in\dr([0,\tau):\R^J)\times\dr([0,\tau):\R^J)$ such that $(\phi_k,\eta_k)\to(\phi,\eta)$ in $\dr([0,\tau):\R^J)\times\dr([0,\tau):\R^J)$ as $k\to\infty$. The closure property of the DM (Lemma \ref{lem:dmclosure}) then shows that $(\phi,\eta)$ is the unique solution of the DP associated with $\z$ for $\psi$, so part 2 of Theorem \ref{thm:main} holds.

Let $t\in(0,\tau)$. We show that for all $k\in\N$ sufficiently large,
	\be\label{eq:Thetaphikphi}|\Theta_\z(\phi_k)(t)-\Theta_\z(\phi)(t)|\leq\norm{\phi_k-\phi}_t.\ee
First, suppose $\z(t)\in G\setminus\S$. Then (ii) and (iii) above imply that for each $k\in\N$, $\phi_k$ is continuous at $t$, and thus, $\phi$ is also continuous at $t$. Definition \eqref{eq:ThetaZ} of $\Theta_\z$ shows that $\Theta_\z(\phi_k)(t)=\phi_k(t)$ for each $k\in\N$ and $\Theta_\Z(\phi)(t)=\phi(t)$, so \eqref{eq:Thetaphikphi} holds. Alternatively, suppose $\z(t)\in\S$. Let $i\in\allN$ be the unique index such that $\allN(\z(t))=\{i\}$. Then \eqref{eq:Gx} and \eqref{eq:Hx} show that
	$$\partial G_{\z(t)}=\{x\in\R^J:\ip{x,n_i}=0\}=H_{\z(t)}.$$ 
If $\phi(t-)\not\in\partial G_{\z(t)}$, then for all $k\in\N$ sufficiently large, either $\phi(t-),\phi_k(t-)\in G_{\z(t)}$ or $\phi(t-),\phi_k(t-)\not\in G_{\z(t)}$. In both cases, \eqref{eq:Thetaphikphi} follows from the definition \eqref{eq:ThetaZ} of $\Theta_\z$. On the other hand, if $\phi(t-)\in\partial G_{\z(t)}=H_{\z(t)}$, then by condition 2 of the DP (Definition \ref{def:dp}), \eqref{eq:etatetatminus}, condition 1 of the DP and the continuity of $\psi$, we have 
	$$\phi(t)\in H_{\z(t)}\qquad\text{and}\qquad\phi(t)-\phi(t-)\in\spaan[d(\z(t))].$$ 
Thus, Lemma \ref{lem:projx} implies that $\phi(t)=\proj_{\z(t)}\phi(t-)=\phi(t-)$. In particular, $\phi$ is continuous at $t$ so \eqref{eq:Thetaphikphi} follows from the definition \eqref{eq:ThetaZ} of $\Theta_\z$. This exhausts all possible cases and so \eqref{eq:Thetaphikphi} must hold for all $t\in(0,\tau)$.

Let $t\in(0,\tau)$. By the triangle inequality, the fact that $\nabla_{\psi_k}\esm(\x)(t)=\Theta_\z(\phi_k)(t)$, the definition \eqref{eq:nablapsive} of $\nabla_{\psi_k}^\ve\esm(\x)$, \eqref{eq:Thetaphikphi} and the Lipschitz continuity of both the ESM (Theorem \ref{thm:esmlip}) and the DM, we have, for all $k\in\N$ sufficiently large,
\begin{align*}
	|\nabla_\psi^\ve\esm(\x)(t)-\Theta_{\z}(\phi)(t)|&\leq|\nabla_\psi^\ve\esm(\x)(t)-\nabla_{\psi_k}^\ve\esm(\x)(t)|\\
	&\qquad+|\nabla_{\psi_k}^\ve\esm(\x)(t)-\nabla_{\psi_k}\esm(\x)(t)|\\
	&\qquad+|\Theta_{\z}(\phi_k)(t)-\Theta_{\z}(\phi)(t)|\\
	&\leq\ve^{-1}|\esm(\x+\ve\psi)(t)-\esm(\x+\ve\psi_k)(t)|\\
	&\qquad+|\nabla_{\psi_k}^\ve\esm(\x)(t)-\nabla_{\psi_k}\esm(\x)(t)|\\
	&\qquad+\norm{\phi_k-\phi}_t\\
	&\leq |\nabla_{\psi_k}^\ve\esm(\x)(t)-\nabla_{\psi_k}\esm(\x)(t)|\\
	&\qquad+(\lip_\sm+\lip_{\dm})\norm{\psi-\psi_k}_t.
\end{align*}
Sending $\ve\downarrow0$ first and then $k\to\infty$ in the above display yields $\lim_{\ve\downarrow0}\nabla_\psi^\ve\esm(\x)(t)=\Theta_{\z}(\phi)(t)$. By Theorem \ref{thm:speu} and Lemma \ref{lem:projxv}, $\nabla_\psi\esm(\x)(0)=\nabla_{\psi(0)}\pi(\x(0))$. Thus, parts 1 and 4 of Theorem \ref{thm:main} hold. By the definition \eqref{eq:ThetaZ} of $\Theta_{\z}(\phi)$, we see that $\phi(t)=\nabla_\psi\esm(\x)(t+)$ for $t\in[0,\tau)$. This establishes part 3 of Theorem \ref{thm:main}. The final statement of the theorem follows from Lemma \ref{lem:Wempty} and the definition of $\tau$ given in \eqref{eq:tau}.
\end{proof}

We are left to prove Lemma \ref{lem:inductionstep}. The proof is given in Section \ref{sec:induction}. In the next section we state some useful lemmas that will be used in Section \ref{sec:induction}.

\subsection{Some useful lemmas}\label{sec:useful}

In this section we state some lemmas that will be needed in subsequent proofs. The first lemma is a trivial consequence of the Lipschitz continuity of the ESM. Recall the definition of $\nabla_\psi^\ve\esm(\x)$ given in \eqref{eq:nablapsive}.

\begin{lem}\label{lem:nablaesmtcompact}
Given $\x,\psi\in\cts$, $t\in[0,\infty)$ and a sequence $\{\ve_\ell\}_{\ell\in\N}$ such that $\ve_\ell\downarrow0$ as $\ell\to\infty$, there exists a subsequence, also denoted $\{\ve_\ell\}_{\ell\in\N}$, such that $\lim_{\ell\to\infty}\nabla_\psi^{\ve_\ell}\esm(\x)(t)$ exists.
\end{lem}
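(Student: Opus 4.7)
The plan is a straightforward compactness argument exploiting the Lipschitz continuity of the ESM established in Theorem \ref{thm:esmlip}. First I would fix $\x,\psi\in\cts$, $t\in[0,\infty)$, and the sequence $\{\ve_\ell\}_{\ell\in\N}$ with $\ve_\ell\downarrow 0$. Using the definition \eqref{eq:nablapsive} of $\nabla_\psi^\ve\esm(\x)$ and applying Theorem \ref{thm:esmlip} with $\x_1\doteq\x+\ve\psi$ and $\x_2\doteq\x$, I would estimate
\begin{equation*}
	|\nabla_\psi^\ve\esm(\x)(t)|=\frac{|\esm(\x+\ve\psi)(t)-\esm(\x)(t)|}{\ve}\leq\frac{\lip_{\esm}\norm{\ve\psi}_t}{\ve}=\lip_{\esm}\norm{\psi}_t,
\end{equation*}
which is finite and independent of $\ve$.

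Consequently the sequence $\{\nabla_\psi^{\ve_\ell}\esm(\x)(t)\}_{\ell\in\N}$ is a bounded sequence in the finite-dimensional Euclidean space $\R^J$. The Bolzano--Weierstrass theorem then yields a convergent subsequence, which (after relabeling, as is permitted by the statement of the lemma) delivers the desired conclusion.

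I do not foresee any obstacle here: the argument is a one-line Lipschitz bound followed by the standard finite-dimensional compactness extraction. There is no need to invoke any structural property of the ESM beyond the Lipschitz estimate in Theorem \ref{thm:esmlip}, nor any property of $(\z,\y)$ such as the boundary jitter property, since the statement asserts only subsequential existence of the limit at a single time $t$ and places no constraints on $\x$ or $\psi$ beyond continuity.
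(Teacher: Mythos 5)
Your proof is correct and is exactly the argument the paper gives: the Lipschitz bound from Theorem \ref{thm:esmlip} applied to $\x+\ve\psi$ and $\x$ shows $|\nabla_\psi^{\ve}\esm(\x)(t)|\leq\lip_{\esm}\norm{\psi}_t$ uniformly in $\ve$, and Bolzano--Weierstrass in $\R^J$ finishes it. No gaps.
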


\begin{proof}
This is an immediate consequence of the compactness of $\{\nabla_\psi^{\ve_\ell}\esm(\x)(t)\}_{\ell\in\N}$ that follows from \eqref{eq:nablapsive} and the Lipschitz continuity of the ESM (Theorem \ref{thm:esmlip}).
\end{proof}

The following lemma will be useful for proving that Statement \ref{state:main} holds.

\begin{lem}\label{lem:stitchmain}
Fix $0\leq S<T<U<\infty$. Let $(\z,\y)$ be the solution of the ESP for $\x\in\cts_G$. Assume that Statement \ref{state:main} holds. Define $\x^S,\z^S$ and $\hat{\psi}^S$ as in \eqref{eq:xS}, \eqref{eq:zS} and \eqref{eq:psiSlimit}, respectively, and assume that Statement \ref{state:main} holds with $\x^S,\hat{\psi}^S,\z^S,U-S$ in place of $\x,\psi,\z,T$, respectively. Then Statement \ref{state:main} holds with $U$ in place of $T$.
\end{lem}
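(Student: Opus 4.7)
The plan is to mirror the argument used for Lemma \ref{lem:stitchA}, which handles the essentially identical stitching for Statement \ref{state:A}. Fix $\psi\in\cts^{\delta,\z}$. The first preliminary step is to verify that the time-shifted perturbation stays in the correct class: namely $\hat{\psi}^S\in\cts^{\delta,\z^S}$. This is straightforward because $\z^S(t)\in\U$ iff $\z(S+t)\in\U$, and constancy of $\hat{\psi}^S$ on $[(t-\delta)\vee 0,t+\delta]$ is equivalent to constancy of $\psi$ on $[S+(t-\delta)\vee 0,S+t+\delta]$, an interval contained in $[(S+t-\delta)\vee 0,S+t+\delta]$, on which $\psi$ is constant by hypothesis.

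Next I would establish part 1 on $[0,U)$. On $[0,T)$, existence of $\nabla_\psi\esm(\x)$ and its $\dlr$ regularity are immediate from the standing hypothesis that Statement \ref{state:main} holds. For $t\in[S,U)$, I would define $\x_\ve^S,\z_\ve^S,\hat{\psi}_\ve^S$ as in \eqref{eq:xveS}--\eqref{eq:psiveS} and invoke the time-shift property of the ESM (Lemma \ref{lem:esmshift}) together with \eqref{eq:xveSxS} to obtain
\[
\nabla_\psi^\ve\esm(\x)(t)=\nabla_{\hat{\psi}_\ve^S}^\ve\esm(\x^S)(t-S).
\]
Since $\nabla_\psi\esm(\x)(S)$ exists by the assumption on $[0,T)$ and $S<T$, we have $\hat{\psi}_\ve^S\to\hat{\psi}^S$ uniformly. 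Then Proposition \ref{prop:psive} applied to $\x^S$ and the perturbation family $\{\hat{\psi}_\ve^S\}$ (using the time-shifted Statement \ref{state:main}, which guarantees $\nabla_{\hat{\psi}^S}\esm(\x^S)$ exists on $[0,U-S)$) gives $\nabla_\psi\esm(\x)(t)=\nabla_{\hat{\psi}^S}\esm(\x^S)(t-S)$ for $t\in[S,U)$. Combining the two pieces yields existence on $[0,U)$ and membership in $\dlr([0,U):\R^J)$. Part 2 is then read off from the corresponding properties on each piece, the identification above, and the relation $\z^S(t-S)=\z(t)$, so that $G_{\z^S(t-S)}=G_{\z(t)}$ for the boundary-continuity dichotomy.

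For part 3, let $(\phi,\eta)$ denote the unique DP solution associated with $\z$ for $\psi$ on $[0,T)$, and $(\phi^S,\eta^S)$ the unique DP solution associated with $\z^S$ for $\hat{\psi}^S$ on $[0,U-S)$. I would define $\hat{\phi}(t)\doteq\nabla_\psi\esm(\x)(t+)$ and $\hat{\eta}\doteq\hat{\phi}-\psi$ on $[0,U)$, and show $(\hat{\phi},\hat{\eta})$ solves the DP on $[0,U)$ by checking the three conditions. Condition 1 is automatic; Condition 2 follows piecewise from $\hat{\phi}=\phi$ on $[0,T)$ and $\hat{\phi}(\cdot)=\phi^S(\cdot-S)$ on $[S,U)$, both of which take values in $H_{\z(t)}$. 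Condition 3 splits into three cases by the location of $s<t$ relative to $S$ and $T$: on $[0,T)$ and on $[S,U)$ it is inherited from $(\phi,\eta)$ and $(\phi^S,\eta^S)$, respectively, while for $0\le s<S<T\le t<U$ one adds the two inclusions across the midpoint $S$ and uses that $\spaan[\cup_{u\in(s,S]}d(\z(u))]+\spaan[\cup_{u\in(S,t]}d(\z(u))]=\spaan[\cup_{u\in(s,t]}d(\z(u))]$. Uniqueness on $[0,U)$ follows from Theorem \ref{thm:dmlip}.

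The proof is essentially mechanical and closely parallels Lemma \ref{lem:stitchA}; the only point that requires genuine care is the first paragraph above, namely confirming that the perturbation class $\cts^{\delta,\z}$ transfers to $\cts^{\delta,\z^S}$ under the shift so that the hypotheses can be applied with the same $\delta$. Once this is in hand, the stitching of the three DP conditions across the overlap region $[S,T)$ is routine, and I do not anticipate any serious obstacle.
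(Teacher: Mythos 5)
Your proposal is correct and matches the paper's approach: the paper itself simply declares the proof analogous to that of Lemma \ref{lem:stitchA}, noting that the only new point is the observation that $\psi\in\cts^{\delta,\z}$ implies $\hat{\psi}^S\in\cts^{\delta,\z^S}$, which you verify explicitly and correctly before carrying out the same stitching argument.
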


\begin{proof}
The statement and proof of Lemma \ref{lem:stitchmain} are analogous to the statement and proof of Lemma \ref{lem:stitchA}, so to avoid redundancy, we omit the proof. The main difference is the observation that if $\psi\in\cts^{\delta,\z}$, then $\hat{\psi}^S\in\cts^{\delta,\z^S}$.
\end{proof}

The next lemma states a Lipschitz continuity property for the orthogonal projections of solutions to the ESP. Given a subset $I\subseteq\allN$, we let $\Pi_I:\R^J\mapsto\spaan(\{n_i,i\in I\})$ denote the orthogonal projection with respect to $\ip{\cdot,\cdot}$. Observe that when $I=\allN(x)$ for some $x\in\partial G$, then $\Pi_I$ projects onto $H_{x}^\perp=\spaan(\{n_i,i\in\allN(x)\})$. For a function $f:[0,\infty)\mapsto\R^J$, define $\Pi_If:[0,\infty)\mapsto\spaan(\{n_i,i\in\allN\})$ by $(\Pi_If)(t)\doteq\Pi_I(f(t))$ for all $t\in[0,\infty)$. 

\begin{lem}\label{lem:projlip}
Given an ESP $\{(d_i,n_i,c_i),i\in\allN\}$ satisfying Assumption \ref{ass:setB} and $I\subseteq\allN$, there exists $\kappa_I<\infty$ such that if $(\z_1,\y_1)$ solves the ESP $\{(d_i,n_i,c_i),i\in\allN\}$ for $\x_1\in\cts$, $(\z_2,\y_2)$ solves the ESP $\{(d_i,n_i,c_i),i\in\allN\}$ for $\x_2\in\cts$, and $\allN(\z_1(t))\cup\allN(\z_2(t))\subseteq I$ for all $t\in[0,T)$, then for all $t\in[0,T)$,
\begin{align}\label{eq:PiZlip}
	\norm{\Pi_{I}\z_1-\Pi_{I}\z_2}_t&\leq \kappa_I\norm{\Pi_{I}\x_1-\Pi_{I}\x_2}_t.
\end{align}
\end{lem}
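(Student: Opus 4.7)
The plan is to reduce the estimate to a Lipschitz continuity bound for an ESP living in the subspace $V_I \doteq \spaan(\{n_i : i \in I\}) \subseteq \R^J$. Concretely, I would introduce the projected ESP with data $\{(\Pi_I d_i, n_i, c_i), i \in I\}$ in $V_I$, and first verify that $(\Pi_I \z_k, \Pi_I \y_k)$ solves this projected ESP for input $\Pi_I \x_k$, $k=1,2$. Linearity of $\Pi_I$ gives condition 1 of the ESP. For condition 2, since each $n_i$ with $i\in I$ lies in $V_I$, we have $\ip{\Pi_I\z_k(t),n_i}=\ip{\z_k(t),n_i}\geq c_i$, and the hypothesis $\allN(\z_k(t))\subseteq I$ ensures the set of active faces of $\Pi_I\z_k(t)$ in the projected ESP coincides with $\allN(\z_k(t))$. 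Condition 3 is then immediate from linearity of $\Pi_I$ together with the identification of the reflection cone at $\Pi_I\z_k(u)$ in the projected ESP as $\Pi_I d(\z_k(u))$.

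The crux of the argument is to verify that the projected ESP satisfies the analogue of Assumption \ref{ass:setB} in $V_I$, with $\tilde{B}\doteq\Pi_I B$ and $\tilde{\delta}\doteq\delta$. The set $\tilde{B}$ is plainly compact, convex, symmetric, and contains $0$ in its relative interior (because $B$ contains a Euclidean ball about $0$). Given $z$ on the relative boundary of $\tilde{B}$ in $V_I$, the key observation is that the affine slice $\Pi_I^{-1}(z)\cap B$ is a nonempty compact convex set of positive codimension in $\R^J$ (when $V_I\subsetneq\R^J$), so any of its extreme points must lie in $\partial B$, furnishing a lift $z^\ast\in\partial B$ with $\Pi_I z^\ast = z$. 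For any inward normal $\nu\in\nu_{\tilde{B}}(z)\subseteq V_I$ and any $x\in B$, the computation $\ip{\nu,x-z^\ast}=\ip{\nu,\Pi_I x-z}\geq 0$ (using $\nu\in V_I$ and $\Pi_I z^\ast=z$, together with $\Pi_I x\in\tilde{B}$) shows that $\nu$, viewed in $\R^J$, also lies in $\nu_B(z^\ast)$. Combining with the identity $\ip{z,n_i}=\ip{z^\ast,n_i}$, if $|\ip{z,n_i}|<\delta$ then Assumption \ref{ass:setB} for the original ESP forces $\ip{\nu,d_i}=0$, and since $\nu\in V_I$ this equals $\ip{\nu,\Pi_I d_i}$, as required.

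With this verified, I would invoke Theorem \ref{thm:esmlip} (whose proof adapts verbatim to the ambient space $V_I$ with $\tilde{B}$ replacing $B$) to extract a Lipschitz constant $\kappa_I<\infty$ for the ESM associated with the projected ESP, yielding the bound \eqref{eq:PiZlip}. The primary technical obstacle is the lifting step in the verification of Assumption \ref{ass:setB} for $\tilde{B}$; everything else is bookkeeping built on the linearity of $\Pi_I$ and the identity $\ip{v,n_i}=\ip{\Pi_I v,n_i}$ for $n_i\in V_I$.
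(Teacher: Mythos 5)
Your proposal follows the same architecture as the paper's proof: show that the projected pair $(\Pi_I\z_k,\Pi_I\y_k)$ solves the projected ESP $\{(\Pi_Id_i,n_i,c_i),i\in I\}$ for $\Pi_I\x_k$ (the paper's Lemma \ref{lem:projesp}), verify that this projected ESP satisfies Assumption \ref{ass:setB} (the paper's Lemma \ref{lem:projsetB}), and then invoke the Lipschitz continuity of Theorem \ref{thm:esmlip}. The one genuine difference is the set used to witness Assumption \ref{ass:setB}: you take the projection $\tilde{B}=\Pi_I B$ and work intrinsically in $V_I$, whereas the paper takes the slice $B\cap V_I$ and fattens it into the cylinder $B_I=\{z+y:z\in B\cap V_I,\,y\in V_I^\perp,\,|y|\le 1\}$, which is full-dimensional in $\R^J$ so that Theorem \ref{thm:esmlip} applies verbatim. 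Your lifting argument for the normals is a clean substitute for the paper's two-case analysis of $\partial B_I$, but it leaves you with a lower-dimensional $\tilde B$ and hence the extra (true, but asserted rather than proved) claim that the Lipschitz theorem adapts to the ambient space $V_I$; the cylinder trick is precisely how the paper avoids having to say this. Two minor soft spots in your write-up: (i) the justification that the lift $z^\ast$ lies in $\partial B$ via ``extreme points of a positive-codimension slice'' is not right as stated (a segment through the center of a ball has extreme points in the interior); the correct and easier reason is that any $x\in B^\circ$ projects to a relative interior point of $\Pi_IB$, so the entire fiber over a relative boundary point of $\tilde B$ lies in $\partial B$ --- or simply note that your inequality $\ip{\nu,x-z^\ast}\ge 0$ for all $x\in B$ already exhibits a supporting hyperplane at $z^\ast$; (ii) in verifying condition 3 of the projected ESP you should, as the paper does, use the equality $\ip{\Pi_I\z_k(u),n_i}=\ip{\z_k(u),n_i}$ together with $\allN(\z_k(u))\subseteq I$ to identify the active index sets before and after projection. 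Neither point is a gap in substance.
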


The proof of Lemma \ref{lem:projlip} is given in Appendix \ref{apdx:projlip}.

\subsection{Proof of the key induction step}\label{sec:induction}

In this section we prove Lemma \ref{lem:inductionstep}. \emph{Throughout this section we fix $\delta>0$, $2\leq n\leq N$ and assume that Statement \ref{state:thetan} holds}. Given $\x\in\cts_G$, let $(\z,\y)$ be the solution of the ESP for $\x$ and define $\tau$ as in \eqref{eq:tau} and $\{\theta_n\}_{n=2,\dots,N+1}$ as in \eqref{eq:thetan}. We need to show that if $(\z,\y)$ satisfies the boundary jitter property on $[0,\tau)$, then Statement \ref{state:main} holds with $T=\theta_{n+1}$. If $\theta_{n+1}=\theta_n$, the assertion is immediate. Suppose $\theta_{n+1}>\theta_n$. Set 
	\be\label{eq:t1n}t_1^{(n)}\doteq\theta_n.\ee 
Given $k\in\N$ for which $t_k^{(n)}$ is defined, if $t_k^{(n)}=\theta_{n+1}$, set $K_n=k$, where as if $t_k^{(n)}<\theta_{n+1}$, define $\rho_k^{(n)}$ to be the first time in the interval $(t_k^{(n)},t_k^{(n)}+\delta]$ that $\z$ reaches a face that is distinct from any of the faces that $\z(t_k^{(n)})$ lies on; that is,
	\be\label{eq:rhok}\rho_k^{(n)}\doteq\inf\lcb t\in(t_k^{(n)},t_k^{(n)}+\delta]:\allN(\z(t))\not\subseteq\allN(\z(t_k^{(n)}))\rcb\wedge (t_k^{(n)}+\delta),\ee
and define $t_{k+1}^{(n)}$ to be the first time in the interval $[\rho_k^{(n)},\theta_{n+1}]$ that $\z$ reaches the intersection of $n$ or more faces; that is,
	\be\label{eq:tk1n}t_{k+1}^{(n)}=\inf\lcb t\in[\rho_k^{(n)},\theta_{n+1}]:|\allN(\z(t))|\geq n\rcb.\ee
If $t_k^{(n)}<\theta_{n+1}$ for all $k\in\N$, set $K_n=\infty$ so that
	\be\label{eq:Kn} K_n\doteq\inf\lcb k\in\N:t_k^{(n)}=\theta_{n+1}\rcb.\ee 
If $K_n=\infty$, then \eqref{eq:thetan}, \eqref{eq:rhok}, \eqref{eq:tk1n} and the continuity of $\z$ imply that $t_k^{(n)}\to\theta_{n+1}$ as $k\to\infty$. Therefore, in order to prove Lemma \ref{lem:inductionstep}, we need to show that for all $1\leq k<K_n+1$, Statement \ref{state:main} holds with $T=t_k^{(n)}$. Since $t_1^{(n)}\doteq\theta_n$ and Statement \ref{state:thetan} holds by assumption, Statement \ref{state:main} holds with $T=t_1^{(n)}$. In Lemma \ref{lem:rhok} and Lemma \ref{lem:rho1} below, we show that if Statement \ref{state:main} holds with $T=t_k^{(n)}$, then Statement \ref{state:main} holds with $T=\rho_k^{(n)}$. In Lemma \ref{lem:sigma2}, we prove that if Statement \ref{state:main} holds with $T=t_k^{(n)}$, then Statement \ref{state:main} holds with $T=t_{k+1}^{(n)}$, which along with the principle of mathematical induction, will complete the proof that for all $1\leq k<K_n+1$, Statement \ref{state:main} holds with $T=t_k^{(n)}$.

We begin with the proof that Statement \ref{state:main} holds with $T=\rho_k^{(n)}$. The proof is split into two lemmas. In Lemma \ref{lem:rhok} we consider the case that $t_k^{(n)}>0$ and in Lemma \ref{lem:rho1} we consider the case that $k=1$ and $t_1^{(n)}\doteq\theta_n=0$. Since Lemma \ref{lem:rhok} is needed in the proof of Lemma \ref{lem:rho1}, we first consider the case that $t_k^{(n)}>0$. For the following lemma recall that $\delta>0$, $2\leq n\leq N$ are fixed and our assumption that Statement \ref{state:thetan} holds.

\begin{lem}\label{lem:rhok}
Given $\x\in\cts_G$, let $(\z,\y)$ denote the solution of the ESP for $\x$. Define $\tau$ as in \eqref{eq:tau} and assume that $(\z,\y)$ satisfies the boundary jitter property on $[0,\tau)$. Define $\theta_n$ and $\theta_{n+1}$ as in \eqref{eq:thetan} and assume that $\theta_{n+1}>\theta_n$. Define $\{t_k^{(n)}\}_{k=1,\dots,K_n}$, $\{\rho_k^{(n)}\}_{k=1,\dots,K_n-1}$ and $K_n\in\N$ are as in \eqref{eq:t1n}--\eqref{eq:Kn}. Let $1\leq k<K_n$. Assume that $t_k^{(n)}>0$ and Statement \ref{state:main} holds with $T=t_k^{(n)}$. Then Statement \ref{state:main} holds with $T=\rho_k^{(n)}$.
\end{lem}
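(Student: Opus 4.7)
\medskip

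Fix $\psi \in \cts^{\delta,\z}$ and denote $S = t_k^{(n)}$, $U = \rho_k^{(n)}$. Two structural properties will drive the proof. First, since $|\allN(\z(S))| \geq n \geq 2$, we have $\z(S) \in \U$, and since $\psi \in \cts^{\delta,\z}$, $\psi$ is constant on $[S - \delta, S + \delta] \supseteq [S, U]$. Second, by \eqref{eq:rhok}, $\allN(\z(t)) \subseteq \allN(\z(S))$ for every $t \in [S, U)$, so by the definition \eqref{eq:Hx}, $H_{\z(S)} \subseteq H_{\z(t)}$ throughout $[S, U)$. The induction hypothesis furnishes the unique DP solution $(\phi, \eta)$ on $[0, S)$ with $\phi(\cdot) = \nabla_\psi\esm(\x)(\cdot+)$. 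The plan is to extend $(\phi, \eta)$ to $[0, U)$ and then identify the extension as the right-continuous regularization of $\nabla_\psi\esm(\x)$ on $[0, U)$.

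I will first identify the left limit $\phi(S-)$. Using upper semicontinuity of $\allN(\cdot)$ (Lemma \ref{lem:allNusc}) together with $|\allN(\z(t))| < n$ on $(\rho_{k-1}^{(n)} \vee 0, S)$, I choose $S_0 \in (\max(S - \delta, \rho_{k-1}^{(n)} \vee 0), S)$ close enough to $S$ so that $\allN(\z(t)) \subsetneq \allN(\z(S))$ for every $t \in [S_0, S)$, and additionally so that $S_0$ is a continuity point of $\nabla_\psi\esm(\x)$ (possible since $\dlr$ functions have at most countably many discontinuities). Because $\psi$ is constant on $[S_0, S]$ and condition 4 of the boundary jitter property holds, the projection argument in the proof of Lemma \ref{lem:projphirhok} yields $\phi(S-) = \proj_{\z(S)}\phi(S_0)$; the hypothesis $\z(S_0) \in G^\circ$ there is inessential, since the key recursion $\phi(\xi_j) = \proj_{\z(\xi_j)} \cdots \proj_{\z(\xi_1)}\phi(S_0)$ relies only on $\phi(S_0) \in H_{\z(S_0)}$, which condition 2 of the DP already supplies. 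I then define $\tilde\phi = \phi$ on $[0, S)$, $\tilde\phi \equiv \phi(S-)$ on $[S, U)$, and $\tilde\eta \doteq \tilde\phi - \psi$. The containment $\phi(S-) \in H_{\z(S)} \subseteq H_{\z(t)}$ on $[S, U)$, the constancy of $\psi$ on $[S, U]$, and a direct verification that condition 3 of the DP extends (splitting any interval $(s,t]$ crossing $S$ at the midpoint $S-\delta$) show that $(\tilde\phi, \tilde\eta)$ solves the DP on $[0, U)$; uniqueness is Theorem \ref{thm:dmlip}.

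For the pointwise identification on $[S, U)$, I use the time-shift at $S_0$ (Lemma \ref{lem:esmshift}). Since $\psi$ is constant on $[S_0, t]$ for every $t \in [S, U) \subseteq [S_0, S_0 + \delta]$, the relation $\x_\ve^{S_0} - \x^{S_0} \equiv \z_\ve(S_0) - \z(S_0) =: \ve c_\ve$ holds on $[0, t - S_0]$, and by the induction hypothesis together with continuity of $\nabla_\psi\esm(\x)$ at $S_0$, $c_\ve \to \phi(S_0)$ as $\ve \downarrow 0$. Rewriting
\[
\frac{\z_\ve(t) - \z(t)}{\ve} = \nabla_{c_\ve \mathbf{1}}^\ve \esm(\x^{S_0})(t - S_0)
\]
and invoking Proposition \ref{prop:psive} reduces the limit to $\nabla_{\phi(S_0)\mathbf{1}}\esm(\x^{S_0})(t - S_0)$, provided the right-hand side exists. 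Its value is identified with $\phi(S-)$ by applying the outer induction hypothesis (Statement \ref{state:thetan}) to the shifted pair $(\x^{S_0}, \phi(S_0)\mathbf{1})$: $\z^{S_0}$ inherits the boundary jitter property from $\z$, the constant perturbation $\phi(S_0)\mathbf{1}$ lies in $\cts^{\delta',\z^{S_0}}$ for every $\delta'>0$, and the first hit time of $\geq n$ faces for $\z^{S_0}$ is exactly $S - S_0$. For $t' \in [S - S_0, U - S_0)$, the DP solution for the shifted problem is constant equal to $\proj_{\z(S)}\phi(S_0) = \phi(S-)$, by uniqueness together with the containment $\phi(S-) \in H_{\z^{S_0}(t')}$. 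Combining the two paragraphs shows $\nabla_\psi\esm(\x)(t) = \phi(S-) = \tilde\phi(t)$ for $t \in [S, U)$, which yields parts 1 and 3 of Statement \ref{state:main}; part 2 (discontinuity structure) extends trivially since $\tilde\phi$ is continuous on $[S, U)$.

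The main obstacle is the final identification $\nabla_{\phi(S_0)\mathbf{1}}\esm(\x^{S_0})(t - S_0) = \phi(S-)$ for $t \in [S, U)$, which extends DP theory on the shifted path \emph{past} its $\theta_n$-time $S - S_0$. This is essentially the base-case content ($k=1$) of the present lemma applied to the shifted problem, i.e., Lemma \ref{lem:rho1}; thus the logical order requires proving Lemma \ref{lem:rho1} first, or the two lemmas jointly with Lemma \ref{lem:rho1} as the base case. The contraction properties of derivative projection operators from Section \ref{sec:derivativeprojection} ensure that this recursion terminates with the projection identity, so no circularity arises.
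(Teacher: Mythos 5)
Your setup (constancy of $\psi$ on $[t_k^{(n)},\rho_k^{(n)}]$, the choice of a nearby time $S_0$ with $\allN(\z(t))\subsetneq\allN(\z(t_k^{(n)}))$ on $[S_0,t_k^{(n)})$, the identification of the left limit via the projection recursion of Lemma \ref{lem:projphirhok}, and the verification that the constant extension solves the DP) matches the paper. But the step you yourself flag as ``the main obstacle'' is a genuine gap, and your proposed repair does not close it. After shifting at $S_0$, the shifted path $\z^{S_0}$ first hits the intersection of $n$ faces at the strictly positive time $S-S_0$, so what you need is existence of the directional derivative for the shifted problem on $[S-S_0,\,U-S_0)$, i.e.\ \emph{past} its $\theta_n$-time. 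That is not Lemma \ref{lem:rho1} (which is the case $\theta_n=0$, where the path starts at the corner); it is Lemma \ref{lem:rhok} itself with $k=1$ and $t_1^{(n)}>0$, applied to the shifted data --- the very statement being proved. Moreover, even the reduction to Lemma \ref{lem:rho1} would be circular in this paper, since Lemma \ref{lem:rho1} is itself proved by invoking Lemma \ref{lem:rhok}. Finally, asserting that ``the DP solution for the shifted problem is constant equal to $\proj_{\z(S)}\phi(S_0)$'' does not yield existence of the directional derivative: the whole content of the lemma is precisely to show that the limit \eqref{eq:SMderivative} exists and agrees with the axiomatically defined DP solution, so uniqueness of the DP cannot substitute for that.

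The missing idea is the paper's direct argument, which avoids any reduction to a shifted directional-derivative problem. Fix $t\in[t_k^{(n)},\rho_k^{(n)})$ and let $v$ be an arbitrary subsequential limit of $\nabla_\psi^{\ve_\ell}\esm(\x)(t)$ (which exists by Lipschitz continuity of the ESM). One shows (a) $v-\phi(S)\in\spaan[d(\z(t_k^{(n)}))]$, by writing $\nabla_\psi^{\ve_\ell}\esm(\x)(t)-\nabla_\psi^{\ve_\ell}\esm(\x)(S)=\ve_\ell^{-1}[\y_{\ve_\ell}(t)-\y_{\ve_\ell}(S)-(\y(t)-\y(S))]$ and using condition 3 of the ESP together with the fact that both $\z$ and $\z_{\ve_\ell}$ touch only faces indexed by $\allN(\z(t_k^{(n)}))$ on $[S,t]$; and (b) $v\in H_{\z(t_k^{(n)})}$, which is where the orthogonal-projection Lipschitz estimate of Lemma \ref{lem:projlip} enters: with $I=\allN(\z(t_k^{(n)}))$ one gets $|\Pi_I v|\leq\lip_I|\Pi_I\nabla_\psi\esm(\x)(s)|$ for $s<t_k^{(n)}$, and sending $s\uparrow t_k^{(n)}$ and using $\nabla_\psi\esm(\x)(t_k^{(n)}-)=\proj_{\z(t_k^{(n)})}\phi(S)\in H_{\z(t_k^{(n)})}$ forces $\Pi_I v=0$. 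The uniqueness of the decomposition $\R^J=H_{\z(t_k^{(n)})}\oplus\spaan[d(\z(t_k^{(n)}))]$ (Lemma \ref{lem:Wdecomp}) then pins down $v=\proj_{\z(t_k^{(n)})}\phi(S)$ independently of the subsequence, giving existence of the limit. Your proposal never invokes Lemma \ref{lem:projlip}, and without step (b) there is no way to identify the subsequential limits.
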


\begin{proof}
For notational convenience, we drop the superscript $n$ notation and write $t_k$ and $\rho_k$ in place of $t_k^{(n)}$ and $\rho_k^{(n)}$, respectively.

Fix $\psi\in\cts^{\delta,\z}$. By \eqref{eq:rhok}, \eqref{eq:tk1n}, the continuity of $\z$ and the upper semicontinuity of $\allN(\cdot)$ (Lemma \ref{lem:allNusc}), we can choose $S\in[(t_k-\delta)\vee0,t_k)$ such that
\begin{align}\label{eq:allNzSrhok}
	\allN(\z(t))&\subsetneq\allN(\z(t_k)),&&t\in[S,t_k),\\ \label{eq:allNztkrhok}
	\allN(\z(t))&\subseteq\allN(\z(t_k)),&&t\in[t_k,\rho_k).
\end{align}
By \eqref{eq:rhok}, $\rho_k\leq t_k+\delta$. Thus, \eqref{eq:CZdelta} and the fact that $\z(t_k)\in\U$ imply that $\psi$ is constant on $[S,\rho_k]$. Since Statement \ref{state:main} holds with $T=t_k$ by assumption, $\nabla_\psi\esm(\x)(t)$ exists for all $t\in[0,t_k)$, there exists a unique solution $(\phi,\eta)$ of the DP for $\psi$ on $[0,t_k)$ and $\phi(t)=\nabla_\psi\esm(\x)(t+)$ for $t\in[0,t_k)$. It then follows from Lemma \ref{lem:projphirhok}, with $T=t_k$, that $\phi(t_k-)$ exists and $\phi(t_k-)=\proj_{\z(t_k)}\phi(S)$. Thus, $\nabla_\psi\esm(\x)(t_k-)$ exists and satisfies
	\be\label{eq:nablatknminus}\nabla_\psi\esm(\x)(t_k-)=\proj_{\z(t_k)}\phi(S).\ee

We claim that $\nabla_\psi\esm(\x)(t)$ exists for all $t\in[t_k,\rho_k)$ and satisfies 
	\be\label{eq:nablatknrhokn}\nabla_\psi\esm(\x)(t)=\proj_{\z(t_k)}\phi(S),\qquad t\in[t_k,\rho_k).\ee
We defer the proof of the claim and instead first derive some consequences of the claim. Parts 1 and 2 of Statement \ref{state:main} with $T=t_k$, together with \eqref{eq:nablatknminus} and \eqref{eq:nablatknrhokn}, imply that parts 1 and 2 of Statement \ref{state:main} hold with $T=\rho_k$. We now turn to part 3 of Statement \ref{state:main} holds with $T=\rho_k$. Define $\hat{\phi}(t)\doteq\nabla_\psi\esm(\x)(t+)$ and $\hat{\eta}(t)\doteq\hat{\phi}(t)-\psi(t)$ for $t\in[0,\rho_k)$. Observe that $\hat{\phi}(t)=\phi(t)$ for $t\in[0,t_k)$ and by \eqref{eq:nablatknrhokn}, $\hat{\phi}(t)=\proj_{\z(t_k)}\phi(S)$ for all $t\in[t_k,\rho_k)$. Due to the fact that $(\hat{\phi},\hat{\eta})$ solves the DP for $\psi$ on $[0,t_k)$, to prove part 3 of Statement \ref{state:main}, we are left to show that $(\hat{\phi},\hat{\eta})$ satisfies conditions 1 and 2 of the DP for $t\in[t_k,\rho_k)$ and condition 3 of the DP for $0\leq s<t<\rho_k$ whenever $t\in[t_k,\rho_k)$. Let $t\in[t_k,\rho_k)$. Condition 1 of the DP holds by definition. Lemma \ref{lem:projx}, the definition \eqref{eq:Hx} of $H_x$ and \eqref{eq:allNztkrhok} imply that $\proj_{\z(t_k)}\phi(S)\in H_{\z(t_k)}\subseteq H_{\z(t)}$, so condition 2 of the DP holds. Lastly, suppose $s\in[0,t)$. Since $\psi$ is constant on $[(t_k-\delta)\vee0,\rho_k)$ and $\hat{\phi}(t)=\hat{\phi}(t_k-)$ for all $t\in[t_k,\rho_k)$, we have $\hat{\eta}(t)=\hat{\eta}(t_k-)$ for all $t\in[t_k,\rho_k)$. When combined with the fact that $\hat{\eta}$ satisfies condition 3 of the DP for $\psi$ on $[0,t_k)$, we have
\begin{align*}
	\hat{\eta}(t)-\hat{\eta}(s)=\hat{\eta}(t_k-)-\hat{\eta}(s)&\in\cup_{r\in(s,t_k)}\spaan\lsb\cup_{u\in(s,r]}d(\z(u))\rsb\\
	&\subseteq\spaan\lsb\cup_{u\in(s,t]}d(\z(u))\rsb.
\end{align*}
Thus, condition 3 of the DP holds. This completes the proof that Statement \ref{state:main} holds with $T=\rho_k$.

It only remains to prove the claim \eqref{eq:nablatknrhokn}. Fix $t\in[t_k,\rho_k)$. By \eqref{eq:allNzSrhok}, \eqref{eq:allNztkrhok} the upper semicontinuity of $\allN(\cdot)$ (Lemma \ref{lem:allNusc}), the continuity of $\z$ and $\psi$, and the Lipschitz continuity of the ESM, for all $\ell\in\N$ sufficiently large,
	\be\label{eq:allNzStve}\allN(\z_{\ve_\ell}(u))\subseteq\allN(\z(t_k)),\qquad u\in[S,t].\ee 
By Lemma \ref{lem:nablaesmtcompact}, given a sequence $\{\ve_\ell\}_{\ell\in\N}$ with $\ve_\ell\downarrow0$ as $\ell\to\infty$, there exists a subsequence, also denoted $\{\ve_\ell\}_{\ell\in\N}$, such that $v\doteq\lim_{\ell\to\infty}\nabla_\psi^{\ve_\ell}\esm(\x)(t)$ exists. It suffices to show that $v=\proj_{\z(t_k)}\phi(S)$. Due to the uniqueness of the derivative projection operators stated in Lemma \ref{lem:projx}, this is equivalent to showing that $v\in H_{\z(t_k)}$ and $v-\phi(S)\in\spaan[d(\z(t_k))]$.

Now \eqref{eq:nablapsive}, condition 1 of the ESP and the fact that $\psi$ is constant on $[S,t]$ show that
\begin{align*}
	\nabla_\psi^{\ve_\ell}\esm(\x)(t)-\nabla_\psi^{\ve_\ell}\esm(\x)(S)&=\frac{1}{\ve_\ell}[\y_{\ve_\ell}(t)-\y_{\ve_\ell}(S)-(\y(t)-\y(S))].
\end{align*}
When combined with condition 3 of the ESP, \eqref{eq:allNzSrhok}, \eqref{eq:allNztkrhok} and \eqref{eq:allNzStve}, this implies that for all $\ve>0$ sufficiently small,
 \begin{align}\label{eq:nablaTnablaS}
	\nabla_\psi^{\ve_\ell}\esm(\x)(t)-\nabla_\psi^{\ve_\ell}\esm(\x)(S)\in\spaan[d(\z(t_k))].
\end{align}
Since $\spaan[d(\z(t_k))]$ is closed, taking limits as $\ell\to\infty$, we see that $v-\nabla_\psi\esm(\x)(S)\in\spaan[d(\z(t_k))]$. Then because $\phi(t)=\nabla_\psi\esm(\x)(t+)$ for all $t\in[0,t_k)$ and $\nabla_\psi\esm(\x)\in\dlr([0,t_k):\R^J)$, in order to show that $\phi(S)=\nabla_\psi\esm(\x)(S)$, it is enough to show that $\phi$ is continuous at $S$. Recall that $\z(S)\in G^\circ$. Therefore, by the continuity of $\z$, the fact that $(\phi,\phi-\psi)$ solves the DP associated with $\z$ for $\psi$ on $[0,t_k)$ and condition 3 of the DP, $\phi-\psi$ is constant in a neighborhood of $S$. Since $\psi$ is continuous, this implies $\phi$ is continuous at $S$, so $v-\phi(S)\in\spaan[d(\z(t_k))]$.

We are left to prove that $v\in H_{\z(t_k)}$. Let $s\in(S,t_k)$ be arbitrary. Define $\x^s,\z^s,\x_{\ve_\ell}^s$, $\z_{\ve_\ell}^s$ and $\psi_{\ve_\ell}^s$ as in \eqref{eq:xS}, \eqref{eq:zS}, \eqref{eq:xveS}, \eqref{eq:zveS} and \eqref{eq:psiveS}, but with $s$ and $\ve_\ell$ in place of $S$ and $\ve$, respectively. Then by the time-shift property of the ESP (Lemma \ref{lem:esmshift}) and \eqref{eq:xveSxS}, $\z^s=\esm(\x^s)$ and $\z_{\ve_\ell}^s=\esm(\x^s+\ve_\ell\psi_{\ve_\ell}^s)$. Thus, \eqref{eq:allNzSrhok}, \eqref{eq:allNztkrhok}, \eqref{eq:allNzStve}, \eqref{eq:zS} and \eqref{eq:zveS} imply that for $\ell\in\N$ sufficiently large, 
	$$\allN(\z^s(u))\cup\allN(\z_{\ve_\ell}^s(u))\subseteq\allN(\z(t_k))\qquad\text{for }u\in[0,t-s].$$
Let $I\doteq\allN(\z(t_k))$ and recall that $\Pi_I$ denotes the orthogonal projection from $\R^J$ onto $\spaan(\{n_i,i\in I\})=H_{\z(t_k)}^\perp$. By Lemma \ref{lem:projlip}, with $\z_{\ve_\ell}^s$, $\z^s$ and $t-s$ in place of $\z_1$, $\z_2$ and $T$, respectively, there is a constant $\lip_I<\infty$ such that for all $\ell\in\N$ sufficiently large,
\begin{align*}
	\frac{\norm{\Pi_I\z_{\ve_\ell}^s-\Pi_I\z^s}_{t-s}}{\ve_\ell}&\leq\lip_I\norm{\Pi_I\psi_{\ve_\ell}^s}_{t-s}=\lip_I|\Pi_I\nabla_\psi^{\ve_\ell}\esm(\x)(s)|,
\end{align*}
where the final equality uses \eqref{eq:psiveS} and the fact that $\psi$ is constant on $[s,t]$. Hence, by \eqref{eq:zS}, \eqref{eq:zveS}, \eqref{eq:nablapsive} and the assumption that $\nabla_\psi\esm(\x)$ exists on $[0,t_k)$,
\begin{align*}
	|\Pi_Iv|&=\lim_{\ell\to\infty}\frac{|\Pi_I\z_{\ve_\ell}^s(t-s)-\Pi_I\z^s(t-s)|}{\ve_\ell}\\
	&\leq\lim_{\ell\to\infty}\lip_I|\Pi_I\nabla_\psi^{\ve_\ell}\esm(\x)(s)|=\lip_I|\Pi_I\nabla_\psi\esm(\x)(s)|.
\end{align*}
Sending $s\uparrow t_k$ in the above display, invoking the identity \eqref{eq:nablatknminus} and noting that $\proj_{\z(t_k)}$ projects onto $H_{\z(t_k)}$ and $\Pi_I$ denotes orthogonal projection onto $H_{\z(t_k)}^\perp$, yields
	$$|\Pi_Iv|\leq\lip_I|\Pi_I\nabla_\psi\esm(\x)(t_k-)|=\lip_I|\Pi_I\proj_{\z(t_k)}\phi(S)|=0.$$
Thus, $v\in H_{\z(t_k)}$, completing the proof.
\end{proof}

In the following lemma we consider the case that $\theta_n=0$ and prove that Statement \ref{state:main} holds with $T=\rho_1^{(n)}$. This is relevant when the path $\z$ starts at the nonsmooth part of the boundary. The proof is much more involved than the proof of Lemma \ref{lem:rhok}. For the following lemma recall that $\delta>0$, $2\leq n\leq N$ are fixed and our assumption that Statement \ref{state:thetan} holds.

\begin{lem}\label{lem:rho1}
Given $\x\in\cts_G$, let $(\z,\y)$ denote the solution of the ESP for $\x$. Define $\tau$ as in \eqref{eq:tau} and assume that $(\z,\y)$ satisfies the boundary jitter property on $[0,\tau)$. Define $\theta_n$ and $\theta_{n+1}$ as in \eqref{eq:thetan} and assume that $\theta_{n+1}>\theta_n=0$. Define $t_1^{(n)}$ and $\rho_1^{(n)}$ as in \eqref{eq:t1n}--\eqref{eq:rhok}. Then Statement \ref{state:main} holds with $T=\rho_1^{(n)}$.
\end{lem}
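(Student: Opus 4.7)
The plan is to fix $\psi \in \cts^{\delta,\z}$ and exploit the fact that $\theta_n = 0$ forces $\z(0) \in \U$, so that by the definition of $\cts^{\delta,\z}$ in \eqref{eq:CZdelta} the perturbation $\psi$ is constant on $[0,\delta] \supseteq [0,\rho_1^{(n)}]$; write $p \doteq \psi(0)$. As a first step, since $\x(0) \in G$ gives $\z(0) = \pi(\x(0)) = \x(0)$ and the perturbed initial condition is $\z_\ve(0) = \pi(\x(0) + \ve p)$, Lemma \ref{lem:projxv} yields $\nabla_\psi\esm(\x)(0) = \nabla_p\pi(\x(0))$ together with $\nabla_p\pi(\x(0)) - p \in \conv[d(\z(0))] \subseteq \spaan[d(\z(0))]$; by the characterization of $\proj_{\z(0)}$ in Lemma \ref{lem:projx} this implies $\proj_{\z(0)}\nabla_p\pi(\x(0)) = \proj_{\z(0)}p$.

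Next, I would propose as the candidate DP solution on $[0,\rho_1^{(n)})$ the constant pair $\phi(t) \equiv \proj_{\z(0)}p$ and $\eta(t) \equiv \proj_{\z(0)}p - p$, and verify the three conditions of Definition \ref{def:dp}: condition 1 is the defining identity; condition 3 is immediate since $\eta$ is constant and $\eta(0) \in \spaan[d(\z(0))]$ by construction; and condition 2 holds because the definition of $\rho_1^{(n)}$ in \eqref{eq:rhok} forces $\allN(\z(t)) \subseteq \allN(\z(0))$ on $[0,\rho_1^{(n)})$, hence $\proj_{\z(0)}p \in H_{\z(0)} \subseteq H_{\z(t)}$. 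Uniqueness of the DP solution follows from Theorem \ref{thm:dmlip}.

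The substantive task is to show that $\nabla_\psi\esm(\x)(t) = \proj_{\z(0)}p$ for every $t \in (0,\rho_1^{(n)})$. Fix such $t$ and, via Lemma \ref{lem:nablaesmtcompact}, extract a subsequential limit $v \doteq \lim_\ell \nabla_\psi^{\ve_\ell}\esm(\x)(t)$; by the characterization in Lemma \ref{lem:projx} it suffices to show (a) $v - p \in \spaan[d(\z(0))]$ and (b) $v \in H_{\z(0)}$. Claim (a) parallels the corresponding step in the proof of Lemma \ref{lem:rhok}: by Theorem \ref{thm:esmlip}, $\z_\ve \to \z$ uniformly on $[0,t]$, so Lemma \ref{lem:allNusc} combined with the inclusion $\allN(\z(u)) \subseteq \allN(\z(0))$ on $[0,\rho_1^{(n)})$ yields $\allN(\z_\ve(u)) \cup \allN(\z(u)) \subseteq \allN(\z(0))$ on $[0,t]$ for $\ve$ small; condition 3 of the ESP then places $\y(t) - \y(0)$ and $\y_\ve(t) - \y_\ve(0)$ in $\conv[d(\z(0))]$, and since $\psi$ is constant on $[0,t]$, condition 1 of the ESP gives $\nabla_\psi^\ve\esm(\x)(t) - \nabla_\psi^\ve\esm(\x)(0) \in \spaan[d(\z(0))]$, which in the limit produces $v - \nabla_p\pi(\x(0)) \in \spaan[d(\z(0))]$ and hence $v - p \in \spaan[d(\z(0))]$.

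The main obstacle will be (b), since a direct application of Lemma \ref{lem:projlip} yields only the unhelpful bound $|\Pi_{\allN(\z(0))}v| \leq \kappa_{\allN(\z(0))}|\Pi_{\allN(\z(0))}p|$, which does not force the projection to vanish. To surmount this, I would invoke the boundary jitter property: condition 3 of Definition \ref{def:jitter}, either applied directly or via Lemma \ref{lem:chij} (possibly on a suitably restricted interval on which $\allN(\z(s)) \subsetneq \allN(\z(0))$), produces a sequence $\chi_j \downarrow 0$ in $(0,t)$ with $\z(\chi_j) \in \S \subseteq \partial G \setminus \W$ and $\cup_{j\geq\ell}\allN(\z(\chi_j)) = \allN(\z(0))$ for every $\ell$. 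For each $j$, the time-shift property (Lemma \ref{lem:esmshift}) reduces the dynamics on $[\chi_j,t]$ to perturbation of the initial condition by $\z_\ve(\chi_j) - \z(\chi_j)$, and the Lipschitz projection estimate (Lemma \ref{lem:projlip}) with $I = \allN(\z(\chi_j))$ bounds the $\Pi_{\allN(\z(\chi_j))}$-component of $\nabla_\psi^\ve\esm(\x)(t)$ by the corresponding component at $\chi_j$, which in turn is controlled via the derivative projection operator $\proj_{\z(\chi_j)}$ as in the argument underlying Lemma \ref{lem:projphirhok}. Iterating this contraction through the sequence and invoking the accumulation property of compositions of derivative projection operators (Lemma \ref{lem:accumulate}, dually Lemma \ref{lem:projxKastzero}) forces $\Pi_{\allN(\z(0))}v = 0$. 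With $v = \proj_{\z(0)}p$ then established, Statement \ref{state:main} follows at $T = \rho_1^{(n)}$: $\nabla_\psi\esm(\x)$ is constant on $(0,\rho_1^{(n)})$ and so lies in $\dlr$, has no discontinuities in $(0,\rho_1^{(n)})$ (so the ``if'' clause of part 2 is vacuous), and its right-continuous regularization coincides with the DP solution $\phi$.
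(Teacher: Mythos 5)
Your overall skeleton matches the paper's: the observation that $\psi$ is constant on $[0,\rho_1^{(n)}]$, the identification of $\nabla_\psi\esm(\x)(0)$ via Lemma \ref{lem:projxv}, the constant candidate DP solution and its verification, the reduction of the pointwise claim to showing that every subsequential limit $v$ satisfies $v-\psi(0)\in\spaan[d(\z(0))]$ and $v\in H_{\z(0)}$, and the proof of the first inclusion are all essentially the paper's argument. The gaps are in the second inclusion. First, your jitter-based iteration only covers the case $\allN(\z(s))\subsetneq\allN(\z(0))$ for all $s\in(0,t)$, which is the hypothesis of Lemma \ref{lem:chij}; the paper must, and does, treat separately the case in which $\allN(\z(U))=\allN(\z(0))$ for some $U\in(0,t)$. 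There the mechanism is entirely different: condition 2 of the jitter property produces $S\in(0,U)$ with $\z(S)\notin\U$, the shifted path $\z^S$ then has $\theta_n^S\in(0,U-S]$ and $\rho_1^S>t-S$, and the already-proved Lemma \ref{lem:rhok} (whose hypothesis $t_1^{(n)}>0$ is exactly what fails at time $0$ but holds for $\z^S$), combined with the outer induction hypothesis (Statement \ref{state:thetan}), yields a DP solution that is constant and $H_{\z(0)}$-valued on $(\theta_n^S,\rho_1^S)\ni t-S$. Your parenthetical ``suitably restricted interval'' does not resolve this: once the restricted interval ends at a return to the corner, you still have to propagate the conclusion forward to $t$, and that is precisely this missing case.

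Second, in the remaining case your iteration is not executable as described. Lemma \ref{lem:projlip} with $I=\allN(\z(\chi_j))$ requires $\allN(\z(u))\subseteq I$ throughout the interval of application, which fails on $[\chi_j,t]$ precisely because the jitter forces $\z$ to visit every face of $\allN(\z(0))\supsetneq\allN(\z(\chi_j))$ there; the only admissible choice on $[0,\rho_1^{(n)})$ is $I=\allN(\z(0))$, and the paper applies it once, on $[\chi_0,t]$ with $\z(\chi_0)\in G^\circ$, to reduce matters to showing $\Pi_{\allN(\z(0))}v_{\chi_0}=0$. The projection recursion that then gives $v_{\chi_0}=\lsb\proj_{\z(\chi_1)}\cdots\proj_{\z(\chi_m)}\rsb\phi^{\chi_m}(0)$ is a statement about a solution of the DP on the shifted interval, not about the prelimit difference quotients; before you can ``iterate the contraction'' you must know the directional derivative exists there and that its regularization solves the DP, which again comes from Statement \ref{state:thetan} together with Lemma \ref{lem:rhok} applied to $\z^{\chi_m}$, and each step of the recursion uses the nesting structure \eqref{eq:chijuj} from Lemma \ref{lem:chij}, not merely the existence of times $\chi_j\downarrow0$ hitting all the faces. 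Finally, the composition must be truncated at a finite $m$ chosen via Corollary \ref{lem:accumulateuniformly} before passing to the limit, since it is applied to the $m$-dependent vector $\phi^{\chi_m}(0)$ rather than to a fixed vector.
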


\begin{proof}
For notational convenience, we drop the superscript $n$ notation and write $\rho_1$ in place of $\rho_1^{(n)}$.

Fix $\psi\in\cts^{\delta,\z}$. Recall the definition of $\nabla_v\pi(x)$ from \eqref{eq:pixv}, and note that by Theorem \ref{thm:speu} and Lemma \ref{lem:projxv}, 
	\be\label{eq:v0}v_0\doteq\nabla_\psi\esm(\x)(0)\doteq\lim_{\ve\downarrow0}\esm_\psi\esm(\x)(0)=\nabla_{\psi(0)}\pi(\x(0))\ee
and $v_0-\psi(0)\in\conv[d(\z(0))]$. By the definition of $\rho_1$ in \eqref{eq:rhok}, we have
	\be\label{eq:rho1}\allN(\z(t))\subseteq\allN(\z(0))\qquad\text{for all }t\in[0,\rho_1).\ee
We claim, and prove below, that for all $t\in(0,\rho_1)$, $\nabla_\psi\esm(\x)(t)$ exists and satisfies
\begin{equation}\label{eq:nablaproj}
	\nabla_\psi\esm(\x)(t)=\proj_{\z(0)}v_0,\qquad t\in(0,\rho_1).
\end{equation}
Given the claim, it follows that $\nabla_\psi\esm(\x)$ lies in $\dlr([0,\rho_1):\R^J)$ and is continuous on $(0,\rho_1)$, so parts 1 and 2 of Statement \ref{state:main} hold with $T=\rho_1$. Let $\hat{\phi}$ denote the right continuous regularization of $\nabla_\psi\esm(\x)$ on $[0,\rho_1)$; that is, $\hat{\phi}\equiv \proj_{\z(0)}v_0$ on $[0,\rho_1)$. By \eqref{eq:Hxdx}, $\proj_{\z(0)}v_0\in H_{\z(0)}$ and 
	$$\proj_{\z(0)}v_0-\psi(0)=(\proj_{\z(0)}v_0-v_0)+(v_0-\psi(0))\in\spaan\lsb d(\z(0))\rsb.$$
In view of the above display and \eqref{eq:rho1}, it is readily verified that $(\hat{\phi},\hat{\phi}-\psi)$ is the unique solution of the DP for $\psi$ on $[0,\rho_1)$, so part 3 of Statement \ref{state:main} holds.

We now turn to the proof of \eqref{eq:nablaproj}. Since $n\geq2$ and $\theta_n=0$, $\z(0)\in\U$. This, combined with the definition of $\cts^{\delta,\z}$ in \eqref{eq:CZdelta} and the fact that $\rho_k\leq\delta$ by \eqref{eq:rhok}, implies that $\psi$ is constant on $[0,\rho_1]$. Fix $T\in(0,\rho_1)$. By Lemma \ref{lem:nablaesmtcompact}, given a sequence $\{\ve_\ell\}_{\ell\in\N}$ with $\ve_\ell\downarrow0$ as $\ell\to\infty$, there exists a subsequence, also denoted $\{\ve_\ell\}_{\ell\in\N}$, such that the following limit exists
	\be\label{eq:vT}v_T\doteq\lim_{\ell\to\infty}\nabla_\psi^{\ve_\ell}\esm(\x)(T).\ee
It suffices to show that $v_T=\proj_{\z(0)}v_0$ must hold. Due to the uniqueness of the derivative projection operators stated in Lemma \ref{lem:projx}, this is equivalent to showing that $v_T\in H_{\z(0)}$ and $v_T-v_0\in\spaan[d(\z(0))]$.

We first show that $v_T-v_0\in\spaan[d(\z(0))]$. By \eqref{eq:rho1}, the Lipschitz continuity of the ESM and the upper semicontinuity of $\allN(\cdot)$ (Lemma \ref{lem:allNusc}), we have, for all $\ell\in\N$ sufficiently large:
	\be\label{eq:vet}\allN(\z_{\ve_\ell}(t))\subseteq\allN(\z(0))\qquad\text{for all }t\in[0,T].\ee
Due to \eqref{eq:nablapsive}, condition 1 of the ESP and the fact that $\psi$ is constant on $[0,T]$, it follows that
\begin{align*}
	\nabla_\psi^{\ve_\ell}\esm(\x)(T)-\nabla_\psi^{\ve_\ell}\esm(\x)(0)&=\frac{1}{\ve_\ell}\lsb\y_{\ve_\ell}(T)-\y_{\ve_\ell}(0)-(\y(T)-\y(0))\rsb.
\end{align*}
When combined with condition 3 of the ESP, \eqref{eq:rho1} and \eqref{eq:vet}, this implies that for all $\ell\in\N$ sufficiently large,
	$$\nabla_\psi^{\ve_\ell}\esm(\x)(T)-\nabla_\psi^{\ve_\ell}\esm(\x)(0)\in\spaan[d(\z(0))].$$
Since $\spaan[d(\z(0))]$ is a closed subspace, taking limits as $\ell\to\infty$ and using \eqref{eq:v0} and \eqref{eq:vT}, we see that $v_T-v_0\in\spaan[d(\z(0))]$.

We are left to prove that $v_T\in H_{\z(0)}$, which is the more complicated part of the proof. We consider two exhaustive and mutually exclusive cases separately. In Case 1, we show that there exists a time-shifted version of the path, which starts in $G\setminus\U$ and satisfies the assumptions of Lemma \ref{lem:rhok}. We then invoke Lemma \ref{lem:rhok} to prove our result. In Case 2, we similarly apply Lemma \ref{lem:rhok} to a time-shifted version of the path; however, that case is more complicated and will make crucial use of condition 3 of the boundary jitter property. \\

\emph{Case 1:} $\allN(\z(t))=\allN(\z(0))$ for some $t\in(0,T)$. \\
Let $U\in(0,T)$ be such that $\allN(\z(U))=\allN(\z(0))$. By \eqref{eq:thetan} and the assumption that $\theta_{n+1}>\theta_n=0$, 
	\be\label{eq:allNU}|\allN(\z(U))|=|\allN(\z(0))|=n.\ee 
By condition 2 of the boundary jitter property, there exists $S\in(0,U)$ such that $\z(S)\not\in\U$. Define $\x^S,\z^S,\y^S$ as in \eqref{eq:xS}--\eqref{eq:yS}, so $\x^S\in\cts_G$ and by the time-shift property of the ESP (Lemma \ref{lem:esmshift}), $(\z^S,\y^S)$ solves the ESP for $\x^S$. Define 
	$$\tau^S\doteq\inf\{t\in[0,\infty):\z^S\in\W\}=\tau-S,$$
where the last equality uses \eqref{eq:zS}, \eqref{eq:tau} and the fact that $S<\tau$. Since $(\z,\y)$ satisfies the boundary jitter property on $[0,\tau)$, it is straightforward to verify, using \eqref{eq:zS}--\eqref{eq:yS}, that $(\z^S,\y^S)$ satisfies the boundary jitter property on $[0,\tau^S)$. Define $\theta_n^S$ and $\theta_{n+1}^S$ as in \eqref{eq:thetan}, but with $\z^S$ and $\tau^S$ in place of $\z$ and $\tau$, respectively. Since $\z(S)\not\in\U$ and $S\in(0,U)$, it follows from \eqref{eq:thetan}, \eqref{eq:zS} and \eqref{eq:allNU} that $\theta_n^S\in(0,U-S]$. By \eqref{eq:zS} and \eqref{eq:rho1}, $\z^S(t)\subseteq\allN(\z(0))$ for all $t\in[0,\rho_1-S)$. Together, the previous two sentences, along with the fact that $U<T<\rho_1$, imply that 
	\be\label{eq:thetan1SthetanS}\allN(\z^S(\theta_n^S))=\allN(\z(0))\qquad\text{and}\qquad0<\theta_n^S\leq U-S<\theta_{n+1}^S.\ee
Set $t_1^S\doteq\theta_n^S$ and define 
\begin{align}\label{eq:rho1S}
	\rho_1^S&\doteq\inf\{t\in(\theta_n^S,\theta_n^S+\delta]:\allN(\z^S(t))\not\subseteq\allN(\z^S(\theta_n^S))\}\wedge(\theta_n^S+\delta)\\ \notag
	&=\lb\inf\{t\in(0,\theta_n^S+\delta]:\allN(\z(t))\not\subseteq\allN(\z(0))\}\wedge(\theta_n^S+\delta)\rb-S\\ \notag
	&\geq \rho_1-S,
\end{align}
where the second line uses \eqref{eq:zS}, \eqref{eq:rho1} and \eqref{eq:thetan1SthetanS}, and the final line follows from the definition \eqref{eq:rhok} of $\rho_1$ along with the fact that $t_1\doteq\theta_n=0$. In addition, by our assumption that Statement \ref{state:thetan} holds, Statement \ref{state:main} holds with $\x^S,\z^S$ and $t_1^S\doteq\theta_n^S$ in place of $\x,\z$ and $T$, respectively. Therefore, the conditions in Lemma \ref{lem:rhok} (when $k=1$) hold, so we can conclude that
\begin{itemize}
	\item[(i)] Statement \ref{state:main} holds with $\x^S,\z^S$ and $\rho_1^S$ in place of $\x,\z$ and $T$, respectively.
\end{itemize}
Proceeding, by Lemma \ref{lem:nablaesmtcompact}, we can choose a further subsequence of $\{\ve_\ell\}_{\ell\in\N}$, also denoted $\{\ve_\ell\}_{\ell\in\N}$, such that the following limit exists
	\be\label{eq:vS}v_S\doteq\lim_{\ell\to\infty}\nabla_\psi^{\ve_\ell}\esm(\x)(S).\ee
Define
	\be\label{eq:psiSvs}\hat{\psi}^S(\cdot)\doteq v_S+\psi(S+\cdot)-\psi(S).\ee
The above definition, along with \eqref{eq:CZdelta}, \eqref{eq:zS} and the fact that $\psi\in\cts^{\delta,\z}$, implies that $\hat{\psi}^S\in\cts^{\delta,\z^S}$. Thus, by (i) above, we have the following:
\begin{itemize}
	\item[(ii)] $\nabla_{\hat{\psi}^{S}}\esm(\x^{S})$ exists on $[0,\rho_1^{S})$; 
	\item[(iii)] there exists a unique solution $({\phi}^S,{\eta}^S)$ of the DP associated with $\z^S$ for $\hat{\psi}^S$ on $[0,\rho_1^{S})$, and ${\phi}^{S}(t)=\nabla_{\hat{\psi}^{S}}\esm(\x^{S})(t+)$ for $t\in[0,\rho_1^{S})$. 
\end{itemize}
We show that $\phi^S$ is constant on $[\theta_n^S,\rho_1^S)$. By \eqref{eq:thetan1SthetanS}--\eqref{eq:rho1S}, we have $\allN(\z^S(t))\subseteq\allN(\z(0))$ for all $t\in[\theta_n^S,\rho_1^S)$. Then \eqref{eq:Hx} implies that $H_{\z(0)}\subseteq H_{\z^S(t)}$ for all $t\in[\theta_n^S,\rho_1^S)$. When combined with condition 2 of the DP and \eqref{eq:thetan1SthetanS}, we have
	\be\label{eq:phiSthetanS}{\phi}^S(\theta_n^S)\in H_{\z^S(\theta_n^S)}=H_{\z(0)}\subseteq H_{\z^S(t)},\qquad t\in[\theta_n^S,\rho_1^S).\ee
Since $\hat{\psi}^S\in\cts^{\delta,\z^S}$ and $\z^S(\theta_n^S)\in\U$, $\hat{\psi}^S$ is constant on $[\theta_n^S,\theta_n^S+\delta]$, and in particular, by \eqref{eq:rho1S}, $\hat{\psi}^S$ is constant on $[\theta_n^S,\rho_1^S]$. It is readily verified that if $(\phi^S,\eta^S)$ are also constant on $[\theta_n^S,\rho_1^S)$, then conditions 1--3 of the DP associated with $\z^S$ for $\hat{\psi}^S$ are satisfied for all $s,t\in[\theta_n^S,\rho_1^S)$. Therefore, by the uniqueness of solutions to the DP, ${\phi}^S$ must be constant on $[\theta_n^S,\delta-S)$. This, together with (iii) above and \eqref{eq:phiSthetanS}, implies that 
	\be\label{eq:psiSvS}\nabla_{\hat{\psi}^S}\esm(\x^S)(t)={\phi}^S(\theta_n^S)\in H_{\z(0)},\qquad t\in(\theta_n^S,\rho_1^S).\ee

For each $\ell\in\N$, define $\x_{\ve_\ell}^S,\z_{\ve_\ell}^S,\hat{\psi}_{\ve_\ell}^S$ as in \eqref{eq:xveS}--\eqref{eq:zveS} and \eqref{eq:psiveS}, but with $\ve_\ell$ in place of $\ve$. Then by \eqref{eq:xveSxS} and \eqref{eq:vS}--\eqref{eq:psiSvs}, the following hold:
\begin{itemize}
	\item[(iv)] $\X_{\ve_\ell}^S=\X^S+\ve_\ell\hat{\psi}_{\ve_\ell}^S$;
	\item[(v)] $\hat{\psi}_{\ve_\ell}^S\to\hat{\psi}^S$ uniformly on $[0,\infty)$ as $\ell\to\infty$. 
\end{itemize}
Since $0<S<U<T<\rho_1$ by definition, $\theta_n^S\leq U-S$ by \eqref{eq:thetan1SthetanS}, and $\rho_1-S<\rho_1^S$ by \eqref{eq:rho1S}, it follows that $T-S\in(\theta_n^S,\rho_1^S)$. We can now conclude that
\begin{align*}
	v_T&=\lim_{\ell\to\infty}\frac{\esm(\x+\ve_\ell\psi)(T)-\esm(\x)(T)}{\ve_\ell}\\
	&=\lim_{\ell\to\infty}\frac{\esm(\x^S+\ve_\ell\hat{\psi}_{\ve_\ell}^S)(T-S)-\esm(\x^S)(T-S)}{\ve_\ell}\\
	&=\lim_{\ell\to\infty}\nabla_{\hat{\psi}_{\ve_\ell}^S}^{\ve_\ell}\esm(\x^S)(T-S)\\
	&=\nabla_{\hat{\psi}^S}\esm(\x^S)(T-S)\in H_{\z(0)},
\end{align*}
where the first equality follows from \eqref{eq:vT} and \eqref{eq:nablapsive}; the second equality is due to the time-shift property of the ESP and (iv); the third equality follows from \eqref{eq:nablapsive}; the final equality holds because of (ii), (v) and Proposition \ref{prop:psive}; and the inclusion is due to \eqref{eq:psiSvS}. This proves Case 1. \\

\emph{Case 2:} $\allN(\z(t))\subsetneq\allN(\z(0))$ for all $t\in(0,T)$. \\
Since $\z$ satisfies conditions 3 and 4 of the boundary jitter property, $\z(0)\in\U$ and $\allN(\z(t))\subsetneq\allN(\z(0))$ for all $t\in(0,T)$, there is a nested decreasing sequence
	\be\label{eq:Tchi0}T>\chi_0>u_1\geq\chi_1>\cdots>u_j\geq\chi_j>\cdots>0\ee
such that the conclusions of Lemma \ref{lem:chij} hold.  Condition 3 of the boundary jitter property, the fact that $T<\tau$ and \eqref{eq:chijuj} imply that $\allN(\z(0))=\cup_{j\geq m}\allN(\z(\chi_j))$ for all $m\in\N$. Let $\gamma>0$ be arbitrary. Define the compact set $C\subseteq\R^J$ by
	\be\label{eq:compactC}C\doteq\lcb y\in\R^J:|y|\leq\lip_\dm\lip_{\esm}|\psi(0)|\rcb.\ee
By \eqref{eq:limKproduniform} in Corollary \ref{lem:accumulateuniformly}, we can choose $m=m(\gamma,C)\in\N$ such that
	\be\label{eq:projchibound}\sup_{y\in C}\left|\lsb\proj_{\z(\chi_m)}\cdots\proj_{\z(\chi_1)}\rsb y-\proj_{\z(0)}y\right|\leq\gamma.\ee

For $j\in\{0,m\}$, define $\x^{\chi_j},\z^{\chi_j},\y^{\chi_j}$ as in \eqref{eq:xS}--\eqref{eq:yS}, but with $\chi_j$ in place of $S$, so $\x^{\chi_j}\in\cts_G$ and by the time-shift property of the ESP, $(\z^{\chi_j},\y^{\chi_j})$ solves the ESP for $\x^{\chi_j}$. As in Case 1, define
	$$\tau^{\chi_j}\doteq\inf\{t\in[0,\infty):\z^{\chi_j}\in\W\}=\tau-\chi_j,$$
where the last equality last equality uses \eqref{eq:zS}, \eqref{eq:tau} and the fact that $\chi_j<\tau$. As in Case 1, since $(\z,\y)$ satisfies the boundary jitter property on $[0,\tau)$, $(\z^{\chi_j},\y^{\chi_j})$ satisfies the boundary jitter property on $[0,\tau^{\chi_j})$. Define $\theta_n^{\chi_j}$ and $\theta_{n+1}^{\chi_j}$ as in \eqref{eq:thetan}, but with $\z^{\chi_j}$ and $\tau^{\chi_j}$ in place of $\z$ and $\tau$, respectively. Since $\allN(\z(t))\subsetneq\allN(\z(0))$ for all $t\in(0,T)$, $\chi_j\in(0,T)$ and $|\allN(\z(0))|=n$ by our assumption $\theta_{n+1}>\theta_n=0$, it follows from \eqref{eq:zS} that $\theta_n^{\chi_j}\geq T-\chi_j$. Set $t_1^{\chi_j}\doteq\theta_n^{\chi_j}$ and define
\begin{align*}
	\rho_1^{\chi_j}&\doteq\inf\{t\in(\theta_n^{\chi_j},\theta_n^{\chi_j}+\delta]:\allN(\z^{\chi_j}(t))\not\subseteq\allN(\z^{\chi_j}(\theta_n^{\chi_j}))\}\wedge(\theta_n^{\chi_j}+\delta)>T-\chi_j.
\end{align*}
By our assumption that Statement \ref{state:thetan} holds, Statement \ref{state:main} holds with $\x^{\chi_j},\z^{\chi_j}$ and $t_1^{\chi_j}\doteq\theta_n^{\chi_j}$ in place of $\x,\z$ and $T$, respectively. Therefore, the conditions in Lemma \ref{lem:rhok} (when $k=1$) hold, so we can conclude that
\begin{itemize}
	\item[(a)] Statement \ref{state:main} holds with $\x^{\chi_j},\z^{\chi_j}$ and $\rho_1^{\chi_j}$ in place of $\x,\z$ and $T$, respectively.
\end{itemize}
By Lemma \ref{lem:nablaesmtcompact}, there is a further subsequence of $\{\ve_\ell\}_{\ell\in\N}$, also denoted $\{\ve_\ell\}_{\ell\in\N}$, such that the following limits exist:
	\be\label{eq:vchimvchi0}v_{\chi_m}\doteq\lim_{\ell\to\infty}\nabla_\psi^{\ve_\ell}\esm(\x)(\chi_m)\qquad\text{and}\qquad v_{\chi_0}\doteq\lim_{\ell\to\infty}\nabla_\psi^{\ve_\ell}\esm(\x)(\chi_0).\ee
For $j\in\{0,m\}$, define
	\be\label{eq:psichij}\hat{\psi}^{\chi_j}(\cdot)\doteq v_{\chi_j}+\psi(\chi_j+\cdot)-\psi(\chi_j).\ee
Note that since $\psi$ is constant on $[0,\rho_1]$ and $T\in(0,\rho_1)$, 
\begin{itemize}
	\item[(b)] $\hat{\psi}^{\chi_j}\equiv v_{\chi_j}$ on $[0,T-\chi_j]$. 
\end{itemize}
As in Case 1, the fact that $\psi\in\cts^{\delta,\z}$ implies that $\hat{\psi}^{\chi_j}\in\cts^{\delta,\z^{\chi_j}}$. Thus, by (a), we have the following:
\begin{itemize}
	\item[(c)] $\nabla_{\hat{\psi}^{\chi_j}}\esm(\x^{\chi_j})$ exists on $[0,\rho_1^{\chi_j})$; 
	\item[(d)] there exists a unique solution $({\phi}^{\chi_j},{\eta}^{\chi_j})$ of the DP associated with $\z^{\chi_j}$ for $\hat{\psi}^{\chi_j}$ on $[0,\rho_1^{\chi_j})$, and ${\phi}^{\chi_j}(t)=\nabla_{\hat{\psi}^{\chi_j}}\esm(\x^{\chi_j})(t+)$ for $t\in[0,\rho_1^{\chi_j})$.
\end{itemize}
For each $\ell\in\N$, define $\x_{\ve_\ell}^{\chi_j},\z_{\ve_\ell}^{\chi_j}$ and $\hat{\psi}_{\ve_\ell}^{\chi_j}$ as in \eqref{eq:xveS}--\eqref{eq:zveS} and \eqref{eq:psiveS}, but with $\chi_j$ and $\ve_\ell$ in place of $S$ and $\ve$, respectively. Then by \eqref{eq:xveSxS} and \eqref{eq:vchimvchi0}--\eqref{eq:psichij},
\begin{itemize}
	\item[(e)] $\x_{\ve_\ell}^{\chi_j}=\x^{\chi_j}+\ve_\ell\hat{\psi}_{\ve_\ell}^{\chi_j}$;
	\item[(f)] $\hat{\psi}_{\ve_\ell}^{\chi_j}\to\hat{\psi}^{\chi_j}$ uniformly on $[0,\infty)$ as $\ell\to\infty$.
\end{itemize}

Let $I\doteq\allN(\z(0))$. Recall that $\Pi_I$ denotes the orthogonal projection from $\R^J$ onto $\spaan(\{n_i,i\in\allN(\z(0))\})=H_{\z(0)}^\perp$. By \eqref{eq:zS}, \eqref{eq:rho1} and \eqref{eq:vet}, for all $\ell\in\N$ sufficiently large, $\z^{\chi_0}(t),\z_{\ve_\ell}^{\chi_0}(t)\subseteq I$ for all $t\in[0,T-\chi_0]$. By \eqref{eq:vT}, \eqref{eq:nablapsive}, the time-shift property of the ESP, Lemma \ref{lem:projlip} with $\z_1=\z_{\ve_\ell}^{\chi_0}$ and $\z_2=\z^{\chi_0}$, and (b), (e) and (f) above, we have
\begin{align*}
	|\Pi_Iv_T|&=\lim_{\ell\to\infty}\frac{|\Pi_I(\z_\ve(T)-\z(T))|}{\ve_\ell}\\
	&=\lim_{\ell\to\infty}\frac{|\Pi_I(\z_{\ve_\ell}^{\chi_j}(T-\chi_j)-\z^{\chi_j}(T-\chi_j))|}{\ve_\ell}\\
	&\leq\lim_{\ell\to\infty}\lip_I\norm{\Pi_I\hat{\psi}_{\ve_\ell}^{\chi_0}}_{T-\chi_0}=\kappa_I|\Pi_Iv_{\chi_0}|.
\end{align*}
Thus, to prove that $v_T\in H_{\z(0)}$, it suffices to show that $v_{\chi_0}\in H_{\z(0)}$.

We claim, and prove below, that for each $1\leq j\leq m$,
	\be\label{eq:phiSchijS}{\phi}^{\chi_m}(\chi_{j-1}-\chi_m)=\proj_{\z(\chi_j)}{\phi}^{\chi_m}(\chi_j-\chi_m).\ee
Iterating this recursion relation yields
	\be\label{eq:phiSchi0S}{\phi}^{\chi_m}(\chi_0-\chi_m)=\lsb\proj_{\z(\chi_1)}\cdots\proj_{\z(\chi_m)}\rsb {\phi}^{\chi_m}(0).\ee
By \eqref{eq:zS} and Lemma \ref{lem:chij}, $\z^{\chi_m}(\chi_0-\chi_m)=\z(\chi_0)\in G^\circ$. By the continuity of $\z^{\chi_m}$ and the fact that $G^\circ$ is open, $\z^{\chi_m}(t)\in G^\circ$ for all $t$ in a neighborhood of $\chi_0-\chi_m$. By condition 3 of the DP, \eqref{eq:allNx} and \eqref{eq:Hx}, this implies that $\eta^{\chi_m}$ is constant in a neighborhood of $\chi_0-\chi_m$. Since $\hat{\psi}^{\chi_m}$ is constant on $[0,T-\chi_m]$ by (b) above, condition 1 of the DP implies that $\phi^{\chi_m}$ is also constant in a neighborhood of $\chi_0-\chi_m$. In particular, $\phi^{\chi_m}(\chi_0-\chi_m)=\nabla_{\hat{\psi}^{\chi_m}}\esm(\x^{\chi_m})(\chi_0-\chi_m)$ due to (d) above. This, along with \eqref{eq:vchimvchi0}, \eqref{eq:nablapsive}, the time-shift property of the ESP, (c) and (f) above, Proposition \ref{prop:psive} and \eqref{eq:phiSchi0S}, implies
\begin{align}\label{eq:vchi0}
	v_{\chi_0}&=\lim_{\ell\to\infty}\nabla_{\psi_{\ve_\ell}^{\chi_m}}^{\ve_\ell}\esm(\x^{\chi_m})(\chi_0-\chi_m)\\ \notag
	&=\nabla_{\psi^{\chi_m}}\esm(\x^{\chi_m})(\chi_0-\chi_m)\\ \notag
	&={\phi}^{\chi_m}(\chi_0-\chi_m)\\ \notag
	&=\lsb\proj_{\z(\chi_1)}\cdots\proj_{\z(\chi_m)}\rsb{\phi}^{\chi_m}(0).
\end{align}
By (d) above, the Lipschitz continuity of the DM (Theorem \ref{thm:dmlip}), \eqref{eq:psichij}, \eqref{eq:vchimvchi0}, the Lipschitz continuity of the ESM (Theorem \ref{thm:esmlip}) and because $\psi$ is constant on $[0,T]$, we have
\begin{align*}
	|{\phi}^{\chi_m}(0)|&\leq\lip_\dm|\hat{\psi}^{\chi_m}(0)|=\lip_\dm|v_{\chi_m}|\leq\lip_\dm\lip_{\esm}|\psi(0)|.
\end{align*}
Then by \eqref{eq:vchi0}, \eqref{eq:compactC} and \eqref{eq:projchibound}, 
\begin{align*}
	|v_{\chi_0}-\proj_{\z(0)}\phi^{\chi_m}(0)|\leq\gamma.
\end{align*}
Since $\gamma>0$ was arbitrary and $\proj_{\z(0)}$ projects onto $H_{\z(0)}$, we have $v_{\chi_0}\in H_{\z(0)}$.

We are left to prove that \eqref{eq:phiSchijS} holds. Fix $1\leq j\leq m$. We first show that ${\phi}^{\chi_m}$ is constant on $[\chi_j-\chi_m,u_j-\chi_m]$, where $u_j$ is as in \eqref{eq:Tchi0}. By condition 2 of the DP, \eqref{eq:zS}, \eqref{eq:Hx} and \eqref{eq:chijuj}, ${\phi}^{\chi_m}(\chi_j-\chi_m)\in H_{\z(\chi_j)}\subseteq H_{\z(t)}$ for all $t\in[\chi_j,u_j]$. Since $\hat{\psi}^{\chi_m}$ is constant on $[\chi_j-\chi_m,u_j-\chi_m]$, it is readily checked that if ${\phi}^{\chi_m}$ is constant on $[\chi_j-\chi_m,u_j-\chi_m]$, then $({\phi}^{\chi_m},{\phi}^{\chi_m}-\psi^{\chi_m})$ satisfies conditions 1--3 of the DP associated with $\z^{\chi_m}$ on the interval $[\chi_j-\chi_m,u_j-\chi_m]$. It then follows from uniqueness of solutions to the DP that ${\phi}^{\chi_m}$ must be constant on $[\chi_j-\chi_m,u_j-\chi_m]$. Next, by condition 2 of the DP and \eqref{eq:zS}, we have
	\be\label{eq:phichim}{\phi}^{\chi_m}(\chi_{j-1}-\chi_m)\in H_{\z(\chi_{j-1})}.\ee
By condition 3 of the DP and the facts that ${\phi}^{\chi_m}$ is constant on $[\chi_j-\chi_m,u_j-\chi_m]$ and $\hat{\psi}^{\chi_m}$ is constant on $[0,T-\chi_m]$ by (b) above, we have
\begin{align*}
	\phi^{\chi_m}(\chi_{j-1}-\chi_m)-{\phi}^{\chi_m}(\chi_j-\chi_m)&={\phi}^{\chi_m}(\chi_{j-1}-\chi_m)-{\phi}^{\chi_m}(u_j-\chi_m)\\
	&\in\spaan[\cup_{u\in(u_j-\chi_m,\chi_{j-1}-\chi_m]}d(\z^{\chi_m}(u))].
\end{align*}
Due to \eqref{eq:zS} and \eqref{eq:chijuj}, the above display implies
	\be\label{eq:phichimchij1chim}\phi^{\chi_m}(\chi_{j-1}-\chi_m)-{\phi}^{\chi_m}(\chi_j-\chi_m)\in\spaan[d(\z(\chi_{j-1}))].\ee
Thus, \eqref{eq:phichim}--\eqref{eq:phichimchij1chim} and the uniqueness of the projection operators shown in Lemma \ref{lem:projx} imply that \eqref{eq:phiSchijS} holds.
\end{proof}

\begin{lem}\label{lem:sigma2}
Given $\x\in\cts_G$, let $(\z,\y)$ denote the solution of the ESP for $\x$. Define $\tau$ as in \eqref{eq:tau} and assume that $(\z,\y)$ satisfies the boundary jitter property on $[0,\tau)$. Define $\theta_n$ and $\theta_{n+1}$ as in \eqref{eq:thetan} and assume that $\theta_{n+1}>\theta_n$. Define $\{t_k^{(n)}\}_{k=1,\dots,K_n}$, $\{\rho_k^{(n)}\}_{k=1,\dots,K_n-1}$, $K_n\in\N$, as in \eqref{eq:t1n}--\eqref{eq:Kn}. Let $1\leq k<K_n$. Assume that Statement \ref{state:main} holds with $T=t_k^{(n)}$. Then Statement \ref{state:main} holds with $T=t_{k+1}^{(n)}$.
\end{lem}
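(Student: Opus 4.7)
The plan is to first apply Lemma \ref{lem:rhok} (or Lemma \ref{lem:rho1}) to extend Statement \ref{state:main} from $T = t_k^{(n)}$ to $T = \rho_k^{(n)}$, and then extend further to $T = t_{k+1}^{(n)}$ by invoking the outer induction hypothesis Statement \ref{state:thetan} on a suitable time shift of $(\x, \z)$ and stitching the two pieces via Lemma \ref{lem:stitchmain}. When $t_k^{(n)} > 0$, Lemma \ref{lem:rhok} applies directly. Otherwise $t_k^{(n)} = 0$; by the strict monotonicity of $\{t_k^{(n)}\}$ and \eqref{eq:t1n}, this forces $k = 1$ and $\theta_n = 0$, so Lemma \ref{lem:rho1} applies. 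Either way, Statement \ref{state:main} holds with $T = \rho_k^{(n)}$, and if $\rho_k^{(n)} = t_{k+1}^{(n)}$ there is nothing more to prove.

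The crux of the remaining argument (assuming $\rho_k^{(n)} < t_{k+1}^{(n)}$) is the existence of a time $S \in (t_k^{(n)}, \rho_k^{(n)})$ such that $|\allN(\z(t))| < n$ for every $t \in [S, \rho_k^{(n)})$. Since $k < K_n$, the definition of $t_k^{(n)}$ forces $|\allN(\z(t_k^{(n)}))| = n$, and \eqref{eq:rhok} gives $\allN(\z(t)) \subseteq \allN(\z(t_k^{(n)}))$ for every $t \in (t_k^{(n)}, \rho_k^{(n)})$. If no such $S$ existed, there would be a sequence $t_j \uparrow \rho_k^{(n)}$ with $\allN(\z(t_j)) = \allN(\z(t_k^{(n)}))$. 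By the continuity of $\z$ and the upper semicontinuity of $\allN(\cdot)$ (Lemma \ref{lem:allNusc}), $\allN(\z(t_k^{(n)})) \subseteq \allN(\z(\rho_k^{(n)}))$, whence $|\allN(\z(\rho_k^{(n)}))| \geq n$, and thus by \eqref{eq:tk1n} we would have $t_{k+1}^{(n)} = \rho_k^{(n)}$, a contradiction.

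Fix such $S$ and define $\x^S, \z^S, \y^S$ as in \eqref{eq:xS}--\eqref{eq:yS}. By the time-shift property of the ESP (Lemma \ref{lem:esmshift}), $(\z^S, \y^S)$ solves the ESP for $\x^S$ and satisfies the boundary jitter property on $[0, \tau - S)$. Since $|\allN(\z(u))| < n$ for $u \in [S, t_{k+1}^{(n)})$ while $|\allN(\z(t_{k+1}^{(n)}))| \geq n$, the analogue of $\theta_n$ for the shifted path equals $t_{k+1}^{(n)} - S$. The outer induction hypothesis Statement \ref{state:thetan} therefore yields Statement \ref{state:main} with $\x^S, \z^S, \hat{\psi}^S, t_{k+1}^{(n)} - S$ in place of $\x, \z, \psi, T$, where $\hat{\psi}^S$ is defined via \eqref{eq:psiSlimit} (well defined because $\nabla_\psi\esm(\x)(S)$ exists by the first step) and a straightforward check confirms $\hat{\psi}^S \in \cts^{\delta,\z^S}$ whenever $\psi \in \cts^{\delta,\z}$. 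Lemma \ref{lem:stitchmain}, applied with $T = \rho_k^{(n)}$ and $U = t_{k+1}^{(n)}$, then yields Statement \ref{state:main} with $T = t_{k+1}^{(n)}$. The main obstacle is the construction of $S$; with that in hand the rest is a time-shift-and-stitch maneuver closely modeled on the proof of Lemma \ref{lem:tk}.
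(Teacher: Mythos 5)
Your proposal is correct and follows the same overall strategy as the paper: pass from $t_k^{(n)}$ to $\rho_k^{(n)}$ via Lemma \ref{lem:rhok} (or Lemma \ref{lem:rho1} when $\theta_n=0$), then time-shift to a point $S$, invoke the outer induction hypothesis (Statement \ref{state:thetan}) for the shifted path, and stitch with Lemma \ref{lem:stitchmain}. The one substantive difference is your treatment of the shift time $S$, and here your version is actually the more careful one. The paper writes ``Let $S\in(\rho_k,t_{k+1})$'' and immediately afterwards uses ``$S<\rho_k$'' to conclude that $\nabla_\psi\esm(\x)(S)$ exists; these are inconsistent, and the hypotheses of Lemma \ref{lem:stitchmain} force $S$ to lie strictly below the time up to which the unshifted statement is already known, i.e.\ below $\rho_k$. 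Taking $S\in(t_k^{(n)},\rho_k^{(n)})$, as you do, then requires the additional argument that $S$ can be chosen so that $|\allN(\z(t))|<n$ on $[S,\rho_k^{(n)})$ --- otherwise the shifted $\theta_n^S$ could fall short of $t_{k+1}^{(n)}-S$ --- and your contradiction argument via the upper semicontinuity of $\allN(\cdot)$ (if $\allN(\z(t_j))=\allN(\z(t_k^{(n)}))$ along $t_j\uparrow\rho_k^{(n)}$ then $|\allN(\z(\rho_k^{(n)}))|\geq n$, forcing $t_{k+1}^{(n)}=\rho_k^{(n)}$) supplies exactly the missing piece. So the proposal is not merely correct; it repairs a gap in the published write-up.
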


\begin{proof}
For notational convenience, we drop the superscript $n$ notation and write $t_k$, $t_{k+1}$ and $\rho_k$ in place of $t_k^{(n)}$, $t_{k+1}^{(n)}$ and $\rho_k^{(n)}$, respectively.

Fix $\psi\in\cts^{\delta,\z}$.  By Lemma \ref{lem:rhok} and Lemma \ref{lem:rho1}, Statement \ref{state:main} holds with $T=\rho_k$, so if $\rho_k=t_{k+1}$, we are done. Suppose that $\rho_k<t_{k+1}$. Let $S\in(\rho_k,t_{k+1})$. By the definition \eqref{eq:tk1n} of $t_{k+1}$, $|\allN(\z(t))|\leq n$ for all $t\in[S,t_{k+1})$. Since Statement \ref{state:main} holds with $T=\rho_k$ and $S<\rho_k$, $\nabla_\psi\esm(\x)(S)$ exists. Define $\x^S,\z^S,\y^S,\hat{\psi}^S$ as in \eqref{eq:xS}--\eqref{eq:yS} and \eqref{eq:psiSlimit}. Define $\tau^S\doteq\inf\{t\in[0,\infty):\z^S\in\W\}=\tau-S$, where we have used \eqref{eq:zS} and the fact that $S\in[0,\tau)$. Then, since $(\z,\y)$ satisfies the boundary jitter property on $[0,\tau)$, it is straightforward to verify, using \eqref{eq:zS}--\eqref{eq:yS}, that $(\z^S,\y^S)$ satisfies the boundary jitter property on $[0,\tau^S)$. Using \eqref{eq:zS} and \eqref{eq:psiSlimit}, it is readily verified that $\hat{\psi}^S\in\cts^{\delta,\z^S}$. Define $\theta_n^S\doteq\inf\{t\in[0,\tau^S):|\allN(\z^S(t))|\geq n\}\wedge\tau^S$, so by \eqref{eq:tk1n} and the fact that $|\allN(\z^S(t))|=|\allN(\z(S+t))|\leq n$ for all $t\in[0,t_{k+1}-S)$, we have $t_{k+1}=S+\theta_n^S$. Since Statement \ref{state:thetan} holds by assumption, Statement \ref{state:main} holds with $\x^S,\z^S,\hat{\psi}^S,\phi^S,\eta^S$ and $\theta_n^S$ in place of $\x,\z,\psi,\phi,\eta$ and $T$, respectively. Therefore, by Lemma \ref{lem:stitchmain} (with $t_k$ and $S+\theta_n^S$ in place of $T$ and $U$, respectively), Statement \ref{state:main} holds with $t_{k+1}$ in place of $T$.
\end{proof}

We conclude this section with the proof of Lemma \ref{lem:inductionstep}.

\begin{proof}[Proof of Lemma \ref{lem:inductionstep}]
By assumption, Statement \ref{state:main} holds with $T=\theta_n$. Since $t_1^{(n)}\doteq\theta_n$, Statement \ref{state:main} holds with $T=t_1^{(n)}$. Then Lemma \ref{lem:sigma2} and the principle of mathematical induction imply that Statement \ref{state:main} holds with $T=t_k^{(n)}$ for $1\leq k<K_n+1$. Since either $K_n<\infty$ and $t_{K_n}^{(n)}=\theta_{n+1}$, or $K_n=\infty$ and $t_k^{(n)}\to\theta_{n+1}$ as $k\to\infty$, it follows that Statement \ref{state:main} holds with $T=\theta_{n+1}$.
\end{proof}

\appendix

\section{Proof of Lemma \ref{lem:local}}\label{apdx:local}

In this section we prove Lemma \ref{lem:local}. Throughout this section we fix an SP $\{(d_i,n_i,c_i),i\in\allN\}$ satisfying Assumption \ref{ass:linearlyind} and assume that $(\z,\y)$ is a solution of the SP for $\x\in\cts_G$.

\begin{proof}[Proof of Lemma \ref{lem:local}]
By \cite[Definition 1.1]{Ramanan2006}, there exists a measurable function $\gamma:[0,\infty)\mapsto \mathbb{S}^{J-1}$ such that $\gamma(t)\in d(\z(t))$ ($d|\y|$-almost everywhere) and
		\be\label{eq:yty0gamma}\y(t)=\int_{(0,t]}\gamma(s)d|\y|(s).\ee 
By \eqref{eq:dx} and Assumption \ref{ass:linearlyind}, there exists a unique (up to a set of $d|\y|$-measure zero) measurable function $\xi:[0,\infty)\mapsto\R_+^N$ such that
	\be\label{eq:gammasumzeta}\gamma(t)=\sum_{i\in\allN(\z(t))}\xi^i(t)d_i\qquad\text{$d|\y|$-almost everywhere}.\ee
For each $i\in\allN$, define
	\be\label{eq:Lizetai}L^i(t)\doteq \int_{(0,t]}1_{\{\z(s)\in F_i\}}\xi^i(s)d|\y|(s).\ee
Since $\xi$ takes values in $\R_+^N$, \eqref{eq:Lizetai} implies that for each $i\in\allN$, $L^i$ is nondecreasing and \eqref{eq:dLi} holds. By \eqref{eq:yty0gamma}--\eqref{eq:Lizetai}, for all $t\in[0,\infty)$,
	$$\y(t)=\sum_{i\in\allN}\lb\int_{(0,t]}1_{\{\z(s)\in F_i\}}\xi^i(s)d|\y|(s)\rb d_i=R L(t).$$
Together with the linear independence condition Assumption \ref{ass:linearlyind}, this implies that $L$ is uniquely defined and there exists a positive constant $\tilde{\lip}<\infty$ such that if, for $k=1,2$, $(\z_k,\y_k)$ is the solution of the SP $\{(d_i,n_i,c_i),i\in\allN\}$ for $\x_k\in\cts_G$ and $L_k$ is as above, but with $\z_k,\y_k$ and $L_k$ in place of $\z,\y$ and $L$, then for all $T\in(0,\infty)$, $\norm{L_1-L_2}_T\leq\tilde{\lip}\norm{\y_1-\y_2}_T$. The Lipschitz continuity property \eqref{eq:locallip} then follows (with $\lip_L=\tilde{\lip}(1+\lip_\sm)$) from condition 1 of the ESP, the triangle inequality and the Lipschitz continuity property \eqref{eq:Zlip}.
\end{proof}

\section{Proof of Lemma \ref{lem:CZdense}}\label{apdx:CZdense}

In this section we prove that if $\z$ satisfies condition 2 of the boundary jitter property, then $\cts^\z$ is dense in $\cts$.

\begin{proof}[Proof of Lemma \ref{lem:CZdense}]
Fix $\psi\in\cts$. We need to show that given $T<\infty$ and $\ve>0$, there exists $\delta>0$ and $\zeta\in\cts^{\delta,\z}$ such that $\norm{\psi-\zeta}_T<\ve$. Let $T<\infty$ and $\ve>0$ be arbitrary. Since $\psi$ is uniformly continuous on the compact interval $[0,T]$, we can choose $\gamma>0$ such that
	$$w_T(\psi,\gamma)\doteq\sup_{0\leq s<t\leq T,|t-s|<\gamma}|\psi(t)-\psi(s)|<\ve.$$
Since $\z$ is continuous and $G\setminus\U$ is relatively open in $G$, $\{s\in(0,T):\z(s)\in G\setminus\U\}$ is open and can thus be written as the countable union of disjoint open intervals $\{(s_j,t_j)\}_{j\in\N}\subseteq(0,T)$. By condition 2 of the boundary jitter property, we can choose $m\in\N$ sufficiently large so that $\sum_{j=1,\dots,m}|t_j-s_j|\geq T-\gamma/4$. Without loss of generality, we can assume the intervals are ordered so that $t_j\leq s_{j+1}$ for $j=1,\dots,m-1$. Consequently, $s_1\leq\gamma/4$, $t_m\geq T-\gamma/4$ and
	$$\{t\in[0,\infty):\z(t)\in\U\}\subseteq [0,s_1]\cup(\cup_{j=1,\dots,m-1}[t_j,s_{j+1}])\cup[t_m,\infty).$$

Let $0<\delta<\frac{\gamma}{4}\wedge\frac{1}{3}\min_{j=1,\dots,m}(t_j-s_j)$ and define the partially linearly interpolated paths $\zeta\in\cts$ as follows: set $\zeta(t)\doteq{\psi}(s_1+\delta)$ for all $t\in[0,s_1+\delta]$ and for $j=1,\dots,m-1$, define
	$$\zeta(t)\doteq
	\begin{cases}
		\psi(t)&t\in[s_j+\delta,t_j-2\delta],\\
		\psi(t_j-2\delta)+\frac{\psi(s_{j+1}+\delta)-\psi(t_j-2\delta)}{\delta}(t-t_j+2\delta)&t\in[t_j-2\delta,t_j-\delta],\\
		\psi(s_{j+1}+\delta)&t\in[t_j-\delta,s_{j+1}+\delta].
	\end{cases}$$
Set $\zeta(t)\doteq\psi(t)$ for all $t\in[s_m+\delta,t_m-\delta]$ and $\zeta(t)\doteq\psi(t_m-\delta)$ for all $t\in[t_m-\delta,\infty)$. By definition, $\zeta$ is constant on a $\delta$-neighborhood of $I$ in $[0,\infty)$, so $\zeta\in\cts^{\delta,\z}$. We are left to show that $\norm{\psi-\zeta}_T<\ve$. Since $\delta<\gamma/4$, $s_1<\gamma/4$, $T-t_m<\gamma/4$ and $s_{j+1}-t_j<\gamma/4$ for all $j=1,\dots,m-1$, we have
\begin{align*}
	\norm{\psi-\zeta}_T&\leq\sup_{0\leq t\leq s_1+\delta}|\psi(t)-\psi(s_1+\delta)|\\
	&\qquad\vee\max_{j=1,\dots,m-1}\sup_{t\in[t_j-2\delta, t_j-\delta]}|\psi(t)-\psi(s_{j+1}+\delta)|\vee|\psi(t)-\psi(t_j-2\delta)|\\
	&\qquad\vee\max_{j=1,\dots,m-1}\sup_{t\in[t_j-\delta,s_{j+1}+\delta]}|\psi(t)-\psi(s_{j+1}+\delta)|\\
	&\qquad\vee\sup_{t_m-\delta\leq t\leq T}|\psi(t)-\psi(t_m-\delta)|\\
	&\leq w_T(\psi,\gamma)<\ve,
\end{align*}
which is our desired conclusion.
\end{proof}

\section{Proof of Lemma \ref{lem:projlip}}\label{apdx:projlip}

In this section we prove Lemma \ref{lem:projlip} which states that certain orthogonal projections of solutions of the ESP satisfy Lipschitz continuity properties. In order to prove the lemma, we show that these orthogonal projections of solutions of the ESP satisfy a transformed ESP in which the directions of reflection are orthogonally projected. 

Fix an ESP $\{(d_i,n_i,c_i),i\in\allN\}$. Given a subset $I\subseteq\allN$, let $\Pi_I:\R^J\mapsto\spaan(\{n_i,i\in I\})$ denote the orthogonal projection with respect to the usual Euclidean inner product $\ip{\cdot,\cdot}$. For $f\in\cts$, we define $\Pi_If\in\cts$ by $(\Pi_If)(t)\doteq\Pi_I(f(t))$ for all $t\in[0,\infty)$.

\begin{lem}\label{lem:projesp}
Suppose that $(\z,\y)$ solves the ESP $\{(d_i,n_i,c_i),i\in\allN\}$ for $\x\in\cts$. Given $T\in(0,\infty]$ and $I\subseteq\allN$, suppose that $\allN(\z(t))\subseteq I$ for all $t\in[0,T)$. Then $(\Pi_I\z,\Pi_I\y)$ solves the ESP $\{(\Pi_Id_i,n_i,c_i),i\in I\}$ for $\Pi_I\x$ on $[0,T)$.
\end{lem}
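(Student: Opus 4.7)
My plan is to verify the three conditions of the ESP $\{(\Pi_I d_i, n_i, c_i), i \in I\}$ for $(\Pi_I\z, \Pi_I\y)$ directly, together with the initial condition. Denote this transformed problem's domain by $\tilde G \doteq \bigcap_{i \in I}\{x \in \R^J : \ip{x,n_i} \ge c_i\}$, its face-index set by $\tilde\allN(x) \doteq \{i \in I : \ip{x,n_i} = c_i\}$, and its reflection cone by $\tilde d(x) \doteq \conv(\{\Pi_I d_i, i \in \tilde\allN(x)\})$ (with $\tilde d(x) = \{0\}$ for $x \in \tilde G^\circ$). The underlying observation throughout will be that for $i \in I$, the normal $n_i$ lies in $\spaan(\{n_j, j \in I\})$, which is the range of the orthogonal projection $\Pi_I$; hence
\begin{equation*}
\ip{\Pi_I f, n_i} = \ip{f, n_i} \qquad \text{for all } f \in \R^J, \; i \in I.
\end{equation*}

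Condition~1 is immediate from linearity of $\Pi_I$ applied to $\z = \x + \y$. For condition~2, I fix $t \in [0,T)$ and $i \in I$; by the observation above and condition~2 of the original ESP, $\ip{\Pi_I \z(t), n_i} = \ip{\z(t), n_i} \ge c_i$, so $\Pi_I\z(t) \in \tilde G$. The heart of the argument is condition~3. I will first show that for every $u \in [0,T)$,
\begin{equation*}
\allN(\z(u)) \subseteq \tilde\allN(\Pi_I\z(u)),
\end{equation*}
using the hypothesis $\allN(\z(u)) \subseteq I$ together with the identity $\ip{\Pi_I\z(u), n_i} = \ip{\z(u), n_i} = c_i$ for $i \in \allN(\z(u))$. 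Consequently, for each such $u$,
\begin{equation*}
\Pi_I d(\z(u)) = \conv(\{\Pi_I d_i, i \in \allN(\z(u))\}) \subseteq \tilde d(\Pi_I \z(u)).
\end{equation*}
Fixing $0 \le s < t < T$, I then apply $\Pi_I$ (linear, hence preserving convex cones under set-image) to the inclusion $\y(t) - \y(s) \in \conv[\cup_{u \in (s,t]} d(\z(u))]$ provided by condition~3 of the original ESP, and conclude that $\Pi_I\y(t) - \Pi_I\y(s) \in \conv[\cup_{u \in (s,t]}\tilde d(\Pi_I\z(u))]$, as required.

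The initial condition $\Pi_I\y(0) \in \tilde d(\Pi_I\z(0))$ follows by the same inclusion applied at $u = 0$: from $\y(0) \in d(\z(0))$ we obtain $\Pi_I\y(0) \in \Pi_I d(\z(0)) \subseteq \tilde d(\Pi_I\z(0))$. The main (and only real) technical point is the containment $\allN(\z(u)) \subseteq \tilde\allN(\Pi_I\z(u))$, which hinges on both the hypothesis that $\allN(\z(u)) \subseteq I$ and the key projection identity; once this is in hand, all three ESP conditions for the transformed problem reduce to straightforward applications of linearity. No regularity issues arise since $\Pi_I$ is continuous, so $\Pi_I\z$ and $\Pi_I\y$ remain in $\cts([0,T):\R^J)$.
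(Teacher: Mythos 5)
Your proposal is correct and follows essentially the same route as the paper's proof: the key identity $\ip{\Pi_I f,n_i}=\ip{f,n_i}$ for $i\in I$, combined with the hypothesis $\allN(\z(u))\subseteq I$, gives conditions 1 and 2 and the matching of active face sets, after which condition 3 and the initial condition follow by pushing the cone inclusion through the linear map $\Pi_I$. The only cosmetic difference is that the paper writes out $\y(t)-\y(s)$ explicitly as a nonnegative combination $\sum r_i d_i$ before projecting, whereas you argue abstractly that $\Pi_I$ maps generated convex cones to generated convex cones; both are valid.
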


\begin{proof}
By condition 1 of the ESP $\{(d_i,n_i,c_i),i\in\allN\}$ and the linearity of $\Pi_I$, $(\Pi_I\z,\Pi_I\y)$ satisfies condition 1 of the ESP $\{(\Pi_Id_i,n_i,c_i),i\in I\}$ for $\Pi_I\x$. Next, $\Pi_I$ being an orthogonal projection onto $\spaan(\{n_i,i\in I\})$, we have 
	\be\label{eq:ipPiIzni}\ip{\Pi_I\z(t),n_i}=\ip{\z(t),n_i}\geq c_i\qquad\text{for all }t\in[0,T)\text{ and }i\in I,\ee
so $\Pi_I\z$ satisfies condition 2 of the ESP $\{(\Pi_Id_i,n_i,c_i),i\in I\}$. To show $(\Pi_I\z,\Pi_I\y)$ satisfies condition 3 of the ESP $\{(\Pi_Id_i,n_i,c_i),i\in I\}$, fix $0\leq s<t<T$. Since $(\z,\y)$ satisfies the ESP $\{(d_i,n_i,c_i),i\in\allN\}$, \eqref{eq:ytys} implies that there exist $r_i\geq0$, $i\in\cup_{u\in(s,t]}\allN(\z(u))$, such that
	$$\y(t)-\y(s)=\sum_{i\in\cup_{u\in(s,t]}\allN(\z(u))}r_id_i=\sum_{i\in\cup_{u\in(s,t]}\allN(\Pi_I\z(u))}r_id_i,$$
where the second equality uses the equality in \eqref{eq:ipPiIzni} and the fact that $\allN(\z(u))\subseteq I$ for all $u\in(s,t]$. Then, by the linearity of $\Pi_I$,
\begin{align*}
	\Pi_I\y(t)-\Pi_I\y(s)&=\sum_{i\in\cup_{u\in(s,t]}\allN(\Pi_I\z(u))}r_i\Pi_Id_i,
\end{align*}
so $(\Pi_I\z,\Pi_I\y)$ satisfies condition 3 of the ESP $\{(\Pi_Id_i,n_i,c_i),i\in I\}$. Lastly, the fact that $\Pi_I\y(0)\in\conv\lb\lcb d_i,i\in\allN(\Pi_I\z(0))\rcb\rb$ follows from an argument analogous to the one used to prove that condition 3 of the ESP holds, so we omit it.
\end{proof}

In the next lemma, we show that if the ESP $\{(d_i,n_i,c_i),i\in\allN\}$ satisfies Assumption \ref{ass:setB}, then the transformed ESP $\{(\Pi_Id_i,n_i,c_i),i\in I\}$ also satisfies Assumption \ref{ass:setB}.

\begin{lem}\label{lem:projsetB}
Given an ESP $\{(d_i,n_i,c_i),i\in\allN\}$ satisfying Assumption \ref{ass:setB} and $I\subseteq\allN$, there is a compact, convex, symmetric set $B_I$ with $0\in B_I^\circ$ such that for all $i\in I$,
	\be\label{eq:setBproj}\lcb\begin{array}{l}\tilde{z}\in\partial B_I\\|\ip{\tilde{z},n_i}|<1\end{array}\rcb\qquad\Rightarrow\qquad\ip{\tilde{\nu},\Pi_Id_i}=0\qquad\text{for all }\;\tilde{\nu}\in\nu_{B_I}(\tilde{z}).\ee
In other words, the ESP $\{(\Pi_Id_i,n_i,c_i),i\in I\}$ satisfies Assumption \ref{ass:setB}.
\end{lem}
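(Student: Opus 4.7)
The plan is to construct $B_I$ as a Minkowski sum that preserves the relevant normal-cone structure of $B$ while ensuring nonempty interior in $\R^J$. Writing $V_I \doteq \spaan\{n_i : i\in I\}$ and using Remark \ref{rmk:setB} to rescale so that Assumption \ref{ass:setB} holds with threshold $1$, I would set
\begin{equation*}
B_I \doteq \Pi_I(B) + \lcb y \in V_I^\perp : |y| \leq 1 \rcb.
\end{equation*}
Since $\Pi_I(B) \subset V_I$ and the closed unit ball in $V_I^\perp$ are each compact, convex, symmetric with $0$ in their relative interior, and $\R^J = V_I \oplus V_I^\perp$, standard arguments for Minkowski sums of convex sets in complementary subspaces give that $B_I$ is compact, convex, symmetric with $0 \in B_I^\circ$ in $\R^J$, and that every $\tilde z \in B_I$ decomposes uniquely as $\tilde z = \tilde z_1 + \tilde z_2$ with $\tilde z_1 = \Pi_I \tilde z \in \Pi_I(B)$ and $\tilde z_2 = (I - \Pi_I)\tilde z$ in the unit ball of $V_I^\perp$.

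The main structural step is a lifting lemma for normal vectors. If $\tilde \nu_a \in V_I \setminus \{0\}$ lies in the normal cone of $\Pi_I(B)$ at some $\tilde z_1 \in \Pi_I(B)$ (the normal cone being taken inside $V_I$), then any $z^\ast \in B$ with $\Pi_I z^\ast = \tilde z_1$ lies in $\partial B$ and satisfies $\tilde \nu_a / |\tilde \nu_a| \in \nu_B(z^\ast)$. Such a $z^\ast$ exists since $\tilde z_1 \in \Pi_I(B)$, and the claim follows because $\tilde \nu_a \in V_I$ annihilates $V_I^\perp$, giving
\begin{equation*}
\ip{\tilde \nu_a, y} = \ip{\tilde \nu_a, \Pi_I y} \geq \ip{\tilde \nu_a, \tilde z_1} = \ip{\tilde \nu_a, z^\ast} \qquad \text{for all } y \in B,
\end{equation*}
which identifies $\tilde \nu_a$ as an inward normal of $B$ at $z^\ast$.

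To verify \eqref{eq:setBproj}, I would fix $\tilde z \in \partial B_I$ with $|\ip{\tilde z, n_i}| < 1$ and a unit $\tilde \nu \in \nu_{B_I}(\tilde z)$, and decompose $\tilde z = \tilde z_1 + \tilde z_2$ and $\tilde \nu = \tilde \nu_a + \tilde \nu_b$ along $V_I \oplus V_I^\perp$. Testing the defining inequality of $\tilde \nu$ against $y \in B_I$ with one component frozen to $\tilde z_1$ or $\tilde z_2$ yields that $\tilde \nu_a$ lies in the normal cone of $\Pi_I(B)$ at $\tilde z_1$ in $V_I$ and $\tilde \nu_b$ lies in the normal cone of the $V_I^\perp$-unit ball at $\tilde z_2$. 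Since $\Pi_I d_i \in V_I$ the $V_I^\perp$ part drops out: $\ip{\tilde \nu, \Pi_I d_i} = \ip{\tilde \nu_a, \Pi_I d_i} = \ip{\tilde \nu_a, d_i}$. If $\tilde \nu_a = 0$ this is $0$; otherwise the lifting lemma produces $z^\ast \in \partial B$ with $\Pi_I z^\ast = \tilde z_1$ and $\tilde \nu_a / |\tilde \nu_a| \in \nu_B(z^\ast)$. Because $n_i \in V_I$, $\ip{z^\ast, n_i} = \ip{\tilde z_1, n_i} = \ip{\tilde z, n_i}$, so $|\ip{z^\ast, n_i}| < 1$, and the rescaled Assumption \ref{ass:setB} forces $\ip{\tilde \nu_a / |\tilde \nu_a|, d_i} = 0$, hence $\ip{\tilde \nu_a, d_i} = 0$.

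The main obstacle I expect is settling on the correct construction of $B_I$. The naive candidates fail: $B_I = B$ does not work because $\nu_B(\cdot)$ generically contains vectors with nonzero $V_I^\perp$-component, so that $\ip{\nu, d_i} = 0$ does not upgrade to $\ip{\nu, \Pi_I d_i} = 0$; while $B_I = \Pi_I(B)$ lies entirely in $V_I$ and has empty interior in $\R^J$. The Minkowski-sum construction sits between these extremes: the $V_I^\perp$-ball restores the missing interior while contributing only $V_I^\perp$-directed normals (which are harmless against $\Pi_I d_i$), and the $V_I$-directed normals of $B_I$ lift back to normals of $B$ via the lifting lemma, at which point the rescaled Assumption \ref{ass:setB} closes the argument.
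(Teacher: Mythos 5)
Your proof is correct, and while it shares the paper's overall strategy of taking a Minkowski sum of a ``$V_I$-part of $B$'' with the unit ball of $V_I^\perp$, it differs in two substantive ways. First, the paper takes the $V_I$-component to be the slice $B\cap V_I$ rather than the projection $\Pi_I(B)$; since these sets generally differ (with $B\cap V_I\subseteq\Pi_I(B)$), your $B_I$ is a genuinely different, larger body. Second, the verifications diverge accordingly: the paper decomposes $\partial B_I$ into two cases and, in the main case, shows directly that $\Pi_I\tilde{\nu}$ is an inward normal to $B$ at the very point $z\in\partial B\cap V_I$, so that Assumption \ref{ass:setB} is invoked at that same point; you instead identify $\tilde{\nu}_a$ as a normal to the projected body $\Pi_I(B)$ and then lift it, via your fiber point $z^\ast\in\Pi_I^{-1}(\tilde z_1)\cap B$, to a normal of $B$ at a possibly different boundary point. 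What your route buys is that the key test inequality is immediate: for $u\in B$ the comparison point $\Pi_I u+\tilde z_2$ lies in your $B_I$ by construction, whereas the paper's analogous step ``$\tilde{u}\doteq\Pi_I u+y\in B_I$'' requires $\Pi_I u\in B\cap V_I$, i.e.\ $\Pi_I(B)\subseteq B$, which is not obvious for a general $B$ satisfying Assumption \ref{ass:setB}; your construction sidesteps this entirely. The price is the extra lifting lemma (together with the observation that $z^\ast\in\partial B$ whenever $\tilde\nu_a\neq0$) and the explicit rescaling via Remark \ref{rmk:setB}, both of which you handle correctly, including the degenerate case $\tilde{\nu}_a=0$.
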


\begin{proof}
Fix $I\subseteq\allN$ and let $V\doteq\spaan(\{n_i,i\in I\})$. By Assumption \ref{ass:setB}, there is a compact, convex, symmetric set $B$ with $0\in B^\circ$ such that \eqref{eq:setB} holds. Define $B_I$ by 
	\be\label{eq:barBx}B_I\doteq\lcb z+y:z\in B\cap V,y\in V^\perp,|y|\leq1\rcb.\ee
Then $B_I$ is a compact, convex, symmetric set with $0\in B_I^\circ$ and
\begin{align}\label{eq:partialBI}
	\partial B_I&=\lcb z+y:z\in\partial B\cap V,y\in V^\perp,|y|\leq1\rcb\\ \notag
	&\qquad\cup\lcb z+y:z\in B^\circ\cap V,y\in V^\perp,|y|=1\rcb.
\end{align}
We now prove that \eqref{eq:setBproj} holds. Suppose $\tilde{z}\in\partial B_I$ satisfies $|\ip{\tilde{z},n_i}|<1$ for some $i\in I$ and let $\tilde{\nu}\in\nu_{B_I}(\tilde{z})$. By \eqref{eq:partialBI}, $\tilde{z}=z+y$ where either 
\begin{itemize}
	\item[(i)] $z\in\partial B\cap V$ and $y\in V^\perp$ satisfies $|y|\leq1$, or 
	\item[(ii)] $z\in B^\circ\cap V$ and $y\in V^\perp$ satisfies $|y|=1$. 
\end{itemize}
In either case, since $y\in V^\perp$, we have $|\ip{z,n_i}|=|\ip{\tilde{z},n_i}|<1$. 

Suppose (i) holds. Given $u\in B$, define $\tilde{u}\doteq\Pi_{I}u+y\in B_I$. Then $\tilde{u}-\tilde{z}=\Pi_Iu-\Pi_Iz$ and hence,
	\be\label{eq:tildenuinwardnormal}\ip{\Pi_{I}\tilde{\nu},u-z}=\ip{\tilde{\nu},\Pi_{I}u-\Pi_{I}z}=\ip{\tilde{\nu},\tilde{u}-\tilde{z}}\geq0,\ee
where we have used that $\Pi_{I}$ is self-adjoint in the first equality and the fact that $\tilde{\nu}\in\nu_{B_I}(\tilde{z})$, $\tilde{z}\in\partial B_I$ and $u\in B_I$ to justify the inequality. Since \eqref{eq:tildenuinwardnormal} holds for all $u\in B$, it follows that $\Pi_I\tilde{\nu}\in\nu_B(z)$. Thus, by (i) and \eqref{eq:setB}, $\ip{\tilde{\nu},\Pi_Id_i}=\ip{\Pi_{I}\tilde{\nu},d_i}=0$. On the other hand, suppose (ii) holds. Then $\ip{\tilde{\nu},\tilde{z}+\tilde{y}-z-y}\geq0$ for all $\tilde{z}\in B\cap V$ and $\tilde{y}\in V^\perp$ satisfying $|\tilde{y}|\leq1$. Letting $\tilde{y}=y$, we see that $\ip{\tilde{\nu},\tilde{z}-z}\geq0$ for all $\tilde{z}\in B\cap V$. Since $z\in B^\circ$, this implies $\tilde{\nu}\in V^\perp$ and so $\ip{\tilde{\nu},\Pi_Id_i}=\ip{\Pi_I\tilde{\nu},d_i}=0$. In either case, $\ip{\tilde{\nu},\Pi_Id_i}=0$. This completes the proof of \eqref{eq:setBproj}.
\end{proof}

\begin{proof}[Proof of Lemma \ref{lem:projlip}]
The lemma follows immediately from Lemma \ref{lem:projesp}, Lemma \ref{lem:projsetB} and Theorem \ref{thm:esmlip}.
\end{proof}

\section{Examples and counter-examples}\label{apdx:examples}

\subsection{An ESP with nonempty $\V$-set}\label{apdx:esp}

Our theory applies to ESMs that are not SMs (see Remark \ref{rmk:sp} to recall the distinction between the two). Here, to illustrate this point, we provide a concrete example of an ESP with a nonempty $\V$-set that satisfies the stated assumptions.

Let $J=3$, $N=4$ and consider the ESP $\{(d_i,n_i,0),i\in\allN\}$ given by $d_i=n_i=e_i$ for $i=1,2,3$, and $d_4=-\sqrt{3}e_3$, $n_4=\frac{1}{\sqrt{3}}(e_1+e_2-e_3)$. It is readily verified that $\V=\{0\}$ and $\W=\emptyset$. Since $\V\neq\emptyset$, a solution to the ESP is not necessarily a solution to the SP. Define the closed, convex, symmetric set $B\subseteq\R^3$ by
	$$B\doteq\lcb z\in\R^3:|\ip{z,e_1}|\leq 1,\;|\ip{z,e_2}|\leq 1,\;|\ip{z,e_3}|\leq 4\rcb.$$
Let $z\in\partial B$. Suppose $|\ip{z,e_i}|<1$ for some $i\in\{1,2,3\}$. Then it is clear that $|\ip{z,e_i}|<1$ implies $\ip{\nu,d_i}=0$ for all $\nu\in\nu(z)$. Suppose $|\ip{z,n_4}|<1$. Then$-\sqrt{3}<\ip{z,e_1}+\ip{z,e_2}-\ip{z,e_3}<\sqrt{3}$. This implies that $|\ip{z,e_3}|<4$, from which it follows that $z$ lies in $D\doteq\partial B\setminus\{x\in B:|\ip{x,e_3}|=4\}$. Since $\{\nu:\nu\in\nu_B(z),z\in D\}\subseteq\spaan(\{e_1,e_2\})$, it follows that for every $\nu\in\nu_B(z)$, $|\ip{\nu,d_4}|=|\ip{\nu,e_3}|=0$. The above assertions, together show that the proposed set $B$ satisfies Assumption \ref{ass:setB}. To verify that the ESP also satisfies Assumption \ref{ass:projection}, consider the map $\pi$ defined by $\pi(x)=x$ if $x\in G$ and for $x\not\in G$, let $a_i=\ip{x,e_i}\vee0$, $i=1,2,3$, and
	$$\pi(x)=a_1e_1+a_2e_2+((a_1+a_2)\wedge a_3)e_3.$$
Then for $x\not\in G$, it is readily verified that $\pi(x)\in\partial G$ and $\pi(x)-x\in d(\pi(x))$. Therefore, it follows from Theorem \ref{thm:main} that given $\x\in\cts_G$, if the solution $(\z,\y)$ of the ESP for $\x$ satisfies the boundary jitter property, then $\nabla_\psi\esm(\x)$ exists and its right continuous regularization can be characterized as the unique solution of the DP associated with $\z$.

\subsection{An SP with nonempty $\W$-set}\label{apdx:nonemptyW}

In this section we present an example of an SP with nonempty $\W$-set, defined as in \eqref{eq:Wset}, and show that the directional derivative of the SM does not necessarily exist at times $t\in[0,\infty)$ such that $\z(t)\in\W$.

Let $J=2$. Consider the SP $\{(d_i,e_i,0),i=1,2\}$ on the nonnegative quadrant with $d_i=(1,1)'$ for $i=1,2$. It is straightforward to verify that $\W=\{0\}$ and the SP satisfies Assumption \ref{ass:setB} with $\delta=1$ and
	$$B\doteq\{x\in\R^2:|\ip{x,e_1}-\ip{x,e_2}|\leq 1,|\ip{x,e_1}+\ip{x,e_2}|\leq 3\},$$ 
and Assumption \ref{ass:projection} with 
	$$\pi(x)\doteq x+(\ip{-x,e_1}\vee\ip{-x,e_2}\vee0)(1,1)'.$$ 
Set $t_0=0$, $\gamma=1/2$ and define $t_n=1-\gamma^n$ so that $t_n\to1$ as $n\to\infty$. Consider the piecewise linear function $\x\in\cts$ defined as follows: $\x(0)=(1,0)'$. For $n\in\N_0$, recursively define
\begin{align*}
	\x(t_{2n+1})&\doteq\x(t_{2n})-3\gamma^{2n+1}e_1,\\
	\x(t_{2n+2})&\doteq\x(t_{2n+1})-3\gamma^{2n+2}e_2.
\end{align*}
Then linearly interpolate as follows: for $t\in[0,1)$,
	$$\x(t)\doteq\x(t_n)+(\x(t_{n+1})-\x(t_n))(t-t_n)\gamma^{-(n+1)},\qquad t\in[t_n,t_{n+1}].$$
Observe that
\begin{align*}
	\x(t_{2n+2})&=\x(t_{2n})-3\gamma^{2n+1}e_1-3\gamma^{2n+2}e_2=(1,0)'+(-3/2,-3/4)'\sum_{j=0}^n\gamma^{2j}.
\end{align*}
Since $\sum_{j=0}^\infty\gamma^{2j}=1/(1-\gamma^2)=1/3\gamma^2$, to ensure $\x$ is continuous, we define $\x(t)\doteq(-1,-1)'$ for $t\in[1,\infty)$. 

Let $\z=\esm(\x)$. Then $\z$ is given as follows: $\z(0)=(1,0)'$ and for $n\in\N_0$, $\z$ is recursively given by
\begin{alignat*}{2}
	\z(t_{2n+1})&=\pi(\z(t_{2n})-3\gamma^{2n+1}e_1)&\;=\;&(0,\gamma^{2n+1})'\\
	\z(t_{2n+2})&=\pi(\z(t_{2n+1})-3\gamma^{2n+2}e_2)&\;=\;&(\gamma^{2n+2},0)'
\end{alignat*}
and for $n\in\N_0$:
	$$\z(t)=\z(t_n)+(\z(t_{n+1})-\z(t_n))(t-t_n)\gamma^{-n-1},\qquad t\in[t_n,t_{n+1}].$$
The continuity of $\z$ implies that $\z(t)=0$ for all $t\in[1,\infty)$.

Let $\psi\in\cts$ be the constant function $\psi\equiv(1,0)'$. For each $\ve>0$, let $\z_\ve\doteq\esm(\x+\ve\psi)$. Consider two sequences $\{\ve_k\}_{k\in\N}$ and $\{\ve_k'\}_{k\in\N}$ given by
\begin{align*}
	\ve_k&\doteq\gamma^{2k},&&k\in\N,\\
	\ve_k'&\doteq\gamma^{2k+1},&&k\in\N.
\end{align*}
We show that $\lim_{k\to\infty}\nabla_\psi^{\ve_k}\esm(\x)(1)\neq\lim_{k\to\infty}\nabla_\psi^{\ve_k'}\esm(\x)(1)$ and thus $\nabla_\psi\esm(\x)(1)$ does not exist.

Fix $k\in\N$. Then $\z_{\ve_k}(0)=(1+\gamma^{2k},0)$ and for $n=0,1,\dots,k-1$, we have
\begin{alignat*}{2}
	\z_{\ve_k}(t_{2n+1})&=\pi(\z_{\ve_k}(t_{2n})-3\gamma^{2n+1}e_1)&\;=\;&(0,\gamma^{2n+1}-\gamma^{2k})',\\
	\z_{\ve_k}(t_{2n+2})&=\pi(\z_{\ve_k}(t_{2n+1})-3\gamma^{2n+2}e_2)&\;=\;&(\gamma^{2n+2}+\gamma^{2k},0)'.
\end{alignat*}
For $n=k$, we have
\begin{alignat*}{2}
	\z_{\ve_k}(t_{2k+1})&=\pi(\z_{\ve_k}(t_{2k})-3\gamma^{2k+1}e_1)&\;=\;&(\gamma^{2k+1},0)',\\
	\z_{\ve_k}(t_{2k+2})&=\pi(\z_{\ve_k}(t_{2k+1})-3\gamma^{2k+2}e_2)&\;=\;&(\gamma^{2k+1}+3\gamma^{2k+2},0)'.
\end{alignat*}
For $n=k+1,k+2,\dots$, we have
\begin{alignat*}{2}
	\z_{\ve_k}(t_{2n+1})&=\pi(\z_{\ve_k}(t_{2n})-3\gamma^{2n+1}e_1)&\;=\;&\lb0,\gamma^{2k+1}+3\sum_{j=k+1}^n\gamma^{2j+1}\rb' ,\\
	\z_{\ve_k}(t_{2n+2})&=\pi(\z_{\ve_k}(t_{2n})-3\gamma^{2n+2}e_2)&\;=\;&\lb0,\gamma^{2k+1}+3\sum_{j=k+1}^n\gamma^{2j+1}+3\gamma^{2n+2}\rb'.
\end{alignat*}
Letting $n\to\infty$, we see that $\z_{\ve_k}(1)=(0,\gamma^{2k})'$. Thus,
	$$\lim_{k\to\infty}\nabla_\psi^{\ve_k}\esm(\x)(1)=\lim_{k\to\infty}\frac{\z_{\ve_k}(1)-\z(1)}{\ve_k}=(0,1)'.$$

We now perform the analogous computations with $\ve_k'$ in place of $\ve_k$. Fix $k\in\N$. Then $\z_{\ve_k'}(0)=(1+\gamma^{2k+1},0)$ and for $n=0,1,\dots,k-1$, we have
\begin{alignat*}{2}
	\z_{\ve_k'}(t_{2n+1})&=\pi(\z_{\ve_k'}(t_{2n})-3\gamma^{2n+1}e_1)&\;=\;&(0,\gamma^{2n+1}-\gamma^{2k+1})',\\
	\z_{\ve_k'}(t_{2n+2})&=\pi(\z_{\ve_k'}(t_{2n+1})-3\gamma^{2n+2}e_2)&\;=\;&(\gamma^{2n+2}+\gamma^{2k+1},0)'.
\end{alignat*}
For $n=k$, we have
\begin{alignat*}{2}
	\z_{\ve_k'}(t_{2k+1})&=\pi(\z_{\ve_k'}(t_{2k})-3\gamma^{2k+1}e_1)&\;=\;&(0,0)',\\
	\z_{\ve_k'}(t_{2k+2})&=\pi(\z_{\ve_k'}(t_{2k+1})-3\gamma^{2k+2}e_2)&\;=\;&(3\gamma^{2k+2},0)'.
\end{alignat*}
For $n=k+1,k+2,\dots$, we have
\begin{alignat*}{2}
	\z_{\ve_k'}(t_{2n+1})&=\pi(\z_{\ve_k'}(t_{2n})-3\gamma^{2n+1}e_1)&\;=\;&\lb3\sum_{j=k+1}^n\gamma^{2j+1}+3\gamma^{2n+2},0\rb', \\
	\z_{\ve_k'}(t_{2n+2})&=\pi(\z_{\ve_k'}(t_{2n})-3\gamma^{2n+2}e_2)&\;=\;&\lb3\sum_{j=k+1}^{n+1}\gamma^{2j+1},0\rb'.
\end{alignat*}
Letting $n\to\infty$, we see that $\z_{\ve_k'}(1)=(\gamma^{2k+1},0)'$. Thus,
	$$\lim_{k\to\infty}\nabla_\psi^{\ve_k'}\esm(\x)(1)=\lim_{k\to\infty}\frac{\z_{\ve_k'}(1)-\z(1)}{\ve_k'}=(1,0)'.$$
	
\bibliographystyle{plain}
\bibliography{DP}

\end{document}